\theoremstyle{plain}
\newtheorem{lemma}{Lemma}[section]
\newtheorem{theorem}[lemma]{Theorem}
\newtheorem{proposition}[lemma]{Proposition}
\newtheorem{corollary}[lemma]{Corollary}
\numberwithin{equation}{section}
\theoremstyle{definition}
\newtheorem{remark}[lemma]{Remark}
\newtheorem{definition}[lemma]{Definition}
\newtheorem{example}[lemma]{Example}
\newtheorem{notation}[lemma]{Notation}
\begin{document}
\title[Localized deformation with DEC]{Localized deformation for initial data sets with the dominant energy condition}
\author{Justin Corvino}
\address{Department of Mathematics, Lafayette College, Easton, PA 18042, USA}
\email{corvinoj@lafayette.edu}
\author{Lan-Hsuan Huang}
\address{Department of Mathematics, University of Connecticut, Storrs, CT 06269, USA }
\email{lan-hsuan.huang@uconn.edu}
\thanks{The first named author was partially supported by the NSF through grant DMS~1207844. The second named author was partially supported by the NSF through DMS~1308837 and DMS~1452477. This material is also based upon work supported by the NSF under Grant No.~0932078~000, while both authors were in residence at the Mathematical Sciences Research Institute in Berkeley, California, during the Fall 2013 program in Mathematical General Relativity.}
%\date{\today}

\begin{abstract} We consider localized deformation for initial data sets of the Einstein field equations with the dominant energy condition. Deformation results with the weak inequality need to be handled delicately. We introduce a modified constraint operator to absorb the first order change of the metric in the dominant energy condition. By establishing the local surjectivity theorem, we can promote the dominant energy condition to the strict inequality by compactly supported variations and obtain new gluing results with the dominant energy condition. The proof of local surjectivity is a modification of the earlier work for the usual constraint map by the first named author and R.~Schoen~\cite{Corvino-Schoen:2006} and by P.~Chru\'{s}ciel and E.~Delay~\cite{Chrusciel-Delay:2003}, with some refined analysis.   
\end{abstract}

\maketitle

\section{Introduction}
Deformations to obtain the strict dominant energy condition are important analytical tools in the study of initial data sets. Among various applications, the most prominent one is perhaps  the proof of the Positive Mass Theorem by  R.~Schoen and S.-T.~Yau, in which they use the strict dominant energy condition in conjunction with the stability of a minimal hypersurface (or more generally a marginally outer trapped hypersurface) to study the geometry and topology of the manifold. Their deformation results for asymptotically flat manifolds are \emph{global} because their argument involves a conformal change of the metric, and the resulting variations of the initial data set, which satisfy an elliptic equation, cannot have compact supports, see ~\cite{Schoen-Yau:1979-pmt1} for the scalar curvature operator and~\cite{Schoen-Yau:1981-pmt2} for the dominant energy condition with nonzero current density~$J$. A general global deformation result for asymptotically flat initial data sets is obtained by the second named author with M.~Eichmair, D.~Lee, and R. Schoen as a central analytical step in the proof of the spacetime Positive Mass Theorem~\cite{Eichmair-Huang-Lee-Schoen:2016}.

In contrast, if one restricts to compactly supported variations of initial data sets, so-called \emph{localized} deformations, there is an obstruction to deform to the strict dominant energy condition. For vacuum initial data sets, the obstruction is related to whether the corresponding spacetime has Killing vector fields. More specifically, work of A.~Fischer and J.~Marsden \cite{Fischer-Marsden:1973, Fischer-Marsden:1975} shows that the constraint map is locally surjective if the kernel of the formal $L^2$ adjoint of the linearized constraint operator is trivial, a condition which V.~Moncrief~\cite{Moncrief:1975} proves is equivalent to the absence of spacetime Killing vector fields.  Fischer-Marsden's proof uses the elliptic splitting of the function spaces on a closed manifold in an essential way.  For compact manifolds with boundary,  the first named author uses a variational approach to prove a local surjectivity result for the scalar curvature operator~\cite{Corvino:2000}, and then with R.~Schoen for the full constraint map~\cite{Corvino-Schoen:2006}.  P.~Chru\'{s}ciel and E.~Delay introduce finer weighted spaces and derive a systematic approach to localized deformations for the constraint map in various settings~\cite{Chrusciel-Delay:2003}. Localized deformations play an important role in gluing constructions (e.g. \cite{Corvino:2000, Corvino-Schoen:2006, Chrusciel-Delay:2003, Chrusciel-Corvino-Isenberg:2011, Isenberg-Mazzeo-Pollack:2003, Chrusciel-Isenberg-Pollack:2005}) and have applications to  rigidity type results (e.g. \cite[Theorem 8]{Corvino:2000}), but all earlier results focus on initial data sets that are either \emph{vacuum} or have the \emph{strict} dominant energy condition.  In this paper, we present localized deformations without assuming either condition, by introducing a new modified constraint operator, and obtain new gluing applications. The rigidity type application is developed in \cite{Huang-Lee-rigid}. 

For non-vacuum initial data sets, there is a serious technical detail in deforming to the strict inequality from a weak inequality.  Essentially, the deformation does not seem to follow directly from local surjectivity of the constraint map, as we now discuss. (Please refer Section~\ref{sec:prelim} for the relevant definitions.)  Suppose the constraint map $\Phi$ is locally surjective at an initial data set $(g, \pi)$. Given $(\psi, V)$ (sufficiently small), suppose one solves for a small deformation $(h, w)$ to achieve $\Phi(g+h, \pi+w) = \Phi(g, \pi) + (2\psi, V)$. That is, the mass and current densities of the deformed initial data set $(\bar{g}, \bar{\pi}) = (g+h, \pi+w)$ are $\bar{\mu} = \mu+\psi$ and $\bar{J} = J +V$. 
The norm of $\bar{J}$ is taken with respect to the deformed metric, and  
\[
|\bar{J}|_{\bar{g}}=|J+V|_{g+h} \le |J+V|_g + \tfrac12 |h|_g |J+ V|_g + O(|h|_g^2).
\] 
Thus for the deformed data $(\bar{g}, \bar{\pi})$ 
\begin{align}\label{equation:dec}
	\bar{\mu} - |\bar{J}|_{\bar{g}} \ge \mu - |J+V|_g +\psi -  \tfrac12 |h|_g |J+ V|_g  + O(|h|_g^2).
\end{align}
Note that $h$ depends on the choice of $(\psi, V)$ and the estimates do \emph{not} indicate that $\psi$ can dominate the first order change  involving $h$ to promote the dominant energy condition (while we can arrange for $\psi$ to dominate the term $O(|h|_g^2)$).  
  
In this work, we introduce a \emph{modified constraint map}.  Given a vector field $V$, a metric $g$ and a symmetric $(0,2)$-tensor $h$, let $h \cdot_g V$ denote the vector field dual (with respect to $g$) to the tensor contraction of $h$ and $V$.  For a fixed initial data set $(g, \pi)$ and a vector field $W$, let $\Phi^W_{(g,\pi)}$ be defined by 
\begin{align*}
 \Phi^W_{(g, \pi)}(\gamma, \tau)= \Phi(\gamma, \tau)+  \left(0, \tfrac{1}{2} \gamma \cdot_g \left(J +W\right)\right),
\end{align*}
where $J= \textup{div}_g \pi$ is the current density of $(g,\pi)$.  When $W=V$, the additional term is designed to absorb the first order change that results in the term $\tfrac12 |h|_g |J+ V|_g$ from \eqref{equation:dec} and is motivated by the linear map introduced in~\cite{Eichmair-Huang-Lee-Schoen:2016}. We establish a sufficient condition, in terms of the modified operator $\Phi^0_{(g, \pi)}$ (setting~$W=0$), to promote to the dominant energy condition. Throughout this paper, we let $\overline{\Omega}$ be a compact connected smooth manifold-with-boundary, with manifold interior $\Omega$, unless otherwise indicated.

\begin{theorem} \label{theorem:main}
 Let $( g, \pi) \in C^{4,\alpha}(\overline{\Omega})\times C^{3,\alpha}(\overline{\Omega})$ be an initial data set. Suppose that the kernel of $D\Phi^0_{(g, \pi)}|_{(g,\pi)}^*$ is trivial on $\Omega$. Then there is a $C^{2,\alpha}(\overline{\Omega})$ neighborhood $\mathcal W$ of the zero vector and  constants $\epsilon>0$, $C>0$ such that for $(\psi, V) \in \mathcal{B}_0\times (\mathcal B_1 \cap \mathcal W)$  with $\| (\psi, V)\|_{\mathcal{B}_0\times \mathcal B_1} \le \epsilon$,  there exists $(h, w)\in \mathcal{B}_2 \times \mathcal{B}_2$ with $\| (h, w) \|_{\mathcal{B}_2 \times \mathcal{B}_2} \le C\| (\psi,V)\|_{\mathcal B_0\times \mathcal B_1}$ such that $(\bar{g}, \bar{\pi}) = (g+ h, \pi + w)\in C^{2,\alpha}(\overline\Omega)\times C^{2,\alpha}(\overline{\Omega})$ is an initial data set and satisfies 
\[	
	\bar{\mu} - |\bar{J}|_{\bar{g}} \ge \mu - |J+V|_g +  \psi.
\]
\end{theorem}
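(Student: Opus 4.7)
My plan is to reduce the theorem to a local surjectivity statement for the \emph{modified} constraint operator $\Phi^V_{(g,\pi)}$ (specializing $W=V$). The linearization of $\Phi^V_{(g,\pi)}$ at $(g,\pi)$ differs from that of $\Phi^0_{(g,\pi)}$ by the zeroth-order term $h\mapsto(0,\tfrac{1}{2}h\cdot V)$, so a standard perturbation argument transfers the triviality of $\ker D\Phi^0_{(g,\pi)}|^{*}_{(g,\pi)}$ on $\Omega$ to $\ker D\Phi^V_{(g,\pi)}|^{*}_{(g,\pi)}$ for $V$ sufficiently small. I would first establish, as the main technical input of the paper, paralleling \cite{Corvino-Schoen:2006} and \cite{Chrusciel-Delay:2003}, that trivial adjoint kernel yields a local surjectivity theorem for $\Phi^V_{(g,\pi)}$ at $(g,\pi)$ using compactly supported variations, with the quantitative bound $\|(h,w)\|_{\mathcal{B}_2\times\mathcal{B}_2}\le C\|(\psi,V)\|_{\mathcal{B}_0\times\mathcal{B}_1}$.

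With surjectivity in hand, I would apply it to the target
\[
\Phi^V_{(g,\pi)}(g+h,\pi+w) \;=\; \Phi^V_{(g,\pi)}(g,\pi) + (2\psi+f,\,V),
\]
where $f\ge 0$ is a small correction of order $\|(\psi,V)\|^2$, supported on a fixed compact subset of $\Omega$, to be pinned down below. Unpacking this identity (using $g\cdot X = X$) gives $\bar\mu = \mu+\psi+\tfrac{1}{2}f$ and $\bar J = J+V-\tfrac{1}{2}h\cdot(J+V)$. The point of the modification is then transparent on expanding $|\bar J|^{2}_{\bar g}=(g+h)(\bar J,\bar J)$: the two first-order terms $h(J+V,J+V)$ and $-g(J+V,\,h\cdot(J+V))$ cancel exactly, via the identity $g(X,\,h\cdot X)=h(X,X)$ that is immediate from the definition of $h\cdot X$. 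What remains is
\[
|\bar J|^{2}_{\bar g} \;=\; |J+V|^{2}_{g} + O(|h|^{2}),
\qquad\text{so }|\bar J|_{\bar g}\;\le\;|J+V|_g + C|h|^{2}\text{ pointwise.}
\]

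Substituting into the DEC defect yields $\bar\mu-|\bar J|_{\bar g}\ge \mu - |J+V|_g + \psi + \tfrac{1}{2}f - C|h|^{2}$, and the asserted inequality follows once $f$ dominates $2C|h|^{2}$ pointwise. Because the surjectivity estimate together with the embedding $\mathcal{B}_2\hookrightarrow C^{0}$ gives $\|h\|_{C^{0}}\le C'\|(\psi,V)\|$, one can take $f$ to be a fixed positive bump function (whose support contains a neighborhood of the support of $h$) scaled by a constant multiple of $\|(\psi,V)\|^{2}$; this keeps the total target inside the surjectivity ball and closes the argument. A contraction/iteration step in $f$ can be used instead if one prefers a minimal correction.

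The principal obstacle is the local surjectivity theorem for $\Phi^V_{(g,\pi)}$: adapting the variational/weighted-elliptic machinery of \cite{Corvino-Schoen:2006} and \cite{Chrusciel-Delay:2003} requires establishing coercivity of the adjoint $D\Phi^V|^{*}$ in the presence of the extra zeroth-order term, and arranging the weighted spaces $\mathcal{B}_0,\mathcal{B}_1,\mathcal{B}_2$ to deliver both the existence step and the $C^{0}$ control on $h$ needed to absorb the quadratic error. The perturbation passage from $\Phi^0$ to $\Phi^V$ and the closing of the quadratic correction are secondary but still require care.
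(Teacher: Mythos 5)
Your overall route is the paper's route: pass to the modified operator with $W=V$, invoke a localized surjectivity theorem for $\Phi^V_{(g,\pi)}$ (with the no-kernel hypothesis transferred from $\Phi^0_{(g,\pi)}$ by smallness of $V$, and with the estimate $\|(h,w)\|_{\mathcal B_2\times\mathcal B_2}\le C\|(\psi,V)\|_{\mathcal B_0\times\mathcal B_1}$), and then read off the dominant energy inequality from $\bar\mu=\mu+\psi$, $\bar J=(J+V)-\tfrac12 h\cdot(J+V)$. Where you diverge is the last step, and there you both miss the key computation and introduce a step that does not work as stated. Writing $Y=J+V$, the exact expansion gives
\begin{align*}
|\bar J|^2_{\bar g}&=(g+h)_{ij}\bigl(Y^i-\tfrac12(h\cdot Y)^i\bigr)\bigl(Y^j-\tfrac12(h\cdot Y)^j\bigr)
=|Y|_g^2-\tfrac34|h\cdot Y|_g^2+\tfrac14\,h_{ij}(h\cdot Y)^i(h\cdot Y)^j,
\end{align*}
because not only do the two first-order terms cancel (as you observe), but the cross term $-h_{ij}Y^i(h\cdot Y)^j=-|h\cdot Y|_g^2$ also combines with $+\tfrac14|h\cdot Y|_g^2$ with a favorable sign. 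Hence $|\bar J|_{\bar g}\le|Y|_g$ outright whenever $|h|_g\le 3$, a condition guaranteed by the smallness of $\|(h,w)\|_{\mathcal B_2\times\mathcal B_2}$. This is the paper's Lemma~\ref{lemma:almost-DEC}: there is no quadratic error to absorb, no auxiliary correction $f$ is needed, and the target $(2\psi,V)$ alone suffices.

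The auxiliary correction as you describe it is, moreover, flawed. The tensor $h$ produced by the localized surjectivity theorem is not compactly supported in $\Omega$; it lies in $\mathcal B_2$ and merely decays (exponentially, like $\rho^{1/2}$ up to powers of $\phi$) at $\partial\Omega$, so there is no ``fixed positive bump function whose support contains a neighborhood of the support of $h$,'' and a compactly supported $f$ cannot dominate $C|h|^2$ pointwise on all of $\Omega$. To repair your version one would have to take $f$ with a matched decay profile (e.g.\ comparable to $\phi^{-4-n}\rho$, which does lie in $\mathcal B_0$) and then resolve the circularity that $h$ depends on $f$ via an honest fixed-point/iteration argument, which you only gesture at. All of this machinery is rendered unnecessary by the sign computation above, so the clean conclusion is: keep your reduction to the surjectivity theorem for $\Phi^V_{(g,\pi)}$, drop $f$, and close with the exact pointwise inequality $|\bar J|_{\bar g}\le|J+V|_g$ valid for $|h|_g\le 3$.
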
 
The weighted Banach spaces  $\mathcal{B}_k = \mathcal{B}_k (\Omega) \subset C^{k,\alpha}_{\mathrm{loc}}(\Omega)$ for $k=0, 1, 2$ and the respective norms are defined in Section~\ref{subsection:weighted-Holder}. In particular, if $(g,\pi)$ satisfies the dominant energy condition, the above theorem gives a sufficient condition to deform to  the strict dominant energy condition in $\Omega$ by setting $V=0$ and $\psi >0$ in $\Omega$. 

Our proofs also give the following version of Theorem~\ref{theorem:main} that includes higher order regularity and uniformity in the neighborhood of an arbitrary initial data set.

\begin{theorem}  \label{theorem:main2}
Let $k\ge 0$.  Let $( g_0, \pi_0) \in C^{k+4,\alpha}(\overline{\Omega})\times C^{k+3,\alpha}(\overline{\Omega})$ be an initial data set. Suppose that the kernel of $D\Phi^0_{(g_0, \pi_0)}|_{(g_0,\pi_0)}^*$ is trivial on $\Omega$. Then there is a $C^{k+4,\alpha}(\overline{\Omega})\times C^{k+3,\alpha}(\overline{\Omega})$ neighborhood  $\mathcal{U}$ of $(g_0, \pi_0)$, a $C^{k+2,\alpha}(\overline{\Omega})$ neighborhood $\mathcal{W}$ of the zero vector, and constants $\epsilon>0$, $C>0$ such that for $(g, \pi) \in \mathcal{U}$ and for $(\psi,V) \in C^{k,\alpha}_c(\Omega) \times (C^{k+1,\alpha}_c(\Omega) \cap \mathcal W)$  with $\| (\psi, V)\|_{\mathcal B_0\times \mathcal B_1} \le \epsilon$,  there exists $(h, w)\in C^{k+2, \alpha}_c (\Omega) \times C^{k+2, \alpha}_c (\Omega)$ with $\| (h, w) \|_{C^{k+2, \alpha} \times C^{k+2, \alpha}} \le C\| (\psi, V)\|_{C^{k,\alpha}\times C^{k+1, \alpha}} $ such that $(\bar{g}, \bar{\pi}) = (g+ h, \pi + w)\in C^{k+2,\alpha}(\overline\Omega)\times C^{k+2,\alpha}(\overline{\Omega})$ is an initial data set that satisfies 
\[	
	\bar{\mu} - |\bar{J}|_{\bar{g}} \ge \mu - |J+V|_g +  \psi.
\]
If, in addition, $(g, \pi)\in C^{\infty}(\overline{\Omega})$ and $(\psi, V)\in C^{\infty}_c(\Omega)$, then we can achieve $(\bar{g}, \bar{\pi})\in C^{\infty}(\overline{\Omega})$.
\end{theorem}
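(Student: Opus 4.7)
The plan is to solve the nonlinear modified-constraint equation
\[
	\Phi^V_{(g,\pi)}(g+h,\, \pi+w) \;=\; \Phi(g,\pi) + (2\psi,\, V)
\]
for a compactly supported pair $(h, w)$ with the advertised size, and then read off the dominant energy condition from the algebraic structure of $\Phi^V$. If the equation is solved, one obtains $\bar\mu = \mu + \psi$ and $\bar J = J + V - \tfrac12\, h\cdot(J+V)$. A direct index computation with $\bar g = g + h$ shows that \emph{all} terms linear in $h$ in the expansion of $\bar g_{ij}\bar J^i \bar J^j$ cancel, so that $|\bar J|^2_{\bar g} = |J+V|^2_g + R$ with $|R| \le C|h|^2_g(|J|_g + |V|_g)^2 + O(|h|^3_g)$. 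Hence $|\bar J|_{\bar g} \le |J+V|_g + C|h|^2_g$, and the desired inequality follows once $C|h|^2 \le \psi$ pointwise; this is built into the choice of weights defining the spaces $\mathcal{B}_k$ (so that the quadratic remainder decays faster near $\partial\Omega$ than the linear data $\psi$) together with the smallness condition on $\epsilon$. This exact cancellation at first order is precisely the role for which the modified operator $\Phi^W$ was designed.

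The central analytic step is a uniform local surjectivity statement for $\Phi^V_{(g,\pi)}$ at $(g, \pi)$, for $(g,\pi) \in \mathcal{U}$ and $V$ small, adapting the variational approach of \cite{Corvino-Schoen:2006, Chrusciel-Delay:2003} to the modified operator. The linearization
\[
	L_{(g,\pi),V}(h, w) \;:=\; D\Phi\big|_{(g, \pi)}(h, w) + \left(0,\, \tfrac12\, h\cdot(J+V)\right)
\]
differs from $D\Phi^0_{(g_0,\pi_0)}\big|_{(g_0,\pi_0)}$ only through coefficients and a zeroth-order term depending continuously on $(g,\pi, V)$ in the specified topologies. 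The triviality of $\ker D\Phi^0_{(g_0,\pi_0)}|_{(g_0,\pi_0)}^*$ on $\Omega$ yields a basic coercivity estimate of the form $\|(u, Z)\|_{*} \lesssim \|L^*(u, Z)\|_{**}$ on suitable weighted spaces; this estimate is stable under small coefficient perturbations, so on shrinking $\mathcal{U}$ and $\epsilon$ it persists with constants uniform in $(g, \pi, V)$. A bounded right inverse $S_{(g,\pi),V}$ is constructed by minimizing a convex weighted functional whose Euler-Lagrange equation is $L L^*(u, Z) = \text{target}$, setting $(h, w) = L^*(u, Z)$; a Picard iteration on $L_{(g,\pi),V}(h, w) = (2\psi, V) - Q(h, w)$, where $Q$ denotes the quadratic-and-higher remainder, produces a compactly supported fixed point obeying $\|(h, w)\|_{\mathcal{B}_2 \times \mathcal{B}_2} \lesssim \|(\psi, V)\|$.

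The higher-regularity claim then follows from the Lagrange-multiplier form $(h, w) = L^*(u, Z)$: interior Schauder theory for the fourth-order elliptic operator $LL^*$, whose coefficients inherit regularity from $(g, \pi) \in C^{k+4,\alpha}\times C^{k+3,\alpha}$, gives $(u, Z) \in C^{k+4,\alpha}_{\mathrm{loc}}(\Omega)\times C^{k+3,\alpha}_{\mathrm{loc}}(\Omega)$ whenever the right-hand side lies in $C^{k,\alpha}\times C^{k+1,\alpha}$, so that $(h, w)\in C^{k+2,\alpha}_{\mathrm{loc}}(\Omega)\times C^{k+2,\alpha}_{\mathrm{loc}}(\Omega)$ with the claimed norm bound, and the compact-support property is enforced by the weighted space $\mathcal{B}_2$. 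Bootstrapping in $k$ yields the $C^\infty$ case. The hardest part will be establishing the basic coercivity estimate \emph{uniformly} on a $C^{k+4,\alpha}\times C^{k+3,\alpha}$ neighborhood of $(g_0, \pi_0)$: triviality of an overdetermined-elliptic kernel is an open condition in principle, but in the present weighted-space setting with compactly supported variations one must also show that the constants in the estimate do not degenerate as $(g, \pi)$ varies in $\mathcal{U}$, a delicate point that hinges on the refined analysis advertised in the abstract.
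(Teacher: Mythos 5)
Your overall structure --- solve a nonlinear modified-constraint equation for $(h,w)$ via a variational/coercivity argument and then read off the dominant energy inequality from the algebra of $\Phi^V$ --- matches the paper's. But the crucial algebraic step has a genuine gap. You correctly observe that the first-order term in $h$ cancels in $\bar g_{ij}\bar J^i\bar J^j$, and you then claim the quadratic remainder $R$ can be absorbed via a pointwise bound $C|h|^2_g\le\psi$. This fails on two counts. First, even granting such a bound, you would only obtain $\bar\mu-|\bar J|_{\bar g}\ge\mu+\psi-|J+V|_g-C|h|^2_g\ge\mu-|J+V|_g$, losing the $+\psi$ required by the conclusion: you cannot spend $\psi$ on absorbing an error and also keep it in the final inequality. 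Second, the bound $C|h|^2\le\psi$ cannot be deduced from the weighted-space setup: $\psi\in\mathcal B_0$ is not assumed nonnegative, and the weights supply only upper bounds near $\partial\Omega$, saying nothing about $\psi$ dominating $|h|^2$ at interior points where $\psi$ may vanish. This is exactly the circularity the paper flags in its introduction ($h$ depends on $\psi$, so one cannot choose $\psi$ to dominate the error it creates).

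The resolution, which is the content of the paper's Lemma~\ref{lemma:almost-DEC}, is that the nonlinear remainder has a favorable \emph{sign}, not merely smaller size. Writing $Y=J+V$, a direct expansion gives the identity
\[
  |\bar J|^2_{\bar g}=|Y|^2_g-\tfrac34|h\cdot Y|^2_g+\tfrac14\,h_{ij}(h\cdot Y)^i(h\cdot Y)^j,
\]
and since $\tfrac14\,h_{ij}(h\cdot Y)^i(h\cdot Y)^j\le\tfrac14|h|_g\,|h\cdot Y|^2_g$, the entire remainder is nonpositive as soon as $|h|_g\le 3$. Hence $|\bar J|_{\bar g}\le|J+V|_g$ \emph{exactly}, with no absorption needed, and $\bar\mu-|\bar J|_{\bar g}\ge\mu+\psi-|J+V|_g$ follows directly. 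The point of the modified operator is therefore stronger than first-order cancellation: the exact remainder is sign-definite, which removes the circularity at the root. The remaining ingredients of your sketch --- uniform coercivity, the weighted variational right inverse, Picard iteration, Schauder bootstrapping for the $C^{k+2,\alpha}$ and $C^\infty$ statements --- are in the spirit of the paper's Sections~\ref{section:modified}--\ref{section:projected}, with the uniformity of the coercivity constant (obtained in the paper via the compactness argument of Theorem~\ref{theorem:coercivity}) being, as you note, the delicate analytic step.
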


As an application, we give the following gluing construction of initial data sets from interpolation. This extends the scalar curvature result of E.~Delay~\cite{Delay:2011}, but the presence of $|J|$ adds an analytical subtlety. 

\begin{theorem} \label{theorem:interpolating-main}
Let $k\ge 0$.  Let $(g_0, \pi_0) \in C^{k+4,\alpha} (\overline{\Omega})\times C^{k+3,\alpha}(\overline{\Omega})$ be an initial data set.  Suppose that the kernel of $D\Phi^0_{(g_0, \pi_0)}|_{(g_0, \pi_0)}^*$ is trivial on $\Omega$. Let $0\le \chi \le 1$ be a smooth function such that $\chi(1-\chi)$ is supported on a compact subset of $\Omega$. Then there exists a $C^{k+4,\alpha}(\overline{\Omega})\times C^{k+3,\alpha}(\overline{\Omega})$ neighborhood  $\mathcal{U}$ of $(g_0, \pi_0)$ such that for  $(g_1, \pi_1), (g_2, \pi_2)\in \mathcal{U}$ and for $(g, \pi) = \chi (g_1, \pi_1) + (1-\chi ) (g_2, \pi_2)$, 
 there exists a pair of symmetric tensors $(h, w)$ supported in $\Omega$ such that the initial data set $(\bar{g} , \bar{\pi})= (g+h, \pi + w)\in C^{k+2,\alpha} (\overline{\Omega})\times C^{k+2,\alpha}(\overline{\Omega})$ satisfies 
\[
	\bar{\mu} - |\bar{J}|_{\bar{g}} \ge \chi (\mu_1 - |J_1|_{g_1}) + (1-\chi) (\mu_2 - |J_2|_{g_2}).
\] 
If, in addition, $(g_1, \pi_1), (g_2,\pi_2)\in C^{\infty}(\overline{\Omega})$, then we can achieve $(\bar{g}, \bar{\pi})\in C^{\infty}(\overline{\Omega})$.
\end{theorem}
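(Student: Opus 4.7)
The plan is to reduce Theorem~\ref{theorem:interpolating-main} to an application of Theorem~\ref{theorem:main2} at the interpolated datum $(g,\pi) := \chi(g_1,\pi_1)+(1-\chi)(g_2,\pi_2)$, after producing a compactly supported pair $(\psi, V)$ for which the conclusion of Theorem~\ref{theorem:main2} immediately yields the desired convex-combination lower bound. Write $\mu_i, J_i$ for the mass and current densities of $(g_i,\pi_i)$ and $\mu_g, J_g$ for those of $(g,\pi)$. After shrinking the neighborhood $\mathcal U$ from Theorem~\ref{theorem:main2} if necessary, $(g,\pi)$ itself lies in $\mathcal U$, since convex combinations preserve $C^{k+4,\alpha}\times C^{k+3,\alpha}$-closeness to $(g_0,\pi_0)$.

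The natural choice is
\[
V := \chi J_1 + (1-\chi) J_2 - J_g, \qquad \psi := \chi(\mu_1-|J_1|_{g_1}) + (1-\chi)(\mu_2-|J_2|_{g_2}) - \mu_g + |J_g+V|_g,
\]
so that $\mu_g - |J_g+V|_g + \psi$ equals the target right-hand side identically. Outside $\mathrm{supp}(\chi(1-\chi))$ the interpolation coincides with one of the $(g_i,\pi_i)$, so $V$ and $\psi$ both vanish there; in particular they are compactly supported in $\Omega$. Applying a Leibniz-type expansion to $J_g = \textup{div}_g(\chi\pi_1+(1-\chi)\pi_2)$ and using the identities $g-g_1 = (1-\chi)(g_2-g_1)$ and $g-g_2 = \chi(g_1-g_2)$ yields
\[
V = \chi(\textup{div}_{g_1}\pi_1 - \textup{div}_g\pi_1) + (1-\chi)(\textup{div}_{g_2}\pi_2 - \textup{div}_g\pi_2) - d\chi\,\lrcorner\,(\pi_1-\pi_2),
\]
whose $C^{k+1,\alpha}$ norm is controlled by $C\,\|(g_1,\pi_1)-(g_2,\pi_2)\|_{C^{k+2,\alpha}\times C^{k+1,\alpha}}$; a Taylor expansion of the nonlinear expression for $\mu(g,\pi)$ around each endpoint gives an analogous (in fact quadratic) bound for the defect in $\psi$. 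Shrinking $\mathcal U$ therefore makes $\|(\psi, V)\|$ as small as Theorem~\ref{theorem:main2} requires, which produces compactly supported $(h, w)$ with $(\bar g,\bar\pi) := (g+h,\pi+w)$ satisfying
\[
\bar\mu - |\bar J|_{\bar g} \ge \mu_g - |J_g+V|_g + \psi = \chi(\mu_1-|J_1|_{g_1}) + (1-\chi)(\mu_2-|J_2|_{g_2})
\]
by construction of $\psi$. The smooth case follows from the final clause of Theorem~\ref{theorem:main2}.

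The main technical issue I expect is that $\psi$ must lie in $C^{k,\alpha}_c(\Omega)$, but the terms $|J_1|_{g_1},\,|J_2|_{g_2},\,|J_g+V|_g$ are only Lipschitz at zeros of their respective arguments, so for $k \ge 1$ the expression above need not have the requisite Hölder regularity. My plan to handle this is to replace each occurrence of $|\cdot|$ in $\psi$ by the smooth surrogate $\sqrt{|\cdot|^2 + \delta^2}$ for a suitable small fixed $\delta > 0$, and to absorb the resulting additive $O(\delta)$ error by adding a compactly supported smooth bump $\delta\eta$ supported in $\Omega$ and equal to $1$ on $\mathrm{supp}(\chi(1-\chi))$. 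For fixed $\delta$ the smoothed $\psi_\delta$ is $C^{\infty}$ in $(g,\pi)$ and still vanishes when $(g_1,\pi_1)=(g_2,\pi_2)$, so its Hölder smallness follows from shrinking $\mathcal U$ as before, now with constants depending on $\delta$. Choosing $\delta$ first (small relative to $\epsilon/\|\eta\|_{C^{k,\alpha}}$) and then shrinking $\mathcal U$ accordingly is the principal bookkeeping task.
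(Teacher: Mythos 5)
Your reduction is correct, and it runs on the same machinery as the paper's proof of Theorem~\ref{theorem:interpolating}: the identical choice $V=\chi J_1+(1-\chi)J_2-\textup{div}_g\pi$, an application of the modified-operator deformation theorem (Theorem~\ref{theorem:main2}, resting on Theorem~\ref{thm:nl} and Lemma~\ref{lemma:almost-DEC}), and a small positive bump supported near $\operatorname{supp}\chi(1-\chi)$. The difference is in how $\psi$ is arranged. The paper takes $\psi=\chi\mu_1+(1-\chi)\mu_2-\mu+\psi_0$ with $\psi_0>0$ small, so $\psi$ contains no norm terms and no regularity issue arises; the comparison with $\chi|J_1|_{g_1}+(1-\chi)|J_2|_{g_2}$ is done afterwards by the triangle inequality (Lemma~\ref{lemma:interpolation-Phi-2}), and $\psi_0$ dominates the resulting compactly supported error $\chi(1-\chi)\bigl(|g_1-g_2|_{g_1}|J_1|_{g_1}+|g_1-g_2|_{g_2}|J_2|_{g_2}\bigr)$ after shrinking $\mathcal U$. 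You instead build the exact defect, including $|J_1|_{g_1}$, $|J_2|_{g_2}$, $|J+V|_g$, into $\psi$, which makes the target inequality an identity but creates exactly the Lipschitz-at-zeros problem you flag; your repair does work: replacing the two negative norm terms by $-\sqrt{|\cdot|^2+\delta^2}$ lowers $\psi$ by at most $\delta$, the replacement in $+|J+V|_g$ only helps, and where $\chi\in\{0,1\}$ the smoothing errors cancel exactly (there $g$ coincides with $g_1$ or $g_2$ and $J+V$ with $J_1$ or $J_2$), so the deficit is confined to $\operatorname{supp}\chi(1-\chi)$ and is absorbed by $\delta\eta$, while $\psi_\delta$ stays compactly supported. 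Two small corrections: with the bump included, $\psi_\delta$ does not vanish when $(g_1,\pi_1)=(g_2,\pi_2)$ --- it equals $\delta\eta$ --- so the smallness estimate must be split as $\delta\|\eta\|_{C^{k,\alpha}}+C(\delta)\|(g_1-g_2,\pi_1-\pi_2)\|$, which is indeed the order of choices ($\delta$ first, then $\mathcal U$) you prescribe; and the defect $\chi\mu_1+(1-\chi)\mu_2-\mu$ is in general only first order, not quadratic, in the difference, since $D\Phi$ does not commute with multiplication by $\chi$ (cf.\ Lemma~\ref{lemma:interpolation-Taylor}) --- linear smallness suffices, so this is harmless. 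In sum, the paper's choice of $\psi$ buys a cleaner argument with no smoothing parameter, while yours yields the sharp defect identity at the cost of the $\delta$-regularization and bump.
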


We will consider deformation and gluing constructions in the asymptotically flat setting, for which it is essential to make use of the modified operator $\Phi^W_{(g,\pi)}$ for $W$ not necessarily $0$. As an application, we show that for any asymptotically flat initial data set \emph{without} assuming the no-kernel condition, one can solve for a new initial data set that interpolates to a model initial data set in a way that the dominant energy condition also interpolates.  Note that at vacuum data, the modified operator $\Phi^0_{(g, \pi)}$ recovers the usual constraint map, so that in particular the adjoint operator $(D\Phi^0_{(g_{\mathbb E}, 0)})^*$ at the flat data has a kernel. In the asymptotic gluing, the deformation may occur far into the asymptotically flat end, so we need to take into account  the finite-dimensional approximate kernel from the flat data, by employing an \emph{admissible family} (see Definition~\ref{definition:admissible}).  This is carried out as in the vacuum case, but we remark that the admissible family in our setting can include not only the Kerr family, but also non-vacuum ones such as the Kerr-Newman family. 

Let $\chi$ be a smooth cutoff function that is $\chi=1$ on the Euclidean unit ball $B_1$ and $\chi=0$ outside $B_2$ with $\chi(1-\chi)$ supported on a compact subset of $B_2 \setminus \overline{B_1}$. Let $\chi_R (y) = \chi(y/R)$ be the rescaled cutoff function. 
\begin{theorem} \label{theorem:gluing}
Let $k\ge 0$. Let $(M,  g, \pi)\in C_{\mathrm{loc}}^{k+4,\alpha} \times C_{\mathrm{loc}}^{k+3,\alpha}$ be an asymptotically flat initial data set with the ADM energy-linear momentum $(E,P)$. Given $\epsilon>0$, there exists $R_0>0$ such that for any $R\geq R_0$, there is an initial data set  $(\bar{g}, \bar{\pi}) \in C^{k+2, \alpha}_{\mathrm{loc}}\times C^{k+2,\alpha}_{\mathrm{loc}}$ with  
\begin{align*}
	(\bar{g}, \bar{\pi}) &= (g, \pi) \quad \mbox{in }  B_R\\
	(\bar{g}, \bar{\pi})&=(g^{\theta}, \pi^{\theta}) \quad \mbox{in } M\setminus B_{2R}
\end{align*}
for some $(g^\theta, \pi^\theta)$ in an admissible family for $(g, \pi)$ so that $(\bar{g}, \bar{\pi})$  satisfies the inequality
\[
	\bar{\mu} - |\bar{J}|_{\bar{g}} \ge \chi_R(\mu - |J|_g) + (1-\chi_R) (\mu^{\theta} - |J^{\theta}|_{g^{\theta}})
\]
with strictly larger ADM energy $E^\theta >E$ and  
\[
	|P^\theta - P|<  E^\theta- E< \epsilon.
\]
If, in addition, $(g, \pi), (g^\theta,\pi^\theta)\in C^{\infty}$, then we can achieve $(\bar{g}, \bar{\pi})\in C^{\infty}$.

\end{theorem}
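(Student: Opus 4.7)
I would follow the asymptotic gluing framework of Corvino--Schoen and Chru\'{s}ciel--Delay, carried out with the modified constraint operator $\Phi^W$. Fix an admissible family $\{(g^{\theta},\pi^{\theta})\}$ whose parameter space includes the ten asymptotic Poincar\'{e} charges; Kerr and Kerr--Newman provide canonical examples. For $R$ large, form the raw interpolation
\[
(\widetilde g,\widetilde\pi) := \chi_R(g,\pi) + (1-\chi_R)(g^{\theta},\pi^{\theta}),
\]
which equals $(g,\pi)$ on $B_R$, equals $(g^{\theta},\pi^{\theta})$ outside $B_{2R}$, and has constraint and DEC defects supported in the annulus $A_R := B_{2R}\setminus\overline{B_R}$ that decay as $R\to\infty$ by asymptotic flatness.

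I would then seek a correction $(h,w)$ supported in $A_R$ together with a parameter shift $\delta\theta$ such that $(\bar g,\bar\pi) := (\widetilde g + h,\widetilde\pi + w)$, with the outer model replaced by $(g^{\theta+\delta\theta},\pi^{\theta+\delta\theta})$, satisfies the target interpolating DEC. Set $W := \chi_R J + (1-\chi_R)J^{\theta}$, choose $(\psi,V)$ so that prescribing $\Phi^W(\bar g,\bar\pi) = \Phi^W(\widetilde g,\widetilde\pi) + (2\psi,V)$ implies the target inequality with a small positive margin, and invoke Theorem~\ref{theorem:main2} on $\Omega = A_R$. Because the first-order term $\tfrac{1}{2}|h|_g|J+V|_g$ is absorbed into the linearization of $\Phi^W$, promoting the weak DEC to the desired inequality is automatic once the modified constraints are solved in $\mathcal B_2\times\mathcal B_2$ with $\|(h,w)\|_{\mathcal B_2\times\mathcal B_2}$ controlled by the defect.

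The main obstacle is that on an annulus deep in the asymptotic end, $(\widetilde g,\widetilde\pi)$ is close to $(g^{\theta},\pi^{\theta})$, which rescales (via $y = Rx$) to a data set close to flat, so $D\Phi^{W*}$ acquires an approximate ten-dimensional kernel generated by the translation and boost KIDs. This obstruction is absorbed by treating $\delta\theta$ as a finite-dimensional Lagrange multiplier, exactly as in the vacuum asymptotic gluing: the pairing of $\delta\theta$ against the approximate cokernel is, up to $o(1)$-errors, the ADM charge functional, which is invertible by the choice of admissible family. Uniform constants as $R\to\infty$ in the iteration behind Theorem~\ref{theorem:main2} are obtained by rescaling $A_R$ to a fixed reference annulus and using the scale-invariance of the weighted Poincar\'{e}-type estimate underlying the $\mathcal B_k$ spaces. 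The charges of the outer model are then tuned via the remaining freedom in $\theta$: one first chooses $\theta$ so that $P^{\theta} = P$ and $E^{\theta} - E$ equals a prescribed small positive number, and then absorbs the $O(R^{-\alpha})$ drift from the Lagrange-multiplier correction into that budget, yielding $|P^{\theta}-P| < E^{\theta}-E < \epsilon$. The smoothness statement is inherited from the corresponding clause of Theorem~\ref{theorem:main2}.
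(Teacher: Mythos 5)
Your overall strategy is the paper's: rescale the annulus to a fixed reference annulus, interpolate, use the modified operator with $W$ equal to the interpolated current so that Lemma~\ref{lemma:almost-DEC} converts the solved modified constraint equation into the interpolated dominant energy inequality, solve only the projected problem transverse to the ten-dimensional flat kernel, and use the admissible family to kill the resulting cokernel error. Two steps, however, need repair.

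First, the final charge-tuning step as you state it would fail. You write that after the Lagrange-multiplier correction one uses ``the remaining freedom in $\theta$'' to first set $P^\theta = P$ and prescribe $E^\theta - E$ to be a small positive number. But there is no remaining freedom: the cokernel is ten-dimensional and the admissible family supplies exactly ten parameters $(E^\theta, P^\theta, \mathcal{C}^\theta, \mathcal{J}^\theta)$, all of which are consumed by the solvability condition --- $\theta$ is determined (via a degree argument) by the requirement that the residual error lie transverse to $\zeta K$, and a priori this only yields $|E^\theta - E| = O(R^{-a}(\log R)^{1/2})$ with no control on the sign. The paper's device is to insert a deliberate positive source term $(\lambda \zeta R^{-2}(\log R)^{1/2}, 0)$ along the energy KID before running the degree argument; the solvability condition then forces $E^\theta - E = \tilde{\lambda} R^{-1}(\log R)^{1/2} + O(R^{-1})$, which is strictly positive, dominates $|P^\theta - P|$, and, being a positive addition to $\psi$, does not damage the dominant energy inequality. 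Second, you gloss the pairing against the center-of-mass and angular-momentum kernel elements: $\mathcal{C}^R$ and $\mathcal{J}^R$ may diverge (logarithmically when $q = q_0 = 1$), which is precisely why the parameter box $\Theta_2^R$ must be centered at $(\mathcal{C}^R, \mathcal{J}^R)$ and allowed to grow with $R$, and why the admissibility conditions \eqref{equation:fall-off-RT} must control the $|\theta|^2 R^{-1}$ error in the charge expansions against that growth; this balance is the content of Lemma~\ref{lemma:degree}. Relatedly, the map from $\theta$ to the cokernel error is only continuous (the solution depends continuously, not differentiably, on the data), so one needs a Brouwer-degree argument on this box rather than ``invertibility'' of the linearized charge functional.
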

\begin{remark}
If $E\ge |P|$ in the above theorem, then the ADM mass of $(g^\theta, \pi^\theta)$ is strictly larger than that of $(g,\pi)$, i.e. $\sqrt{(E^\theta)^2 - |P^\theta|^2 }> \sqrt{E^2 - |P|^2}$, by direct manipulations.  
\end{remark}
  
Note that our gluing results do not fully recover the vacuum gluing results even for vacuum initial data sets, since we do not obtain equality for the dominant energy condition. On the other hand, our gluing construction includes the feature to bring up the ADM energies by promoting the dominant energy condition. In particular, we can glue initial data for a Kerr solution (including Schwarzschild data) to initial data for a (different) Kerr solution through a region where the dominant energy condition holds, which may not be (directly) feasible by the vacuum theorem.

The paper is organized as follows.  In Section 2, we introduce basic properties of the modified constraint map and some analytical preliminaries, including an improved estimate for weighted spaces (Proposition~\ref{proposition:weighted-norm}).  In Section~3, we study localized deformation by the modified constraint map and prove Theorem~\ref{theorem:main} and Theorem~\ref{theorem:interpolating-main} (with $k=0$). In Section 4, we prove asymptotic gluing results to an admissible family, including Theorem~\ref{theorem:gluing} (with $k=0$). In Section 5 and Section 6, we prove the local surjectivity theorems for the modified operator and the projected operator, respectively; the analysis follows closely that from the vacuum case, but we include the details to emphasize the uniformity of various required estimates. 

\section{Preliminaries} \label{sec:prelim}

We use the Einstein summation convention, summing over repeated upper and lower indices, throughout, and we use the convention that a semicolon denotes a covariant derivative, while a comma denotes a partial derivative. 

\subsection{Initial data sets}
Let $n\ge 3$. An $n$-dimensional \emph{initial data set} is an $n$-dimensional manifold~$M$ equipped with a $C^2_{\mathrm{loc}}$ Riemannian metric $g$ and a $C^1_{\mathrm{loc}}$ symmetric $(2, 0)$ tensor $K$. The \emph{mass density} $\mu$ and the \emph{current density} $J$ are defined by
\begin{align*}
	\mu &= \frac{1}{2}(R_g - |K|^2_g + (\textup{tr}_g K)^2)\\
	J &= \textup{div}_g K - d(\textup{tr}_g K)
\end{align*}
where $R_g=g^{ij}R_{ij}$ is the scalar curvature of $g$, with $R_{ij}$ the components of the Ricci tensor. 
It is convenient for us to consider the momentum $(2,0)$ tensor
\[
	\pi^{ij}	 = K^{ij} - (\textup{tr}_g K) g^{ij}.
\]
Abusing terminology slightly, we  refer to $(g, \pi)$ as an \emph{initial data set} throughout this paper. The initial data set is said to satisfy the \emph{dominant energy condition} if 
\[
	\mu\ge |J|_g
\] 
holds everywhere in $M$. 

The \emph{constraint map} is defined by
\begin{align*}
  \Phi(g, \pi)=\left(R(g)+ \tfrac{1}{n-1} (\mathrm{tr}_g \pi)^2 - | \pi|^2_g,\; \textup{div}_g\pi\right) =(2\mu, J).
\end{align*}
The linearization  is given by the following formula (see, for example, \cite[Lemma 20]{Eichmair-Huang-Lee-Schoen:2016})
\begin{align} \label{equation:lin}
\begin{split}
	D\Phi|_{(g,\pi)} (h , w) &=\Big(L_g h -2 h_{ij} \pi_\ell^i \pi^{j\ell} - 2 \pi^j_k w^k_j  +\tfrac{2}{n-1}\mbox{tr}_g \pi (h_{ij} \pi^{ij} + \mbox{tr}_g w), \\
	&\qquad ( \mbox{div}_g w)^i - \tfrac{1}{2} \pi^{jk} h_{jk;\ell} g^{\ell i} + \pi^{jk} h^i_{j;k} +\tfrac{1}{2} \pi^{ij} (\mbox{tr}_g h)_{,j}\Big).
\end{split}
\end{align}
Here all indices are raised or lowered with respect to $g$.  The linearized scalar curvature operator $L_g(h)= -\Delta_g(\mbox{tr}_g h) + \mbox{div}_g \mbox{div}_g (h) - h^{ij} R_{ij}$ appears above.  The formal $L^2$ adjoint operator of $D\Phi|_{(g,\pi)}$ is given by 
 \begin{align}
 \begin{split}
 	&D\Phi|_{(g,\pi)} ^*(f, X) \\ & = \left(  L_g^*f +\big( \tfrac{2}{n-1} (\mbox{tr}_g \pi) \pi_{ij} - 2 \pi_{ik} \pi^k_j \big) f\right.\\
	& \quad+ \tfrac{1}{2} \left( g_{i\ell}g_{jm} (L_X\pi)^{\ell m} + (X^k_{;k}) \pi_{ij}- X_i \pi^k_{j;k} - X_j\pi^k_{i;k} - X_{k;m} \pi^{km} g_{ij} - X_k \pi^{km}_{;m} g_{ij} \right), \\
	&\quad \left. -\tfrac{1}{2} (\mathcal{D}_g X)^{ij} + \big(\tfrac{2}{n-1} (\mbox{tr}_g \pi ) g^{ij}- 2 \pi^{ij}  \big) f\right),\label{equation:lin-adj}
	\end{split}
 \end{align}
 where $L_g^*f = -(\Delta_g f)g + \textup{Hess}_g f - f \textup{Ric}(g)$,   $L_X \pi$ is the Lie derivative, and $ \mathcal{D}_g X = L_X g$ is the Lie derivative operator $(\mathcal D_g X)^{ij}= X^i_{\; ;\ell}g^{\ell j}+ X^j_{\; ;\ell}g^{\ell i}$.  See  \cite[Lemma 2.3]{Corvino-Schoen:2006} for $n=3$, \cite[Lemma 20]{Eichmair-Huang-Lee-Schoen:2016} for general $n$.

\subsection{Modified constraint map}\label{subsection:modified}
Let $(g, \pi)$ be an initial data set and let $W$ be a vector field. We define the \emph{modified} constraint map ${\Phi}_{(g,\pi)}^W$ by 
\begin{equation} \label{eq:def-mco} 
 \Phi^W_{(g, \pi)}(\gamma, \tau)= \Phi(\gamma, \tau)+  (0, \tfrac{1}{2} \gamma \cdot_g (\textup{div}_g \pi +W)), 
\end{equation}
where $ (\gamma \cdot_g Y)^i= g^{ij} \gamma_{jk} Y^k$ in local coordinates. The linearized operator at $(g, \pi)$ is denoted by $D{\Phi}_{(g,\pi)}^W=D{\Phi}_{(g,\pi)}^W|_{(g,\pi)} $ and has the following expression
\begin{align} \label{equation:modified}
	&D{\Phi}_{(g,\pi)}^W(h , w) =D\Phi |_{(g,\pi)} (h,w) + (0,\tfrac{1}{2} h\cdot_g (\textup{div}_g \pi +W)).
\end{align}
 The formal $L^2$ adjoint operator has the expression
\begin{align} \label{equation:modified-adjoint}
	(D{\Phi}^W_{(g,\pi)})^*(f, X) = D\Phi|^*_{(g,\pi)} (f, X) + \left( \tfrac{1}{4} [ X_i (\textup{div}_g \pi +W)_j+X_j  (\textup{div}_g \pi+W)_i], 0\right),
\end{align}
where the indices are lowered by $g$.

\subsection{Kernel of the adjoint operators}
We include regularity results for any kernel element $(f,X)\in H^2_{\mathrm{loc}}(U)\times H^1_{\mathrm{loc}}(U)$ where $U\subset M$ is an open subset, from which we can obtain higher order regularity depending on the smoothness of the initial data sets.  The analysis is similar to the scalar curvature operator in \cite[Section 2.2]{Corvino:2000}. 

\begin{proposition}   \label{proposition:kernel0}
Let $k\geq 2$, $\alpha \in (0,1)$. Let $(g,\pi) \in C_{\textup{loc}}^{k,\alpha}(U)\times C_{\textup{loc}}^{k-1, \alpha}(U)$ be an initial data set. Suppose that $(f, X)\in H^2_{\mathrm{loc}}(U)\times H^1_{\mathrm{loc}}(U)$ satisfies $D\Phi|^*_{(g,\pi)}(f, X)=0$ weakly.  Then the following holds:
\begin{enumerate}
\item $(f,X)\in C^{k,\alpha}_{\textup{loc}}(U) \times C^{k,\alpha}_{\textup{loc}}(U)$.  
\item If $(g,\pi) \in C^{k,\alpha}(\overline{U})\times C^{k-1, \alpha}(\overline{U})$, then $(f, X) \in C^{k-2,\alpha}(\overline{U}) \times C^{k-2,\alpha}(\overline{U})$.
\item  If $U$ is connected, the space of solutions $(f, X)\in H^2_{\mathrm{loc}}(U)\times H^1_{\mathrm{loc}}(U)$ to the homogeneous equation $D\Phi|^*_{(g, \pi)}(f,X)=0$ is finite-dimensional, and a non-trivial solution cannot vanish on any open subset of $U$.  
\end{enumerate}
\end{proposition}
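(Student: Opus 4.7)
My plan is to treat the adjoint equation $D\Phi|^*_{(g,\pi)}(f,X)=0$ as an overdetermined elliptic system that, after a suitable prolongation, closes as a first-order ODE system in an extended variable; all three conclusions then flow from this structure.

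For parts (1) and (2), I would decouple the system into two coupled second-order elliptic equations. The second component of~\eqref{equation:lin-adj} gives the algebraic identity $\mathcal{D}_g X = B(f)$, where $B(f) := 4\bigl(\tfrac{2}{n-1}(\mathrm{tr}_g \pi)g - 2\pi\bigr)f$. Taking its divergence and using $\mathrm{div}_g \mathcal{D}_g X = \Delta_g X + \nabla_g \mathrm{div}_g X + \mathrm{Ric}(g)\cdot X$ yields a second-order elliptic equation for $X$ whose right-hand side is first order in $f$. Similarly, taking the trace of the first component and applying $\mathrm{tr}_g L_g^* f = (1-n)\Delta_g f - f R_g$ gives a second-order elliptic equation for $f$ whose right-hand side is first order in $X$. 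Iterating standard interior Schauder estimates on this coupled pair, together with Sobolev embedding, yields the local Hölder regularity stated in (1); the uniform boundary regularity of (2) follows by applying these estimates on domains slightly interior to $\overline{U}$ and using that the coefficients built from $g,\pi,\nabla \pi$ and $\mathrm{Ric}(g)$ all extend across $\overline{U}$ with regularity at least $C^{k-2,\alpha}$.

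For (3), the key idea is to prolong the adjoint equation to a closed first-order system in the extended variable $U = (f, \nabla f, X, \nabla X)$, which has $1 + n + n + n^2 = (n+1)^2$ components. The first component of the adjoint equation, after isolating $\Delta_g f$ by its trace and substituting back, algebraically determines $\mathrm{Hess}_g f$ from $(f, X, \nabla X)$. For $\nabla^2 X$, I would derive the standard prolongation identity for the conformal-Killing-type equation $\mathcal{D}_g X = B(f)$: one forms three cyclic permutations of $\nabla_k (\mathcal{D}_g X)_{ij} = (\nabla_k B(f))_{ij}$, sums them with appropriate signs so that all occurrences of $X_{i;jk}$ except one cancel (modulo Ricci commutator terms), and then applies the commutation identity $X_{j;ik} - X_{j;ki} = R^\ell{}_{jki} X_\ell$ to extract $X_{i;jk}$ algebraically in terms of $X$, $\nabla B(f)$ (linear in $f$ and $\nabla f$), and $\mathrm{Riem}(g)$. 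This closes the system as $\nabla U = A(x) U$ for a continuous matrix $A(x)$. ODE uniqueness along arcs then implies that $U$ is determined by its value at any single point, giving both the dimension bound $\dim \leq (n+1)^2$ and unique continuation from any open subset of the connected $U$.

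The principal obstacle is the prolongation identity for $\nabla^2 X$: it is the natural inhomogeneous analogue of the classical Killing identity $X_{i;jk} = R_{jki}{}^\ell X_\ell$, but one must verify that, after the cyclic summation, no residual second derivatives of $X$ or $f$ remain on the right-hand side, so that the system genuinely closes in $U$. Once this algebraic identity is in hand, the Schauder bootstrap for (1)–(2) and the ODE-uniqueness step for (3) proceed along well-trodden lines.
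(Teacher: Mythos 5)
Your plan for parts (1) and (3) matches the paper's strategy closely: the paper likewise eliminates $\Delta_g f$ by taking the trace of the first slot, obtains the algebraic relation $\mathcal{D}_g X = B(f)$ from the second slot, forms the cyclic prolongation identity (equation~\eqref{equation:kernel2}) to extract $X_{i;jk}$, and for (1) takes trace and divergence to form a second-order elliptic system for $(f,X)$ and applies elliptic regularity. For (3), the paper organizes the prolonged data as a \emph{second-order} linear ODE $Z''=A(t)Z'+B(t)Z$ for $Z=(f,X)$ along geodesics rather than your first-order system $\nabla U = A(x)U$ for the 1-jet $U$; these are equivalent repackagings, and both yield determination by the 1-jet at a point, hence the bound $\dim K\le (n+1)^2$ and unique continuation on a connected $U$.

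The genuine gap is in your argument for part (2). Applying interior Schauder estimates ``on domains slightly interior to $\overline{U}$'' only gives estimates whose constants degenerate as those domains exhaust $U$; without boundary conditions there is no up-to-boundary Schauder theory to invoke. Merely observing that the coefficients (built from $g,\pi,\nabla\pi,\mathrm{Ric}(g)$) extend in $C^{k-2,\alpha}$ across $\overline{U}$ does not by itself extend the \emph{solution} $(f,X)$, which is what one needs to view $\overline{U}$ as interior to a larger domain. Note also that the claimed regularity $(f,X)\in C^{k-2,\alpha}(\overline{U})\times C^{k-2,\alpha}(\overline{U})$ is strictly weaker than the interior regularity $C^{k,\alpha}_{\mathrm{loc}}\times C^{k-1,\alpha}_{\mathrm{loc}}$ of (1); this loss is precisely the signature of the different mechanism the paper uses. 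The paper proves (2) by the ODE argument: extend $(g,\pi)$ to a tubular neighborhood of $\partial U$, then extend $(f,X)$ beyond $\partial U$ along radial geodesics by solving the second-order ODE system with initial 1-jet data on an interior sphere, and invoke smooth dependence of ODE solutions on initial conditions and parameters (with $\exp^{-1}$ of class $C^{k-1,\alpha}$ and $\mathrm{Ric}(g)$ of class $C^{k-2,\alpha}$ accounting for the stated regularity). You already set up the needed ODE closure for part (3); redirecting that machinery at part (2) would close the gap.
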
  

\begin{proof}
Let $(f, X) \in H^2_{\mathrm{loc}}(U)\times H^1_{\mathrm{loc}}(U)$ satisfy $D\Phi|_{(g,\pi)} ^*(f, X)=0$. Taking the trace of the first component of \eqref{equation:lin-adj} gives an equation for $\Delta_g f$. Using this equation, we can eliminate the term $\Delta_g f$ from the first component of the system $D\Phi|_{(g,\pi)} ^*(f, X)=0$ to obtain
\begin{align} 
	f_{;ij} &= A_{ij} f+ B_{ijk} X^k + C^{\ell}_{ijk} X^k_{\; ;\ell}  \label{equation:kernel1}\; ,
\end{align}	
where $A_{ij}$, $B_{ijk}$ and $C^{\ell}_{ijk}$ are functions locally computed as polynomials in the components $g_{ab}$, $g^{ab}$, $\pi_{ab}$, $\partial_c g_{ab}$, $\partial^2_{cd} g_{ab}$, and $\partial_c \pi_{ab}$.  The other components in $D\Phi|_{(g,\pi)} ^*(f, X)=0$ constitute the following system:
\begin{align}
	\tfrac{1}{2} ( \mathcal{D}_g X)^{ij}&=\big(\tfrac{2}{n-1} (\mbox{tr}_g \pi ) g^{ij}- 2 \pi^{ij}  \big) f .	\label{eq:ker-X}
\end{align}
Thus $\mathcal D_g X \in H^1_{\mathrm{loc}}(U)$, and by commuting the order of derivatives and using the Ricci formula, we have
\begin{align} \label{equation:kernel2}	
\begin{split}
\Big[ (\mathcal D_g X)_{ij;k} & +  (\mathcal D_g X)_{ki;j}- (\mathcal D_g X)_{jk;i})\Big]  \\
 &=( X_{i;jk}+ X_{i;kj} )+ (X_{j;ik}-X_{j;ki}) + (X_{k;ij}-X_{k;ji})\\
 &= 2  X_{i;jk}+ (R^{\ell}_{kji} + R^{\ell}_{ikj} + R^{\ell}_{ijk}) X_{\ell},
 \end{split}
\end{align}
where the sign convention for the Riemannian curvature tensor is so that the Ricci tensor $R_{jk} = R^{\ell}_{\ell jk}$. Along with (\ref{eq:ker-X}), this implies that $X\in H^2_{\mathrm{loc}}(U)$.  
By taking the trace of the first component of $D\Phi|^*_{(g,\pi)}(f,X)=0$ and the divergence of the other components,  $(f, X)$ satisfies a second order elliptic linear system (see \cite[Proposition 3.1]{Corvino-Schoen:2006} for $n=3$ and \cite[Lemma 20]{Eichmair-Huang-Lee-Schoen:2016} for general $n$). The desired interior regularity $(f, X)\in  C^{k,\alpha}_{\mathrm{loc}}(U) \times C^{k,\alpha}_{\mathrm{loc}}(U)$ follows from elliptic regularity.  

 Let $\gamma = \gamma(t)$ be a geodesic. Because $\nabla_{\gamma'}\gamma'=0$, we have 
\begin{align}\label{equation:geodesic}
\begin{split}
(f\circ \gamma)''(t)&= f_{; ij}|_{\gamma(t)} \dot \gamma^i(t) \dot \gamma^j(t)\\
	\left(\frac{D^2 X(\gamma(t))}{dt^2}\right)^k &= (\nabla_{\dot \gamma}\nabla_{\dot\gamma} X)^k = X^k_{\; ; ij}|_{\gamma(t)} \dot\gamma^i(t) \dot \gamma^j(t). 
	\end{split}
\end{align}
We have the formula for $f_{;ij}$ in \eqref{equation:kernel1}. The term $X^k_{;ij}$ is obtained from \eqref{equation:kernel2}:
\begin{align} \label{equation:hessianX}
\begin{split}
	X^i_{;jk} &=  \frac{1}{2}g^{i\ell} \left[ (\mathcal D_g X)_{\ell j;k}+ (\mathcal D_g X)_{k\ell;j}- (\mathcal D_g X)_{jk;\ell})\right]  -\frac{1}{2} g^{i\ell}\widetilde{C}^{p}_{\ell jk}  X_{p} \\
	&= \widetilde{A}^i_{jk} f + \widetilde{B}^{i\ell}_{jk} f_{; \ell} -\frac{1}{2} g^{i\ell} \widetilde{C}^{p}_{\ell jk}X_{p},
\end{split}
\end{align}
where $\widetilde{A}^i_{jk}, \widetilde{B}^{i\ell}_{jk}, \widetilde{C}^{\ell}_{ijk}$ are locally computed as polynomials in $g_{ab}$, $g^{ab}$, $\pi_{ab}$, $\partial_c g_{ab}$, $\partial^2_{cd} g_{ab}$, and $\partial_c \pi_{ab}$.  For convenience, let $\{ E_i(t) , i = 1, \dots, n\}$ be a parallel orthonormal frame field along~$\gamma$.   Let $X(\gamma(t))= X^i (t) E_i(t)$, and $X^0(t)= f(\gamma(t))$.  Let   $Z(t)$ be the column vector with the components $X^0(t), X^1(t), \dots, X^n(t)$. Then by \eqref{equation:geodesic}, \eqref{equation:kernel1}, and \eqref{equation:hessianX}, the vector satisfies a second-order linear system of ordinary differential equations along any geodesic $\gamma$ in $U$:
\[
	Z''(t)= A(t) Z'(t) + B(t) Z(t),
\]
where $A(t)$ and $B(t)$ are $(n+1)\times (n+1)$ matrix functions whose components are computed locally as polynomials in $g_{ij}$, $g^{ij}$, $\pi_{ij}$, $\partial_k g_{ij}$, $\partial^2_{k\ell} g_{ij}$, and $\partial_k \pi_{ij}$, evaluated along $\gamma$. If $U$ is connected, then $(f,X)$  is determined by its 1-jet at a point in $U$, and thus the dimension of the kernel is at most $(n+1)^2$, and any non-trivial element in the kernel cannot vanish on an open subset.

Boundary regularity for $(f,X)$ follows from the ODE argument. By extending the initial data set $(g, \pi)$ in a neighborhood of the boundary, we may assume that $\overline{U}$ is in a manifold interior. Let $q \in \partial U$ and let $2r>0$ be the injectivity radius at $q$.  For a point $p\in U$ with $d(p, q)<r$, we can extend $(f,X)$ on $ B_r(p)\setminus U$ along the unique geodesic in $B_r(p)$ starting at $p$ with initial velocity $v= \exp^{-1}(x)$ that reaches $x \in B_r(p)\setminus U$. By the smooth dependence of solutions of ODE on parameters (note that $\exp^{-1}$ is $C^{k-1,\alpha}$ and the ODE system involves $\mathrm{Ric}(g)$, which accounts in part for the ensuing regularity), we have that $(f,X)\in C^{k-2,\alpha}_{\mathrm{loc}}(B_r(p))\times C^{k-2,\alpha}_{\mathrm{loc}}(B_r(p))$.

\end{proof}

The above proposition also applies to the modified constraint operator because its adjoint operator $(D\Phi^W_{(g,\pi)})^*$ differs from $D\Phi|^*_{(g,\pi)}$ only by  a zero-th order term. Essentially the same proof implies the following statement.

\begin{proposition} \label{proposition:kernel}
Let $k\geq 2$, $\alpha\in (0,1)$. Let $(g, \pi) \in C_{\mathrm{loc}}^{k,\alpha}(U)\times C_{\mathrm{loc}}^{k-1,\alpha}(U)$ be an initial data set. Let $W\in C^{k-2,\alpha}_{\mathrm{loc}}(U)$ be a vector field.  Suppose that $(f, X)\in H^2_{\mathrm{loc}}(U)\times H^1_{\mathrm{loc}}(U)$ satisfies $(D\Phi^W_{(g,\pi)})^*(f,X)=0$ weakly.  Then the following holds:
\begin{enumerate}
\item $(f, X) \in C_{\mathrm{loc}}^{k,\alpha}(U) \times C_{\mathrm{loc}}^{k,\alpha}(U)$.
\item If  $(g, \pi), (\gamma, \tau) \in C^{k,\alpha}(\overline{U})\times C^{k-1,\alpha}(\overline{U})$, $W\in C^{k-2, \alpha}(\overline{U})$, then $(f, X) \in C^{k-2,\alpha}(\overline{U}) \times C^{k-2,\alpha}(\overline{U})$.  
\item If  $U$ is connected, the space of solutions $(f, X)\in H^2_{\mathrm{loc}}(U)\times H^1_{\mathrm{loc}}(U)$ to the homogeneous equation $(D\Phi^W_{(g,\pi)})^*(f,X)=0$ is finite-dimensional, and a non-trivial solution cannot vanish on any open subset of $U$. 
\end{enumerate}
\end{proposition}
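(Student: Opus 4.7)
The plan is to adapt the proof of the preceding proposition almost verbatim, exploiting the observation recorded just before the statement: the adjoint $(D\Phi^W_{(g,\pi)})^*$ differs from $D\Phi|^*_{(g,\pi)}$ only through a zero-th order symmetric-tensor correction $\tfrac{1}{4}[X_i(\mathrm{div}_g\pi+W)_j + X_j(\mathrm{div}_g\pi+W)_i]$ in the first slot, with no correction at all in the vector slot. Under the stated hypotheses, $\mathrm{div}_g\pi + W \in C^{k-2,\alpha}_{\mathrm{loc}}(U)$, so this correction contaminates only the lower-order coefficients of the derived systems and does not alter their principal symbol.

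Accordingly, I would first take the trace of the first component of $(D\Phi^W_{(g,\pi)})^*(f,X)=0$, solve for $\Delta_g f$, and substitute back to obtain, in analogy with \eqref{equation:kernel1},
\[
f_{;ij} = \widetilde{A}_{ij}f + \widetilde{B}_{ijk}X^k + \widetilde{C}^{\ell}_{ijk} X^k_{\;;\ell},
\]
where the coefficients $\widetilde{A}, \widetilde{B}, \widetilde{C}$ are again locally polynomial in the components of $g$, $g^{-1}$, $\pi$, $\partial g$, $\partial^2 g$, $\partial \pi$, and now additionally in the components of $W$. The second component of the system is identical to \eqref{eq:ker-X} because the modification touches only the first slot; consequently the commutator identity \eqref{equation:kernel2} and the resulting formula \eqref{equation:hessianX} for $X^i_{;jk}$ carry over unchanged, and $X \in H^2_{\mathrm{loc}}(U)$ follows as before. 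Taking the trace of the first equation and the divergence of the second, exactly as in the unmodified case, yields a second-order elliptic linear system for $(f,X)$ whose principal part is unchanged, so standard elliptic regularity gives (1).

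For (3), I would introduce a parallel orthonormal frame along a geodesic $\gamma \subset U$ and assemble the column vector $Z(t) = (f(\gamma(t)), X^1(t), \dots, X^n(t))^T$, which by the formulas above satisfies a linear ODE system
\[
Z''(t) = A(t)\, Z'(t) + B(t)\, Z(t)
\]
with $C^{k-2,\alpha}$ matrix coefficients; the only change from the unmodified case is that $A$ and $B$ now additionally depend polynomially on components of $W$. Uniqueness for linear ODE then gives the bound $(n+1)^2$ on the dimension of the kernel and the unique continuation property. For the boundary regularity (2), I would first extend $(g,\pi)$ and $W$ smoothly to a tubular neighborhood of $\overline{U}$, then for each $p \in U$ with $d(p,\partial U) < r$ (the injectivity radius) extend $(f,X)$ across $\partial U$ by solving the ODE along geodesics emanating from $p$ with initial velocities parametrized by $\exp_p^{-1}$; smooth dependence of ODE solutions on initial data and on parameters (which enter through $\mathrm{Ric}(g)$, $\partial\pi$, and $W$, and so are $C^{k-2,\alpha}$) delivers $(f,X) \in C^{k-2,\alpha}(\overline{U}) \times C^{k-2,\alpha}(\overline{U})$.

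There is no substantive analytic obstacle; the only work is bookkeeping, verifying that the $W$-correction enters only as a coefficient of terms of order zero in $(f,X)$ and so neither disturbs the ellipticity used for (1) nor destroys the ODE reduction used for (2) and (3). Both facts are immediate from the explicit form \eqref{equation:modified-adjoint}.
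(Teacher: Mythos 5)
Your proposal is correct and follows exactly the route the paper takes: the paper proves this proposition by noting that $(D\Phi^W_{(g,\pi)})^*$ differs from $D\Phi|^*_{(g,\pi)}$ only by a zeroth-order term (entering only the first slot), so the elliptic-regularity, ODE-along-geodesics, and boundary-extension arguments of the unmodified case go through verbatim with $W$ appearing only in the lower-order coefficients. Your bookkeeping of where $W$ enters (the $f_{;ij}$ equation but not the $\mathcal{D}_g X$ equation) matches the paper's intent.
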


\begin{definition}
The \emph {kernel}  of $(D\Phi^W_{(g,\pi)})^*$ on $U$ is the set $K\subset H^2_{\mathrm{loc}}(U)\times H^1_{\mathrm{loc}}(U)$ which consists of those $(f,X)$ that satisfy $(D\Phi^W_{(g,\pi)})^*(f, X)=0$ weakly. The kernel of $(D\Phi^W_{(g,\pi)})^*$ is said to be \emph{trivial} on $U$ if $K=\{0 \}$. 
\end{definition}

\begin{example}\label{example:linearization}
Consider the flat data  $( g_{\mathbb{E}}, 0)$ on an open connected subset of $\mathbb{R}^3$ and $W=0$. Then the modified operator is the usual constraint map, and its formal $L^2$ adjoint operator $D\Phi|_{(g_{\mathbb{E}}, 0)}^*(f, X) = (-(\Delta_{g_{\mathbb{E}}} f )g_{\mathbb{E}} + \textup{Hess}_{g_{\mathbb{E}}}f, -\frac{1}{2} \mathcal{D}_{g_{\mathbb{E}}}X)$ has a ten-dimensional kernel $K = K_0 \oplus K_1$, where
\begin{align*}
	&K_0 = \mbox{span} \{  1, x^1, x^2, x^3\}\\
	&K_1 = \mbox{span} \left\{\frac{\partial }{\partial x^1}, \frac{\partial }{\partial x^2}, \frac{\partial }{\partial x^3}, x\times \frac{\partial}{\partial x^1} ,  x\times \frac{\partial}{\partial x^2},  x\times \frac{\partial}{\partial x^3}\right \}.
\end{align*}
\end{example}

\subsection{Weighted Sobolev spaces} \label{subsection:weighted}

Let $d_{g}(x)= d_g(x,\partial \Omega)$ be the distance to the boundary with respect to $g$; the boundary is assumed to be a smooth hypersurface, so near $\partial \Omega$, $d$ is as regular as $g$.  We will work with  uniformly equivalent metrics in a bounded open set $\mathcal{U}_0$ in the space of $C^m(\overline{\Omega})$ $(m\ge 2)$  Riemannian metrics such that $\| d_g \|_{C^m}$ is uniformly bounded near $\partial \Omega$. We will establish a framework uniformly across~$\mathcal{U}_0$ in what follows. 

 Let $V_\Omega= \{ x\in \Omega: d_{g}(x)<r_0\mbox{ for some }g\in \mathcal{U}_0\}$ be a thin regular collar neighborhood of $\partial \Omega$.  There is $r_0\in (0,\tfrac{1}{2})$ sufficiently small so that a neighborhood of $V_\Omega$ is foliated by smooth (as regular as the metric $g$ is) level sets of $d_g$ and that $d_g(x)\le \frac{1}{2} $ for all $x\in V_\Omega$ and $g\in \mathcal{U}_0$. 

 Let $0<r_1<r_0$ be fixed. Define a smooth positive monotone function $\tilde \rho: (0, \infty) \rightarrow \mathbb R$  such that  $\tilde \rho(t)= e^{-1/t}$ for $t\in (0, r_1)$ and $\tilde\rho(t)= 1$ for $t> r_0$.
 
 \begin{notation}[Exponential weight function] 
For $N>0$, let $\rho_g$ be the positive function on $\Omega$ defined by
\[
	\rho_g(x)= ({\tilde\rho}\circ d_g(x))^N.
\] 
\end{notation}
We will eventually fix $N$ to be a large number, chosen for some $k\in \mathbb Z_+$ so that 
\begin{align} \label{equation:N}
	N > \max\{4(4k-3), 4C_0\}
\end{align}
where $C_0>0 $ is the constant appearing in \eqref{equation:X}. While the discussion in this section holds for all $k\le m$, in this paper we only apply \eqref{equation:N}  for $k\le 2$ in the variational argument in Section~\ref{section:modified}.

Let $L^2_{\rho_g}(\Omega, g)$ be the set of functions or tensor fields $u$ such that $|u|\rho_g^{\frac{1}{2}}\in L^2(\Omega, g)$ with the norm defined by
\[
\| u \|_{L^2_{\rho_g}(\Omega, g)} = \left( \int_{\Omega} |u|^2\rho_g\, d\mu_g \right)^{\frac{1}{2} }.
\] 
The pairing $\langle u, v\rangle_{L^2_{\rho_g}(\Omega, g)} = \langle u\rho_g^{\frac{1}{2}}, v\rho_g^{\frac{1}{2}}\rangle_{L^2(\Omega, g)}$ makes $L^2_{\rho_g}(\Omega, g)$ a Hilbert space. Let $H^k_{\rho_g}(\Omega, g)$ be the Hilbert space of functions or tensor fields whose covariant derivatives up to and including order $k$ with respect to $g$ are also in $L^2_{\rho_g}(\Omega, g)$ with the norm defined as
\[
	\| u \|^2_{H^k_{\rho_g}(\Omega, g)} = \sum_{j=0}^k \| \nabla^j _g u \|^2_{L^2_{\rho_g}(\Omega, g)} = \sum_{j=0}^k  \int_{\Omega } | \nabla_g^{j} u |^2 \rho_g \, d\mu_g.
\]
By \cite[Lemma 2.1]{Corvino-Schoen:2006}, $H^k(\Omega, g)$ (and hence $C^{\infty}(\overline{\Omega})$) is dense in $H^k_{\rho_g}(\Omega, g)$.  We note that  the tensor fields in $H^k(\Omega,g)$ are the same across $g$, and while we can further shrink $\mathcal U_0$ so that the norms are equivalent for $g\in \mathcal U_0$ as well, the weighted norms $H^k_{\rho_g}(\Omega, g)$ may not be equivalent as $g$ varies in $\mathcal U_0$. (Note that they would be equivalent if $\rho$ were a power weight function, i.e. $\tilde \rho(t)= t$ near the boundary, and such a weight can be used for simplicity to establish the finite regularity results \cite{Corvino:2000, Corvino-Schoen:2006}.)  We often suppress $\Omega$ and $g$ from the notation when it is clear from the context. 

It is useful to compare the norms $\| u \|_{H^k_{\rho_g}}$ and  $\| u \rho_g^{\frac{1}{2}}\|_{H^k}$. We shall show that  $u \in H^k_{\rho_g}$ implies 
\[
\| u \rho_g^{\frac{1}{2}} \|_{H^k} \le C \| u \|_{H^k_{\rho_g}},
\] 
where the constant $C$ is uniform in~$\mathcal U_0$.   

We begin with basic lemmas for which we work at a fixed metric $g\in C^m(\overline{\Omega})$ ($m\geq 2$) and write $d = d_g$ and $\rho=\rho_g$.  We note explicit dependence of the constants in the estimates so that the estimates will hold uniformly across $\mathcal U_0$.  Let $k\in \{ 1, 2, \ldots, m\}$.

\begin{lemma}\label{lemma:rho}
Let $N\ge 1$. There is a constant $C>0$ independent of $N$ (depending only on $k$, $\| \log \tilde\rho\|_{C^{k}([r_1, r_0])}$ and $\| d \|_{C^k(\overline{V_\Omega})}$) such that
\[
	|\nabla^k (\rho^{\frac{1}{2}}) | \le CN^k  \rho^{\frac{1}{2}}d^{-2k}.
\]
holds on $\Omega$.
\end{lemma}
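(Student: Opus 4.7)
The plan is to rewrite $\rho^{1/2}=e^{\phi}$ with $\phi:=\tfrac{N}{2}\log(\tilde\rho\circ d)$, and then apply Fa\`a di Bruno. Concretely, one has
\[
\nabla^{k}\rho^{1/2}=\rho^{1/2}\sum c_{m_1,\dots,m_k}\,(\nabla\phi)^{\otimes m_1}\otimes\cdots\otimes(\nabla^{k}\phi)^{\otimes m_k},
\]
the sum ranging over tuples $(m_1,\dots,m_k)$ of non-negative integers with $\sum_{j}jm_{j}=k$. So the problem reduces to bounding $|\nabla^{j}\phi|$ for $1\le j\le k$ with constants depending on the quantities allowed.

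The main step is to show that, uniformly in $N\ge 1$,
\[
|\nabla^{j}\phi|\le C_{j}\,N\,d^{-j-1}\qquad(1\le j\le k)
\]
on all of $\Omega$ (interpreted as $0$ where $\phi\equiv 0$), with $C_{j}$ depending only on $j$, $\|\log\tilde\rho\|_{C^{j}([r_1,r_0])}$ and $\|d\|_{C^{j}(\overline{V_\Omega})}$. I would split into the three natural regions determined by the definition of $\tilde\rho$. On $\{d>r_{0}\}$ the function $\tilde\rho$ is constant, so $\phi\equiv 0$ and there is nothing to do. On $\{r_{1}\le d\le r_{0}\}$, $\log\tilde\rho$ is $C^{k}$-bounded in $d$ and $d$ is bounded below by $r_{1}$, so the chain rule immediately gives $|\nabla^{j}\phi|\le C_{j}N\le C_{j}'Nd^{-j-1}$. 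The only region where care is needed is $\{d<r_{1}\}$, where $\phi=-N/(2d)$; a short induction (or another Fa\`a di Bruno applied to $t\mapsto 1/t$ composed with $d$) yields $|\nabla^{j}(1/d)|\le C_{j}'' d^{-j-1}$ with $C_{j}''$ depending only on $\|d\|_{C^{j}(\overline{V_\Omega})}$, whence the claim.

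Substituting the pointwise bound on $|\nabla^{j}\phi|$ into the Fa\`a di Bruno expansion, each term satisfies
\[
\prod_{j}|\nabla^{j}\phi|^{m_{j}}\le C\,N^{\sum_{j}m_{j}}\,d^{-\sum_{j}(j+1)m_{j}}=C\,N^{\sum_{j}m_{j}}\,d^{-k-\sum_{j}m_{j}}.
\]
The crucial observation is $\sum_{j}m_{j}\le\sum_{j}jm_{j}=k$, since each $j\ge 1$. Together with $N\ge 1$ this gives $N^{\sum m_{j}}\le N^{k}$, and since $d\le r_{0}<1$ on the support of $\phi$, one also has $d^{-k-\sum m_{j}}\le d^{-2k}$. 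Summing the finitely many terms in Fa\`a di Bruno produces the desired estimate $|\nabla^{k}\rho^{1/2}|\le CN^{k}\rho^{1/2}d^{-2k}$, with $C$ independent of $N$ and depending only on the permitted data.

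The only non-routine point is the bookkeeping for the exponent of $d$: one must exploit $\sum m_{j}\le k$ to absorb the apparent loss of powers in the chain-rule expansion into the uniform factor $d^{-2k}$, rather than something like $d^{-k(k+1)}$ that a naive estimate would produce. Everything else is standard differentiation under boundedness hypotheses.
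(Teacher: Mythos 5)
Your proof is correct and is in essence the same argument as the paper's: the paper computes $\nabla(\rho^{1/2})$ explicitly (which in the region $d<r_1$ is exactly your $\rho^{1/2}\nabla\phi$ with $\phi=-N/(2d)$) and asserts the case $k>1$ by induction, and your Fa\`a di Bruno expansion of $e^{\phi}$ together with the bounds $|\nabla^{j}\phi|\le C_jNd^{-j-1}$ is precisely a closed-form version of that induction. The bookkeeping you highlight --- $\sum_j m_j\le\sum_j jm_j=k$, $N\ge 1$, and $d\le 1$ on the support of $\nabla\rho^{1/2}$, so the loss is at most $d^{-2k}$ and $N^{k}$ --- is exactly the content the paper leaves implicit, so the argument is complete and the constant depends only on the permitted quantities.
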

\begin{proof}
By direct computation, 
\begin{align}\label{equation:rho-boundary}
	\nabla (\rho^{\frac{1}{2}}) (x) =\left\{ \begin{array}{ll}\frac{1}{2} N   \rho^{\frac{1}{2}} d^{-2} \nabla d & \mbox{if $0<d(x)\le r_1$} \\
	 \frac{1}{2}N   \rho^{\frac{1}{2}} d^{-2}\left(d^2(\log\tilde{\rho})'(d)\right)\nabla d &  \mbox{if $r_1\le d(x)\le r_0$}\\
	 0 \qquad & \mbox{if $d(x) \ge r_0$}\end{array}\right..
\end{align}
This implies the estimate for $k=1$. The estimate for $k>1$ follows from induction. 
\end{proof}

\begin{lemma}\label{lemma:rho2} 
Let $N\ge 1$. There is $r_2\in (0,r_1)$  independent of $N$ (depending only on $\| \Delta d\|_{C^0(\overline{V_\Omega})}$)  such that if $0< d(x)\le  r_2$, we have
 \begin{equation} \label{eq:lap-rho} 
\frac{1}{2} N^2 d^{-4} \rho \le \Delta \rho.
\end{equation}
\end{lemma}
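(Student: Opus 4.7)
The plan is to exploit the explicit form $\rho = e^{-N/d}$ that holds on $\{0<d\le r_1\}$, carry out the direct computation of $\Delta \rho$ there, and observe that the leading $N^2 d^{-4}$ term swamps the other contributions once $d$ is small enough. Since the boundary is smooth, $d=d_g$ is smooth in $V_\Omega$ with $|\nabla d|\equiv 1$, and $\Delta d$ is bounded on $\overline{V_\Omega}$.

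First I would set $f = -N/d$ on $\{0<d\le r_1\}$ so that $\rho = e^f$. Using $\Delta(e^f) = e^f(\Delta f + |\nabla f|^2)$ together with
\[
\nabla f = N d^{-2}\,\nabla d, \qquad |\nabla f|^2 = N^2 d^{-4}, \qquad \Delta f = -2N d^{-3}|\nabla d|^2 + N d^{-2}\Delta d,
\]
and $|\nabla d|^2 = 1$, I obtain the explicit formula
\[
\Delta \rho = \rho\,\bigl[\,N^2 d^{-4} - 2N d^{-3} + N d^{-2}\Delta d\,\bigr]
\]
on $\{0<d\le r_1\}$.

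Next, the desired inequality $\tfrac12 N^2 d^{-4}\rho \le \Delta\rho$ is equivalent, after dividing by the positive quantity $N d^{-2}\rho$, to
\[
\tfrac12 N d^{-2} \ge 2 d^{-1} - \Delta d.
\]
Using $-\Delta d \le \|\Delta d\|_{C^0(\overline{V_\Omega})}$ and $N\ge 1$, it suffices to verify
\[
\tfrac12 d^{-2} \ge 2 d^{-1} + \|\Delta d\|_{C^0(\overline{V_\Omega})},
\]
or equivalently $2d + \|\Delta d\|_{C^0(\overline{V_\Omega})}\,d^2 \le \tfrac12$. This last condition holds uniformly for all $d\le r_2$ provided we choose
\[
r_2 := \min\!\Bigl\{\,r_1,\ \tfrac{1}{8},\ \bigl(1+\|\Delta d\|_{C^0(\overline{V_\Omega})}\bigr)^{-1/2}\Bigr\},
\]
which depends only on $\|\Delta d\|_{C^0(\overline{V_\Omega})}$ and is independent of $N$, as required.

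There is no real obstacle here beyond bookkeeping: the content of the lemma is precisely that for $N\ge 1$ the $N^2$-term in $\Delta \rho$ is quadratically stronger in $d^{-1}$ than the correction terms, so a single smallness condition on $d$ (independent of $N$) is enough to absorb them into half of the leading term. The only mild point worth flagging is that the threshold $r_2$ is extracted from $\|\Delta d\|_{C^0(\overline{V_\Omega})}$ alone, which is the uniformity asserted in the statement and which is compatible with the uniform control on $d_g$ across $\mathcal U_0$ described just before the lemma.
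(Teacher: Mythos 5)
Your strategy and computation are the same as the paper's: the authors likewise compute $\Delta\rho = N^2 d^{-4}\rho\,(1+N^{-1}d^2\Delta d - 2N^{-1}d)$ on $\{0<d<r_1\}$ (identical to your $\Delta\rho=\rho\,[N^2d^{-4}-2Nd^{-3}+Nd^{-2}\Delta d]$) and then pick $r_2$ so that $1-r_2^2\|\Delta d\|_{C^0(\overline{V_\Omega})}-2r_2\ge \tfrac12$, which is exactly your reduced condition $2d+\|\Delta d\|_{C^0(\overline{V_\Omega})}\,d^2\le\tfrac12$ after you divide by $Nd^{-2}\rho$ and use $N\ge 1$. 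The only flaw is your explicit choice of $r_2$: writing $K=\|\Delta d\|_{C^0(\overline{V_\Omega})}$, the constraint $d\le (1+K)^{-1/2}$ only gives $Kd^2\le K/(1+K)<1$, not $\le\tfrac14$, so the sum $2d+Kd^2$ can exceed $\tfrac12$ (e.g. $K=100$ gives $r_2\approx 0.0995$ and $2r_2+Kr_2^2\approx 1.19$), i.e. the stated $r_2$ does not satisfy the very inequality you reduced to. This is a harmless arithmetic slip rather than a gap in the argument: any $r_2$ with $2r_2+Kr_2^2\le\tfrac12$ works, for instance $r_2=\min\bigl\{r_1,\ \tfrac18,\ \tfrac12(1+K)^{-1/2}\bigr\}$, and with that correction your proof is exactly the paper's.
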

\begin{proof}
By direct computation, for  $0<d(x) < r_1$, 
\[
\Delta \rho=N^2 d^{-4} \rho (1+ N^{-1} d^2 \Delta d - 2N^{-1}d).
\]
If $r_2$ is sufficiently small, for $0< d(x) \le r_2$, 
\[
	1+ N^{-1} d^2 \Delta d - 2N^{-1}d \ge 1 - r_2^2\| \Delta d\|_{C^0(\overline{V_\Omega})} - 2r_2 \ge  \frac{1}{2}.
\]
\end{proof}

In the next lemma we use a cutoff function $\xi=\xi_g$ to handle estimates near the boundary. Let $0\le \xi_g\le 1$ be smooth with $\xi_g=0$ on the compact subset $\{ x\in \Omega: d_g(x) \ge r_2\}$ and $\xi_g= 1$ in a collar neighborhood $\{ x\in \Omega: d_g(x) \le r_2/2\}$ of $\partial \Omega$ with $|\nabla \xi|_g \le 4/r_2$.  In the following lemma, $u$ can be a function or a tensor field.

\begin{lemma} \label{lemma:ibp}
 For $j \in \{ 1, 2, \ldots, m\}$, and for $u\in C^j(\overline{\Omega})$, if $N \ge 4(4j-3)$, then 
\begin{align*}
	\int_\Omega \xi |u|^2 d^{-4j} \rho \, d\mu_g & \le \left( \frac{4}{ N}\right)^j \int_\Omega \xi | \nabla^j u|^2 \rho\, d\mu_g \\
	&\qquad  + \sum_{i=1}^j \left(\frac{4}{N} \right)^{j+1-i}\sup_\Omega (|\nabla \xi| d^{-4i+2}) \| \nabla^{j-i} u \|_{L_\rho^2(\Omega)}^2.
\end{align*}
\end{lemma}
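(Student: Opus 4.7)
I would prove the lemma by induction on $j\ge 1$, using Lemma~\ref{lemma:rho2} together with integration by parts against the weight $\rho$. The crucial property is that both $\rho=(\tilde\rho\circ d)^N$ and all of its derivatives decay like $e^{-N/d}$ as $d\to 0^+$, so integration by parts on $\Omega$ produces no boundary contributions even though $\xi$ and $u$ need not vanish on $\partial\Omega$. The two main inputs are the pointwise Laplacian lower bound $d^{-4}\rho\le\tfrac{2}{N^2}\Delta\rho$ on $\operatorname{supp}(\xi)$ (which lies in $\{d\le r_2\}\subset\{d<r_1\}$) and the explicit formula $\nabla\rho=Nd^{-2}\rho\,\nabla d$ deduced from \eqref{equation:rho-boundary}, both valid on $\operatorname{supp}(\xi)$. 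When $u$ is tensor-valued, as it must be once the induction step is applied with $\nabla u$ in place of $u$, the only extra ingredient needed is the identity $\nabla|u|^2_g=2\langle u,\nabla u\rangle_g$, which holds because $g$ is parallel.

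For the base case $j=1$, I start from
\[
\int_\Omega\xi u^2 d^{-4}\rho\,d\mu_g \le \tfrac{2}{N^2}\int_\Omega\xi u^2\,\Delta\rho\,d\mu_g = -\tfrac{2}{N^2}\int_\Omega\nabla(\xi u^2)\cdot\nabla\rho\,d\mu_g,
\]
expand $\nabla(\xi u^2)=u^2\nabla\xi+2\xi u\,\nabla u$, substitute $\nabla\rho=Nd^{-2}\rho\,\nabla d$, and bound the two resulting pieces by $\tfrac{2}{N}\sup_\Omega(|\nabla\xi|d^{-2})\|u\|^2_{L^2_\rho}$ and, via weighted Cauchy--Schwarz, by $\tfrac{4}{N}A_1^{1/2}B_0^{1/2}$, where $A_1=\int\xi u^2 d^{-4}\rho$ and $B_0=\int\xi|\nabla u|^2\rho$. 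An AM--GM split absorbs half of $A_1$ into the left side and yields $A_1\le \tfrac{4}{N}\sup(|\nabla\xi|d^{-2})\|u\|^2_{L^2_\rho}+\tfrac{16}{N^2}B_0$; for $N\ge 4$ the constant $\tfrac{16}{N^2}$ is dominated by $\tfrac{4}{N}$, matching the target.

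For the inductive step $j-1\Rightarrow j$, I run the same argument after weighting the cutoff, starting from
\[
A_j := \int_\Omega\xi u^2 d^{-4j}\rho\,d\mu_g \le \tfrac{2}{N^2}\int_\Omega\xi u^2 d^{-4(j-1)}\Delta\rho\,d\mu_g = -\tfrac{2}{N^2}\int_\Omega\nabla\bigl(\xi u^2 d^{-4(j-1)}\bigr)\cdot\nabla\rho\,d\mu_g.
\]
Expanding the gradient now produces three contributions: a $\nabla\xi$-term bounded by $\tfrac{2}{N}\sup(|\nabla\xi|d^{-4j+2})\|u\|^2_{L^2_\rho}$; a $\nabla u$-term bounded by Cauchy--Schwarz by $\tfrac{4}{N}A_j^{1/2}B_{j-1}^{1/2}$ with $B_{j-1}=\int\xi|\nabla u|^2 d^{-4(j-1)}\rho$; and a new term from $\nabla d^{-4(j-1)}=-4(j-1)d^{-4j+3}\nabla d$ that reduces to $\tfrac{8(j-1)}{N}\int\xi u^2 d^{-4j+1}\rho\le\tfrac{4(j-1)}{N}A_j$ after using $d\le r_2\le\tfrac12$ on $\operatorname{supp}(\xi)$. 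The hypothesis $N\ge 4(4j-3)$ gives $\tfrac{4(j-1)}{N}\le\tfrac{j-1}{4j-3}\le\tfrac14$, allowing this last term to be absorbed, and a further AM--GM with parameter $\tfrac14$ produces
\[
A_j\le\tfrac{4}{N}\sup(|\nabla\xi|d^{-4j+2})\|u\|^2_{L^2_\rho}+\tfrac{32}{N^2}B_{j-1}.
\]
Applying the induction hypothesis to the tensor field $\nabla u$ inside $B_{j-1}$ and using $\tfrac{32}{N^2}\bigl(\tfrac{4}{N}\bigr)^{j-1}=2\bigl(\tfrac{4}{N}\bigr)^{j+1}\le\bigl(\tfrac{4}{N}\bigr)^j$ (i.e.\ $\tfrac{8}{N}\le 1$, satisfied since $N\ge 4(4j-3)\ge 20$ when $j\ge 2$), together with the same scaling for the summands and a re-index so that the surviving first contribution matches the $i=j$ term in the target, completes the induction.

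The main obstacle is the bookkeeping to ensure the constants multiply to the claimed $\bigl(\tfrac{4}{N}\bigr)^{j+1-i}$ rather than merely $O(N^{-j})$. The threshold $N\ge 4(4j-3)$ in~\eqref{equation:N} is essentially tight: it is just strong enough to absorb the "extra" term produced by differentiating $d^{-4(j-1)}$ (controlled by $\tfrac{j-1}{4j-3}\le\tfrac14$, sharp as $j\to\infty$) and to keep the constant loss $\tfrac{8}{N}$ incurred at each iteration from spoiling the powers of $\tfrac{4}{N}$. Beyond this the argument is routine, and the tensor case requires nothing more than the product rule for $\nabla|u|^2_g$.
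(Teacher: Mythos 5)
Your proof is correct and follows essentially the same route as the paper's: multiply the comparison $\tfrac12 N^2 d^{-4}\rho\le\Delta\rho$ by the weighted cutoff quantity, integrate by parts using $|\nabla\rho|\le Nd^{-2}\rho$ (with no boundary terms thanks to the exponential decay of $\rho$), absorb via AM--GM using $N\ge 4(4j-3)$, and induct on $j$ by applying the estimate to the tensor field $\nabla u$. The only difference is bookkeeping: the paper's single AM--GM yields the recursion $A_j\le\tfrac4N A_{j-1}(\nabla u)+\tfrac4N\sup_\Omega(|\nabla\xi|d^{-4j+2})\|u\|^2_{L^2_\rho}$ with the stated constants exactly, whereas your Cauchy--Schwarz-then-AM--GM variant produces the factor $32/N^2$ and needs the extra (harmless) observation $8/N\le1$.
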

\begin{proof}
Multiplying $\xi |u|^2 d^{-4j+4}$ to \eqref{eq:lap-rho} and noting $|\nabla \rho| \le Nd^{-2} \rho$ and $d\le 1$ in the supports of $\xi$ and $\nabla \xi$ by \eqref{equation:rho-boundary}, we have
\begin{align*}
	\frac{1}{2} N^2 \int_\Omega & \xi |u|^2 d^{-4j} \rho\, d\mu_g \le \int_\Omega \xi |u|^2 d^{-4j+4} \Delta \rho \, d\mu_g\\
	&\le  \int_\Omega \left[ 2 \xi |\nabla u ||u| d^{-4j+4}  + (4j-4)\xi |u|^2 d^{-4j+3}  +  | \nabla \xi| |u|^2 d^{-4j+4} \right] |\nabla \rho| \, d\mu_g\\
	&\le N \int_\Omega \left[ 2 \xi |\nabla u ||u| d^{-4j+2}  + (4j-4)\xi |u|^2 d^{-4j+1}  +  | \nabla \xi| |u|^2 d^{-4j+2} \right] \rho\, d\mu_g\\
	&\le N\left( (4j-3) \int_\Omega \xi |u|^2 d^{-4j} \rho\, d\mu_g+ \int_\Omega \xi |\nabla u|^2 d^{-4j+4} \rho\, d\mu_g + \sup_\Omega (| \nabla \xi| d^{-4j+2}) \|u\|_{L^2_\rho}^2\right),  
\end{align*}
where we applied the AM-GM inequality in the last inequality. Absorbing the first term into the left hand side, we have 
\begin{align*}
	&\int_\Omega \xi |u|^2 d^{-4j} \rho\, d\mu_g\le    \frac{4}{N}\int_\Omega \xi |\nabla u|^2 d^{-4j+4} \rho\, d\mu_g +\frac{4}{N} \sup_\Omega (| \nabla \xi| d^{-4j+2}) \|u\|_{L^2_\rho}^2  .
\end{align*}
This proves the case $j=1$. The case $j>1$ follows by induction. 

\end{proof}

\begin{corollary}\label{corollary:weight}
Let $u\in H^k_{\rho}(\Omega)$ and $N \ge  4(4k-3)$. For $j \in\{ 0,1, \dots,  k\}$,
\[
	\int_\Omega |\nabla^{k-j} u |^2 d^{-4j} \rho\, d\mu_g \le C \| u\|^2_{H^k_\rho(\Omega)},
\]
where $C$ depends on $N$, $j$ and $r_2$.
\end{corollary}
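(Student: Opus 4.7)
The plan is to deduce Corollary~\ref{corollary:weight} directly from Lemma~\ref{lemma:ibp} by applying the latter to the tensor field $\nabla^{k-j}u$ with the same integer $j$. First I would split the integral using the cutoff $\xi$ introduced before Lemma~\ref{lemma:ibp}:
\[
\int_\Omega |\nabla^{k-j}u|^2 d^{-4j}\rho\, d\mu_g = \int_\Omega \xi\,|\nabla^{k-j}u|^2 d^{-4j}\rho\, d\mu_g + \int_\Omega (1-\xi)\,|\nabla^{k-j}u|^2 d^{-4j}\rho\, d\mu_g.
\]
On the support of $1-\xi$ one has $d_g \ge r_2/2$, so $d^{-4j} \le (r_2/2)^{-4j}$, and the second piece is immediately bounded by $(r_2/2)^{-4j}\|u\|_{H^k_\rho}^2$.

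For the first piece, since $N \ge 4(4k-3) \ge 4(4j-3)$, Lemma~\ref{lemma:ibp} applied to the tensor field $\nabla^{k-j}u$ with integer $j$ yields
\[
\int_\Omega \xi\,|\nabla^{k-j}u|^2 d^{-4j}\rho\, d\mu_g \le \Bigl(\tfrac{4}{N}\Bigr)^{\!j}\!\int_\Omega \xi\,|\nabla^{j}(\nabla^{k-j}u)|^2 \rho\, d\mu_g + \sum_{i=1}^j C_{i}\,\|\nabla^{j-i}(\nabla^{k-j}u)\|_{L^2_\rho}^2,
\]
where $C_i$ depends on $N$, $j$, and $\sup_\Omega(|\nabla\xi|d^{-4i+2})$, which is controlled by $r_2$ via $|\nabla\xi|\le 4/r_2$ and $d\ge r_2/2$ on $\mathrm{supp}(\nabla\xi)$. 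The tensor $\nabla^{j-i}(\nabla^{k-j}u)$ is a $(k-i)$-th covariant derivative of $u$, so by the Ricci commutation identity it equals a linear combination of components of $\nabla^{k-i}u$ plus contractions of the Riemann tensor of $g$ with derivatives of $u$ of order at most $k-i-1$. For $g$ ranging over the $C^k$-bounded family $\mathcal U_0$, those curvature coefficients are uniformly bounded, so the $L^2_\rho$-norm of each such term is controlled by $\|u\|_{H^k_\rho}^2$.

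The top-order term $\int_\Omega \xi\,|\nabla^{j}(\nabla^{k-j}u)|^2\rho\, d\mu_g$ is handled identically: the Ricci identity gives the pointwise bound $|\nabla^{j}(\nabla^{k-j}u)|^2 \le C\bigl(|\nabla^k u|^2 + \sum_{\ell=0}^{k-1}|\nabla^\ell u|^2\bigr)$, and integration against $\rho$ bounds it by $C\|u\|_{H^k_\rho}^2$. Assembling the pieces gives the corollary for smooth $u$, and the general case follows from the density of $C^\infty(\overline\Omega)$ in $H^k_\rho(\Omega)$ cited earlier from \cite{Corvino-Schoen:2006}. The only non-routine point is the bookkeeping of curvature commutators to confirm that they involve only derivatives of order at most $k$ and hence are absorbed into $\|u\|_{H^k_\rho}^2$; once that is verified, Lemma~\ref{lemma:ibp} does all the real work.
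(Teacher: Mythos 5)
Your proof takes the same approach as the paper's: split the integral with the cutoff $\xi$, apply Lemma~\ref{lemma:ibp} to the tensor field $\nabla^{k-j}u$ with the same integer $j$, bound the interior piece using $d\ge r_2/2$ on $\mathrm{supp}(1-\xi)$, and conclude by density of $C^\infty(\overline\Omega)$ in $H^k_\rho(\Omega)$. That is exactly what the paper does.

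However, the ``non-routine point'' you flag --- the Ricci commutation bookkeeping --- is not actually present. For any tensor field, $\nabla^k u$ is \emph{defined} by iterating the covariant derivative $k$ times, so $\nabla^{j}(\nabla^{k-j}u)$ is \emph{literally the same tensor} as $\nabla^k u$, and likewise $\nabla^{j-i}(\nabla^{k-j}u)=\nabla^{k-i}u$, with no curvature corrections whatsoever. Ricci commutation would only enter if you tried to permute the order of the $k$ lower indices of $\nabla^k u$, which is never required here. This matters beyond elegance: if curvature terms genuinely appeared, your constant would have to depend on curvature bounds for $g\in\mathcal U_0$, which would contradict the corollary's assertion that $C$ depends only on $N$, $j$, and $r_2$. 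Deleting the Ricci-commutation digression makes your argument both correct and identical in substance to the paper's proof, with exactly the claimed constant dependence.
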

\begin{proof}
By density, it suffices to prove the estimate for $u\in C^{\infty}(\overline{\Omega})$. By Lemma~\ref{lemma:ibp},
\begin{align} \label{inequality:boundary-estimate}
\begin{split}
	\int_{\Omega} & |\nabla^{k-j} u |^2 d^{-4j} \rho\, d\mu_g  \\
	&= \int_{\Omega} \xi |\nabla^{k-j} u |^2 d^{-4j} \rho \, d\mu_g +\int_{\Omega} (1-\xi) |\nabla^{k-j} u |^2 d^{-4j} \rho\, d\mu_g \\
	&\le \left( \frac{4}{ N}\right)^j \int_\Omega \xi |\nabla^k u|^2 \rho + \sum_{i=1}^j \left( \frac{4}{ N} \right)^{j+1-i} \sup_\Omega (|\nabla \xi| d^{-4i+2}) \| \nabla^{k-i}u\|^2_{L^2_\rho}\\
	&\quad + \sup_\Omega((1-\xi) d^{-4j}) \| \nabla^{k-j} u\|^2_{L^2_\rho}. 
\end{split}
\end{align}	
This implies the desired inequality. 
\end{proof}

\begin{proposition}\label{proposition:weighted-norm}
Let  $u\in H^k_\rho (\Omega)$ and $N \ge  4(4k-3)$. Then 
\[
	\| u\rho^{\frac{1}{2}}\|_{H^k(\Omega)} \le C \| u\|_{H^k_\rho(\Omega)},
\]
where $C$ depends on $N$, $k$, $r_2$, $\| \log \tilde{\rho}\|_{C^{k}([r_1, r_0])}$ and $\| d \|_{C^k(\overline{V_\Omega})}$.
\end{proposition}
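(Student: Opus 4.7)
The plan is to expand $\nabla^I(u\rho^{1/2})$ via the Leibniz rule, bound each derivative of $\rho^{1/2}$ pointwise using Lemma~\ref{lemma:rho}, and then reduce each resulting weighted integral to one already controlled by $\|u\|_{H^k_\rho}$ via Corollary~\ref{corollary:weight}. By density of $C^\infty(\overline{\Omega})$ in $H^k_\rho(\Omega)$, it suffices to prove the bound for smooth~$u$.

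First, for a covariant derivative of order $m\le k$, Leibniz gives
\[
\bigl|\nabla^m(u\rho^{1/2})\bigr| \le C_m \sum_{j=0}^m |\nabla^{m-j} u|\,|\nabla^j \rho^{1/2}|.
\]
Lemma~\ref{lemma:rho} supplies $|\nabla^j \rho^{1/2}| \le C N^j \rho^{1/2} d_g^{-2j}$ throughout $\Omega$ (trivially on $\{d_g\ge r_0\}$, where $\rho\equiv 1$, and with the required constant on the transition region $r_1\le d_g\le r_0$ via $\|\log\tilde\rho\|_{C^k([r_1,r_0])}$). Squaring and applying Cauchy--Schwarz on the finite sum yields the pointwise estimate
\[
\bigl|\nabla^m(u\rho^{1/2})\bigr|^2 \le C \sum_{j=0}^m N^{2j}\,|\nabla^{m-j}u|^2\,\rho\,d_g^{-4j}.
\]
Integrating over $\Omega$ and applying Corollary~\ref{corollary:weight} at level $m$---valid since $m\le k$ guarantees $N \ge 4(4k-3) \ge 4(4m-3)$---controls each term by
\[
\int_\Omega |\nabla^{m-j} u|^2\, d_g^{-4j}\,\rho\, d\mu_g \le C\,\|u\|^2_{H^m_\rho(\Omega)} \le C\,\|u\|^2_{H^k_\rho(\Omega)}
\]
for each $0\le j\le m$. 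Summing over $j$ and over $m=0,1,\dots,k$ then gives $\|u\rho^{1/2}\|^2_{H^k(\Omega)} \le C\,\|u\|^2_{H^k_\rho(\Omega)}$, with $C$ depending only on the parameters listed in the statement.

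The substantive analytic step---trading negative powers of $d_g$ for additional covariant derivatives while retaining the exponential weight $\rho$---has already been carried out in Lemma~\ref{lemma:ibp} and Corollary~\ref{corollary:weight}, via the subsolution-type inequality $\Delta\rho \ge \tfrac12 N^2 d_g^{-4}\rho$ near $\partial\Omega$ together with a boundary cutoff and integration by parts. Consequently no new analytic ingredient is required for this proposition; the only mild subtlety is checking that the pointwise bound of Lemma~\ref{lemma:rho} is global (immediate from inspection of the three regions $\{0<d_g\le r_1\}$, $\{r_1\le d_g\le r_0\}$, and $\{d_g\ge r_0\}$), and that the $N$-dependence in Corollary~\ref{corollary:weight} is compatible with the Leibniz factors $N^{2j}$, which it is because both are controlled once $N\ge 4(4k-3)$ is fixed.
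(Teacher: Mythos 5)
Your argument is correct and matches the paper's proof essentially step for step: reduce to smooth $u$ by density, expand $\nabla^m(u\rho^{1/2})$ via the Leibniz rule, bound $|\nabla^j\rho^{1/2}|$ pointwise by Lemma~\ref{lemma:rho}, and then apply Corollary~\ref{corollary:weight} to absorb the resulting weights $d^{-4j}\rho$, noting that $N\ge 4(4k-3)\ge 4(4m-3)$ validates the corollary at each level $m\le k$. The only difference is presentational: you explicitly record the level-$m$ application of Corollary~\ref{corollary:weight} and the Cauchy--Schwarz step, which the paper leaves implicit.
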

\begin{proof}
Recall
\[
	\| u\rho^{\frac{1}{2} } \|^2_{H^k} = \sum_{j=0}^k \| \nabla^j (u\rho^{\frac{1}{2}}) \|^2_{L^2}.
\]
We may assume $u\in C^\infty(\overline{\Omega})$, using density. By Lemma~\ref{lemma:rho}, for some constants $C_{ij}$, 
\begin{align*}
	|\nabla^j (u\rho^{\frac{1}{2}})| &\le | \nabla^j u| \rho^{\frac{1}{2}} + \sum_{i=1}^{j} C_{ij}|\nabla^{j-i} u| \; |\nabla^i (\rho^{\frac{1}{2}})|\\
	&\le  | \nabla^j u| \rho^{\frac{1}{2}} + C\sum_{i=1}^{j}N^i |\nabla^{j-i} u | d^{-2i} \rho^{\frac{1}{2}}.
\end{align*}
Therefore,
\begin{align*}
	\| \nabla^j (u\rho^{\frac{1}{2}}) \|^2_{L^2} & = \int_\Omega |\nabla^j (u\rho^{\frac{1}{2}})|^2\, d\mu_g \\
	&\le 2 \int_\Omega  | \nabla^j u|^2 \rho\, d\mu_g + C^2N^{2j} \sum_{i=1}^j \int_\Omega |\nabla^{j-i} u |^2 d^{-4i} \rho\, d\mu_g.
\end{align*}
This implies the desired estimate by Corollary~\ref{corollary:weight}.
\end{proof}

\subsection{Weighted H\"{o}lder spaces} \label{subsection:weighted-Holder}

Here we follow the idea of \cite{Chrusciel-Delay:2003} to consider weighted H\"{o}lder norms in small balls $B_{\phi(x)}(x)$ that cover $\Omega$. The weight function $\phi = \phi_g$ satisfies the following properties with uniform estimates across $g$ in a $C^m(\overline{\Omega})$ neighborhood $\mathcal{U}_0$. We suppress the subscript $g$ in  $\phi, d, \rho, \nabla$, when the context is clear. Recall the neighborhood $V_\Omega$ defined in the previous section, and suppose we have chosen $N$ suitably as in \eqref{equation:N}.

\begin{proposition} \label{proposition:phi} For $g\in \mathcal U_0$, we define  $\phi(x)=(d(x))^2$ in $V_\Omega$. There exists a constant $C>0$, uniform across $\mathcal U_0$, such that we can extend $\phi$ to $\Omega$ with $0<\phi<1$ and with the following properties.
\begin{enumerate}
\item  $\phi$ has a positive lower bound on $\Omega\setminus V_\Omega$ uniformly in $g\in \mathcal{U}_0$, and for each $x$, $\phi(x)<d(x)$, so that $\overline{B_{\phi(x)}(x)}\subset \Omega$.  \label{item:1}
\item \label{item:2} For $x \in \Omega$ and $k\le m$, we have 
\begin{align*}
|\phi^k \rho^{-1} \nabla^k \rho| &\le C.
\end{align*}
\item \label{item:3} For $x\in \Omega$ and for $y\in B_{\phi(x)}(x)$, we have
\begin{align}\label{equation:weight}
\begin{split}
	C^{-1} \rho(y) &\le \rho(x) \le C \rho(y)\\
	C^{-1} \phi(y) &\le \phi(x) \le C \phi(y).
\end{split}
\end{align}
\end{enumerate}
\end{proposition}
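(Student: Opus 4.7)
The plan is to produce an explicit (Lipschitz) extension of $\phi_g$ and verify the three properties by direct computation, exploiting the very simple form $\rho_g=\exp(-N/d_g)$ on the collar $\{d_g<r_1\}$. First, since $\mathcal U_0$ is bounded in $C^m(\overline{\Omega})$ (and we may further shrink $\mathcal U_0$), the metrics and their Riemannian distances are uniformly equivalent: there is a constant $C_1>0$ with $C_1^{-1} d_{g_0}(x)\le d_g(x)\le C_1\, d_{g_0}(x)$ on $\Omega$, and similarly for $d_g(x,y)$. Choose $r_0$ small enough (further shrinking the one in Section~\ref{subsection:weighted} if needed) so that $c_0:=C_1^2 r_0^2$ is smaller than both $1$ and the Lebesgue number of a fixed finite coordinate atlas of $\overline{\Omega}$. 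Define
\[
\phi_g(x):=\min\{d_g(x)^2,\, c_0\}.
\]
Since $x\in V_\Omega$ implies $d_{g_0}(x)<r_0$ for some $g_0\in\mathcal U_0$, uniform equivalence gives $d_g(x)\le C_1 r_0$, hence $d_g(x)^2\le c_0$, so indeed $\phi_g=d_g^2$ on $V_\Omega$. Moreover $\phi_g<1$, while on $\Omega\setminus V_\Omega$ one has $d_g\ge r_0$, so $\phi_g\ge r_0^2>0$; this gives (\ref{item:1}), since $B_{\phi_g(x)}(x)\subset B_{c_0}(x)$ lies in a single chart.

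For (\ref{item:2}), on the collar $\{d_g<r_1\}$ one has $\log\rho_g=-N/d_g$, so $\rho_g^{-1}\nabla\rho_g=Nd_g^{-2}\nabla d_g$, and an induction on $k$ (or extracting from Lemma~\ref{lemma:rho}) yields $|\nabla^k\rho_g|\le C\rho_g d_g^{-2k}$, with $C$ depending only on $N$, $k$, and $\|d_g\|_{C^k(\overline{V_\Omega})}$, hence uniformly on $\mathcal U_0$. Multiplying by $\phi_g^k=d_g^{2k}$ gives the desired bound. On the transition $\{r_1\le d_g\le r_0\}$, $\rho_g$ and its derivatives are uniformly bounded (in terms of $\|\log\tilde\rho\|_{C^k([r_1,r_0])}$ and $\|d_g\|_{C^k}$), while $\rho_g\ge e^{-N/r_1}$ and $\phi_g\ge r_1^2$ are bounded below. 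On $\{d_g\ge r_0\}$, $\rho_g\equiv 1$ and $\nabla^k\rho_g=0$, so the estimate is trivial.

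For (\ref{item:3}), the content is again near the boundary. For $x$ with $d_g(x)$ small and $y\in B_{\phi_g(x)}(x)$, the triangle inequality (for the $g$-distance to $\partial\Omega$) gives $|d_g(y)-d_g(x)|\le d_g(x,y)\le\phi_g(x)=d_g(x)^2\le\tfrac12 d_g(x)$, provided $r_0$ is small. Thus $d_g(y)\in[\tfrac12 d_g(x),\tfrac32 d_g(x)]$, which immediately yields $\phi_g(y)\asymp\phi_g(x)$. For $\rho_g$, the same bound gives
\[
\bigl|\log\rho_g(y)-\log\rho_g(x)\bigr|=N\left|\tfrac{1}{d_g(x)}-\tfrac{1}{d_g(y)}\right|=\frac{N\,|d_g(y)-d_g(x)|}{d_g(x)\,d_g(y)}\le\frac{N\,d_g(x)^2}{d_g(x)\cdot\tfrac12 d_g(x)}=2N,
\]
so $\rho_g(x)/\rho_g(y)\in[e^{-2N},e^{2N}]$. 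Away from the collar, both $\phi_g$ and $\rho_g$ are uniformly bounded above and below, so (\ref{item:3}) is trivial there. The main technical point is really bookkeeping: verifying that every constant produced depends only on $N$, $k$, $r_0$, $\|\log\tilde\rho\|_{C^m}$, and the uniform data $C_1$ and $\|d_g\|_{C^m(\overline{V_\Omega})}$, and not on the particular $g\in\mathcal U_0$; this is ensured by the hypothesis that $\mathcal U_0$ is a bounded set of $C^m$ metrics for which $\|d_g\|_{C^m}$ is uniformly controlled near $\partial\Omega$.
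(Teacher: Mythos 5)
Your proof is correct and follows essentially the same route as the paper's: property (2) comes from Lemma~\ref{lemma:rho}, and property (3) from the triangle inequality $|d(y)-d(x)|\le\phi(x)=d(x)^2\le\tfrac12 d(x)$ together with the explicit form of $\rho$ near the boundary, with the regions $d\ge r_1$ handled by uniform upper and lower bounds. The only difference is that you spell out an explicit extension $\phi=\min\{d_g^2,c_0\}$ where the paper simply asserts one exists, which is a harmless filling-in of an ``obvious'' step.
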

\begin{proof} 
 \eqref{item:1} is obvious. \eqref{item:2} follows by Lemma~\ref{lemma:rho}. It is clear that  \eqref{item:3} holds for $x\in \Omega\setminus V_\Omega$ and $y\in B_{\phi(x)}(x)$,  since both $\rho$ and $\phi$ have positive uniform lower bounds.  For $x\in V_\Omega$ and $y\in B_{\phi(x)} (x)$, the triangle inequality implies $d(x)-\phi(x) \le d(y) \le d(x) + \phi(x)$. The desired estimates follow since $\phi = d^2$ and $d\le \frac{1}{2}$ in $V_\Omega$. 
\end{proof}

Let  $r,s \in \mathbb{R}$ and $\varphi = \phi^r \rho^s$. For  $u\in C^{k,\alpha}_{\mathrm{loc}}(\Omega)$,  we define the weighted H\"{o}lder norms $\| u \|_{C^{k,\alpha}_{\phi, \varphi}(\Omega)}$ by 
\[
	\| u \|_{C^{k, \alpha}_{\phi, \varphi} (\Omega)} =  \sup_{x\in \Omega} \left(\sum_{j=0}^k \varphi(x) \phi^j(x) \| \nabla_g^j u \|_{C^0(B_{\phi(x)}(x))} + \varphi(x) \phi^{k+\alpha}(x) [\nabla^k_g u]_{0, \alpha; B_{\phi(x)}(x)}\right).
\] 
Note that $\phi$ is to make the norm scaling invariant with respect to the size of the ball. The weighted H\"{older} space $C^{k, \alpha}_{\phi, \varphi}(\Omega)$ consists of $C^{k,\alpha}_{\mathrm{loc}}(\Omega)$ functions or tensor fields with finite $C_{\phi, \varphi}^{k,\alpha}(\Omega)$ norm. If $u\in C_{\phi, \varphi}^{k,\alpha}(\Omega)$, then $u$ is dominated by $\varphi^{-1}$ in the sense that $u = O(\varphi^{-1})$ and $\nabla^j u = O(\varphi^{-1}\phi^{-j})$ near the boundary. The norms are equivalent to those introduced in \cite[Appendix A]{Chrusciel-Delay:2003} (cf. \cite{Corvino-Eichmair-Miao:2013}). 

Note that differentiation is a continuous map from $C^{k, \alpha}_{\phi, \varphi}$ to $C^{k-1, \alpha}_{\phi, \phi \varphi}$. For  $u\in C^{k,\alpha}_{\phi, \varphi}(\Omega), v\in C^{k,\alpha}(\overline{\Omega})$ we have $uv \in C^{k,\alpha}_{\phi, \varphi}(\Omega)$ with 
\[
	\| uv\|_{C^{k,\alpha}_{\phi, \varphi}(\Omega)} \le C \| u \|_{C^{k,\alpha}_{\phi, \varphi}(\Omega)} \| v \|_{C^{k,\alpha}(\overline{\Omega})}
\]	
where $C$ depends only on $k$.  Furthermore, using Lemma~\ref{lemma:rho}, it follows that multiplication by $\rho$ is a continuous map from $C^{k, \alpha}_{\phi, \varphi}$ to $C^{k, \alpha}_{\phi, \varphi \rho^{-1}}$. 

We will use the following Banach spaces $\mathcal{B}_k(\Omega)$ (for functions or tensor fields): 
 \begin{align*}
 	\mathcal{B}_k(\Omega) &= C^{k,\alpha}_{\phi, \phi^{4-k+\frac{n}{2}}\rho^{-\frac{1}{2}}}(\Omega) \cap L^2_{\rho^{-1}}(\Omega) \qquad (\mbox{for $k=0, 1,2$})\\
	\mathcal{B}_3(\Omega) &= C^{3,\alpha}_{\phi, \phi^{1+\frac{n}{2}}\rho^{\frac{1}{2}}} (\Omega) \cap H^{1}_\rho(\Omega)\\
 	\mathcal{B}_4(\Omega) &= C^{4,\alpha}_{\phi, \phi^{\frac{n}{2}}\rho^{\frac{1}{2}}} (\Omega) \cap H^{2}_\rho(\Omega)\; ,
 \end{align*}
with the Banach norms: 
\begin{align*}
	\| u \|_{\mathcal{B}_k(\Omega) } &=   \|u\|_{C^{k,\alpha}_{\phi, \phi^{4-k+\frac{n}{2}}\rho^{-\frac{1}{2}}}(\Omega) } + \| u\|_{L^2_{\rho^{-1}}(\Omega) }  \qquad (\mbox{for $k=0, 1,2$})\\
	 \| X \|_{\mathcal{B}_3(\Omega) } &=  \|X\|_{C^{3,\alpha}_{\phi, \phi^{1+\frac{n}{2}}\rho^{\frac{1}{2}}}(\Omega) } +\| X\|_{H^1_{\rho}(\Omega) } \\
	\| f \|_{\mathcal{B}_4(\Omega) } &=   \|f\|_{C^{4,\alpha}_{\phi, \phi^{\frac{n}{2}}\rho^{\frac{1}{2}}}(\Omega) } + \| f\|_{H^2_{\rho}(\Omega) }.
\end{align*}
It is clear that these Banach spaces contain the smooth functions with compact supports in $\Omega$, and that $\mathcal B_2(\Omega) \subset \mathcal B_1(\Omega) \subset \mathcal B_0 (\Omega)$.

We will frequently use the product norms:
\begin{align*}
	\| (\psi, V) \|_{\mathcal{B}_0\times \mathcal{B}_1} &= \|\psi \|_{\mathcal{B}_0} + \| V \|_{\mathcal{B}_1}\\
		\| (h,w)\|_{\mathcal{B}_2\times \mathcal{B}_2} &= \|h \|_{\mathcal{B}_2} + \| w \|_{\mathcal{B}_2} \\
		\| (f,X)\|_{\mathcal{B}_4\times \mathcal{B}_3}  &= \|f \|_{\mathcal{B}_4} + \| X \|_{\mathcal{B}_3}.
\end{align*}

%%%%%%%%%
%%%%%%%%%

\begin{remark} \label{rmk:weight-cts} We remark that in results we discuss in the ensuing sections, we will be working on $\overline{\Omega}$, a compact connected smooth manifold-with-boundary $\partial \Omega$ and manifold interior $\Omega$, e.g. an annulus $\Omega=A_1$.  A precompact connected open subset $\Omega'$ so that $\overline{\Omega'}$ is a smooth manifold-with-boundary, with manifold interior $\Omega'$, will be called a \emph{precompact smooth subdomain} of $\Omega$.  For some results, we will work with metrics $g$ in a neighborhood of a metric $g_0$ (e.g. metrics close to Euclidean $g_{\mathbb E}$ on $A_1$), and for several results $g$ will be a convex combination of two metrics $g_1$ and $g_2$.  For the H\"{o}lder spaces $C^{k,\alpha}(\overline{\Omega})$ that appear in these results, we can use any chosen metric to build the norm, e.g. $g_0$ or $g_1$, since any two such norms will be equivalent.  As for the weighted norms defining $\mathcal B_\ell(\Omega)$, e.g. $\mathcal B_\ell(A_1)$, we could work with norms built on the fixed metric $g_0$, respectively $g_1$, or when working at a metric $g$, we could work with weighted norms built with respect to $g$. We note that while the weighted norms for two different metrics are not necessarily equivalent when using exponential weights, relevant estimates below are suitably formulated so long as we use the same metric in defining the weighted norms on either side of the inequality.  As such, we just need to use a metric consistently in this way in construing the stated results.

We also note that an analysis of the proofs given shows that (by adjusting constants, and choosing the value of $N$ higher and $r_2$ smaller to absorb terms as needed) we could actually work with respect to a fixed weight function, e.g. $\rho_{g_0}$, cf. Remark \ref{rmk:cts-dep}.  While it seems natural to work with $\rho_g$ when working at a metric $g$, what we require from the weight function is that it behaves as noted in the estimates above in terms of a defining function for the boundary, such as $d_{g_0}$.   \end{remark}

%%%%%%%%
%%%%%%%%%%

\section{Localized deformation with the no-kernel condition} \label{section:localized}
In this section, we show how to locally deform an initial data set while controlling the dominant energy inequality.  To do this, we employ a modified constraint operator to handle the first order change in $|J|_g$ under the perturbation. 

The modified map
\[
	\Phi^W_{(g, \pi)}(\gamma, \tau)= \Phi(\gamma, \tau)+ (0, \tfrac{1}{2} \gamma \cdot_g (\textup{div}_g \pi +W))
\]
 differs from the  usual constraint map only by a term of lower order in derivatives and hence has similar analytic properties as the usual constraint map. In particular, we have the following local surjectivity theorem. The proof, which is deferred to Section~\ref{section:modified}, is a straightforward modification of the proof for the constraint map  in \cite{Corvino-Schoen:2006}.  In our proof, we obtain uniform estimates in a neighborhood of an arbitrary initial data set, which sharpen the estimates  in \cite{Corvino-Schoen:2006}, for the usual constraint map at a fixed initial data set or in a neighborhood of the flat data.  
 
For notational simplicity, we denote  $D\Phi^W_{(g,\pi)} = D\Phi^W_{(g,\pi)}|_{(g,\pi)}$ and its formal $L^2$ adjoint operator by $(D\Phi^W_{(g,\pi)})^*$. 

\begin{theorem} \label{thm:nl} 
Let $( g_0, \pi_0) \in C^{4,\alpha}(\overline{\Omega})\times C^{3,\alpha}(\overline{\Omega})$ be an initial data set, and let $W_0\in C^{2,\alpha}(\overline{\Omega})$ be a vector field.  Suppose that the kernel of $(D \Phi^{W_0}_{(g_0, \pi_0)})^*$ is trivial on $\Omega$. Then there is a neighborhood~$\mathcal U$ of $(g_0, \pi_0)$ in $C^{4,\alpha}(\overline{\Omega}) \times C^{3,\alpha}(\overline{\Omega})$, a  neighborhood $\mathcal W$ of $W_0$ in $C^{2,\alpha}(\overline{\Omega})$,  and constants $\epsilon>0$, $C>0$ such that for $(g, \pi)\in \mathcal U$, $W\in \mathcal W$, and for $(\psi,V)\in \mathcal B_0\times \mathcal B_1$ with $\|(\psi,V)\|_{\mathcal B_0\times \mathcal B_1} \leq \epsilon$, there is a pair of symmetric tensors $(h,w)\in \mathcal B_2\times \mathcal B_2$ with  $\|(h,w)\|_{\mathcal{B}_2\times \mathcal{B}_2} \leq C \| (\psi, V)\|_{\mathcal{B}_0\times \mathcal{B}_1}$ such that the initial data set $(g+h, \pi+w)$ satisfies
\begin{equation} \label{eq:sys-2psi}
	\Phi^W_{(g, \pi)} (g+h , \pi + w)  = \Phi^W_{(g, \pi)} (g, \pi )+ (2\psi, V).
\end{equation}
\end{theorem}

\begin{remark} \label{remark:Phi}
Rewriting the identity $\Phi^W_{(g, \pi)} (g+h , \pi + w)  = \Phi^W_{(g, \pi)} (g, \pi )+ (2\psi, V)$ in terms of the usual constraint map, we see that $(h, w)$ solves
\[
	\Phi(g+h, \pi+w) = \Phi(g, \pi) + (2\psi, V) - \left(0,  \tfrac{1}{2}h\cdot_g \left(\textup{div}_g \pi+W\right)\right).
\]
\end{remark}

The following computational lemma gives the motivation behind our definition of the modified map. 

\begin{lemma} \label{lemma:almost-DEC}
Let $( g, \pi) $ be an initial data set. Suppose the initial data set $(\bar{g}, \bar{\pi}) = (g+h, \pi+w) $ satisfies 
\[
	\Phi^V_{(g,\pi)}(g+h, \pi+w) = \Phi_{(g,\pi)}^V(g, \pi) + (2\psi, V).
\]
If $|h|_g\le3$, then 
\[
	\bar{\mu} - |\bar{J}|_{\bar{g}}\ge \mu + \psi - |J+V|_g,
\]	
where $(\bar{\mu}, \bar{J}) $ and $(\mu, J)$ are the mass and current densities of $(g, \pi)$ and $(\bar{g}, \bar{\pi})$, respectively.
\end{lemma}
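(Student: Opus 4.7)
The strategy is to unpack the hypothesis to extract the relevant identities for $\bar\mu$ and $\bar J$, and then verify the pointwise inequality $|\bar J|_{\bar g}\le |J+V|_g$ by direct computation, exploiting the precise cancellation built into the modified operator.

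First, I would unwind the equation $\Phi^V_{(g,\pi)}(g+h,\pi+w)=\Phi^V_{(g,\pi)}(g,\pi)+(2\psi,V)$ using the definition \eqref{eq:def-mco}. Since $g\cdot(J+V)=J+V$, the first component gives $\bar\mu=\mu+\psi$, and the second component gives
\[
\bar J+\tfrac12(g+h)\cdot(J+V)=J+\tfrac12(J+V)+V,
\]
so $\bar J=(J+V)-\tfrac12\,h\cdot(J+V)$. Because $\bar\mu-\mu=\psi$, the desired inequality reduces to $|\bar J|_{\bar g}\le |J+V|_g$.

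Next, set $Y:=J+V$ and $A^i_{\ j}:=h^i_{\ j}=g^{ik}h_{kj}$, so that $\bar Y=(I-\tfrac12 A)Y$ and $\bar g=g+h$. I would expand
\[
\bar g(\bar Y,\bar Y)=g(\bar Y,\bar Y)+h(\bar Y,\bar Y)
\]
term by term. Using symmetry of $h$, one has $g(AY,Y)=h(Y,Y)$ and $h(AY,Y)=g(AY,AY)=|h\cdot Y|_g^2$. The linear-in-$h$ contributions $-h(Y,Y)$ from $g(\bar Y,\bar Y)$ and $+h(Y,Y)$ from $h(\bar Y,\bar Y)$ cancel precisely — this cancellation is exactly what the modification term $\tfrac12\gamma\cdot(\textup{div}_g\pi+W)$ in $\Phi^W_{(g,\pi)}$ is engineered to produce. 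After collecting the remaining quadratic and cubic contributions, the identity becomes
\[
|\bar Y|_{\bar g}^2=|Y|_g^2-\tfrac34\,|h\cdot Y|_g^2+\tfrac14\,h(h\cdot Y,h\cdot Y).
\]

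Finally, to absorb the cubic remainder I would apply the pointwise bound $|h(h\cdot Y,h\cdot Y)|\le |h|_g\,|h\cdot Y|_g^2$, yielding
\[
|\bar Y|_{\bar g}^2\le |Y|_g^2+\bigl(-\tfrac34+\tfrac14|h|_g\bigr)|h\cdot Y|_g^2.
\]
The hypothesis $|h|_g\le 3$ makes the coefficient nonpositive, so $|\bar Y|_{\bar g}\le |Y|_g=|J+V|_g$, and combining with $\bar\mu=\mu+\psi$ finishes the proof. The only real content is the first-order cancellation in the second step; everything else is bookkeeping with raising and lowering indices, and the constant $3$ emerges naturally from the algebra so no smallness of $h$ beyond this explicit bound is needed.
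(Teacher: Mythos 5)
Your proposal is correct and follows essentially the same route as the paper: extract $\bar{\mu}=\mu+\psi$ and $\bar{J}=(J+V)-\tfrac12\,h\cdot(J+V)$ from the modified-constraint equation, expand $|\bar{J}|_{\bar g}^2$ to get the identity $|Y|_g^2-\tfrac34|h\cdot Y|_g^2+\tfrac14 h(h\cdot Y,h\cdot Y)$, and conclude $|\bar{J}|_{\bar g}\le|J+V|_g$. The only difference is cosmetic: you make explicit the final absorption step $|h(h\cdot Y,h\cdot Y)|\le|h|_g|h\cdot Y|_g^2$ together with $|h|_g\le 3$, which the paper leaves implicit.
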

\begin{proof}
Denote by $Y= J+V$. By Remark~\ref{remark:Phi}, 
\[
	\bar{\mu} = \mu + \psi\quad \mbox{and} \quad \bar{J} = Y-  \frac{1}{2}h\cdot_g Y.
\]
We compute the norm of $\bar{J}$ with respect to $\bar{g}$ below:
\begin{align*}
	|\bar{J}|_{\bar{g}}^2& = (g+h)_{ij} \left(Y^i- \frac{1}{2} ( h \cdot_g Y)^i\right)\left(Y^j - \frac{1}{2} ( h \cdot_g Y )^j\right)\\
	&= (g+h)_{ij}  \left(Y^i Y^j - Y^i ( h \cdot_g Y)^j + \frac{1}{4} ( h \cdot_g Y)^i( h \cdot_g Y)^j \right)\\
	&= |Y|_g^2 + h_{ij} Y^iY^j - g_{ij} Y^i( h \cdot_g Y)^j - h_{ij} Y^i (h\cdot_g Y)^j \\
	&\quad + \frac{1}{4} |h\cdot_g Y|_g^2 + \frac{1}{4} h_{ij}  ( h \cdot_g Y)^i( h \cdot_g Y)^j \\
	&= |Y|_g^2 -  \frac{3}{4} |h\cdot_g Y|_g^2 + \frac{1}{4} h_{ij}  ( h \cdot_g Y)^i( h \cdot_g Y)^j\\
	&\le |Y|_{g}^2.
\end{align*} 

\end{proof}

Theorem~\ref{theorem:main} directly follows from Theorem~\ref{thm:nl} (setting $W_0=0$) and Lemma~\ref{lemma:almost-DEC}. In particular, by choosing $V=0$ and $\psi>0$ on $\Omega$ with $\|\psi\|_{\mathcal B_2}$ sufficiently small, we have an immediate corollary.

\begin{corollary}
Let $(\overline{\Omega}, g_0, \pi_0)$ be as in Theorem~\ref{thm:nl}, with $W_0=0$. Suppose that $(g_0, \pi_0)$ satisfies the dominant energy condition. Then there exists $(h, w)\in \mathcal{B}_2(\Omega)\times \mathcal{B}_2(\Omega)$ such that the initial data set $(\bar{g}, \bar{\pi} ) = (g_0+h, \pi_0+w)$ satisfies the strict dominant energy condition $\bar{\mu} > |\bar{J}|_{\bar{g}}$  in~$\Omega$.  
\end{corollary}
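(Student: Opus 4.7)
The plan is to specialize Theorem~\ref{thm:nl} to $W=0$ and $V=0$, feed in a strictly positive source $\psi$, and then cash in Lemma~\ref{lemma:almost-DEC} to upgrade the weak dominant energy condition to a strict one.

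The first task is to produce a function $\psi \in \mathcal B_0(\Omega)$ that is strictly positive on all of $\Omega$ and has $\|\psi\|_{\mathcal B_0}$ as small as desired. A concrete candidate is $\psi := c\,\rho_{g_0}$ for a small constant $c>0$. The exponential weight $\rho_{g_0}$ is smooth and strictly positive on $\Omega$, so $\psi>0$. The weighted H\"older part of the $\mathcal B_0$ norm carries the factor $\psi \cdot \phi^{4+n/2}\rho^{-1/2} = c\,\phi^{4+n/2}\rho^{1/2}$, which is bounded on $\Omega$; Lemma~\ref{lemma:rho} controls the associated weighted H\"older seminorm. For the $L^2_{\rho^{-1}}$ part one has $\int_\Omega \psi^2\rho^{-1}\,d\mu_g = c^2\int_\Omega \rho\, d\mu_g < \infty$. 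Both pieces scale linearly in $c$.

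Next I would invoke Theorem~\ref{thm:nl} with $(g_0,\pi_0)$, $W_0 = 0$ (so the no-kernel hypothesis is exactly the one given in the Corollary), and the pair $(\psi,0)$. Choosing $c$ small enough that $\|\psi\|_{\mathcal B_0} \leq \epsilon$, this yields $(h,w)\in \mathcal B_2\times\mathcal B_2$ of norm $O(c)$ satisfying
\[
\Phi^0_{(g_0,\pi_0)}(g_0+h,\pi_0+w) \;=\; \Phi^0_{(g_0,\pi_0)}(g_0,\pi_0) + (2\psi,0).
\]
The $\mathcal B_2$ estimate gives the pointwise bound $|h|_{g_0}(x) \leq \|h\|_{\mathcal B_2}\,\phi(x)^{-(2+n/2)}\rho(x)^{1/2}$, and the weight $\phi^{-(2+n/2)}\rho^{1/2}$ is uniformly bounded on $\Omega$ (in fact it decays exponentially at $\partial\Omega$ because of the exponential profile of $\rho$). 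Hence $|h|_{g_0}\leq 3$ holds automatically once $c$ is small, which is the size hypothesis needed to apply Lemma~\ref{lemma:almost-DEC}.

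Finally, Lemma~\ref{lemma:almost-DEC} applied with $V=0$ yields
\[
\bar\mu - |\bar J|_{\bar g} \;\geq\; \mu + \psi - |J|_{g_0}.
\]
Combining with the assumed weak dominant energy condition $\mu \geq |J|_{g_0}$ and the strict positivity $\psi>0$ on $\Omega$ gives $\bar\mu - |\bar J|_{\bar g} \geq \psi > 0$ throughout $\Omega$, which is the claim. The only non-routine step is the construction of a strictly positive $\psi\in\mathcal B_0$: the weights defining $\mathcal B_0$ force extremely rapid decay of admissible functions at $\partial\Omega$, and the exponential weight $\rho$ is tailored precisely to this rate, which is what makes the choice $\psi = c\rho$ succeed.
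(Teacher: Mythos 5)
Your proposal is correct and follows exactly the route the paper intends: apply Theorem~\ref{thm:nl} with $W_0=0$, $V=0$, and a strictly positive $\psi\in\mathcal B_0$ of small norm, then conclude via Lemma~\ref{lemma:almost-DEC} and the weak dominant energy condition. The extra details you supply (the explicit choice $\psi=c\rho_{g_0}$ and the pointwise bound $|h|_{g_0}\le 3$ from the $\mathcal B_2$ weight) are exactly the verifications the paper leaves implicit, and they check out.
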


\begin{definition} In the sections which follow, we will consider estimates where some parameter fields (initial data sets, auxiliary cutoff functions) may be allowed to vary.  In the case where we can choose a single constant so that the estimate holds for all nearby parameter fields (with respect to specified norms), we say that the constant \emph{depends locally uniformly} on the fields. \end{definition}

We apply Lemma~\ref{lemma:almost-DEC} to initial data sets which may not satisfy the dominant energy condition but have an error from interpolation. The following computational lemma suggests an appropriate choice of $(\psi, V)$ to absorb the error term. 

\begin{lemma}\label{lemma:interpolation-Phi-2}
Let $(g_1, \pi_1), (g_2, \pi_2)\in C^{2,\alpha}(\overline{\Omega})\times C^{2,\alpha}(\overline{\Omega})$ be initial data sets.  Let $0\le \chi \le 1$ be a smooth function such that $\chi(1-\chi)$ is supported on a compact subset of  $\Omega$. Denote by $(g,\pi) = \chi(g_1,\pi_1) + (1-\chi) (g_2, \pi_2)$. Let 
\[
	(2\psi, V) =-\Phi(g, \pi)+ \chi \Phi(g_1, \pi_1) + (1-\chi) \Phi(g_2, \pi_2) + (2\psi_0, 0)
\]
for some $\psi_0 \in \mathcal B_0$. Then 
\[
	\| (\psi,V)\|_{\mathcal{B}_0\times \mathcal B_1} \le  C\big(\| \chi (1-\chi)\|_{\mathcal{B}_1} +\|\nabla^2 \chi\|_{\mathcal B_0}+ \| \nabla \chi \|_{\mathcal B_1} \big) \| (g_1 - g_2, \pi_1 - \pi_2)\|_{C^{2,\alpha} \times C^{2,\alpha} } + \| \psi_0\|_{\mathcal B_0}
\]
where the constant $C$ depends locally uniformly on $(g_1, \pi_1), (g_2, \pi_2) \in C^{2,\alpha}(\overline{\Omega})\times C^{2,\alpha}(\overline{\Omega})$ and on $\chi \in C^{2,\alpha}(\overline{\Omega})$.

Furthermore, 
\begin{align*}
	|J+V|_g &= |\chi J_1 + (1-\chi)J_2|_g\\
	& \le \chi |J_1|_{g_1} + (1-\chi) |J_2|_{g_2} + \chi(1-\chi) (|g_1-g_2|_{g_1} |J_1|_{g_1} + | g_1-g_2|_{g_2} |J_2|_{g_2}),
\end{align*}
where $J, J_1, J_2$ are the current densities of $(g,\pi), (g_1,\pi_1), (g_2,\pi_2)$, respectively. 
\end{lemma}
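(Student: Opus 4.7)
The plan is to treat the two estimates independently. The pointwise bound on $|J+V|_g$ reduces to an algebraic inequality once we recognize, directly from the definition of $V$, that $J+V=\chi J_1+(1-\chi)J_2$ as a vector field (components add as real-valued functions, irrespective of the metrics used to compute $J_1,J_2$). The norm estimate on $(\psi,V)$ requires Taylor expanding $\Phi$ about $(g,\pi)$ and isolating the structure of the error.

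For the pointwise part I will use the triangle inequality for the $g$-norm to obtain $|J+V|_g \le \chi|J_1|_g + (1-\chi)|J_2|_g$, then convert each $|J_i|_g$ back to $|J_i|_{g_i}$. Because $g-g_1 = -(1-\chi)(g_1-g_2)$ and $g-g_2 = \chi(g_1-g_2)$, one has
\[
|J_i|_g^2 = |J_i|_{g_i}^2 + (g-g_i)(J_i,J_i) \le |J_i|_{g_i}^2 \bigl(1 + c_i |g_1-g_2|_{g_i}\bigr),
\]
with $c_1 = 1-\chi$ and $c_2 = \chi$. Taking square roots via $\sqrt{1+x} \le 1+x$ for $x \ge 0$, multiplying by $\chi$ and $1-\chi$ respectively, and adding yields the claimed inequality.

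For the norm estimate I will Taylor expand, writing $g_1-g=(1-\chi)(g_1-g_2)$, $g_2-g=-\chi(g_1-g_2)$ (and similarly for $\pi$), so that
\begin{align*}
\chi\Phi(g_1,\pi_1)+(1-\chi)\Phi(g_2,\pi_2)-\Phi(g,\pi) &= \chi D\Phi|_{(g,\pi)}\bigl((1-\chi)(g_1-g_2,\pi_1-\pi_2)\bigr) \\
&\quad -(1-\chi) D\Phi|_{(g,\pi)}\bigl(\chi(g_1-g_2,\pi_1-\pi_2)\bigr) \\
&\quad +\chi Q_1+(1-\chi)Q_2,
\end{align*}
where $Q_i$ denotes the quadratic Taylor remainder. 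Since $D\Phi|_{(g,\pi)}$ is a linear differential operator (second order in the metric slot, first order in the momentum slot), commuting it past the scalar factor $\chi$ or $1-\chi$ produces expressions in which the principal $\chi(1-\chi)D\Phi|_{(g,\pi)}(g_1-g_2,\pi_1-\pi_2)$ contributions cancel, and what remains is a sum of terms linear in $(g_1-g_2,\pi_1-\pi_2)$ and their derivatives up to the relevant order, with coefficients involving $\nabla\chi$ or $\nabla^2\chi$. For the quadratic remainders, each contribution in $\chi Q_1$ (respectively $(1-\chi)Q_2$) carries a factor $\chi(1-\chi)^2$ (respectively $(1-\chi)\chi^2$), each dominated by $\chi(1-\chi)$. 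Because the supports of $\nabla\chi$ and $\chi(1-\chi)$ lie in a compact subset of $\Omega$, the $\mathcal B_0$ and $\mathcal B_1$ norms restricted there are controlled by ordinary H\"{o}lder norms, and the locally uniform $C^{2,\alpha}$ control on $(g_i,\pi_i)$ absorbs one factor of the difference to produce the claimed linear-in-$(g_1-g_2,\pi_1-\pi_2)$ bound.

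The main obstacle is the systematic bookkeeping needed to verify that every term in the Taylor expansion indeed admits one of the two described factorizations --- namely, either a $\chi(1-\chi)$ factor coming from nonlinearity, or a $\nabla\chi$ or $\nabla^2\chi$ factor coming from commutation with the cutoff. This involves handling the quadratic first-derivative terms in $R(g)$, the algebraic nonlinearity in $(\mathrm{tr}_g \pi)^2 - |\pi|_g^2$, and the metric-dependent Christoffel coefficients in $\mathrm{div}_g \pi$. Once the decomposition is in place, the desired norm estimate follows directly from the definitions of $\mathcal B_0$ and $\mathcal B_1$ together with the locally uniform $C^{2,\alpha}$ control assumed on the data.
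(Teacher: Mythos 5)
Your proof is correct and follows essentially the same strategy as the paper. For the pointwise bound on $|J+V|_g$, your derivation --- observing that $J+V=\chi J_1+(1-\chi)J_2$ directly from the definition, applying the triangle inequality, and then converting each $|J_i|_g$ to $|J_i|_{g_i}$ via $g-g_1=-(1-\chi)(g_1-g_2)$, $g-g_2=\chi(g_1-g_2)$, Cauchy--Schwarz, and $\sqrt{1+x}\le 1+x$ --- produces exactly the chain of inequalities the paper displays in \eqref{equation:interpolation-J}.

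For the norm estimate, the paper invokes Lemma~\ref{lemma:interpolation-Taylor}(2), whose proof writes $\Phi(g,\pi)=\chi\Phi(g,\pi)+(1-\chi)\Phi(g,\pi)$ and Taylor-expands the first piece at $(g_1,\pi_1)$ and the second at $(g_2,\pi_2)$. You instead expand $\Phi(g_1,\pi_1)$ and $\Phi(g_2,\pi_2)$ each around the interpolated data $(g,\pi)$. The two choices are symmetric versions of the same idea, and both ultimately rely on the commutation estimates of Lemma~\ref{lemma:Taylor}: every residual term either carries a $\nabla\chi$ (or $\nabla^2\chi$) factor from commuting $D\Phi$ or $Q$ past the cutoff, or a $\chi(1-\chi)$ factor from the nonlinearity, and the locally uniform $C^{2,\alpha}$ bound absorbs the extra power of $g_1-g_2$ in the quadratic remainder. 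If anything, your choice of expanding at $(g,\pi)$ is marginally cleaner in the linear term: the leading pieces $\chi(1-\chi)D\Phi|_{(g,\pi)}(g_1-g_2,\pi_1-\pi_2)$ cancel exactly, whereas the paper's version leaves the residual $\chi(1-\chi)\bigl(D\Phi|_{(g_2,\pi_2)}-D\Phi|_{(g_1,\pi_1)}\bigr)(g_1-g_2,\pi_1-\pi_2)$, which requires the additional middle estimate of Lemma~\ref{lemma:Taylor} to control. Your statement that ``each contribution in $\chi Q_1$ carries a factor $\chi(1-\chi)^2$'' is slightly overstated --- derivatives falling on $1-\chi$ produce $\nabla\chi$-terms rather than a pure $\chi(1-\chi)^2$-factor --- but you acknowledge the required bookkeeping of precisely these two factorizations in your final paragraph, so the plan is sound.
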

\begin{proof}
The estimate of $(\psi, V)$ follows by Lemma~\ref{lemma:interpolation-Taylor}. The inequality for $J+V$ follows by direct computation:

\begin{align} \label{equation:interpolation-J}
\begin{split}
	|\chi J_1 + (1-\chi)J_2|_{g} &\le \chi |J_1|_{g} + (1-\chi) |J_2|_{g}\\
	&\le \chi |J_1|_{g_1} + (1-\chi) |J_2|_{g_2}\\
	&\quad + \chi  (1-\chi) |g_1 - g_2|_{g_1}  |J_1|_{g_1}  + (1-\chi) \chi |g_1 - g_2|_{g_2}  |J_2 |_{g_2}.
\end{split}
 \end{align}
\end{proof}

We now prove a statement which along with Remark~\ref{rmk:cpt-spt} establishes Theorem~\ref{theorem:interpolating-main} for $k=0$; the version of higher $k$ is handled later in Section~\ref{subsection:higher-regularity}. The statement we prove here can be expressed precisely as follows.

\begin{theorem} \label{theorem:interpolating}
Let $(g_0, \pi_0)\in C^{4,\alpha}(\overline{\Omega})\times C^{3,\alpha}(\overline{\Omega})$ be an initial data set.  Suppose that the kernel of $(D\Phi^0_{(g_0, \pi_0)})^*$ is trivial in $\Omega$. Let $0\le \chi \le 1$ be a smooth function such that $\chi(1-\chi)$ is supported on a compact subset of $\Omega$. Then there exists a neighborhood $\mathcal{U}$ of $(g_0, \pi_0)$ in $C^{4,\alpha}(\overline{\Omega})\times C^{3,\alpha}(\overline{\Omega})$ and $\epsilon >0$ such that for $(g_1, \pi_1), (g_2, \pi_2)\in \mathcal{U}$, if we set $(g,\pi) = \chi (g_1, \pi_1) + (1-\chi) (g_2, \pi_2)$,  there exists $(h, w)\in \mathcal B_2(\Omega)\times \mathcal B_2 (\Omega)$ with $\| (h,w)\|_{\mathcal B_2\times \mathcal B_2} \leq \epsilon$ such  that the initial data set $(\bar{g} , \bar{\pi})= (g+h, \pi + w)$ satisfies 
\[
	\bar{\mu} - |\bar{J}|_{\bar{g}} \ge \chi (\mu_1 - |J_1|_{g_1}) + (1-\chi) (\mu_2 - |J_2|_{g_2}).
\] 
\end{theorem}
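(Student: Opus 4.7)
The plan is to invoke the local surjectivity Theorem~\ref{thm:nl} with $W_0 = 0$ at $(g_0,\pi_0)$, and to choose the source term $(\psi,V)$ so that Lemma~\ref{lemma:almost-DEC} and the interpolation estimates in Lemma~\ref{lemma:interpolation-Phi-2} combine to produce exactly the desired dominant energy inequality, with the interpolation error in $|J+V|_g$ cancelled by a carefully chosen zeroth-order contribution $\psi_0$ to $\psi$.

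First I apply Theorem~\ref{thm:nl} at $(g_0,\pi_0)$ with $W_0=0$, using the hypothesis that the kernel of $(D\Phi^0_{(g_0,\pi_0)})^*$ is trivial on $\Omega$, to obtain a $C^{4,\alpha}\times C^{3,\alpha}$ neighborhood $\mathcal U$ of $(g_0,\pi_0)$ and uniform constants $\epsilon_0,C>0$.  Given $(g_1,\pi_1),(g_2,\pi_2)\in\mathcal U$, the convex combination $(g,\pi)=\chi(g_1,\pi_1)+(1-\chi)(g_2,\pi_2)$ also lies in $\mathcal U$ after possibly shrinking the neighborhood (by an amount depending on $\|\chi\|_{C^{4,\alpha}}$).

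Following the template of Lemma~\ref{lemma:interpolation-Phi-2}, I set
\[
\psi_0 \;=\; \chi(1-\chi)\bigl(|g_1-g_2|_{g_1}\,|J_1|_{g_1}+|g_1-g_2|_{g_2}\,|J_2|_{g_2}\bigr),
\]
which has compact support in $\Omega$ and, being Lipschitz in the tensor $g_1-g_2$, lies in $C^{0,\alpha}\cap\mathcal B_0$ with $\mathcal B_0$-norm bounded by $\|g_1-g_2\|_{C^0}(\|J_1\|_{C^0}+\|J_2\|_{C^0})$.  I then take
\[
(2\psi, V) \;=\; -\Phi(g,\pi) + \chi\,\Phi(g_1,\pi_1) + (1-\chi)\Phi(g_2,\pi_2) + (2\psi_0, 0).
\]
By Lemma~\ref{lemma:interpolation-Phi-2}, $\|(\psi,V)\|_{\mathcal B_0\times\mathcal B_1}$ is controlled by a multiple of $\|(g_1-g_2,\pi_1-\pi_2)\|_{C^{2,\alpha}\times C^{2,\alpha}}$ plus $\|\psi_0\|_{\mathcal B_0}$, which I arrange to be at most $\epsilon_0$ by shrinking $\mathcal U$ further.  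Theorem~\ref{thm:nl} then produces $(h,w)\in\mathcal B_2\times\mathcal B_2$ with $\|(h,w)\|_{\mathcal B_2\times\mathcal B_2}\le C\epsilon_0$ solving $\Phi^0_{(g,\pi)}(g+h,\pi+w)=\Phi^0_{(g,\pi)}(g,\pi)+(2\psi,V)$; after possibly shrinking $\epsilon_0$, the smallness of $\|h\|_{\mathcal B_2}$ guarantees the pointwise bound $|h|_g\le 3$ required for Lemma~\ref{lemma:almost-DEC}.

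Lemma~\ref{lemma:almost-DEC} applied with our $V$ then yields $\bar\mu-|\bar J|_{\bar g}\ge \mu+\psi-|J+V|_g$.  By the construction, $\mu+\psi=\chi\mu_1+(1-\chi)\mu_2+\psi_0$ and $J+V=\chi J_1+(1-\chi)J_2$, and the second conclusion of Lemma~\ref{lemma:interpolation-Phi-2} gives $|J+V|_g\le \chi|J_1|_{g_1}+(1-\chi)|J_2|_{g_2}+\psi_0$; the $\psi_0$ terms cancel and the desired inequality follows.  I expect the main obstacle to be verifying that $\psi_0$ has the right function-space regularity and smallness: the pointwise tensor norm forces $\psi_0$ to be only Lipschitz in $g_1-g_2$, which is enough for $\mathcal B_0$ but would require a smooth majorant if one sought higher regularity; for the $k=0$ statement this is adequate, and closeness of $(g_1,\pi_1),(g_2,\pi_2)$ to $(g_0,\pi_0)$ gives the required smallness of $\|\psi_0\|_{\mathcal B_0}$.
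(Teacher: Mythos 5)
Your overall route is the paper's: invoke the local surjectivity Theorem~\ref{thm:nl} with $W_0=0$ at $(g_0,\pi_0)$, choose $(2\psi,V)=-\Phi(g,\pi)+\chi\Phi(g_1,\pi_1)+(1-\chi)\Phi(g_2,\pi_2)+(2\psi_0,0)$ as in Lemma~\ref{lemma:interpolation-Phi-2}, and conclude via Lemma~\ref{lemma:almost-DEC}. Your one real departure is taking $\psi_0$ to be exactly the interpolation error $\chi(1-\chi)\bigl(|g_1-g_2|_{g_1}|J_1|_{g_1}+|g_1-g_2|_{g_2}|J_2|_{g_2}\bigr)$ rather than, as the paper does, a fixed smooth function positive on all of $\Omega$ which is then shown to dominate that error after shrinking $\mathcal U$. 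For the $k=0$ statement this variation is fine: the term is compactly supported and Lipschitz, hence lies in $\mathcal B_0$ with norm tending to $0$ as $\mathcal U$ shrinks, and the exact cancellation gives the stated inequality; the paper's fixed positive $\psi_0$ is what additionally yields the strict inequality and is more convenient for the higher-regularity versions, but neither is needed here.

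There is, however, a genuine gap at the central step as written. You solve $\Phi^0_{(g,\pi)}(g+h,\pi+w)=\Phi^0_{(g,\pi)}(g,\pi)+(2\psi,V)$, i.e. the modified operator with $W=0$, and then apply Lemma~\ref{lemma:almost-DEC} ``with our $V$.'' But the hypothesis of Lemma~\ref{lemma:almost-DEC} is that $(\bar g,\bar\pi)$ solves the equation for $\Phi^{V}_{(g,\pi)}$, with $W$ equal to the \emph{same} vector field $V$ appearing in the source; this matching is the entire point of the modified operator. If you solve with $W=0$, then $\bar J=(J+V)-\tfrac12 h\cdot J$ instead of $(J+V)-\tfrac12 h\cdot(J+V)$, and expanding $|\bar J|^2_{\bar g}$ leaves an uncontrolled first-order cross term of size $|h|\,|V|\,|J+V|$ with no sign — precisely the first-order obstruction described in the paper's introduction — and your exact-cancellation choice of $\psi_0$ leaves zero slack to absorb it (nor can you enlarge $\psi_0$ to cover it, since $h$ depends on $\psi_0$ through $(\psi,V)$). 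The fix is exactly what the paper does: apply Theorem~\ref{thm:nl} with $W=V$, which requires verifying that $V=\chi J_1+(1-\chi)J_2-J$ lies in the $C^{2,\alpha}$ neighborhood $\mathcal W$ of $W_0=0$; this follows after shrinking $\mathcal U$, using Lemma~\ref{lemma:interpolation-Taylor} at one more order of regularity (here the $C^{4,\alpha}\times C^{3,\alpha}$ hypotheses on $(g_i,\pi_i)$ are used). With that correction the remainder of your argument goes through.
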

\begin{proof}
Let $\mathcal U$, $\mathcal W$, $\epsilon$ and $C>1$ be as in Theorem~\ref{thm:nl} (with $W_0 = 0$).  We choose $(\psi, V)$ as follows (cf. Lemma~\ref{lemma:interpolation-Phi-2}),
\[
	(2\psi, V) =-\Phi(g, \pi)+ \chi \Phi(g_1, \pi_1) + (1-\chi) \Phi(g_2, \pi_2) + (2\psi_0, 0),
\]
where $\psi_0\in \mathcal B_0$ is a smooth function, positive on $\Omega$, with $\| \psi_0\|_{\mathcal B_0} \leq \frac{\epsilon}{2C}$. For  $\mathcal U$ sufficiently small, we have $V\in \mathcal W$ and $\|(\psi, V)\|_{\mathcal B_0 \times \mathcal B_1} \leq \frac{\epsilon}{C}$ by Lemma~\ref{lemma:interpolation-Phi-2}. 

Applying Theorem~\ref{thm:nl} gives $(h,w)$ that satisfies
\[
	\Phi_{(g,\pi)}^V(g+h, \pi+w) = \Phi_{(g,\pi)}^V(g, \pi) + (2\psi, V). 
\] 
The mass and current densities $(\bar{\mu}, \bar{J})$ of the deformed initial data set $(\bar{g}, \bar{\pi})= (g+h, \pi+w)$ satisfy, by Lemma~\ref{lemma:almost-DEC}, 
\[
	\bar{\mu}-|\bar{J}|_{\bar{g}}\ge \mu + \psi  - |J+V|_g = \chi \mu_1 + (1-\chi) \mu_2 +\psi_0 - | \chi J_1 + (1-\chi)J_2 |_g.
\]
Applying Lemma~\ref{lemma:interpolation-Phi-2} to estimate the last term yields 
\begin{align*}
	\bar{\mu}-|\bar{J}|_{\bar{g}}&\ge \chi (\mu_1- |J_1|_{g_1}) + (1-\chi)(\mu_2 - |J_2|_{g_2})\\
	&\quad + \psi_0 - \chi(1-\chi) (|g_1-g_2|_{g_1} |J_1|_{g_1} + | g_1-g_2|_{g_2} |J_2|_{g_2}).
\end{align*}
Because $\psi_0>0$ on $\Omega$ and $\chi(1-\chi)$ is supported on a compact subset of $\Omega$,  we have, by further shrinking $\mathcal U$ if necessary,
\[
	\psi_0 \ge  \chi(1-\chi) (|g_1-g_2|_{g_1} |J_1|_{g_1} + | g_1-g_2|_{g_2} |J_2|_{g_2}).
\]

\end{proof}
\begin{remark} 
It is clear from the proof that, assuming $(g_1, \pi_1)$ and $(g_2,\pi_2)$ satisfy the dominant energy condition, we can obtain the \emph{strict} dominant energy condition in $\Omega$ by choosing $\psi_0$ strictly greater than the error term. Also, the theorem still holds if $\chi(1-\chi)$ is supported in $\overline{\Omega}$ with an appropriate fall-off rate toward the boundary so that the error term can still be dominated by $\psi_0$. 
\end{remark}

\begin{remark} \label{rmk:cpt-spt} 
To conclude the proof of Theorem~\ref{theorem:interpolating-main} for $k=0$, we need only remark how we could arrange the support of $(h,w)$ to be contained in $\Omega$.  In fact from the above proof, we see it suffices to take $\psi_0\geq 0$, compactly supported inside $\Omega$, with $\psi_0>0$ on the support of $\chi(1-\chi)$.  A simple argument using Proposition \ref{proposition:kernel} shows that we can pull the domain in slightly to a precompact smooth subdomain $\Omega'\subset \Omega$ which contains the support of $\psi_0$, and on which $(D\Phi^0_{(g_0, \pi_0)})^*$ still has trivial kernel.  Applying the proof above of Theorem~\ref{theorem:interpolating} on $\Omega'$ allows us to solve for $(h,w)$ with compact support in $\Omega$ as desired.
\end{remark}

As another application, we provide a refined version of the density-type results with the dominant energy condition. A density-type theorem says that given an asymptotically flat initial data set $(M, g, \pi)$, there exists a sequence of initial data sets $(M, g_k, \pi_k)$ with the desired asymptotic properties, e.g. harmonic asymptotics (cf. \cite[Theorem 18]{Eichmair-Huang-Lee-Schoen:2016}), such that $(g_k, \pi_k)$ converges to $(g, \pi)$ in some appropriate topology. If $(g, \pi)$ satisfies the no-kernel condition, then the approximate sequence can be made to be identical to $(g, \pi)$ in a fixed compact set. The reason is that an initial data set interpolating between $(g,\pi)$ and $(g_k, \pi_k)$ would utimately satisfy the no-kernel condition in a fixed compact set for $k$ sufficiently large, so one can perform the localized deformation to reimpose the dominant energy condition.

\begin{corollary}
Let $(M, g, \pi)$ be a $C^{4,\alpha}_{\mathrm{loc}}\times C^{3,\alpha}_{\mathrm{loc}}$ asymptotically flat initial data set with the dominant energy condition. Let $B_R$ be a coordinate ball of radius $R$.  Suppose the kernel of $(D\Phi^0_{(g, \pi)})^*$ is trivial in $B_{R_2} \setminus \overline{B_{R_1}}$, $R_2>R_1$.  
For any sequence of asymptotically flat initial data sets $(g_k, \pi_k)$  with the dominant energy condition that converges to $(g, \pi)$ in $C_{\textup{loc}}^{4,\alpha}(M) \times C_{\textup{loc}}^{3,\alpha}(M)$, there exists $k_0 $ and a sequence $(\bar{g}_k, \bar{\pi}_k)\in C^{2,\alpha}_{\mathrm{loc}}(M)\times C^{2,\alpha}_{\mathrm{loc}}(M)$  with the dominant energy condition for $k\ge k_0$  that converges to $(g, \pi)$ in $C^{2,\alpha}_{\mathrm{loc}}(M)\times C^{2,\alpha}_{\mathrm{loc}}(M)$
and 
\begin{align*}
	(\bar{g}_k, \bar{\pi}_k) &= (g, \pi) \;\; \;\quad \mbox{ in } B_{R_1}\\
	(\bar{g}_k, \bar{\pi}_k) &= (g_k, \pi_k) \quad \mbox{ in } M\setminus B_{R_2}.
\end{align*}
\end{corollary}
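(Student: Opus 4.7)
The plan is to apply Theorem~\ref{theorem:interpolating} on the annular region $\Omega := B_{R_2}\setminus\overline{B_{R_1}}$, which is a compact manifold with smooth boundary and on which the no-kernel condition for $(D\Phi^0_{(g,\pi)})^*$ holds by hypothesis. Fix intermediate radii $R_1<R_1'<R_2'<R_2$ and choose a smooth cutoff $\chi$ on $M$ with $\chi\equiv 1$ on $B_{R_1'}$ and $\chi\equiv 0$ on $M\setminus B_{R_2'}$, so that $\chi(1-\chi)$ is supported in the compact subset $\{R_1'\le|x|\le R_2'\}$ of $\Omega$. For each $k$, I would apply the theorem on $\overline\Omega$ with $(g_1,\pi_1)=(g,\pi)$ and $(g_2,\pi_2)=(g_k,\pi_k)$; since $(g_k,\pi_k)\to(g,\pi)$ in $C^{4,\alpha}_{\mathrm{loc}}\times C^{3,\alpha}_{\mathrm{loc}}$, for all $k$ sufficiently large both pairs lie in the neighborhood $\mathcal U$ of $(g,\pi)|_{\overline\Omega}$ given by the theorem, so there is a threshold $k_0$ beyond which the construction applies.

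Next, I would inspect the proof of Theorem~\ref{theorem:interpolating} to ensure smallness of the deformation. The auxiliary positive function $\psi_0^{(k)}\in\mathcal B_0(\Omega)$ only needs to pointwise dominate the interpolation error
\[
\chi(1-\chi)\bigl(|g-g_k|_g|J|_g+|g-g_k|_{g_k}|J_k|_{g_k}\bigr),
\]
which is supported in a fixed compact subset of $\Omega$ and tends to zero uniformly as $k\to\infty$ by $C^{4,\alpha}$-convergence. One can thus choose $\psi_0^{(k)}$ with $\|\psi_0^{(k)}\|_{\mathcal B_0}\to 0$, and Lemma~\ref{lemma:interpolation-Phi-2} then gives $\|(\psi^{(k)},V^{(k)})\|_{\mathcal B_0\times\mathcal B_1}\to 0$. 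Theorem~\ref{theorem:interpolating} produces $(h_k,w_k)\in\mathcal B_2(\Omega)\times\mathcal B_2(\Omega)$ with $\|(h_k,w_k)\|_{\mathcal B_2\times\mathcal B_2}\le C\|(\psi^{(k)},V^{(k)})\|_{\mathcal B_0\times\mathcal B_1}\to 0$, such that the data $(g^{(k)}+h_k,\pi^{(k)}+w_k)$ satisfies the full dominant energy condition on $\Omega$, where $(g^{(k)},\pi^{(k)}):=\chi(g,\pi)+(1-\chi)(g_k,\pi_k)$.

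I would then set
\[
(\bar g_k,\bar\pi_k):=\begin{cases}(g,\pi)&\text{on }\overline{B_{R_1}},\\[2pt](g^{(k)}+h_k,\,\pi^{(k)}+w_k)&\text{on }\overline\Omega,\\[2pt](g_k,\pi_k)&\text{on }M\setminus B_{R_2},\end{cases}
\]
with $(h_k,w_k)$ extended by zero across $\partial\Omega$. Because $\chi\equiv 1$ in a collar of $\partial B_{R_1}$ and $\chi\equiv 0$ in a collar of $\partial B_{R_2}$, the interpolated data agrees with $(g,\pi)$ and $(g_k,\pi_k)$ respectively near those boundaries, while the exponential weight $\rho^{-1/2}$ built into $\mathcal B_2$ forces $(h_k,w_k)$ to vanish faster than any polynomial at $\partial\Omega$. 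The union is therefore a $C^{2,\alpha}_{\mathrm{loc}}(M)$ initial data set. The dominant energy condition holds on $\overline{B_{R_1}}$ and $M\setminus B_{R_2}$ by hypothesis and on $\overline\Omega$ by construction, and convergence $(\bar g_k,\bar\pi_k)\to(g,\pi)$ in $C^{2,\alpha}_{\mathrm{loc}}$ follows from $(g^{(k)},\pi^{(k)})\to(g,\pi)$ together with $\|(h_k,w_k)\|_{C^{2,\alpha}(\overline\Omega)}\to 0$, which is controlled by $\|(h_k,w_k)\|_{\mathcal B_2\times\mathcal B_2}\to 0$.

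The main obstacle is the quantitative claim that $\|(h_k,w_k)\|_{\mathcal B_2\times\mathcal B_2}\to 0$: one must verify that $\psi_0^{(k)}$ can be picked small enough in $\mathcal B_0(\Omega)$ to vanish in the limit while still dominating the pointwise interpolation error. This is a mild adjustment rather than a genuine difficulty, since that error is essentially quadratic in $(g-g_k,\pi-\pi_k)$ via the product $\chi(1-\chi)|g-g_k|_g|J|_g$, is supported in a fixed compact subset of $\Omega$, and tends to zero uniformly as $k\to\infty$.
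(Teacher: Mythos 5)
Your proposal is correct and is essentially the argument the paper intends; the corollary is stated without an explicit proof, and the natural derivation is precisely what you carry out: apply Theorem~\ref{theorem:interpolating} on the annulus $\Omega=B_{R_2}\setminus\overline{B_{R_1}}$, with $(g_1,\pi_1)=(g,\pi)$ and $(g_2,\pi_2)=(g_k,\pi_k)$, then glue by triviality of the deformation near $\partial\Omega$. You also correctly identify the one point that requires inspecting the proof of Theorem~\ref{theorem:interpolating} rather than only its statement — namely that $\psi_0$ (and hence $\psi$) can be chosen to shrink to zero as $k\to\infty$ while still dominating the interpolation error, so that the estimate $\|(h_k,w_k)\|_{\mathcal B_2\times\mathcal B_2}\le C\|(\psi^{(k)},V^{(k)})\|_{\mathcal B_0\times\mathcal B_1}\to 0$ from Theorem~\ref{thm:nl} yields convergence. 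The weighted spaces $\mathcal B_2$ indeed embed into $C^{2,\alpha}(\overline\Omega)$ with vanishing to all orders at $\partial\Omega$, so the union is a $C^{2,\alpha}_{\mathrm{loc}}$ initial data set and the $C^{2,\alpha}_{\mathrm{loc}}$ convergence follows.

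One small imprecision: the interpolation error you quote, $\chi(1-\chi)\bigl(|g-g_k|_g|J|_g+|g-g_k|_{g_k}|J_k|_{g_k}\bigr)$, is \emph{linear} rather than quadratic in $(g-g_k)$, since $|J|$ and $|J_k|$ are merely bounded on the fixed compact support of $\chi(1-\chi)$. This does not affect your conclusion — the term still tends to zero uniformly by $C^{4,\alpha}_{\mathrm{loc}}$-convergence, which is all that is needed to shrink $\psi_0^{(k)}$ — but the rate is linear, and Lemma~\ref{lemma:interpolation-Taylor}\eqref{item:estimate-2} confirms that the remaining contribution to $(\psi^{(k)},V^{(k)})$ from $\Phi(g^{(k)},\pi^{(k)})-\chi\Phi(g,\pi)-(1-\chi)\Phi(g_k,\pi_k)$ is likewise controlled linearly by $\|(g-g_k,\pi-\pi_k)\|_{C^{2,\alpha}(\overline\Omega)}$.
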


\begin{remark}
Clearly from the proof we can reverse the roles of $(g, \pi)$ and $(g_k,\pi_k)$ and construct the converging sequence of initial data sets which is $(g,\pi)$ outside $B_{R_2}$ and is $(g_k, \pi_k)$ in $B_{R_1}$. 
\end{remark}

\section{Gluing in the asymptotically flat region} \label{N-body}

In this section, we prove gluing results with the dominant energy condition for initial data sets that are arbitrarily close to the flat data.  We focus on the three-dimensional case since our examples of admissible families are in three dimensions (see Section~\ref{section:admissible}), but the analysis presented here can be generalized to higher dimensions. Please refer to Appendix~\ref{section:asymptotically-flat} for the definitions of asymptotically flat initial data sets and the ADM integrals.

\subsection{Rescaling}
Given any pair of asymptotically flat initial data sets defined on the exterior of a ball in $\mathbb{R}^3$, they are both close to the flat data on $A_R$ for  $R$ large, by the asymptotic flatness.  Hence the error from interpolation between those two initial data sets is small enough, for $R$ sufficiently large, so that the localized deformation is applicable. Instead of working on $A_R$, it is convenient to perform the analysis over a fixed region $A_1$ via rescaling.  

\begin{notation} \label{notation:cut-off}
Let $B_R$ be an open ball of radius $R$ in $\mathbb{R}^3$. Denote by $A_R =  B_{2R} \setminus \overline{B_R}$ the open annulus. Let $\chi$ be a smooth cutoff function that is $\chi= 1$ on $B_1$ and $\chi= 0$ outside $B_2$ with $\chi(1-\chi)$ supported on a compact subset of $A_1$. Denote by $\chi_R(y) = \chi(y/R)$ the rescaled cutoff function. 
\end{notation}

We  make some general remarks on rescaling. Let $(g, \pi)$ be an initial data set on $\mathbb{R}^3\setminus B$. Here we write $\pi$ as a $(0,2)$ tensor with the indices lowered by $g$. Let $F_R: A_1 \to A_R$ be the diffeomorphism sending $x \mapsto y = Rx$. Define the rescaled initial data set on $A_1$, via pullback, as follows:
\[
	g^R = R^{-2} F_R^* g, \qquad \pi^R = R^{-1} F_R^* \pi.
\]
Noting $(F_R)_*\left(\partial_{x^i}\right) = R \partial_{y^i}$, in coordinates we have 
\begin{align*}
	g^R(\partial_{x^i}, \partial_{x^j}) (x)&=R^{-2} g( (F_R)_*(\partial_{x^i}), (F_R)_*(\partial_{x^j})) (y)= g(\partial_{y^i}, \partial_{y^j}) (y) \\
	\pi^R(\partial_{x^i}, \partial_{x^j}) (x)&= R^{-1} \pi ( (F_R)_*(\partial_{x^i}), (F_R)_*(\partial_{x^j})) (y)= R\pi(\partial_{y^i}, \partial_{y^j}) (y).
\end{align*}
The constraint operator is scaling invariant in the sense that
\[
	\Phi (g^R, \pi^R) = R^2 F_R^*\Phi(g, \pi) . 
\]
If  $(g, \pi)$ is asymptotically flat at the rate $(q, q_0)$ with respect to $y$, then the rescaled initial data sets  on $A_1$ satisfy
\begin{align*}
	|g^R_{ij} - \delta_{ij} |\le CR^{-q}, \qquad | \pi^R_{ij} | \le CR^{-q}, \qquad |\Phi(g^R, \pi^R)| \le C R^{-1-q_0}
\end{align*}
and  for a multi-index $I$ with $|I|\le k$,
\begin{align*}
	&|\partial^I_x g_{ij}^R (x)|= |R^{|I|}\partial^I_y g_{ij} (y)| \le CR^{-q}\\
	&|\partial^I_x \pi^R_{ij}(x)| =  |R^{1+|I|}\partial^I_y \pi_{ij} (y)| \le CR^{-q},
\end{align*}
where $C$ depends on $\| g-g_{\mathbb{E}}\|_{C^{k}_{-q}(A_R)},  \| \pi \|_{C^{k}_{-1-q}(A_R)}$. 

The following computational lemma says that the interpolation between $\mu$ and $J$ gives the interpolation between the dominant energy inequality, up to a controllable error term. The lemma  will be applied to initial data sets on $A_1$ that come from rescaling.

\begin{lemma} \label{lemma:estimates}
Let $(g_1, \pi_1), (g_2, \pi_2)\in C^{2,\alpha}(\overline{A_1})\times C^{2,\alpha}(\overline{A_1})$ be initial data sets on $\overline{A_1}$. Suppose $C_1>0$ is such that
\begin{align*}
	\| (g_1- g_2, \pi_1-\pi_2)\|_{C^{2,\alpha}(\overline{A_1})\times C^{2,\alpha}(\overline{A_1})} &\leq C_1 R^{-q}.
\end{align*}
 Let $(g, \pi) = \chi (g_1, \pi_1) + (1-\chi) (g_2, \pi_2)$, and let
 \begin{align*}
	(2\psi, V) = - \Phi( g,\pi)+ \chi \Phi(g_1, \pi_1) + (1-\chi) \Phi(g_2, \pi_2) + (2\psi_0R^{-1-q_0}, 0),
\end{align*}
for some $\psi_0\in \mathcal B_0(A_1)$. Then 
\begin{align}\label{equation:estimate-1}
	\| (\psi, V) \|_{\mathcal B_0 \times \mathcal B_1} \le CR^{-\min(q, 1+q_0)},
\end{align}
where $C$ depends on $C_1, \| \psi_0 \|_{\mathcal{B}_0}, \| \chi(1-\chi)\|_{\mathcal{B}_1}, \| \nabla^2 \chi \|_{\mathcal B_0}, \| \nabla \chi \|_{\mathcal B_1}$ and locally uniformly on $(g_1, \pi_1), (g_2, \pi_2) \in C^{2,\alpha}(\overline{A_1})\times C^{2,\alpha}(\overline{A_1})$ and $\chi \in C^{2,\alpha}(\overline{A_1})$. 

Furthermore, if 	$\|J_1\|_{C^0}+ \|J_2\|_{C^0} \le C_1 R^{-1-q_0}$, and if there exists $(h, w)$ so that the initial data set $(\bar g, \bar \pi)=(g+h, \pi + w)$ satisfies
 \[
 \Phi^V_{(g,\pi)}(g+h, \pi+w)  = \Phi^V_{(g,\pi)}( g,h) + (2\psi, V),
 \]
with $|h|_g\leq 3$, then 
\begin{align} \label{equation:estimate-2}
\begin{split}
	\bar{\mu} - |\bar{J}|_{\bar{g}} &\ge \chi( \mu_1 - | J_1|_{g_1} ) + (1-\chi) (\mu_2 - |J_2|_{g_2}) \\
	&\quad +  \left( \psi_0 - 2C_1^2\chi(1-\chi) R^{-q}\right)R^{-1-q_0},
\end{split}
\end{align}
where $(\bar{\mu}, \bar{J}), (\bar{\mu}_1, \bar{J}_1), (\bar{\mu}_2, \bar{J}_2)$ are the mass and current densities of $(\bar{g}, \bar{\pi}), (g_1,\pi_1), (g_2,\pi_2)$, respectively.

\end{lemma}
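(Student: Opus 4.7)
The statement is a bookkeeping result: it is essentially Lemma~\ref{lemma:interpolation-Phi-2} and Lemma~\ref{lemma:almost-DEC} applied to a rescaled family, with careful tracking of the powers of $R$ arising from the asymptotic decay rates $(q,q_0)$. The plan has two parts, one for each displayed estimate.

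\textbf{Estimate \eqref{equation:estimate-1}.} I would apply Lemma~\ref{lemma:interpolation-Phi-2} directly to $(g_1,\pi_1),(g_2,\pi_2)$, with the smooth function there given by $\psi_0 R^{-1-q_0}$ in place of $\psi_0$. The lemma outputs
\[
\|(\psi,V)\|_{\mathcal B_0\times\mathcal B_1}\le C\bigl(\|\chi(1-\chi)\|_{\mathcal B_0}+\|\nabla\chi\|_{\mathcal B_1}\bigr)\|(g_1-g_2,\pi_1-\pi_2)\|_{C^{2,\alpha}\times C^{2,\alpha}}+\|\psi_0\|_{\mathcal B_0}R^{-1-q_0}.
\]
Plugging in the hypothesis $\|(g_1-g_2,\pi_1-\pi_2)\|_{C^{2,\alpha}\times C^{2,\alpha}}\le C_1R^{-q}$ yields the desired bound by $R^{-\min(q,1+q_0)}$, with constants depending on the quantities listed.

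\textbf{Inequality \eqref{equation:estimate-2}.} By construction, $\Phi^V_{(g,\pi)}(g+h,\pi+w)=\Phi^V_{(g,\pi)}(g,\pi)+(2\psi,V)$ with $|h|_g\le 3$, so Lemma~\ref{lemma:almost-DEC} gives
\[
\bar\mu-|\bar J|_{\bar g}\ge \mu+\psi-|J+V|_g.
\]
From the definition of $(\psi,V)$, we have $\mu+\psi=\chi\mu_1+(1-\chi)\mu_2+\psi_0R^{-1-q_0}$ and $J+V=\chi J_1+(1-\chi)J_2$. The pointwise inequality from Lemma~\ref{lemma:interpolation-Phi-2} bounds
\[
|\chi J_1+(1-\chi)J_2|_g\le \chi|J_1|_{g_1}+(1-\chi)|J_2|_{g_2}+\chi(1-\chi)\bigl(|g_1-g_2|_{g_1}|J_1|_{g_1}+|g_1-g_2|_{g_2}|J_2|_{g_2}\bigr).
\]
Combining the two pieces, the $\chi\mu_i$ and $\chi|J_i|_{g_i}$ terms regroup into $\chi(\mu_1-|J_1|_{g_1})+(1-\chi)(\mu_2-|J_2|_{g_2})$. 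For the residual, I use $\|g_i-g_j\|_{C^0}\le C_1R^{-q}$ and the current-density hypothesis $\|J_i\|_{C^0}\le C_1R^{-1-q_0}$ to bound each summand by $C_1^2R^{-q}R^{-1-q_0}$, giving total $2C_1^2\chi(1-\chi)R^{-q-1-q_0}$. Factoring $R^{-1-q_0}$ out of the remainder produces the stated form.

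\textbf{Main difficulty.} There is no real analytic obstacle; the work is keeping track of which norms depend on which metric and how the $R$-powers of the decay rate propagate through the two pointwise estimates. Two minor points deserve care: the norms $\mathcal B_0,\mathcal B_1$ are weighted by $\phi,\rho$ on $A_1$, so one must check that multiplying the ``flat'' Taylor-type bound of Lemma~\ref{lemma:interpolation-Phi-2} by $R^{-q}$ is compatible with those weighted norms (since the factor is constant in $x$, it commutes trivially with the weights); and one must verify that the equivalence of metrics on $A_1$ (all close to $g_{\mathbb E}$ for large $R$) lets us absorb factors like $|g_i-g_j|_{g_k}$ into a common bound up to a harmless constant.
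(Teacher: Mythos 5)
Your proposal is correct and follows essentially the same route as the paper: the weighted bound comes from the interpolation estimate (Lemma~\ref{lemma:interpolation-Phi-2}, itself a repackaging of Lemma~\ref{lemma:interpolation-Taylor}) together with the hypothesis $\|(g_1-g_2,\pi_1-\pi_2)\|_{C^{2,\alpha}\times C^{2,\alpha}}\le C_1R^{-q}$, and the dominant energy inequality comes from Lemma~\ref{lemma:almost-DEC} combined with the pointwise bound on $|J+V|_g$ and the $C^0$ hypotheses on $g_1-g_2$ and $J_i$. The $R$-power bookkeeping and the regrouping into $\chi(\mu_1-|J_1|_{g_1})+(1-\chi)(\mu_2-|J_2|_{g_2})$ match the paper's (terser) argument.
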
 
\begin{proof}
By Lemma~\ref{lemma:interpolation-Taylor}, we have the following 
\begin{align*}
	 \| ( 2\psi, V) \|_{\mathcal B_0 \times \mathcal B_1} &=\| \chi \Phi(g_1, \pi_1) + (1-\chi) \Phi(g_2, \pi_2) -  \Phi(g, \pi)\| _{\mathcal B_0 \times \mathcal B_1} + \|2\psi_0 \|_{\mathcal{B}_0} R^{-1-q_0} \\
	& \le CR^{-\min(q, 1+q_0)}.
\end{align*}

By Lemma~\ref{lemma:interpolation-Phi-2}, we have
\begin{align*}	
	|J+V|_g &\le \chi |J_1|_{g_1} + (1-\chi) |J_2|_{g_2}\\
	&\quad + \chi (1-\chi) ( |g_1 - g_2|_{g_1}  |J_1|_{g_1}  +  |g_1 - g_2|_{g_2}  |J_2 |_{g_2}).
\end{align*}
Lemma \ref{lemma:almost-DEC} gives $\bar{\mu} - |\bar{J}|_{\bar{g}}\ge \mu + \psi - |J+V|_g$, so that inequality \eqref{equation:estimate-2} follows using 
\[
	\bar{\mu} = \chi \mu_1 + (1-\chi) \mu_2 + \psi_0 R^{-1-q_0}.
\]
\end{proof}

For $R$ large,  the rescaled initial data set $(g^R, \pi^R)$ on $A_1$ is close to the flat data  $(g_{\mathbb{E}}, 0)$. Hence the adjoint of the linearized operator at $(g^R, \pi^R)$ has an  \emph{approximate kernel} comprised of the ten-dimensional kernel $K$ of the flat data (see Example~\ref{example:linearization}), in the sense that elements of unit norm in $K$ are mapped by the adjoint operator at $(g^R, \pi^R)$ to elements of small norm.  Thus there is no uniform coercivity estimate for the adjoint operator at $(g^R, \pi^R)$ as $R$ tends to infinity. The following theorem, whose proof is deferred to  Section \ref{section:projected},  says that the modified constraint operator can be solved transverse to the approximate kernel.

Let $K$ be the kernel of $(D\Phi^{W_0}_{(g_0, \pi_0)})^*$, which is finite-dimensional by Proposition~\ref{proposition:kernel}.  Let $\mathcal{U}_0$ be a bounded neighborhood of a Riemannian metric $g_0$, as in Section~\ref{subsection:weighted}.  We fix a smooth bump function $\zeta$ supported in $\overline{\Omega_0} \subset \Omega$, where the precompact smooth subdomain $\Omega_0$ is chosen so that $\rho_g \equiv 1$ on $\Omega_0$ for all $g\in \mathcal{U}_0$.  Denote by $S_g$ the $L^2(d\mu_g)$-orthogonal complement of $\zeta K$. Let $\Pi_g: \mathcal{B}_0\times \mathcal B_1 \to (\mathcal{B}_0\times \mathcal B_1) \cap S_g$ be the $L^2(d\mu_g)$-orthogonal projection. 
\begin{theorem} \label{theorem:projected-modified}
Let $(g_0, \pi_0)\in C^{4, \alpha}(\overline{\Omega}) \times C^{3, \alpha}(\overline{\Omega})$ be an initial data set, and let $W_0\in C^{2,\alpha}(\overline{\Omega})$ be a vector field. There is a neighborhood $\mathcal U$ of $(g_0, \pi_0)$ in $C^{4, \alpha}(\overline{\Omega}) \times C^{3, \alpha}(\overline{\Omega})$, a neighborhood $\mathcal W$ of $W_0$ in $C^{2,\alpha}(\overline{\Omega})$,  and constants  $\epsilon>0$, $C>0$ such that for $(g, \pi)\in \mathcal U$, $W\in \mathcal W$, and $(\psi,V)\in \mathcal B_0(\Omega)\times \mathcal{B}_1(\Omega)$ with $\|(\psi,V)\|_{\mathcal B_0\times \mathcal{B}_1} \leq \epsilon$, there is a pair of symmetric tensors $(h,w)\in \mathcal B_2(\Omega)\times \mathcal B_2(\Omega)$ such that the initial data set $(g+h, \pi+w)$ satisfies
\[
	\Pi_{g_0}\circ \Phi^W_{(g,\pi)} (g+h , \pi + w)  = \Pi_{g_0}\circ \Phi^W_{(g,\pi)} (g, \pi )+ \Pi_{g_0}(2\psi, V)
\]
with  
\[
\|(h,w)\|_{\mathcal B_2\times \mathcal B_2} \leq C \| \Pi_{g_0}(2\psi, V)\|_{\mathcal B_0\times \mathcal{B}_1}.
\] 
\end{theorem}

We now apply this in the setting we study here, interpolating $\mu$ and $J$. 

\begin{proposition} \label{proposition:nl2} 
Let $(g_1, \pi_1)$ and $(g_2, \pi_2)\in  C_{\mathrm{loc}}^{4,\alpha}\times C^{3,\alpha}_{\mathrm{loc}}$ be asymptotically flat initial data sets at the rate $(q, q_0)$ on $\mathbb{R}^3 \setminus B$.  Consider the corresponding rescaled initial data sets $(g_1^R, \pi_1^R), (g_2^R, \pi_2^R)$ on~$A_1$. Define the initial data set 
\[
	(\gamma^R, \tau^R) = \chi (g_1^R, \pi_1^R) + (1-\chi) (g_2^R, \pi_2^R).
\]
Let 
\[
	(2\psi^R, V^R) =-  \Phi(\gamma^R, \tau^R) + \chi \Phi(g_1^R, \pi_1^R) + (1-\chi) \Phi(g_2^R, \pi_2^R) + (2\psi_0 R^{-1-q_0}, 0 )
\]
for some $\psi_0\in \mathcal B_0(A_1)$. There is  $R_0>0$ and $C>0$, depending only on $g_{\mathbb{E}}$, $\chi$, $\| \psi_0\|_{\mathcal B_0}$, $\| (g_1-g_{\mathbb{E}}, \pi_1 ) \|_{C^{2,\alpha}_{-q} \times C^{2,\alpha}_{-1-q}}$, $\| (g_2-g_{\mathbb{E}}, \pi_2 ) \|_{C^{2,\alpha}_{-q} \times C^{2,\alpha}_{-1-q}}$,  such that for each $R\geq R_0$, there exists a pair of symmetric tensors $(h^R,w^R)\in \mathcal B_2(A_1)\times \mathcal B_2(A_1)$ such that the initial data set $(\gamma^R+h^R, \tau^R+w^R)$ satisfies 
\[
	\Pi_{g_{\mathbb{E}}}\circ\Phi^{V^R}_{(g_{\mathbb{E}}, 0)} (\gamma^R+h^R , \tau^R+ w^R)  = \Pi_{g_{\mathbb{E}}}\circ\Phi^{V^R}_{(g_{\mathbb{E}}, 0)} (\gamma^R, \tau^R)+ \Pi_{g_{\mathbb{E}}} (2\psi^R, V^R)
\]
with  the estimate
\begin{align} \label{equation:fall-off-rate}
	\|(h^R,w^R)\|_{\mathcal{B}_2\times \mathcal{B}_2} \le CR^{-\min(q, 1+q_0)}.
\end{align}
\end{proposition}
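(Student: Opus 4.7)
The plan is to apply Theorem~\ref{theorem:projected} at the fixed basepoint $(g_0, \pi_0) = (g_\mathbb{E}, 0)$ with $W_0 = 0$, after verifying that all of its hypotheses hold for the rescaled data once $R$ is large enough. The key observation is that although $(\gamma^R, \tau^R)$ and $V^R$ vary with $R$, they all sit in a fixed $C^{4,\alpha}\times C^{3,\alpha}$ neighborhood of flat data and a fixed $C^{2,\alpha}$ neighborhood of $0$, respectively.

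First, I would record the consequences of asymptotic flatness under the rescaling. The estimates in the paragraph preceding Lemma~\ref{lemma:estimates} yield, for $i = 1, 2$,
\[
\|g_i^R - g_\mathbb{E}\|_{C^{4,\alpha}(\overline{A_1})} + \|\pi_i^R\|_{C^{3,\alpha}(\overline{A_1})} \leq C R^{-q},
\]
with $C$ depending only on the weighted asymptotic norms of $(g_i - g_\mathbb{E}, \pi_i)$. Since $(\gamma^R, \tau^R)$ is a $\chi$-convex combination of these, the same bound holds for $(\gamma^R - g_\mathbb{E}, \tau^R)$. Next I would apply Lemma~\ref{lemma:estimates} to the pair $(g_1^R, \pi_1^R), (g_2^R, \pi_2^R)$ on $\overline{A_1}$: its difference hypothesis is exactly the bound just displayed, and the pointwise smallness $\|J_i^R\|_{C^0} \leq C_1 R^{-1-q_0}$ follows from the asymptotic decay $J_i \in C^{0,\alpha}_{-3-q_0}$ together with the rescaling of $\Phi$. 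The lemma then provides
\[
\|(\psi^R, V^R)\|_{\mathcal{B}_0 \times \mathcal{B}_1} \leq C R^{-\min(q,\, 1+q_0)}.
\]

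With these two ingredients in hand, let $\mathcal{U}, \mathcal{W}, \epsilon, C$ be the data furnished by Theorem~\ref{theorem:projected} applied on $\Omega = A_1$ at the basepoint $(g_\mathbb{E}, 0)$ with $W_0 = 0$. For $R$ large enough, $(\gamma^R, \tau^R) \in \mathcal{U}$ by Step~1; $V^R \in \mathcal{W}$, since the $\mathcal{B}_1$-bound on $V^R$ controls its $C^{2,\alpha}(\overline{A_1})$ norm via the definition of the weighted norms; and $\|\Pi_{g_\mathbb{E}}(\psi^R, V^R)\|_{\mathcal{B}_0 \times \mathcal{B}_1} \leq \epsilon$ because the projection is bounded and the source is $O(R^{-\min(q,1+q_0)})$. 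Theorem~\ref{theorem:projected} then produces $(h^R, w^R)\in \mathcal{B}_2(A_1)\times \mathcal{B}_2(A_1)$ satisfying the projected identity, together with
\[
\|(h^R, w^R)\|_{\mathcal{B}_2 \times \mathcal{B}_2} \leq C \,\|\Pi_{g_\mathbb{E}}(\psi^R, V^R)\|_{\mathcal{B}_0 \times \mathcal{B}_1} \leq C R^{-\min(q,\, 1+q_0)},
\]
which is exactly \eqref{equation:fall-off-rate}.

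The main obstacle, modest here, is ensuring uniformity in $R$: the neighborhoods $\mathcal{U}, \mathcal{W}$ and the constants $\epsilon, C$ of Theorem~\ref{theorem:projected} depend on the basepoint and on $W_0$. By pinning these to $(g_\mathbb{E}, 0)$ and $W_0 = 0$ (rather than to the moving data $(\gamma^R, \tau^R)$ or $V^R$), all $R$-dependence is pushed into the right-hand side, where it is dominated by Lemma~\ref{lemma:estimates}. Correspondingly, the projection $\Pi_{g_\mathbb{E}}$ is the same operator for every $R$, so the smallness of its action on $(\psi^R, V^R)$ follows from the smallness of $(\psi^R, V^R)$ itself. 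This is precisely the setting in which Theorem~\ref{theorem:projected} was designed to cover the approximate kernel from the flat data.
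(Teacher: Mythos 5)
Your proposal is essentially the paper's own proof: apply Theorem~\ref{theorem:projected} on $A_1$ at the fixed basepoint $(g_{\mathbb{E}},0)$ with $W_0=0$, and verify the smallness hypotheses via the rescaling estimates, Lemma~\ref{lemma:estimates}, and the boundedness of $\Pi_{g_{\mathbb{E}}}$ (Lemma~\ref{lemma:projection}), so that all $R$-dependence sits in the right-hand side. One small correction: your justification that $V^R\in\mathcal W$ because ``the $\mathcal B_1$-bound controls the $C^{2,\alpha}(\overline{A_1})$ norm'' is not right, since $\mathcal B_1$ is only a weighted $C^{1,\alpha}$-type space; the membership $V^R\in\mathcal W$ should instead be read off from the explicit formula for $V^R$ (an interpolation of divergence terms of the rescaled data) together with the rescaled decay estimates, which is how the paper treats it.
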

\begin{proof}
Apply Theorem~\ref{theorem:projected-modified} with $(g_0, \pi_0) = (g_{\mathbb{E}}, 0)$ and $W_0=0$. By Lemma~\ref{lemma:estimates} and Lemma~\ref{lemma:projection}, for $R$ sufficiently large we have $V^R\in \mathcal W$ and
\begin{align*}
	 \| \Pi_{g_{\mathbb{E}}} ( 2\psi^R, V^R ) \|_{\mathcal B_0 \times \mathcal B_1}\leq CR^{-\min(q, 1+q_0)}< \epsilon.
\end{align*}
 Therefore we can apply Theorem~\ref{theorem:projected-modified} to solve for $(h^R, w^R)$ with the estimate~\eqref{equation:fall-off-rate}. 
\end{proof}

\subsection{An admissible family and gluing} \label{section:admissible}
As observed in \cite{Corvino-Schoen:2006}, the $10$-dimensional approximate kernel in the asymptotically gluing is ultimately connected to the $10$-dimensional parameter space of an \emph{admissible family}. An improved rescaling argument to handle a more general situation is discussed in \cite{Chrusciel-Corvino-Isenberg:2011}. However, since neither paper states in sufficiently explicit terms the requisite conditions on an admissible family that can be used to model the asymptotics of initial data sets by asymptotic gluing, we proceed to do so now.

In order to handle  initial data sets whose center of mass and angular momentum integrals may not converge, we define the center of mass and angular momentum integrals of $(g, \pi)$ at the finite radius $R$ as
\[
	\mathcal C^R_i = \frac{1}{16\pi} B^R_{(g,\pi)} (x^i, 0), \qquad \mathcal J^R_k = \frac{1}{8\pi} B^R_{(g, \pi)} (0, x\times \frac{\partial}{\partial x^k}).
\] 
It is clear that $\mathcal C^R, \mathcal{J}^R$ are continuous in $R$. If  $(g, \pi)$ is asymptotically flat at the rate $q=q_0=1$, then $(\mathcal C^R, \mathcal{J}^R)=O(\log R)$ as $R$ tends to infinity. For other values of  $q, q_0$, $(\mathcal C^R, \mathcal{J}^R)=O(R^{1-\min\{1, q_0, 2q-1\}})$. Note that if $(g, \pi)$ satisfies the Regge-Teitelboim conditions, then $(\mathcal C^R, \mathcal J^R)$ converges to a pair of vectors (see, e.g. \cite{Corvino-Schoen:2006, Corvino-Wu:2008, Huang:2009, Huang:2010}).  We denote $a =\min\{1, q_0, 2q-1\}\in (0, 1]$.

\begin{definition}\label{definition:admissible}
Let $(g, \pi)\in C_{\mathrm{loc}}^{k+4,\alpha}\times C^{k+3,\alpha}_{\mathrm{loc}}$ be an asymptotically flat initial data set on $\mathbb{R}^3\setminus B_{R_0}$, at the rate $( q, q_0)$. 
Let $\mathcal{S} =\{ (g^{\theta}, \pi^{\theta}) \}_{\theta\in\Theta}$ be a family of $C^{k+4,\alpha}_{\textup{loc}}\times C^{k+3,\alpha}_{\textup{loc}}$ asymptotically flat initial data sets defined on $\mathbb{R}^3 \setminus B_{R_0}$, where the components of $\theta = (E^\theta, P^\theta, \mathcal C^\theta, \mathcal{J}^\theta)$ are the ADM energy, linear momentum, center of mass, and angular momentum of $(g^{\theta}, \pi^{\theta})$, and let $(2\mu^{\theta}, J^{\theta})=\Phi(g^{\theta}, \pi^{\theta})$.   The family $\mathcal{S}$ is said to be an \emph{admissible family for $(g, \pi)$} if the following properties hold, with respect to a fixed asymptotically flat coordinate chart:
\begin{enumerate}
\item The map $ (g^{\theta}, \pi^{\theta}) \mapsto \theta \in \Theta=\Theta_1 \times \Theta_2$ is a homeomorphism where  $\Theta_1 \subset \mathbb{R}^4$,  $\Theta_2 \subset \mathbb{R}^{6}$ are open sets such that $\Theta_1$  contains $(E, P)$ of $(g, \pi)$ and $\Theta_2 = \cup_{R\ge R_0} \Theta^R_2$ where $\Theta_2^R$ is the ball centered at $(\mathcal C^R, \mathcal J^R)$ of radius $ R^{1-a}\log R$.\label{definition:admissible1}

\item  $(g^{\theta}, \pi^{\theta})$ satisfies the following uniformity conditions:  there is a constant $\kappa>0$ such that for all $R\geq R_0$ and $\theta \in \Theta_1\times \Theta_2^R$,
\begin{align}\label{equation:fall-off}
\begin{split}
	\| (g^{\theta} - g_{\mathbb{E}}, \pi^{\theta} )\|_{C^2_{-q} ( \mathbb{R}^3\setminus B_R)\times C^1_{-1-q} ( \mathbb{R}^3\setminus B_R)}&\le \kappa\\
	\| (\mu^\theta, J^\theta)  \|_{C^0_{-3-q_0} ( \mathbb{R}^3\setminus B_R)} &\le \kappa
\end{split}
\end{align}
and 
\begin{align}\label{equation:fall-off-RT}
\begin{split}
	|B^R_{(g^\theta, \pi^\theta)}(x^k, 0) - 16 \pi \mathcal C^\theta_k |&\le \kappa |\theta|^2R^{-1}\\
	|B^R_{(g^\theta, \pi^\theta)}(0, x\times \frac{\partial}{\partial x^k}) - 8 \pi \mathcal J^\theta_k | &\le \kappa |\theta|^{2} R^{-1}.
\end{split}
\end{align}
\end{enumerate}
\end{definition}

\begin{remark}
The definition of the parameter set $\Theta_1\times \Theta_2^R$ is set up in a way so that $\theta$ can be large enough to account for the error terms from $(\mathcal C^R, \mathcal J^R)$ and from the right hand side of \eqref{equation:fall-off-RT}, which may compete with largeness of $\theta$. This subtle balance shows up in Lemma~\ref{lemma:degree} below in the degree argument. We remark that in Definition~\ref{definition:admissible}, the term $|\theta|^2$ appearing in \eqref{equation:fall-off-RT} can be replaced by $|\theta|^{\kappa_2}$ for $\kappa_2>0$. If $q=q_0=1$, the same parameter set $\Theta_1\times \Theta_2^R$ is still valid. For other values of $q, q_0$, the radius of $\Theta_2^R$ needs to be modified accordingly, depending on $\kappa_2$. This can be done by tracking the exponents in the proof of Lemma~\ref{lemma:degree}.
\end{remark}

\begin{example} Let $(g, \pi)$ be an asymptotically flat initial data set with $E>|P|$. Let $\Theta=\Theta_1 \times \mathbb R^6$, where $(E, P)\in \Theta_1$ is a precompact open subset of $ \{ (a, b) \in \mathbb{R}\times \mathbb{R}^3: a > |b|\}$. There is an admissible \emph{vacuum} family  $\mathcal{S}_{\textup{Kerr}}=\{(g^{\theta}, \pi^{\theta})\}_{\theta\in \Theta}$ for $(g,\pi)$ obtained from the family of Kerr spacetimes. Condition \eqref{definition:admissible1} is shown in \cite[Appendix F]{Chrusciel-Delay:2003}. Inequalities \eqref{equation:fall-off} in Condition (2) follow from the asymptotic expansions of $(g^\theta, \pi^\theta)$ and precompactness of $\Theta_1$. Inequalities \eqref{equation:fall-off-RT} follows from a standard argument:  by the divergence theorem, $B^R_{(g^\theta, \pi^\theta)}(x^k, 0) - 16 \pi \mathcal C^\theta_k  = \int_{M\setminus B_R} x^k \sum_{i,j} (g^\theta_{ij,ij} - g^\theta_{ii,jj}) \, dx$. The estimate then follows by using the constraint equations to rewrite the integrand and estimate the resulting terms in an obvious way. The inequality for the angular momentum can be obtained in the same fashion. 

The same argument would also allow us to find an admissible \emph{non-vacuum} family $\mathcal{S}_{\textup{Kerr-Newman}} = \{(g^{\theta}, \pi^{\theta})\}_{\theta\in \Theta}$ for $(g,\pi)$ obtained from the Kerr-Newman spacetime for each fixed pair of electric and magnetic charges.
\qed
\end{example}

Admissible families will be used in an asymptotic gluing construction in Theorem~\ref{theorem:gluing2} below, and we want to highlight how the assumptions in Definition~\ref{definition:admissible}  will appear.  The condition on the parameter set $\Theta_2^R$ is to handle the scaling in the center of mass and angular momentum components of a map $\mathcal I^R$, in order to be able to apply a degree argument as used in the following technical lemma.   The uniformity assumptions  \eqref{equation:fall-off} and \eqref{equation:fall-off-RT}  will be used in the proof of the theorem to establish the desired estimate of the error term $\mathcal I_0^R$, as appears in the following lemma.

\begin{lemma}\label{lemma:degree}
 Let $\Theta_1\times \Theta^R_2$ be as in Definition~\ref{definition:admissible}. Let $\mathcal I^R : \Theta_1\times \Theta^R_2 \to \mathbb{R}^{10}$ be a family of continuous maps, parametrized by $R \in [ R_0, \infty)$. Suppose that there is a constant $C>0$ such that for any $R\ge R_0$ and $\theta = (E^\theta, P^\theta, \mathcal C^\theta, \mathcal J^\theta)\in \Theta_1\times \Theta^R_2$,
\begin{align*}
	\mathcal{I}^{R}(\theta) =( E^\theta - E, P^\theta - P, R^{-1} (\mathcal C^\theta - \mathcal C^R), R^{-1}(\mathcal J^\theta- \mathcal J^R)) +  \mathcal{I}_0^R(\theta),
\end{align*}
where
\begin{align*}
	|\mathcal{C}^R|&\le C\log R\\
	|\mathcal J^R |&\le  C\log R\\
	|\mathcal{I}_0^R (\theta)|&\le C( R^{-a} (\log R)^{\frac{1}{2}}+   |\theta|^{2}R^{-2}).
\end{align*}
Then for each $R$ sufficiently large, there exists $\theta \in \Theta_1\times \Theta^R_2$ such that $\mathcal{I}^R(\theta)=0$. 
\end{lemma}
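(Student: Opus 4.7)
The plan is to view $\mathcal I^R$ as a perturbation of the affine map
\[
L^R(\theta) := (E^\theta - E,\; P^\theta - P,\; R^{-1}(\mathcal C^\theta - \mathcal C^R),\; R^{-1}(\mathcal J^\theta - \mathcal J^R)),
\]
which is a bijection of $\mathbb{R}^{10}$ with unique zero $\theta^\star := (E, P, \mathcal C^R, \mathcal J^R)$ lying in the interior of $\Theta_1\times \Theta_2^R$, and then to locate a zero of $\mathcal I^R$ near $\theta^\star$ via a Brouwer degree/homotopy argument.

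First I would fix $\epsilon>0$ small enough that $\overline{B_\epsilon(E, P)}\subset \Theta_1$, and set
\[
\Omega_R := B_\epsilon(E, P)\times B_{R^{1-a}\log R}(\mathcal C^R, \mathcal J^R) \subset \Theta_1\times \Theta_2^R,
\]
a bounded open region containing $\theta^\star$ in its interior. Using the hypothesis $|\mathcal C^R|, |\mathcal J^R|\le C\log R$, every $\theta\in\overline{\Omega_R}$ satisfies $|\theta|\le C(1 + R^{1-a}\log R)$, whence the assumed bound on $\mathcal I_0^R$ gives
\[
|\mathcal I_0^R(\theta)|\le C\bigl(R^{-a}(\log R)^{1/2} + R^{-2a}(\log R)^2\bigr) = O\bigl(R^{-a}(\log R)^{1/2}\bigr)
\]
for all sufficiently large $R$ (the quadratic term is dominated by the first since $a>0$).

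Next I would verify that $|L^R(\theta)|$ strictly dominates $|\mathcal I_0^R(\theta)|$ on $\partial\Omega_R$. On the portion of the boundary where $|(E^\theta, P^\theta) - (E, P)| = \epsilon$, one has $|L^R(\theta)|\ge \epsilon$, which is a fixed positive number while the error is $o(1)$. On the portion where $|(\mathcal C^\theta, \mathcal J^\theta) - (\mathcal C^R, \mathcal J^R)| = R^{1-a}\log R$, the $R^{-1}$ rescaling gives $|L^R(\theta)|\ge R^{-a}\log R$, which strictly exceeds $CR^{-a}(\log R)^{1/2}$ for $R$ large. Consequently, the linear homotopy $H_s(\theta) := L^R(\theta) + s\mathcal I_0^R(\theta)$, $s\in[0,1]$, is nowhere zero on $\partial\Omega_R$. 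By homotopy invariance of the Brouwer degree,
\[
\deg(\mathcal I^R, \Omega_R, 0) = \deg(L^R, \Omega_R, 0) = 1,
\]
so there exists $\theta\in\Omega_R$ with $\mathcal I^R(\theta) = 0$.

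The routine ingredients are the boundary non-vanishing and the degree computation; the real obstacle — and the reason for the particular exponents appearing in Definition~\ref{definition:admissible} — is the careful matching of scales. The radius $R^{1-a}\log R$ of $\Theta_2^R$ is chosen precisely so that, after multiplication by the intrinsic $R^{-1}$, the resulting signal $R^{-a}\log R$ beats the error bound $R^{-a}(\log R)^{1/2}$ by exactly a factor $(\log R)^{1/2}$; any substantially smaller radius would leave the homotopy argument without a margin at the outer boundary, while a much larger radius would allow the quadratic error $|\theta|^2R^{-2}$ to swamp the signal. This is the narrow window in which the degree argument closes.
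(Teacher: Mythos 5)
Your proof is correct and follows essentially the same route as the paper: the paper first translates and rescales via $T^R(v,w)=(v,Rw)+\theta_0^R$ so that $\mathcal I^R\circ T^R$ becomes a perturbation of the identity of size $o(R^{-a}\log R)$ on $B_\epsilon(0)\times B_{R^{-a}\log R}(0)$, and then invokes the degree argument — which is exactly your homotopy from the affine map $L^R$, just written in rescaled coordinates. Your boundary estimates and the observation that the quadratic error $|\theta|^2R^{-2}=O(R^{-2a}(\log R)^2)$ is dominated by $R^{-a}(\log R)^{1/2}$ match the paper's computation.
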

\begin{proof}
Let $\theta_0^R= (E, P, \mathcal{C}^R, \mathcal J^R)$.  We first translate and rescale $\Theta_1\times \Theta^R_2$, in order to employ a degree argument over  sets centered around the same point.  Suppose $\Theta_1$ contains the ball centered at $ (E, P)$ of radius $\epsilon$ for some $\epsilon>0$. Let $T^R:B_\epsilon(0) \times B_{R^{-a}\log R}(0) \to \Theta_1 \times \Theta^R_2$  be given by 
\[
	T^R(v, w) = (v, Rw) + \theta_0^R. 
\]
The composition map satisfies
\[
	\mathcal I^R\circ T^R (v,w) = (v,w) + \mathcal{I}_0^R\circ T^R(v,w).
\]
Hence, for $R$ sufficiently large, we have
\[
	|\mathcal I^R\circ T^R (v,w) - (v,w) |  \le C(R^{-a} (\log R)^{\frac{1}{2}}+  |T^R(v,w)|^{2}R^{-2})<R^{-a} \log R,
\]
where we use that $\theta= T^R(v,w) \in \Theta_1\times \Theta_2^R$ and hence $|\theta|\le C |R^{1-a}\log R|$. The rest of proof follows from a standard degree argument (see, e.g. \cite[Lemma 5.2]{Huang-Schoen-Wang:2011}). 

\end{proof}

We now restate Theorem~\ref{theorem:gluing} for $k=0$ in  a more specific form and give a proof.  The version for higher $k$ is discussed in Section~\ref{subsection:higher-regularity}.

\begin{theorem} \label{theorem:gluing2}
Let $( g, \pi)\in C^{4,\alpha}_{\mathrm{loc}}\times C^{3,\alpha}_{\mathrm{loc}}$ be an asymptotically flat initial data set at the rate $(q, q_0)$ on $\mathbb{R}^3 \setminus B$ with the ADM energy-momentum $(E, P)$. Let $\epsilon>0$. There is a  constant $R_0>0$ such that for $R\geq R_0$, there is an initial data set $(\bar{g}, \bar{\pi})\in C^{2,\alpha}_{\mathrm{loc}}\times C^{2,\alpha}_{\mathrm{loc}}$ such that
\begin{align*}
	(\bar{g}, \bar{\pi}) &= (g, \pi) \quad \mbox{in }  B_R\\
	(\bar{g}, \bar{\pi})&=(g^{\theta}, \pi^{\theta}) \quad \mbox{in } M\setminus B_{2R}
\end{align*}
for some $(g^\theta, \pi^\theta)$ in the admissible family for $(g, \pi)$, and $(\bar{g}, \bar{\pi})$ satisfies the inequality
\[
	\bar{\mu} - |\bar{J}|_{\bar{g}} \ge \chi_R(\mu - |J|_g) + (1-\chi_R) (\mu^{\theta} - |J^{\theta}|_{g^{\theta}})
\]
with strictly larger ADM energy $E^\theta >E$ and  
\[
	|P^\theta - P|<  E^\theta- E< \epsilon.
\]
\end{theorem}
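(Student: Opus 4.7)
The plan is to adapt the vacuum asymptotic gluing argument of Corvino--Schoen and Chru\'sciel--Delay to the modified constraint operator. After rescaling to the fixed annulus $A_1$, one solves the projected modified equation (via Proposition~\ref{proposition:nl2}) with a small bump $\psi_0$ in the mass slot; the resulting ten-dimensional obstruction from the flat-data kernel (Example~\ref{example:linearization}) is then killed by varying the parameter $\theta$ in the admissible family, using the degree argument of Lemma~\ref{lemma:degree}. The same bump $\psi_0$ is what produces the dominant energy inequality across the gluing region, via Lemma~\ref{lemma:estimates}.

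More concretely, fix $\psi_0\in C^\infty_c(A_1)$ with $\psi_0\ge 0$ and $\int\psi_0\,dx>0$. For each $\theta\in\Theta_1\times\Theta_2^R$, rescale $(g,\pi)$ and $(g^\theta,\pi^\theta)$ to $A_1$, interpolate as $(\gamma^R,\tau^R)=\chi(g^R,\pi^R)+(1-\chi)((g^\theta)^R,(\pi^\theta)^R)$, and form the source $(2\psi^R,V^R)$ as in Proposition~\ref{proposition:nl2}. The fall-off \eqref{equation:fall-off} gives $\|(g^R-(g^\theta)^R,\pi^R-(\pi^\theta)^R)\|_{C^{2,\alpha}\times C^{2,\alpha}(A_1)}\le CR^{-q}$ uniformly in $\theta$; Proposition~\ref{proposition:nl2} then yields, for $R$ large, $(h^R_\theta,w^R_\theta)\in\mathcal B_2\times\mathcal B_2$ solving the projected equation, with $\|(h^R_\theta,w^R_\theta)\|\le CR^{-\min(q,1+q_0)}$ (so in particular $|h^R_\theta|_{g_{\mathbb{E}}}\le 3$) and depending continuously on $\theta$ by Theorem~\ref{theorem:projected}.

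To remove the finite-dimensional obstruction, define $\mathcal I^R:\Theta_1\times\Theta_2^R\to\mathbb R^{10}$ to be the $L^2$-pairings of the residual
\[
\Phi^{V^R}_{(g_{\mathbb{E}},0)}(\gamma^R+h^R_\theta,\tau^R+w^R_\theta)-\Phi^{V^R}_{(g_{\mathbb{E}},0)}(\gamma^R,\tau^R)-(2\psi^R,V^R)
\]
with the ten kernel generators of $D\Phi|^*_{(g_{\mathbb{E}},0)}$, so that $\mathcal I^R(\theta)=0$ is exactly the condition to upgrade the projected equation to the full modified equation. Integrating by parts, using the definition of $(2\psi^R,V^R)$ in terms of the interpolated constraints, identifies these pairings with (rescalings of) the ADM boundary integrals of $(g,\pi)$ and $(g^\theta,\pi^\theta)$ on $\partial B_R$; combined with \eqref{equation:fall-off-RT} and the size of $(h^R_\theta,w^R_\theta)$, this yields
\[
\mathcal I^R(\theta)=\bigl(E^\theta-E-c_E R^{-1-q_0},\;P^\theta-P,\;R^{-1}(\mathcal C^\theta-\mathcal C^R),\;R^{-1}(\mathcal J^\theta-\mathcal J^R)\bigr)+\mathcal I^R_0(\theta),
\]
where $c_E>0$ is proportional to $\int\psi_0\,dx$ and $|\mathcal I^R_0(\theta)|\le C(R^{-a}(\log R)^{1/2}+|\theta|^2 R^{-2})$. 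Absorbing the $c_E R^{-1-q_0}$ shift into the energy coordinate, Lemma~\ref{lemma:degree} produces $\theta^*\in\Theta_1\times\Theta_2^R$ with $\mathcal I^R(\theta^*)=0$ for every $R$ sufficiently large; unrescaling $(h^R_{\theta^*},w^R_{\theta^*})$ and extending by zero yields the required $(\bar g,\bar\pi)$, which agrees with $(g,\pi)$ on $B_R$ and with $(g^{\theta^*},\pi^{\theta^*})$ outside $B_{2R}$.

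The dominant energy condition then follows directly from \eqref{equation:estimate-2} of Lemma~\ref{lemma:estimates}, since for $R$ large the $\psi_0 R^{-1-q_0}$ term strictly dominates the interpolation error $2C_1^2\chi(1-\chi)R^{-q}\cdot R^{-1-q_0}$ on $\mathrm{supp}\,\chi(1-\chi)$. Reading $E^{\theta^*}-E=c_E R^{-1-q_0}+\mathrm{error}$ and $P^{\theta^*}-P=\mathrm{error}$ off the equation $\mathcal I^R(\theta^*)=0$ gives both $E^{\theta^*}-E>0$ and $E^{\theta^*}-E<\epsilon$ for $R$ large. The main technical obstacle I expect is the strict inequality $|P^{\theta^*}-P|<E^{\theta^*}-E$: it requires a quantitative comparison of the momentum and energy components of $\mathcal I^R_0$, showing that the momentum error is genuinely smaller than $c_E R^{-1-q_0}$. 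This is handled essentially as in the vacuum asymptotic gluing by exploiting that $\psi_0$ contributes only to the energy pairing and by selecting $\psi_0$ so that $c_E$ dominates the constants arising from the momentum pairing of $V^R$ against the translation kernel elements.
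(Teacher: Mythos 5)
Your overall scaffolding (rescale to $A_1$, interpolate, solve the projected equation via Proposition~\ref{proposition:nl2}, kill the ten-dimensional obstruction via the degree argument of Lemma~\ref{lemma:degree}, apply Lemma~\ref{lemma:estimates} for the DEC) is the same as the paper's, and that part is fine. But the step you flag yourself as ``the main technical obstacle'' is in fact a genuine gap, and your proposed resolution does not work.

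The issue is quantitative. After rescaling and pairing with the energy kernel element $(1,0)$, the bump $2\psi_0 R^{-1-q_0}$ contributes $\tfrac{R}{16\pi}\int_{A_1} 2\psi_0 R^{-1-q_0}\,dx = c_E R^{-q_0}$ (not $c_E R^{-1-q_0}$ as you wrote), which for $q=q_0=1$ is $c_E R^{-1}$. But the remainder $\mathcal I^R_0(\theta)$ --- coming from the quadratic term $Q_{(g_\mathbb{E},0)}(\bar g^R-g_\mathbb{E},\bar\pi^R)$, the second-order tail of the boundary integrals \eqref{equation:fall-off-RT}, and the lower-order piece $\tfrac12 h^R\cdot(\mathrm{div}\,\tau^R+V^R)$ --- is \emph{also} $O(R^{-1})$ (plus $O(|\theta|^2R^{-2})$). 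These are the same order, and they land in the energy component as well, so $\mathcal I^R(\theta^*)=0$ gives no control on the \emph{sign} of $E^{\theta^*}-E$ and no room to absorb the momentum error into $E^{\theta^*}-E$. Your suggestion to enlarge $\psi_0$ so that $c_E$ dominates is self-defeating: $\psi_0$ enters $(\psi^R,V^R)$, which controls $\|(h^R,w^R)\|_{\mathcal B_2\times\mathcal B_2}$ and hence the size of the quadratic remainder, so the error grows along with $c_E$.

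The paper's device, which you are missing, is to add a second, \emph{independent} bump $(\lambda\zeta R^{-2}(\log R)^{1/2},\,0)\in\zeta K$ to the right-hand side of the nonlinear equation. Because this term lies in the obstruction space $\zeta K$, it does not change the projected solution $(h^R,w^R)$ (so all the estimates from Proposition~\ref{proposition:nl2} and the $\psi_0$-based DEC argument via Lemma~\ref{lemma:estimates} go through unchanged), but its $L^2$-pairing with $(1,0)$ contributes $\tilde\lambda R^{-1}(\log R)^{1/2}$ to the energy component of $\mathcal I^R$. The extra $(\log R)^{1/2}$ factor is exactly what makes this term strictly dominate the $O(R^{-1}+|\theta|^2R^{-2})$ errors for $R$ large (and is also what accounts for the $(\log R)^{1/2}$ in the hypothesis of Lemma~\ref{lemma:degree}, which you quoted but did not generate). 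This is what yields $E^{\theta^*}>E$ and $|P^{\theta^*}-P|<E^{\theta^*}-E<\epsilon$; and since the new term has a favorable sign in the mass slot, it only improves the dominant energy inequality. Without this ingredient your argument stalls at the very point you identified.
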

\begin{proof}  
We prove the case  $q=q_0=1$, as the proof for other values of $q, q_0$ is similar.  Given $R\geq 1$ sufficiently large and $\theta\in \Theta$, we apply Proposition~\ref{proposition:nl2} with $(g_1, \pi_1) = (g, \pi)$, $(g_2, \pi_2) = (g^\theta, \pi^\theta)$ and $(\gamma^R, \tau^R) = \chi(g^R, \pi^R) + (1-\chi) ((g^\theta)^R, (\pi^\theta)^R)$. Let $\psi_0\in \mathcal B_0(A_1)$ be a fixed  positive function in $A_1$. Define $(\psi^R, V^R)$, as in~Proposition ~\ref{proposition:nl2}, by
\begin{align*}
	(2\psi^R, V^R) &=-  \Phi (\gamma^R, \tau^R) + \chi \Phi(g^R, \pi^R) + (1-\chi) \Phi((g^\theta)^R, (\pi^\theta)^R) + (2\psi_0 R^{-2}, 0 ).
\end{align*}
There exists $(h^R, w^R)\in \mathcal B_2(A_1)\times \mathcal B_2(A_1)$ (depending on $\theta$) such that $(\bar{g}^R, \bar{\pi}^R)  =( \gamma^R + h^R , \tau^R+w^R)$ solves the following projected problem, for some radial bump function $\zeta$ supported in $A_1$,
\[
	\Phi^{V^R}_{(\gamma^R, \tau^R)}  (\bar{g}^R , \bar{\pi}^R)  -\Phi^{V^R}_{(\gamma^R, \tau^R)}  (\gamma^R, \tau^R)- (2\psi^R, V^R) \in \zeta K,
\]
where $K$ is the kernel at the flat data (see Example~\ref{example:linearization}). Fix a constant $\lambda >0$.   We show that, for each $R$ sufficiently large, there exists $\theta\in \Theta_1\times \Theta_2^R$ such that 
\[
	\mathcal{E}^R(\theta) :=\Phi^{V^R}_{(\gamma^R, \tau^R)}  (\bar{g}^R , \bar{\pi}^R)  -\Phi^{V^R}_{(\gamma^R, \tau^R)}  (\gamma^R, \tau^R)- (2\psi^R, V^R) - (\lambda \zeta R^{-2} (\log R)^\frac{1}{2}, 0)= 0.
\]
We emphasize that $(h^R, w^R)$ has been generated independently from $\lambda$.  We remark that the additional term  $(\lambda \zeta  R^{-2} (\log R)^\frac{1}{2}, 0)$ from the kernel will help  to bring up the ADM energy and also keep the dominant energy condition.  

It suffices to show that $\mathcal{E}^{R}(\theta)$ is $L^2(dx)$-orthogonal to $K$, because $K^{\perp}$ is transverse to $\zeta K$.  Consider the $L^2(dx)$-projection of $\mathcal{E}^{R}(\theta)$ to $K$, which via a basis of $K$ is given by a map $\mathcal{I}^{R} (\theta) : \Theta_1\times \Theta_2^R \to \mathbb{R}^{10} $ that sends $\theta$ to  $(e, \bf p, c, j)$, where ${\bf p}= (p_1, p_2, p_3)$, ${\bf c} = (c_1, c_2, c_3)$,  ${\bf j} = (j_1, j_2, j_3)$:
\begin{align*}
	e &=\frac{R}{16\pi } \int_{A_1}\mathcal{E}^{R}(\theta)\cdot (1, 0) \, dx \\
	p_i &=\frac{R}{8\pi } \int_{A_1} \mathcal{E}^{R}(\theta) \cdot (0, \frac{\partial}{\partial x^i})\, dx \\
	c_k &=\frac{R}{ 16\pi } \int_{A_1} \mathcal{E}^{R} (\theta) \cdot (x^k, 0)\, dx \\
	j_\ell & =\frac{R}{8 \pi } \int_{A_1} \mathcal{E}^{R} (\theta) \cdot (0, x\times \frac{\partial}{\partial x^\ell})\, dx.
\end{align*}
The map $\mathcal{I}^{R}$ is continuous because $(h^R, w^R)$ depends continuously on $(\psi^R, V^R)$ and $(\gamma^R  , \tau^R)$, which are continuous in $\theta$ by definition, cf. Remark \ref{rmk:cts-dep}.  

Expressing $\mathcal{E}^{R}( \theta) $ in terms of the usual constraint map, we have 
\begin{align*}
	\mathcal{E}^{R}(\theta) &=\Phi (\bar{g}^R , \bar{\pi}^R)+  \mathcal{E}^R_2(\theta) -(\lambda \zeta R^{-2}(\log R)^\frac{1}{2}, 0),
\end{align*}
where 	
\[
	\mathcal{E}^R_2(\theta)= -\chi \Phi(g^R, \pi^R) - (1-\chi) \Phi((g^\theta)^R, (\pi^\theta)^R)- (2\psi_0 R^{-2}, \tfrac{1}{2} h^R\cdot_{\gamma^R} (\textup{div}_{\gamma^R} \tau^R +V^R)).
\]
The $L^2$~projection of $\Phi (\bar{g}^R , \bar{\pi}^R)$ is handled exactly as in the vacuum case; estimates of the projection are included in Lemma~\ref{lemma:L^2-projection} for which we employ the uniformity conditions, in particular \eqref{equation:fall-off-RT}.  The $L^2$ projection of $\mathcal{E}^R_2(\theta)$ is of lower order because by using \eqref{equation:fall-off} and the estimates for $(h^R, w^R)$ in Proposition~\ref{proposition:nl2}, we obtain $\left| \mathcal{E}^R_2(\theta)\right|_{g_\mathbb{E}}  \le C R^{-2}$. Because $\zeta$ is radial, the $L^2$ projection of the last term $-(\lambda \zeta R^{-2}(\log R)^\frac{1}{2}, 0)$ is non-zero only onto the kernel element $(1,0)$, and we find  
\[
	\int_{A_1}  \lambda \zeta R^{-2}(\log R)^\frac{1}{2} \, dx =16\pi \tilde{\lambda} R^{-2}(\log R)^\frac{1}{2}> 0,
\]
where $\tilde{\lambda}: =(16\pi)^{-1} \int_{A_1}  \lambda \zeta \, dx$. We then obtain, together with Lemma~\ref{lemma:L^2-projection},  for $\theta \in \Theta_1 \times \Theta^R_2$,
\begin{align*}
	\mathcal{I}^{R}(\theta) =( E^\theta - E, P^\theta - P, R^{-1} (\mathcal C^\theta - \mathcal C^R), R^{-1}(\mathcal J^\theta- \mathcal J^R)) +  \mathcal{I}_1^R(\theta)- (\tilde{\lambda}R^{-1}(\log R)^\frac{1}{2},0)
\end{align*}
where 
\[
	|\mathcal{I}_1^R (\theta)|\le C(R^{-1}+  |\theta|^{2} R^{-2}).
\]
By Lemma~\ref{lemma:degree} with $\mathcal I_0^R(\theta) =\mathcal{I}_1^R (\theta) - (\tilde{\lambda}R^{-1}(\log R)^\frac{1}{2},0)$, there is  $\theta \in \Theta_1 \times \Theta^R_2$ such that $\mathcal{I}^R(\theta)=0$ for $R$ sufficiently large. Because  the term $\tilde{\lambda}R^{-1}(\log R)^\frac{1}{2}$ dominates other error terms in the identity $\mathcal{I}^R(\theta)=0$ with a favorable sign, we obtain $E<E^\theta$ and $|P^\theta - P| < E^\theta - E<\epsilon$ for $R$ large.

Last, we verify the dominant energy condition for $(\bar{g}^R, \bar{\pi}^R)$ on $A_1$.  We have solved
\[
	\Phi^{V^R}_{(\gamma^R, \tau^R)}  (\bar{g}^R , \bar{\pi}^R)  =\Phi^{V^R}_{(\gamma^R, \tau^R)}  (\gamma^R, \tau^R)+ (2\psi^R  +  \lambda \zeta R^{-2}(\log R)^\frac{1}{2}, V^R).
\]
Because $\lambda>0$, and by the uniformity estimates and rescaling, we see that by Lemma~\ref{lemma:estimates}, there is a $C_1>0$ for which it suffices to show that on $A_1$
\begin{align*}
	\psi_0 \ge 2C_1^2 \chi(1-\chi) R^{-1} .
\end{align*}
Since $\psi_0 $ is positive in $A_1$ and $\chi(1-\chi)$ is supported on a compact subset of $A_1$, the above inequality holds for $R$ sufficiently large.

\end{proof}

%%%%%%%%%
%%%%%%%%%

\begin{remark} As an application of Theorem~\ref{theorem:gluing2}, we consider the problem of combining some number $N$ of asymptotically flat initial data sets into a single asymptotically flat initial data set, so that the construction preserves the dominant energy condition.  So let $(M_k, g^k, \pi^k)$, $k=1,\ldots, N$, be three-dimensional asymptotically flat initial data sets that satisfy the dominant energy condition and have time-like ADM energy-momentum vector, and let $U_k \subset M_k$ chosen open subsets so that $M_k\setminus U_k$ is a single asymptotically flat end.   A natural question is whether there exists an initial data set $(M, g, \pi)$ satisfying the dominant energy condition and which contains an open set $U$ so that the restriction $(U, g, \pi)$ is given by the disjoint union $\bigcup\limits_{k=1}^N (U_k, g^k,\pi^k)$, so that $(M\setminus U, g, \pi)$ has one asymptotically flat end.   Such a construction in the case the data sets are vacuum near infinity appears in \cite{Chrusciel-Corvino-Isenberg:2011}.   Since we can use the Kerr initial data as the admissible family in Theorem~\ref{theorem:gluing2} above, we can indeed achieve the construction just posed by using the multi-Kerr template of \cite{Chrusciel-Corvino-Isenberg:2011}, cf. \cite{Chrusciel-Delay:2003}.  By the local nature of the construction in the proof of Theorem~\ref{theorem:gluing2}, we can modify the initial data sets $(M_k, g^k, \pi^k)$ to be Kerr near infinity, preserving the data $(U_k, g^k, \pi^k)$, and then paste them in to an appropriate multi-Kerr vacuum initial data set.  \end{remark}

%%%%%%%%%
%%%%%%%%%

%%%%%%%%%%%%
%%%%%%%%%%%%

\section{Deforming the modified constraint map} \label{section:modified}

In this section, we prove Theorem~\ref{thm:nl}. The argument is similar to that for the constraint map {\cite[Section 4.2]{Corvino-Schoen:2006}}. We first solve the linearized equation via a variational  approach with estimates and then use iteration for the nonlinear problem. We also pay special care to ensure uniform estimates. Throughout the section, the weighted Sobolev and H\"{o}lder norms are all taken on $\Omega$.

\subsection{The linearized equation} \label{subsection:linearized}

The goal is to solve the following linearized system for given $(\psi, V)$
\[	
	D\Phi^W_{(g, \pi)}|_{(g,\pi)}(h,w) = (\psi, V).
\]  
We recall that to simplify notation, we let $D\Phi^W_{(g, \pi)}= D\Phi^W_{(g, \pi)}|_{(g,\pi)}$, the linearization at $(g,\pi)$. Consider the functional $\mathcal{G}$ defined as follows (where $(\psi, V) \cdot_g (f, X)= \psi f + g(V, X)$):
\[
	\mathcal G(f,X)= \int_{\Omega} \left( \frac12 \rho_g \left|(D \Phi^W_{(g,\pi)})^* (f, X) \right|_g^2 - (\psi, V) \cdot_g (f, X) \right) \; d\mu_g.
\]
Clearly the functional is convex. To derive the key coercivity property, we need some basic estimates. Recall $L_g^* f =  - (\Delta_g f) g +\textup{Hess}_g f - f \, \textup{Ric}(g)$.  

\begin{lemma} \label{lemma:f}
Let $g_0\in C^2(\overline{\Omega})$. There is a $C^2(\overline{\Omega})$ neighborhood $\mathcal{U}_0$ of $g_0$ and a constant $C>0$ such that for $g\in \mathcal{U}_0$ and for $(f, X)\in H^2_{\rho_g}(\Omega)\times H^1_{\rho_g}(\Omega)$, 
\begin{align}
		\| f \|_{H^2_{\rho_g}} &\le  C  \left( \| L_g^* f \|_{L^2_{\rho_g}} + \| f \|_{L^2_{\rho_g}}\right)  \label{equation:H^2}\\
		\| X\|_{H^1_{\rho_g}} &\le C  \left( \|\mathcal{D}_gX \|_{L^2_{\rho_g}} + \| X\|_{L^2_{\rho_g}}\right). \label{equation:H1}
\end{align}
\end{lemma}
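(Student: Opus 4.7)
The approach is to reduce \eqref{equation:H^2} and \eqref{equation:H1} to weighted versions of the classical Garding and Korn estimates for the overdetermined elliptic operators $L_g^*$ and $\mathcal{D}_g$. Both have injective principal symbols away from $\xi=0$ (namely $f\mapsto f(-|\xi|_g^2 g+\xi\otimes\xi)$ and $X\mapsto \xi\otimes X+X\otimes\xi$), so the unweighted coercivity bounds are standard. My plan is to begin from pointwise or Bochner-type identities, integrate against $\rho_g$, and absorb the resulting weight-derivative commutators using Lemma~\ref{lemma:rho2}, Lemma~\ref{lemma:ibp}, and Corollary~\ref{corollary:weight}; boundary contributions from integration by parts vanish because $\rho_g$ and all of its derivatives vanish on $\partial\Omega$.

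For \eqref{equation:H^2}, the principal-symbol expansion yields the pointwise identity $|L_g^*f|_g^2=(n-2)(\Delta_g f)^2+|\textup{Hess}_g f|_g^2+(\text{lower-order in }f)$, so $|\textup{Hess}_g f|_g^2\le 2|L_g^* f|_g^2+Cf^2$ with $C$ depending on $\|\textup{Ric}(g)\|_{C^0}$; integrating against $\rho_g$ controls the top-order part of $\|f\|_{H^2_{\rho_g}}$. The first-order part is extracted from the IBP identity
\[
\int_\Omega |\nabla f|_g^2\rho_g\,d\mu_g=-\int_\Omega f(\Delta_g f)\rho_g\,d\mu_g+\tfrac{1}{2}\int_\Omega f^2\Delta_g\rho_g\,d\mu_g,
\]
with the cross term handled by Cauchy-Schwarz and the weight-derivative term controlled through $|\Delta_g\rho_g|\le CN^2 d_g^{-4}\rho_g$ combined with Corollary~\ref{corollary:weight} applied at order $k=j=2$ (after using $d_g^{-4}\le Cd_g^{-8}$ on $V_\Omega$), so that the factor $(4/N)^2$ offsets the $N^2$ coming from the weight. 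For \eqref{equation:H1}, I would use the Bochner identity for the Killing operator combined with integration by parts against $\rho_g$ to obtain
\[
\int_\Omega |\mathcal{D}_g X|_g^2\rho_g\,d\mu_g=2\!\int_\Omega\!|\nabla X|_g^2\rho_g\,d\mu_g+2\!\int_\Omega\!(\textup{div}_g X)^2\rho_g\,d\mu_g-2\!\int_\Omega\!\textup{Ric}(g)(X,X)\rho_g\,d\mu_g+2\mathcal{E},
\]
where the error $\mathcal{E}$ is a sum of terms of the form $\int X\cdot\nabla X\cdot\nabla\rho_g$; dropping the nonnegative divergence term and estimating $\mathcal{E}$ by Cauchy-Schwarz followed by Corollary~\ref{corollary:weight} yields the weighted Korn inequality, with the factors of $N$ arising from $|\nabla\rho_g|$ offset by the gain of $4/N$ from the corollary and remainders absorbed into $\|X\|_{L^2_{\rho_g}}$.

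Uniformity across $\mathcal{U}_0$ is automatic: every constant above depends on $g$ only through $\|g\|_{C^2(\overline{\Omega})}$, the Ricci tensor, Christoffel symbols, and $\|d_g\|_{C^2(\overline{V_\Omega})}$, each of which varies continuously with $g$ in a $C^2$ neighborhood of $g_0$; the setup of Section~\ref{subsection:weighted} guarantees uniform control of $d_g$, and shrinking $\mathcal{U}_0$ if necessary yields a single constant~$C$. The main obstacle is the careful bookkeeping of the commutator terms involving $\nabla^j\rho_g$: they carry factors $N^j$ that must be precisely offset by the $(4/N)^j$ available from Lemma~\ref{lemma:ibp} and Corollary~\ref{corollary:weight}, and the choice $N>4(4k-3)$ built into \eqref{equation:N} is designed exactly to make these exponents balance at the order $k=2$ needed here.
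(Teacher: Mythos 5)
There is a genuine gap, and it is most serious in your treatment of \eqref{equation:H1}. Your Bochner/Korn identity is fine, but the error $\mathcal{E}$ it produces scales like $\int |X|\,|\nabla X|\,|\nabla\rho_g|\,d\mu_g$ together with terms controlled by $\int |X|^2\,|\nabla\rho_g|^2\rho_g^{-1}$ and $\int |X|^2|\Delta_g\rho_g|$, and for the exponential weight these carry a factor $N^2 d_g^{-4}\rho_g$. The only tool you have to convert $\int |X|^2 d_g^{-4}\rho_g$ back into gradient terms is Lemma~\ref{lemma:ibp} with $j=1$ (there is no second derivative of $X$ to trade against, since only $\mathcal{D}_g X\in L^2_{\rho_g}$ is controlled), and that gains a single factor $4/N$ against the $N^2$. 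Optimizing the Cauchy--Schwarz parameter, the best you can get is a coefficient of order $\sqrt{N}$ (or $N$, without optimization) multiplying $\int |\nabla X|^2\rho_g$, which cannot be absorbed into the fixed left-hand coefficient; recall $N$ is chosen large in \eqref{equation:N}. So the claim that ``the factors of $N$ arising from $|\nabla\rho_g|$ \dots\ are offset by the gain of $4/N$'' is a miscount: the error is quadratic in the weight derivative while the Hardy-type inequality gains only one power of $N$. This is exactly why the paper does not argue this way: it invokes the Corvino--Schoen estimate \eqref{equation:X}, whose essential feature is that the constant $C_0$ in front of $\int |X|^2 d_g^{-4}\rho_g$ is independent of $N$ (a nontrivial structural fact that the naive weighted Bochner computation does not give), and only then absorbs that term via Lemma~\ref{lemma:ibp} using $N>4C_0$. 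Without an input of this kind your Korn estimate does not close.

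For \eqref{equation:H^2} your pointwise control of the Hessian by $|L_g^*f|+|f|$ is correct and handles the top-order term, but the bookkeeping for the gradient term also has a hole. Applying Lemma~\ref{lemma:ibp} with $j=2$ (after $d_g^{-4}\le d_g^{-8}$), the factor $(4/N)^2$ only offsets $N^2$ on the $\int\xi|\nabla^2 f|^2\rho_g$ term; the intermediate cutoff term returns $N^2(4/N)^2\sup_\Omega(|\nabla\xi|d_g^{-2})\,\|\nabla f\|^2_{L^2_{\rho_g}}$, i.e.\ the very quantity you are estimating multiplied by a constant of size roughly $r_2^{-3}$, which cannot be absorbed; and quoting Corollary~\ref{corollary:weight} as a black box is circular, since its right-hand side is the full $\|f\|^2_{H^2_{\rho_g}}$ with an $N$-dependent constant. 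This part is repairable: from the proof of Lemma~\ref{lemma:ibp} the offending term is supported on the fixed annulus $\mathrm{supp}\,\nabla\xi$, where $\rho_g$ is comparable to a constant, so an interior interpolation inequality there against $\|\nabla^2 f\|_{L^2_{\rho_g}}$ (which you control pointwise) and $\|f\|_{L^2_{\rho_g}}$ closes the argument --- but that step is missing, and it is precisely the kind of refinement the paper sidesteps by citing the estimates of Corvino and Corvino--Schoen and adding only the $d_g^{-4}$-absorption. Finally, the role of $N>4(4k-3)$ in \eqref{equation:N} is to make the absorption inside Lemma~\ref{lemma:ibp} itself work (and $N>4C_0$ to absorb into \eqref{equation:X}), not to balance $N^2$ against $(4/N)^2$ in the way you describe.
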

\begin{proof} 
The uniform dependence of the constant $C$ on the metric in \eqref{equation:H^2} follows from the proof of \cite[Proposition 3.1-3.2, Theorem 3]{Corvino:2000}; in particular, one uses \cite[Equation (13)]{Corvino:2000} and the co-area formula as in the end of the proof of \cite[Theorem 3]{Corvino:2000}, cf. \cite[Proposition 3.1 and Remark 3.6]{Corvino-Eichmair-Miao:2013}.

For \eqref{equation:H1}, it suffices to prove that there is a uniform constant $C$ such that
\[
	\int_{\Omega} |\nabla_g X|^2   \rho_g \, d\mu_g \le C\left( \int_{\Omega} |\mathcal{D}_g X|^2 \rho_g \, d\mu_g + \int_{\Omega} |X|^2 \rho_g \, d\mu_g\right).
\]
The estimate for a fixed metric is obtained in \cite[Proof of Lemma 4.1]{Corvino-Schoen:2006}. In their proof, an analysis of the arguments in \cite[pp.~201-202 and the first paragraph of p.~203]{Corvino-Schoen:2006} shows that there is a neighborhood $\mathcal{U}_0$ so that for $X \in H^1_{\rho_g}(\Omega)$
\begin{align} \label{equation:X}
\begin{split}
	\int_{\Omega}& |\nabla_g X|^2   \rho_g \, d\mu_g \\
	&\le C_0\left( N^2\int_{\Omega} |\mathcal{D}_g X|^2 \rho_g \, d\mu_g + \int_{\Omega} |X|^2 \rho_g \, d\mu_g + \int_{\Omega} |X|^2 d_g^{-4} \rho_g \, d\mu_g \right),
\end{split}
\end{align}
where $C_0$ is independent of $g \in \mathcal{U}_0$, and $N$.  It is important that the coefficient of the last integral above does not depend on $N$. (Note that we use the exponential weight function $\rho_g$, instead of a power weight function, so the power of the distance function in the last term is different from \cite[p.~203]{Corvino-Schoen:2006}.) The last term in the right hand side was handled by an indirect argument in \cite{Corvino-Schoen:2006}.  Here we apply \eqref{inequality:boundary-estimate} (with $j=k=1$) in the proof of Corollary~\ref{corollary:weight} to the last term and derive
\[
	\int_{\Omega} |X|^2 d_g^{-4} \rho_g d\mu_g \le \frac{4}{N} \int_{\Omega} |\nabla_g X|^2 \rho_g\, d\mu_g +C \int_{\Omega} |X|^2 \rho_g \, d\mu_g .
\]
The first term on the right hand side can be absorbed into the left hand side of \eqref{equation:X} for our choice of $N$ in \eqref{equation:N}.

\end{proof}

\begin{lemma} \label{lemma:mbe0}  
Let $(g_0, \pi_0)\in C^2(\overline{\Omega}) \times C^1(\overline{\Omega})$ be an initial data set, and  let $W_0\in C^0(\overline{\Omega})$ be a vector field.  There is a neighborhood $\mathcal U$ of $(g_0, \pi_0)$ in $C^2(\overline{\Omega})\times C^1(\overline{\Omega})$, a neighborhood $\mathcal W$ of $W_0$ in $C^0(\overline{\Omega})$, and  a constant $C>0$  such that for $(g, \pi) \in \mathcal {U}$, $W\in \mathcal W$, and for $(f, X)\in H^2_{\rho_g}(\Omega)\times H^1_{\rho_g}(\Omega)$, 
the following estimate holds: 
\begin{align}
\|(f, X)\|_{H^2_{\rho_g}\times H^1_{\rho_g}} &\leq C \left( \| (D  \Phi^W_{(g, \pi)})^*(f, X)\|_{L^2_{\rho_g}} + \| (f, X)\|_{L^2_{\rho_g}}\right) \label{equation:(f,X)}.
\end{align}
\end{lemma}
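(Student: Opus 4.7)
The plan is to recover the principal operators $L_g^* f$ and $\mathcal{D}_g X$ from the two components of $(D\Phi^W_{(g,\pi)})^*(f,X)$ and then apply the estimates of Lemma~\ref{lemma:f} in a carefully chosen order. Writing $(D\Phi^W_{(g,\pi)})^*(f,X) = (F_1, F_2)$ and inspecting \eqref{equation:lin-adj} together with \eqref{equation:modified-adjoint}, we have schematically
\begin{align*}
F_1 &= L_g^* f + A_0(g,\pi)\, f + A_1(g,\pi,W)\, X + A_2(g,\pi)\cdot \nabla_g X,\\
F_2 &= -\tfrac{1}{2}\mathcal{D}_g X + B_0(g,\pi)\, f,
\end{align*}
where the tensorial coefficients $A_0, A_1, A_2, B_0$ are polynomial expressions in $g$, $g^{-1}$, $\pi$, $\nabla_g g$, $\nabla_g^2 g$, $\nabla_g \pi$, and $W$. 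On a sufficiently small $C^2\times C^1 \times C^0$ neighborhood of $(g_0,\pi_0,W_0)$ their $C^0(\overline{\Omega})$ norms are bounded uniformly.

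The key structural observation is that $\nabla_g X$ appears in $F_1$ but not in $F_2$, while $\mathcal{D}_g X$ appears only in $F_2$. This dictates the order of the estimates. First I would use the expression for $F_2$ to obtain
\[
\|\mathcal{D}_g X\|_{L^2_{\rho_g}} \le 2\|F_2\|_{L^2_{\rho_g}} + C\|f\|_{L^2_{\rho_g}},
\]
and then \eqref{equation:H1} of Lemma~\ref{lemma:f} gives
\[
\|X\|_{H^1_{\rho_g}} \le C\bigl(\|F_2\|_{L^2_{\rho_g}} + \|f\|_{L^2_{\rho_g}} + \|X\|_{L^2_{\rho_g}}\bigr).
\]
With $\|\nabla_g X\|_{L^2_{\rho_g}}$ now controlled, I would return to $F_1$ to extract
\[
\|L_g^* f\|_{L^2_{\rho_g}} \le \|F_1\|_{L^2_{\rho_g}} + C\bigl(\|f\|_{L^2_{\rho_g}} + \|X\|_{H^1_{\rho_g}}\bigr),
\]
after which \eqref{equation:H^2} yields $\|f\|_{H^2_{\rho_g}} \le C\bigl(\|L_g^*f\|_{L^2_{\rho_g}} + \|f\|_{L^2_{\rho_g}}\bigr)$. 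Adding the two resulting bounds produces \eqref{equation:(f,X)}.

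As for uniformity, Lemma~\ref{lemma:f} already delivers a constant valid on a $C^2(\overline\Omega)$ neighborhood $\mathcal U_0$ of $g_0$, and the coefficients $A_0, A_1, A_2, B_0$ are continuous functions of $(g,\pi,W)$ in the $C^2\times C^1\times C^0$ topology, so shrinking $\mathcal U\subset \mathcal U_0$ and $\mathcal W$ if necessary yields a single constant $C$. The only mild subtlety is the ordering just described: any attempt to invoke \eqref{equation:H^2} before controlling $\nabla_g X$ via \eqref{equation:H1} would fail. The extra zero-th order term $\tfrac{1}{4}[X_i(\textup{div}_g \pi + W)_j + X_j(\textup{div}_g \pi + W)_i]$ introduced in \eqref{equation:modified-adjoint} contributes only to $A_1 \, X$ in $F_1$, hence is absorbed without difficulty, and I do not anticipate a genuine obstacle beyond this bookkeeping.
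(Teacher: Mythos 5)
Your proposal is correct and follows essentially the same route as the paper: the paper likewise isolates the principal parts $L_g^*f$ and $\mathcal{D}_g X$ from the two components of $(D\Phi^W_{(g,\pi)})^*(f,X)$, bounds $\|\mathcal{D}_g X\|_{L^2_{\rho_g}}$ by $\|(D\Phi^W_{(g,\pi)})^*(f,X)\|_{L^2_{\rho_g}} + \|(f,X)\|_{L^2_{\rho_g}}$ and $\|L_g^*f\|_{L^2_{\rho_g}}$ by the same quantity plus $\|X\|_{H^1_{\rho_g}}$, and then invokes Lemma~\ref{lemma:f}. The ordering you emphasize (control $X$ via \eqref{equation:H1} before feeding $\|X\|_{H^1_{\rho_g}}$ into the bound for $L_g^*f$ and applying \eqref{equation:H^2}) is exactly what is implicit in the paper's two displayed inequalities.
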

\begin{proof}
The terms in $(D \Phi^W_{(g, \pi)})^*(f, X)$ that have the highest order of derivatives are $L^*_g f$ and $\mathcal D_gX$. Hence,
\begin{align*}
		\| L_{g}^* f \|_{L^2_{\rho_{g}}} &\le C\left(\| (D  \Phi^W_{(g, \pi)})^*(f, X)\|_{L^2_{\rho_{g}}} + \| f \|_{L^2_{\rho_{g}}} +\| X\|_{H^1_{\rho_{g}}} \right)\\
	\| \mathcal{D}_{g}  X\|_{L^2_{\rho_{g}}} &\le C\left( \| (D \Phi^W_{(g, \pi)})^*(f, X)\|_{L^2_{\rho_{g}}} + \| (f , X)\|_{L^2_{\rho_{g}}}\right).
\end{align*}
The desired inequalities follow from Lemma~\ref{lemma:f}.
\end{proof}

\begin{theorem} \label{theorem:coercivity}
Let $(g_0, \pi_0)\in C^{2} (\overline{\Omega})\times C^{1}(\overline{\Omega})$ be an initial data set, and let $W_0\in C^{0}(\overline{\Omega})$ be a vector field. Suppose that the kernel of $(D \Phi^{W_0}_{(g_0,\pi_0)})^*$ is trivial on $\Omega$.  Then there is a  neighborhood $\mathcal U$ of $(g_0, \pi_0)$ in $C^2(\overline{\Omega})\times C^1(\overline{\Omega})$, a neighborhood $\mathcal W$ of $W_0$ in $ C^0(\overline{\Omega})$, and a constant $C>0$ such that for  $(g, \pi ) \in \mathcal U$, $W\in \mathcal W$, and $(f, X) \in H^2_{\rho_{g}}(\Omega)\times H^1_{\rho_{g}}(\Omega)$,
the following estimate holds: 
\begin{align} \label{equation:coercivity}
	 \|(f, X)\|_{H^2_{\rho_{g}}\times H^1_{\rho_{g}}} \leq C \| (D \Phi^W_{(g, \pi)})^*(f , X)\|_{L^2_{\rho_{g}}} . 
 \end{align}
\end{theorem}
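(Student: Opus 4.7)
The plan is a contradiction/compactness argument. The fixed-metric version of the coercivity estimate is a standard consequence of Lemma~\ref{lemma:mbe0} combined with kernel triviality: if there is no constant $C$ for $(g_0,\pi_0,W_0)$ itself, one gets a sequence $(f_n,X_n)$ normalized in $H^2_{\rho_{g_0}}\times H^1_{\rho_{g_0}}$ whose adjoint image tends to $0$, extracts a nontrivial weak/strong limit that solves $(D\Phi^{W_0}_{(g_0,\pi_0)})^*(f,X)=0$, and contradicts the no-kernel hypothesis. The new content is to make the constant uniform across a neighborhood, and the structure of the proof is the same with an added parameter sequence.

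Assume, for contradiction, that \eqref{equation:coercivity} fails uniformly. Then there are sequences $(g_n,\pi_n)\to (g_0,\pi_0)$ in $C^2\times C^1$, $W_n\to W_0$ in $C^0$, and $(f_n,X_n)\in H^2_{\rho_{g_n}}\times H^1_{\rho_{g_n}}$ with
\[
\|(f_n,X_n)\|_{H^2_{\rho_{g_n}}\times H^1_{\rho_{g_n}}}=1,\qquad \|(D\Phi^{W_n}_{(g_n,\pi_n)})^*(f_n,X_n)\|_{L^2_{\rho_{g_n}}}\longrightarrow 0.
\]
Applying Lemma~\ref{lemma:mbe0} along the sequence, and taking $n$ large, yields a uniform lower bound
\[
\|(f_n,X_n)\|_{L^2_{\rho_{g_n}}}\ \geq\ c_0>0.
\]

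Next, extract a limit. Since $g_n\to g_0$ in $C^2(\overline{\Omega})$, the distance functions $d_{g_n}\to d_{g_0}$ in $C^2$ on the collar neighborhood $V_\Omega$, so $\rho_{g_n}\to\rho_{g_0}$ uniformly on $\overline{\Omega}$ (and in $C^k$ on compact subsets of $\Omega$). By Proposition~\ref{proposition:weighted-norm} the quantities $\tilde f_n:=f_n\rho_{g_n}^{1/2}$ and $\tilde X_n:=X_n\rho_{g_n}^{1/2}$ are bounded in $H^2(\Omega)$ and $H^1(\Omega)$, respectively, with bounds independent of $n$. By Rellich--Kondrachov, after passing to a subsequence, $\tilde f_n\to \tilde f$ strongly in $H^1(\Omega)$ and weakly in $H^2(\Omega)$, and $\tilde X_n\to \tilde X$ strongly in $L^2(\Omega)$ and weakly in $H^1(\Omega)$. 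Setting $f:=\tilde f\,\rho_{g_0}^{-1/2}$ and $X:=\tilde X\,\rho_{g_0}^{-1/2}$ gives elements of $H^2_{\rho_{g_0},\mathrm{loc}}\times H^1_{\rho_{g_0},\mathrm{loc}}(\Omega)$, and the strong $L^2$ convergence of $\tilde f_n,\tilde X_n$ together with the lower bound $\|(f_n,X_n)\|_{L^2_{\rho_{g_n}}}=\|(\tilde f_n,\tilde X_n)\|_{L^2(g_n)}\geq c_0$ shows $(f,X)$ is nontrivial.

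Finally, identify the limit equation. The coefficients of $(D\Phi^{W_n}_{(g_n,\pi_n)})^*$, as given by \eqref{equation:lin-adj} and \eqref{equation:modified-adjoint}, depend continuously on $(g,\pi,W)\in C^2\times C^1\times C^0$, so for every test section $(u,Y)\in C^\infty_c(\Omega)$ one has the weak formulation
\[
\bigl\langle (f_n,X_n),\, D\Phi^{W_n}_{(g_n,\pi_n)}(u,Y)\bigr\rangle_{L^2(g_n)}\ =\ \bigl\langle (D\Phi^{W_n}_{(g_n,\pi_n)})^*(f_n,X_n),\,(u,Y)\bigr\rangle_{L^2(g_n)}.
\]
Because the right-hand side is bounded by $\|(D\Phi^{W_n}_{(g_n,\pi_n)})^*(f_n,X_n)\|_{L^2_{\rho_{g_n}}}$ times a weighted norm of $(u,Y)$ (with $\rho_{g_n}^{-1/2}$ absorbed into the compactly supported test pair), it tends to $0$. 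The left-hand side passes to the limit using strong $L^2_{\mathrm{loc}}$ convergence of $(f_n,X_n)$ on compact subsets and $C^0$ convergence of the coefficients of $D\Phi^{W_n}_{(g_n,\pi_n)}(u,Y)$. Hence $(f,X)$ satisfies $(D\Phi^{W_0}_{(g_0,\pi_0)})^*(f,X)=0$ weakly on $\Omega$. By Proposition~\ref{proposition:kernel} this kernel is trivial, forcing $(f,X)=0$, a contradiction.

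The main technical obstacle I anticipate is the first step of the compactness argument: the weighted norms $H^k_{\rho_g}$ themselves depend on $g$ (through $\rho_g$) and, as the authors note after Proposition~\ref{proposition:weighted-norm}, need not be equivalent as $g$ varies even in a small $C^m$ neighborhood. Proposition~\ref{proposition:weighted-norm} is precisely the tool that circumvents this, letting one pull the weight inside the Sobolev norm and reduce uniform compactness in $H^k_{\rho_g}$ to standard Rellich compactness for $\tilde u_n=u_n\rho_{g_n}^{1/2}$; keeping constants uniform throughout Lemma~\ref{lemma:f} and Lemma~\ref{lemma:mbe0} (which already encode the delicate boundary analysis and the use of Corollary~\ref{corollary:weight} to absorb the troublesome term in \eqref{equation:X}) is what makes the contradiction step go through.
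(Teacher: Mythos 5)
Your proposal is correct and follows essentially the same route as the paper's proof: a contradiction/compactness argument using Lemma~\ref{lemma:mbe0}, Proposition~\ref{proposition:weighted-norm} to pull the weight inside and get uniform $H^2\times H^1$ bounds, Rellich compactness, and identification of the limit as a weak kernel element of $(D\Phi^{W_0}_{(g_0,\pi_0)})^*$. The only differences are cosmetic — you center the sequences at $(g_0,\pi_0,W_0)$ and extract the $L^2_{\rho}$ lower bound from Lemma~\ref{lemma:mbe0} up front rather than deriving the final contradiction from it at the end, and the triviality forcing $(f,X)=0$ is the theorem's hypothesis (via the paper's definition of kernel) rather than a consequence of Proposition~\ref{proposition:kernel}, which only supplies regularity and finite-dimensionality.
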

\begin{proof}
The proof is a standard argument, but we include it for the reader's convenience. Let $\mathcal U, \mathcal W$ be from Lemma~\ref{lemma:mbe0}. By shrinking the neighborhoods if necessary, we may assume that $(D\Phi^W_{(g, \pi)})^*$ has a trivial kernel on $\Omega$ for $(g, \pi) \in \mathcal{U}$, $W\in \mathcal W$. Suppose there were sequences $(g_k,\pi_k)\to (g, \pi)$ in $\mathcal U$, $W_k \to W$ in $\mathcal {W}$, and $(f_k, X_k)\in H^2_{\rho_k} \times H^1_{ \rho_k}$, for which 
 \[
	 \|(f_k, X_k)\|_{H^2_{\rho_k}\times H^1_{\rho_k}}=1
 \] 
 but with 
 \[
	 \| (D \Phi^{W_k}_{(g_k, \pi_k)})^*(f_k, X_k)\|_{L^2_{\rho_k}}\rightarrow 0,
 \]
 where $\rho_k = \rho_{g_k}$.  By Proposition~\ref{proposition:weighted-norm}, the sequence $(f_k, X_k)\rho^{\frac{1}{2}}_k $  is bounded in $H^2(\Omega)\times H^1(\Omega)$. By the Rellich theorem, upon choosing a suitable subsequence and relabeling, there is $(f, X)\in H^2_{\mathrm{loc}}(\Omega)\times H^1_{\mathrm{loc}}(\Omega)$ such that 
 \[
 \| (f_k, X_k)\rho^{\frac{1}{2}}_k-(f, X)\rho_{g}^{\frac{1}{2}}\|_{H^1\times L^2}\to 0.
 \] 
(Since the ${H^1(\Omega)\times L^2(\Omega)}$ norms are equivalent for $g$ in a neighborhood of $g_0$, the above convergence can be taken, for example,  with respect to $g$.) Because $\rho_k$ has a uniform positive lower bound on any given compact subset of $\Omega$, it implies that $(f_k, X_k)$ converges in $L^2_{\mathrm{loc}}$ to $(f, X)$ and  $(D\Phi^W_{(g, \pi)})^*(f, X)= 0$ weakly and hence $(f, X)$ is in the kernel of $(D\Phi^W_{(g, \pi)})^*$.  Thus,  $(f, X)=(0,0)$  because the kernel of $(D\Phi^W_{(g, \pi)})^*$ is trivial. Thus, $(f_k, X_k)\rho_{k}^{\frac{1}{2}}$ converges to zero in $H^1(\Omega)\times L^2(\Omega)$.  Lemma~\ref{lemma:mbe0} implies $ \|(f_k, X_k)\|_{H^2_{\rho_k}\times H^1_{\rho_k}}\rightarrow 0$ and contradicts our assumption.
\end{proof} 

\begin{remark}\label{remark:coercivity}
The above theorem is essentially the only place we need to assume that the kernel is trivial.  If the kernel were not trivial, the coercivity estimate would still hold transverse to the kernel.  More precisely, let $S$ be a complete linear subspace of $H^2_{\rho_{g}}(\Omega)\times H^1_{\rho_{g}}(\Omega)$ such that $S\cap K= \{0\}$ where $K = \textup{ker }(D \Phi^{W_0}_{(g_0,\pi_0)})^*$. Then the above argument implies that the coercivity estimate \eqref{equation:coercivity} holds for $(f, X) \in S$. The only difference in the proof is that after showing that the sequence $(f_k, X_k)$ converges to $(f, X) \in \textup{ker } (D\Phi^W_{(g, \pi)})^* =:K'$, one uses that $K'$ is also transverse to $S$ for sufficiently small neighborhoods $\mathcal U, \mathcal W$ to conclude $(f, X)=(0,0)$. 
\end{remark}

We now apply the coercivity estimate to obtain the variational solution of the linearized equation.

\begin{theorem}\label{theorem:var-sol} 
Let $(g_0, \pi_0) \in C^{4}(\overline{\Omega})\times C^{3}(\overline{\Omega})$ be an initial data set, and let $W_0 \in C^0(\overline{\Omega})$ be a vector field.  Suppose that the kernel of $(D \Phi^{W_0}_{(g_0,\pi_0)})^*$ is trivial on $\Omega$. Let the neighborhoods $\mathcal{U}$, $\mathcal W$ and the constant~$C$ be as in Theorem~\ref{theorem:coercivity}.  Then for $(g, \pi) \in \mathcal{U}$,  $W\in \mathcal W$,  and $(\psi, V)\in L^2_{\rho_g^{-1}}(\Omega)\times L^2_{\rho_g^{-1}}(\Omega)$, the functional $\mathcal{G}(f, X)$  has a global minimizer $(f,X)\in H^2_{\rho_g}(\Omega)\times H^1_{\rho_g}(\Omega)$. Furthermore, $(f,X)$ is the unique weak solution of the linear system \begin{equation}\label{equation:linearized}
D \Phi^W_{(g,\pi)} \circ \rho_g(D \Phi^W_{(g,\pi)})^*(f, X) = (\psi, V)
\end{equation}
and satisfies the estimate
\begin{align} \label{eq:var-est}
\|(f,X)\|_{H^2_{\rho_g}\times H^1_{\rho_{g}}} \leq 2C \| (\psi, V)\|_{L^2_{\rho_{g}^{-1}}\times L^2_{\rho_{g}^{-1}}}.
\end{align}
\end{theorem}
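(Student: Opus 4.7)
The plan is to apply the direct method of the calculus of variations, exploiting the fact that Theorem~\ref{theorem:coercivity} already encodes all the hard analysis. First I would verify that $\mathcal G$ is well-defined, coercive, and strictly convex on $H^2_{\rho_g}(\Omega)\times H^1_{\rho_g}(\Omega)$. Well-definedness of the quadratic term is clear since $(D\Phi^W_{(g,\pi)})^*$ is at most second order, so $(D\Phi^W_{(g,\pi)})^*(f,X)\in L^2_{\rho_g}$; for the linear term, Cauchy--Schwarz gives
\[
	\left|\int_\Omega (\psi,V)\cdot_g(f,X)\, d\mu_g\right| \le \|(\psi,V)\|_{L^2_{\rho_g^{-1}}\times L^2_{\rho_g^{-1}}}\,\|(f,X)\|_{L^2_{\rho_g}\times L^2_{\rho_g}}.
\]
Combining with the coercivity estimate \eqref{equation:coercivity} and completing the square yields
\[
	\mathcal G(f,X) \ge \tfrac{1}{4}\|(D\Phi^W_{(g,\pi)})^*(f,X)\|_{L^2_{\rho_g}}^2 - C^2\|(\psi,V)\|_{L^2_{\rho_g^{-1}}\times L^2_{\rho_g^{-1}}}^2,
\]
so $\mathcal G$ is bounded below and coercive on $H^2_{\rho_g}\times H^1_{\rho_g}$. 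Strict convexity follows because, by the trivial kernel hypothesis together with \eqref{equation:coercivity}, the quadratic form $(f,X)\mapsto \|(D\Phi^W_{(g,\pi)})^*(f,X)\|_{L^2_{\rho_g}}^2$ is positive definite.

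Next I would extract a minimizer by the direct method: a minimizing sequence is bounded in $H^2_{\rho_g}\times H^1_{\rho_g}$, so after multiplying by $\rho_g^{1/2}$ and invoking Proposition~\ref{proposition:weighted-norm} together with the Banach--Alaoglu theorem, a subsequence converges weakly in $H^2_{\rho_g}\times H^1_{\rho_g}$. Weak lower semicontinuity of $\mathcal G$ (convex quadratic part plus continuous linear part) identifies the weak limit $(f,X)$ as a global minimizer. Setting the first variation equal to zero against test pairs $(u,Y)\in H^2_{\rho_g}\times H^1_{\rho_g}$ produces
\[
	\int_\Omega \rho_g\,(D\Phi^W_{(g,\pi)})^*(f,X)\cdot_g (D\Phi^W_{(g,\pi)})^*(u,Y)\, d\mu_g = \int_\Omega (\psi,V)\cdot_g(u,Y)\, d\mu_g,
\]
which is precisely the weak form of \eqref{equation:linearized}. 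Uniqueness follows from strict convexity.

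The estimate \eqref{eq:var-est} comes from testing $\mathcal G(f,X)\le\mathcal G(0,0)=0$, which rearranges to
\[
	\tfrac{1}{2}\|(D\Phi^W_{(g,\pi)})^*(f,X)\|_{L^2_{\rho_g}}^2 \le \|(\psi,V)\|_{L^2_{\rho_g^{-1}}\times L^2_{\rho_g^{-1}}}\,\|(f,X)\|_{L^2_{\rho_g}\times L^2_{\rho_g}},
\]
and then applying \eqref{equation:coercivity} to the right-hand side to bound $\|(f,X)\|_{L^2_{\rho_g}\times L^2_{\rho_g}}$ by $C\|(D\Phi^W_{(g,\pi)})^*(f,X)\|_{L^2_{\rho_g}}$; dividing through and using \eqref{equation:coercivity} once more yields the desired bound (after redefining the generic constant $C$). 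I do not anticipate a serious obstacle --- once Theorem~\ref{theorem:coercivity} is in hand, this argument is a textbook convex-minimization / Lax--Milgram-type story. The only mildly delicate point is the weak compactness step, where Proposition~\ref{proposition:weighted-norm} is needed to transfer the weighted bound to an unweighted Sobolev bound before invoking the standard Rellich/Alaoglu machinery.
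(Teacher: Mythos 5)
Your proposal follows the same variational route as the paper: establish a lower bound for $\mathcal G$ via the coercivity estimate \eqref{equation:coercivity}, extract a minimizer by the direct method, derive the Euler--Lagrange equation to obtain the weak form of \eqref{equation:linearized}, and read off the estimate from $\mathcal G(f,X)\le \mathcal G(0,0)=0$. One small inefficiency: the weak-compactness step does not need Proposition~\ref{proposition:weighted-norm} at all --- a $\mathcal G$-bounded minimizing sequence is bounded directly in the Hilbert space $H^2_{\rho_g}\times H^1_{\rho_g}$ by coercivity, and weak sequential compactness there is standard; routing through the unweighted $H^2\times H^1$ via $\rho_g^{1/2}$ does not in fact yield weak convergence back in the weighted space, since multiplication by $\rho_g^{1/2}$ is not an isomorphism between these spaces.
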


\begin{proof}
Theorem~\ref{theorem:coercivity} implies that the infimum of the functional $\mathcal{G}$ is bounded from below because 
\[
	\mathcal{G}(f, X) \ge \frac{1}{2C} \| (f, X) \|^2_{H^2_{\rho_{g}}\times H^1_{\rho_{g}}}-\| (\psi, V)\|_{L^2_{\rho_{g}^{-1}}\times L^2_{\rho_{g}^{-1}}}\| (f, X) \|_{L^2_{\rho_{g}} \times L^2_{\rho_{g}}}.
\] 
By standard variational theory, e.g. as in \cite[p.~150-152]{Corvino:2000}, the functional has a global minimizer $(f,X)\in H^2_{\rho_{g}} (\Omega)\times H^1_{\rho_{g}}(\Omega)$. Deriving the Euler-Lagrange equation for the functional $\mathcal{G}$ yields \eqref{equation:linearized}. The estimate \eqref{eq:var-est} follows because $\mathcal{G}(f, X)\le 0$. 

\end{proof}

\subsection{Weighted Schauder estimates}\label{subsection:Schauder}

This section is devoted to deriving the following weighted interior Schauder estimates for the linearized equation.

\begin{theorem} \label{theorem:var-sol-est} 
Let $(g_0, \pi_0)\in C^{4,\alpha}(\overline{\Omega})\times C^{3,\alpha}(\overline{\Omega})$ be an initial data set, and let $W_0 \in C^{2,\alpha}(\overline{\Omega})$ be a vector field.   Suppose that the kernel of $(D \Phi^{W_0}_{(g_0,\pi_0)})^*$ is trivial on $\Omega$.  There exists a  neighborhood $\mathcal{U}$  of  $(g_0, \pi_0)$ in $C^{4,\alpha}(\overline{\Omega})\times C^{3,\alpha}(\overline{\Omega})$,  a  neighborhood  $\mathcal W$ of $W_0$ in $C^{2,\alpha}(\overline{\Omega})$, and a constant  $C>0$ such that for $(g, \pi) \in \mathcal{U}$,  $W\in \mathcal W$,  and for $(\psi, V)\in \mathcal B_0 \times \mathcal{B}_1$, if $(f,X) \in H^2_{\rho_g}(\Omega) \times H^1_{\rho_g}(\Omega)$ weakly solves the linear system 
\[
	D \Phi^W_{(g,\pi)} \circ \rho_g (D\Phi^W_{(g,\pi)})^* (f, X)= (\psi, V),
\]
then $(f,X)\in \mathcal B_4  \times \mathcal{B}_3$ and 
\begin{align} \label{equation:Schauder-f-X}
 	\| (f,X)\|_{\mathcal B_4 \times \mathcal{B}_3}\leq C \| (\psi, V)\|_{\mathcal{B}_0 \times \mathcal{B}_1}.
\end{align}
Furthermore, if we set $(h,w)=  \rho_{g}(D \Phi^W_{(g,\pi)})^* (f, X)$, then 
\begin{align}\label{equation:Schauder-h-w}
	\|(h,w) \|_{\mathcal B_2\times \mathcal B_2} \leq C \| (\psi, V)\|_{\mathcal{B}_0 \times \mathcal B_1}.
\end{align}
\end{theorem}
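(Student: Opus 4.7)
The plan is to establish the estimates by localizing to small balls $B_{\phi(x_0)}(x_0)$, rescaling to unit scale, and applying standard interior Schauder estimates for elliptic systems, then repackaging with the correct powers of $\phi$ and $\rho$ to match the definitions of $\mathcal{B}_4$ and $\mathcal{B}_3$. First I would observe that since $\mathcal{B}_0 \subset L^2_{\rho^{-1}}$ and $\mathcal{B}_1 \subset L^2_{\rho^{-1}}$, Theorem~\ref{theorem:var-sol} immediately provides a unique variational solution with the $H^2_{\rho_g} \times H^1_{\rho_g}$ bound controlled by $\|(\psi,V)\|_{\mathcal{B}_0 \times \mathcal{B}_1}$. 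The operator $D\Phi^W_{(g,\pi)}\circ \rho_g (D\Phi^W_{(g,\pi)})^*$ is an elliptic system for $(f,X)$ (fourth order in $f$, mixed order in $X$) whose coefficients lie in appropriate H\"older spaces; by Proposition~\ref{proposition:phi}(ii) and Lemma~\ref{lemma:rho}, the derivatives of the multiplier $\rho_g$ are controlled by inverse powers of $\phi$, which cancel exactly against the scaling $\partial_x = \phi(x_0)^{-1}\partial_y$ on the ball.

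The key step is the rescaling argument. For each $x_0 \in \Omega$, set $y = (x-x_0)/\phi(x_0)$ and write $\tilde f(y) = f(x_0+\phi(x_0)y)$, $\tilde X(y) = X(x_0+\phi(x_0)y)$, $\tilde \psi$, $\tilde V$ analogously. In these coordinates the equation becomes an elliptic system on the unit ball $B_1 \subset \mathbb{R}^n$ whose coefficients are uniformly $C^{2,\alpha}$-bounded across $(g,\pi)\in \mathcal U$, $W\in \mathcal W$, by Proposition~\ref{proposition:phi} and the hypotheses. Interior Schauder estimates then give
\[
\|\tilde f\|_{C^{4,\alpha}(B_{1/2})} + \|\tilde X\|_{C^{3,\alpha}(B_{1/2})} \le C\bigl( \|\tilde\psi\|_{C^{0,\alpha}(B_1)} + \|\tilde V\|_{C^{1,\alpha}(B_1)} + \|\tilde f\|_{L^2(B_1)} + \|\tilde X\|_{L^2(B_1)}\bigr).
\]
Undoing the rescaling and multiplying by the volume factor $\phi(x_0)^{n/2}$ along with $\rho(x_0)^{1/2}$ (using that $\rho$ is comparable to $\rho(x_0)$ on $B_{\phi(x_0)}(x_0)$ by Proposition~\ref{proposition:phi}(iii)), the $\|\tilde f\|_{L^2}$, $\|\tilde X\|_{L^2}$ terms reduce to the $H^2_{\rho_g} \times H^1_{\rho_g}$ norm already controlled by the variational estimate, while the H\"older terms reproduce the definitions of $\mathcal{B}_0$ and $\mathcal{B}_1$. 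Taking the sup over $x_0$ yields \eqref{equation:Schauder-f-X}, and the $L^2_{\rho^{-1}}$ components in the $\mathcal{B}_4$, $\mathcal{B}_3$ norms follow from the variational estimate directly since $L^2_{\rho_g}$ and $L^2_{\rho^{-1}}$ are dual in the natural pairing used in Theorem~\ref{theorem:var-sol}.

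For \eqref{equation:Schauder-h-w}, note that $(h,w) = \rho_g (D\Phi^W_{(g,\pi)})^*(f,X)$ is an expression involving $\rho_g$ and derivatives of $(f,X)$ of order at most two. Using that differentiation maps $C^{k,\alpha}_{\phi,\varphi} \to C^{k-1,\alpha}_{\phi,\phi\varphi}$ and multiplication by $\rho_g$ maps $C^{k,\alpha}_{\phi,\varphi} \to C^{k,\alpha}_{\phi,\varphi\rho^{-1}}$ (both noted in Section~\ref{subsection:weighted-Holder}), we compute that the weight $\phi^{n/2}\rho^{1/2}$ on $f$ propagates to $\phi^{n/2+2}\rho^{-1/2}$ on $h$, matching $\mathcal{B}_2$ exactly; the same bookkeeping works for the $X$ contribution. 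For the $L^2_{\rho^{-1}}$ component,
\[
\|h\|_{L^2_{\rho_g^{-1}}}^2 + \|w\|_{L^2_{\rho_g^{-1}}}^2 = \int_\Omega \rho_g \bigl|(D\Phi^W_{(g,\pi)})^*(f,X)\bigr|_g^2\, d\mu_g = \bigl\langle (\psi,V),\,(f,X)\bigr\rangle_{L^2}
\]
by testing the equation against $(f,X)$; Cauchy-Schwarz and \eqref{eq:var-est} close the estimate.

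The main obstacle is the precise accounting of weights and uniformity. The powers $\phi^{4-k+n/2}$, $\phi^{n/2}$, $\phi^{1+n/2}$ in the definitions of $\mathcal{B}_k$, $\mathcal{B}_4$, $\mathcal{B}_3$ are calibrated exactly so that (i) the scaling $\partial_x = \phi^{-1}\partial_y$ redistributes derivatives without loss, (ii) the $L^2_\rho$ mass in the ball, which scales with a factor $\phi(x_0)^{n/2}\rho(x_0)^{1/2}$, matches the pointwise H\"older weight, and (iii) the algebraic identities relating the operators $(D\Phi^W)^*$ and $\rho_g$ preserve membership in the space. Verifying uniformity of constants across $(g,\pi)\in \mathcal U$, $W\in \mathcal W$ requires that the Schauder constants depend only on $C^{k,\alpha}$-bounds for the coefficients of the rescaled system, which holds since we work in a bounded neighborhood in the H\"older topology and the weight-derivative bounds in Proposition~\ref{proposition:phi} and Lemma~\ref{lemma:rho} are uniform by construction of $\mathcal U_0$.
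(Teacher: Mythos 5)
Your overall strategy is the paper's: the variational $H^2_{\rho_g}\times H^1_{\rho_g}$ bound from Theorem~\ref{theorem:var-sol} supplies the low-order terms, and the H\"older control comes from rescaling to unit balls $B_1(0)\cong B_{\phi(x)}(x)$ and applying Douglis--Nirenberg interior Schauder estimates for the mixed-order system, with the weight bookkeeping exactly as you describe; the paper simply packages the rescaling step as a general weighted estimate (Theorem~\ref{theorem:U}, proved in Appendix~C) applied with $r=\tfrac n2$, $s=\tfrac12$. Your treatment of $(h,w)$ (weight propagation under differentiation and multiplication by $\rho_g$, plus testing the equation against $(f,X)$ for the $L^2_{\rho^{-1}}$ part) also matches the paper's.

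There is, however, one imprecision at the crux of the argument: the rescaled Schauder inequality you display is mis-normalized, and taken literally it would not produce \eqref{equation:Schauder-f-X}. The operator $D\Phi^W_{(g,\pi)}\circ\rho_g(D\Phi^W_{(g,\pi)})^*$ has principal coefficients proportional to $\rho_g$, which degenerates exponentially at $\partial\Omega$; to obtain a system on $B_1(0)$ that is uniformly elliptic with uniformly bounded coefficients one must first divide by $\rho_g$ (this is precisely why the paper works with $L=\rho_g^{-1}D\Phi^W_{(g,\pi)}\rho_g(D\Phi^W_{(g,\pi)})^*$, and Lemma~\ref{lemma:weights} shows its coefficients are polynomials in $\rho^{-1}\nabla^k\rho$, bounded by $C\phi^{-k}$ via Proposition~\ref{proposition:phi}). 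Correspondingly the data appearing in the local Schauder estimate is not $(\tilde\psi,\tilde V)$ but $\phi(x_0)^{4}\rho(x_0)^{-1}\tilde\psi$ and $\phi(x_0)^{3}\rho(x_0)^{-1}\tilde V$ (the powers $\phi^{t_j+s_j}$ of the mixed orders). It is exactly this extra factor $\phi^{4}\rho^{-1}$ (resp.\ $\phi^{3}\rho^{-1}$) that turns the solution weights $\phi^{n/2}\rho^{1/2}$, $\phi^{1+n/2}\rho^{1/2}$ into the data weights $\phi^{4+n/2}\rho^{-1/2}$, $\phi^{3+n/2}\rho^{-1/2}$, i.e.\ the $\mathcal B_0\times\mathcal B_1$ norms; your later prose asserts this outcome, but it is inconsistent with the inequality as displayed, where solution and data would carry the same $\rho$-power. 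Once that normalization is inserted the argument closes as in the paper. (Minor slip: the Sobolev components of $\mathcal B_4\times\mathcal B_3$ are $H^2_{\rho}\times H^1_{\rho}$, not $L^2_{\rho^{-1}}$, but as you say they are supplied directly by \eqref{eq:var-est}.)
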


We set up the framework of Douglis-Nirenberg \cite{Douglis-Nirenberg:1955} for the interior Schauder estimate for an elliptic system. Denote the linear system
\begin{align} \label{equation:L}
	L(f, X):&= \rho_g^{-1} D\Phi^W_{(g,\pi)}\rho_g (D \Phi^W_{(g,\pi)})^*(f, X).
\end{align}
Note the leading order terms of this operator are the same as the operator with the usual constraint map. Hence $L$ is strictly elliptic as a system of mixed order in the sense of Douglis-Nirenberg \cite{Douglis-Nirenberg:1955}, cf. \cite[pp.~4-5]{Chrusciel-Delay:2003}, \cite[p.~207]{Corvino-Schoen:2006}. In local coordinates, we write $X= (X^1, \dots, X^n)$ and set $U$ to be the vector-valued function
\[
	U=(U^1, U^2, \dots, U^{n+1}) = (f, X^1, \dots, X^n).
\]
Denote the $j$th component of $LU$ by $(L U)_j$ for $j= 1, \dots, n+1$, which we express locally as
\[
	(LU)_j = \sum_{k=1}^{n+1} \sum_{|\beta|=0}^4 b_{jk}^{\beta} \partial^{\beta}U^k =: \sum_{k=1}^{n+1} L_{jk} U^k,
\]	
where $L_{jk}$ is the differential operator $L_{jk} =\sum_\beta b_{jk}^{\beta} \partial^{\beta}$. Using the notation as in \cite{Douglis-Nirenberg:1955}, we solve for integers $s_1,\dots, s_{n+1}$ and $t_1,\dots, t_{n+1}$  such that $s_j + t_k$ is the order of the differential operator $L_{jk}$ and such that $s_1=0$. This gives 
\begin{align*}
	&s_1 = 0,\quad t_1 = 4, \quad  s_j = -1,\quad t_k=3 \qquad (j,k = 2, \dots, n+1).
\end{align*}
By direct computation, the function $b_{jk}^{\beta}$ is a degree one polynomial of $\rho^{-1} \nabla^\ell \rho$, $0\le \ell \le s_j + t_k - |\beta|$ with coefficients depending locally uniformly in $(g, \pi)\in C^{4,\alpha}(\overline{\Omega})\times C^{3,\alpha}(\overline{\Omega})$ and $W\in C^{2,\alpha}(\overline{\Omega})$. By Proposition~\ref{proposition:phi}, we have the following estimate. 
\begin{lemma} \label{lemma:weights}
The coefficients $b_{jk}^{\beta}$ satisfy 
\[
\| b^{\beta}_{jk}\|_{C^{-s_j, \alpha}_{\phi, \phi^{s_j + t_k - |\beta|}}(\Omega)} \le C,
\] 
where the constant $C$ depends locally uniformly  on $(g, \pi) \in C^{4,\alpha}(\overline{\Omega})\times C^{3,\alpha}(\overline{\Omega})$ and $W\in C^{2,\alpha}(\overline{\Omega})$.
\end{lemma}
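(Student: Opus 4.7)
My plan is to expand $L$ in coordinates via the Leibniz rule to identify each $b_{jk}^{\beta}$ explicitly, and then estimate its weighted H\"older norm by combining Proposition~\ref{proposition:phi} with the multiplication rule for $C^{k,\alpha}_{\phi,\varphi}$ recalled in Section~\ref{subsection:weighted-Holder}. First I would write $L$ schematically as $\rho^{-1}\, B\, (\rho\, C)$, where $B := D\Phi^W_{(g,\pi)}$ is second order in its first slot and first order in its second, and $C := (D\Phi^W_{(g,\pi)})^*$ is second order in $f$ and first order in $X$, with coefficients that depend polynomially on components of $g, g^{-1}, \pi, W$ and their derivatives up to orders $2, 1, 2$ respectively. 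Expanding $B(\rho\, C(f, X))$ by the Leibniz rule produces a sum in which each term picks up a single factor $\nabla^m \rho$; after multiplying by the outer $\rho^{-1}$ this becomes a single factor $\rho^{-1}\nabla^m \rho$. Because exactly one $\rho$ appears between the differential operators $B$ and $C$ (and exactly one outer $\rho^{-1}$), each resulting term contains at most one factor of the form $\rho^{-1}\nabla^m\rho$, which is precisely the degree-one structure asserted in the paragraph preceding the lemma. The remaining factors in each term are polynomials in components of $g, g^{-1}, \pi, W$ and their derivatives, and hence lie uniformly in $C^{-s_j,\alpha}(\overline{\Omega})$ under the assumed regularity.

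Next I would check that for every $m$ with $0\le m \le s_j + t_k - |\beta|$, one has $\rho^{-1}\nabla^m\rho \in C^{-s_j, \alpha}_{\phi,\, \phi^m}(\Omega)$ with norm bounded locally uniformly in $g$. The $C^0$ piece of the weighted norm is immediate from Proposition~\ref{proposition:phi}(2). For the top-derivative piece (needed when $-s_j = 1$), the identity
\[
\nabla\bigl(\rho^{-1}\nabla^m\rho\bigr) = -\bigl(\rho^{-1}\nabla\rho\bigr)\otimes\bigl(\rho^{-1}\nabla^m\rho\bigr) + \rho^{-1}\nabla^{m+1}\rho,
\]
together with a further application of Proposition~\ref{proposition:phi}(2) to each factor on the right, gives $\phi^{m+1}|\nabla(\rho^{-1}\nabla^m\rho)|\le C$, matching the weight $\phi^m\cdot\phi$ in the weighted norm. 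The weighted H\"older seminorm then follows from the mean value theorem inside each ball $B_{\phi(x)}(x)$, using Proposition~\ref{proposition:phi}(3) to replace $\rho(y), \phi(y)$ by $\rho(x), \phi(x)$ up to constants, and using one additional derivative of $\rho$, available by the assumed regularity $g \in C^{4,\alpha}(\overline{\Omega})$. Since $\phi < 1$ and $m \le s_j + t_k - |\beta|$, one has $\phi^{s_j+t_k-|\beta|} \le \phi^m$, which gives a continuous inclusion $C^{-s_j,\alpha}_{\phi, \phi^m}(\Omega) \hookrightarrow C^{-s_j,\alpha}_{\phi, \phi^{s_j+t_k-|\beta|}}(\Omega)$ --- so the individual $\rho^{-1}\nabla^m\rho$ factors belong to the target weighted space with a uniform bound.

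To finish, I would apply the multiplication rule $\|uv\|_{C^{k,\alpha}_{\phi,\varphi}} \le C \|u\|_{C^{k,\alpha}_{\phi,\varphi}}\|v\|_{C^{k,\alpha}(\overline{\Omega})}$ to each term in the polynomial expansion of $b_{jk}^{\beta}$, sum the finitely many terms, and track the dependence of the constant on the $C^{4,\alpha}\times C^{3,\alpha}\times C^{2,\alpha}$ norms of $(g, \pi, W)$. The step I expect to require the most care is the combinatorial bookkeeping verifying the degree-one structure of $b_{jk}^{\beta}$: one must confirm that no product $(\rho^{-1}\nabla^{m_1}\rho)(\rho^{-1}\nabla^{m_2}\rho)$ with both factors nontrivial can arise from the raw Leibniz expansion. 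This ultimately rests on the simple structural observation that $L = \rho^{-1} B(\rho\, C\, \cdot)$ contains just a single $\rho$ between the two differential operators, so that every Leibniz term is linear in derivatives of $\rho$ after multiplication by $\rho^{-1}$. Once this structural fact is in place, the remainder of the argument is a routine application of the weighted H\"older calculus developed in Section~\ref{subsection:weighted-Holder}.
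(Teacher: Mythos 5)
Your proposal takes essentially the same route as the paper: identify the degree-one structure in $\rho^{-1}\nabla^m\rho$ (the paper states this ``by direct computation''; your observation that it is forced because $L=\rho^{-1}B(\rho\,C\,\cdot)$ contains a single $\rho$ between the two differential operators is the clean reason), reduce to checking $\rho^{-1}\nabla^m\rho\in C^{-s_j,\alpha}_{\phi,\phi^m}$ via Proposition~\ref{proposition:phi}, and then invoke the multiplication rule for weighted H\"older spaces.

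One small slip to flag. You state that the weighted H\"older seminorm ``follows from the mean value theorem \ldots using one additional derivative of $\rho$, available by the assumed regularity $g\in C^{4,\alpha}(\overline\Omega)$.'' For the top-order terms this is not available: the coefficient $b^0_{11}$ contains $\rho^{-1}\nabla^4\rho$ (since $s_1+t_1=4$), so the MVT you describe would need $\nabla^5\rho$, whereas $\rho=\rho_g=(\tilde\rho\circ d_g)^N$ is only $C^{4,\alpha}$ because $d_g$ inherits the regularity of $g$. The same count occurs for $j\ge 2$, where you need $[\nabla(\rho^{-1}\nabla^3\rho)]_{0,\alpha}$. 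The repair is standard: for the critical $m$, avoid the MVT on the full expression and instead split
\[
\rho^{-1}(y)\nabla^m\rho(y)-\rho^{-1}(z)\nabla^m\rho(z)
=\bigl(\rho^{-1}(y)-\rho^{-1}(z)\bigr)\nabla^m\rho(y)
+\rho^{-1}(z)\bigl(\nabla^m\rho(y)-\nabla^m\rho(z)\bigr),
\]
apply the MVT only to the first factor (which only requires $\nabla\rho$), and estimate the second by the weighted $C^{0,\alpha}$ bound on $\nabla^m\rho$ itself, which one obtains directly from the explicit formula for $\nabla^m\rho$ in terms of $\tilde\rho$, $d_g$, and derivatives of $d_g$ up to order $m\le 4$ (available since $d_g\in C^{4,\alpha}$), together with Proposition~\ref{proposition:phi}(3) to replace $\phi(y),\rho(y)$ by $\phi(x),\rho(x)$. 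With that replacement, the rest of your argument goes through unchanged.
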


We have the following interior Schauder estimate. The proof follows from a scaling argument as in \cite[Appendix B]{Chrusciel-Delay:2003} and is included in Appendix~\ref{section:Schauder}, where we spell out the dependence of the constant  $C$.

\begin{theorem}\label{theorem:U}
Let $L$ be a linear differential operator on $U=(U^1, \dots, U^{n+1})$ such that the $j$-th component of the operator $L$ is defined by
\[
	  (LU)_j= \sum_{k=1}^{n+1} L_{jk} U^k, \qquad (j= 1, \dots, n+1)
\]
where $L_{jk} = \sum_{|\beta|=0}^{s_j + t_k}b^{\beta}_{jk} \partial^{\beta}$ is a differential operator of order $s_j + t_k$ with
\begin{align*}
	&s_1 = 0,\quad t_1 = 4\\
	& s_j = -1,\quad t_k=3, \qquad (j,k = 2, \dots, n+1).
\end{align*}
Let $\varphi_j = \phi^{r+4-t_j} \rho^{s}$ for $r, s\in \mathbb{R}$. Then
\begin{align}  \label{equation:estimate-U}
	\sum_{j=1}^{n+1}\| U^j \|_{C^{t_j,\alpha}_{\phi, \varphi_j}(\Omega)} \le C\left(\sum_{j=1}^{n+1} \| (L U)_j\|_{C^{-s_j, \alpha}_{\phi, \phi^{t_j+s_j}\varphi_j}(\Omega)} + \sum_{j=1}^{n+1} \| U^j \|_{L^2_{\phi^{-n}\varphi^2_j}(\Omega)} \right)
\end{align}
where $C$ depends only on $n$, $\alpha$, $\sup\limits_{\stackrel{j, k= 1, \dots, n+1}{ |\beta|\leq s_j+t_k}} \| b^{\beta}_{jk}\|_{C^{-s_j, \alpha}_{\phi, \phi^{s_j + t_k - |\beta|}}(\Omega)}$, and the lower bound of ellipticity of the operator $L$, as well as $r$, $s$ and the constant in \eqref{equation:weight}. 
\end{theorem}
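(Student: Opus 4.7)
This is a weighted interior Schauder estimate for a Douglis--Nirenberg elliptic system, and the proof proceeds by a standard localization-and-rescaling argument along the lines of \cite[Appendix B]{Chrusciel-Delay:2003}. The key is that the weight function $\phi$ controls the size of the balls on which the weighted H\"older norms are defined, so that each such ball can be rescaled to unit size and the classical (unweighted) Douglis--Nirenberg Schauder estimate can be applied pointwise.

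\textbf{Main steps.} Fix $x_0\in\Omega$, set $\sigma=\phi(x_0)$, and consider the ball $B_\sigma(x_0)$, which by Proposition~\ref{proposition:phi}\eqref{item:1} lies in a single coordinate chart. Introduce rescaled unknowns $\widetilde U^k(\tilde y)=\varphi_k(x_0)\,U^k(x_0+\sigma\tilde y)$ on $B_1(0)$, and a rescaled operator $\widetilde L$ whose $j$-th component is $(\widetilde L\widetilde U)_j(\tilde y):=\sigma^{s_j+t_j}\varphi_j(x_0)\,(LU)_j(x_0+\sigma\tilde y)$. Using $\varphi_j/\varphi_k=\phi^{t_k-t_j}$, the coefficients of $\widetilde L$ are $\tilde b_{jk}^\beta(\tilde y)=\sigma^{s_j+t_k-|\beta|}b_{jk}^\beta(x_0+\sigma\tilde y)$, which by Lemma~\ref{lemma:weights} and Proposition~\ref{proposition:phi}\eqref{item:3} are uniformly bounded in $C^{-s_j,\alpha}(B_1)$ independently of $x_0$. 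The rescaled principal symbol is elliptic with a lower bound inherited from that of $L$, since the Douglis--Nirenberg principal symbol rescales homogeneously. Applying the classical Douglis--Nirenberg Schauder estimate on $B_{1/2}\subset B_1$ gives
\[
\sum_k\|\widetilde U^k\|_{C^{t_k,\alpha}(B_{1/2})}\le C\Bigl(\sum_j\|(\widetilde L\widetilde U)_j\|_{C^{-s_j,\alpha}(B_1)}+\sum_k\|\widetilde U^k\|_{C^0(B_1)}\Bigr).
\]
Translating back via Proposition~\ref{proposition:phi}\eqref{item:3} (so that $\varphi_j(x_0)$ and $\sigma$ are comparable to $\varphi_j(y)$ and $\phi(y)$ on $B_\sigma(x_0)$) identifies the LHS with the contribution of $x_0$ to $\|U^k\|_{C^{t_k,\alpha}_{\phi,\varphi_k}(\Omega)}$ and the first sum on the RHS with the contribution to $\|(LU)_j\|_{C^{-s_j,\alpha}_{\phi,\phi^{s_j+t_j}\varphi_j}(\Omega)}$.

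\textbf{Replacing the $C^0$ term.} The leftover $\|\widetilde U^k\|_{C^0(B_1)}$ is handled by the standard interpolation $\|\widetilde U^k\|_{C^0(B_1)}\le \varepsilon\|\widetilde U^k\|_{C^{t_k,\alpha}(B_1)}+C_\varepsilon\|\widetilde U^k\|_{L^2(B_1)}$. Undoing the rescaling, and using Proposition~\ref{proposition:phi}\eqref{item:3} once more, gives
\[
\|\widetilde U^k\|^2_{L^2(B_1)}=\varphi_k(x_0)^2\sigma^{-n}\!\int_{B_\sigma(x_0)}\!|U^k|^2\,dy\le C\!\int_{B_\sigma(x_0)}\!\phi^{-n}\varphi_k^2|U^k|^2\,dy\le C\|U^k\|^2_{L^2_{\phi^{-n}\varphi_k^2}(\Omega)}.
\]
Choosing $\varepsilon$ small, taking the supremum over $x_0\in\Omega$ on the LHS (so that the $\varepsilon$-term can be absorbed), and summing over $j,k$ yields \eqref{equation:estimate-U}.

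\textbf{Main obstacle.} The principal difficulty is bookkeeping: verifying that the precise exponents in $\varphi_j=\phi^{r+4-t_j}\rho^s$ (with $\phi$-powers shifted by $-t_j$) cause all factors of $\sigma$ to combine correctly, so that (a) the rescaled coefficients of $\widetilde L$ are uniformly bounded with uniform Douglis--Nirenberg ellipticity, and (b) the weight $\phi^{-n}\varphi_k^2$ emerges precisely from the volume rescaling together with the factor $\varphi_k(x_0)$ in the definition of $\widetilde U^k$. Both ultimately rest on Lemma~\ref{lemma:weights} and Proposition~\ref{proposition:phi}, and on the local uniformity of $(g,\pi)$ and $W$ in their respective neighborhoods.
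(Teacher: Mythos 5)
Your overall strategy is the same as the paper's: fix $x_0$, rescale $B_{\phi(x_0)}(x_0)$ to the unit ball, check that the coefficients $\sigma^{s_j+t_k-|\beta|}b^{\beta}_{jk}(x_0+\sigma\tilde y)$ are uniformly bounded in $C^{-s_j,\alpha}(B_1)$ with uniform Douglis--Nirenberg ellipticity, apply the classical interior estimate on $B_{1/2}$, and translate back using the comparability \eqref{equation:weight}; your exponent bookkeeping ($\varphi_j/\varphi_k=\phi^{t_k-t_j}$, the emergence of $\phi^{-n}\varphi_k^2$ from the volume factor) is correct and matches the paper's Lemma~\ref{lemma:pull-back}.

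There is, however, one step that does not close as written. The rescaled estimate controls $U^k$ only on the half ball $B_{\phi(x_0)/2}(x_0)$, while the norm $\|U^k\|_{C^{t_k,\alpha}_{\phi,\varphi_k}(\Omega)}$ appearing in \eqref{equation:estimate-U} is a supremum of norms over the \emph{full} balls $B_{\phi(x)}(x)$; your claim that the left-hand side "identifies with the contribution of $x_0$ to the global norm" therefore skips the comparison
$\sup_{x}\|\cdot\|_{C^{k,\alpha}_{\phi,\varphi}(B_{\phi(x)}(x))}\le C\,\sup_{x}\|\cdot\|_{C^{k,\alpha}_{\phi,\varphi}(B_{\phi(x)/2}(x))}$,
which the paper proves as a separate final step (using \eqref{equation:weight}, with a little care for the H\"older seminorm when the two points sit in a big ball but are compared at their own scales). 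This comparison is also exactly what your $\varepsilon$-absorption needs: after taking the supremum over $x_0$, the interpolation term is $\varepsilon$ times the supremum of \emph{full-ball} norms, and it can only be absorbed into the supremum of \emph{half-ball} norms once that comparison (with a uniform constant, and $\varepsilon$ chosen small relative to it) is in hand. Moreover, absorbing requires knowing a priori that the quantity $\sup_{x_0}\|U^k\|_{C^{t_k,\alpha}_{\phi,\varphi_k}(B_{\phi(x_0)}(x_0))}$ is finite, which is not given; you would need an exhaustion/compact-subdomain argument, or you can sidestep the issue entirely as the paper does by invoking the Douglis--Nirenberg estimate in the form whose lower-order term is already the $L^2$ norm on $B_1$, so that no absorption of a possibly infinite quantity is ever performed. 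With the half-ball/full-ball lemma supplied and the lower-order term handled in one of these two ways, your argument coincides with the paper's proof.
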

\begin{remark}\label{remark:higher-order}
We also note that higher order estimates take the form
\[
	\sum_{j=1}^{n+1} \| U^j \|_{C^{\ell+t_j, \alpha}_{\phi, \varphi_j}(\Omega)} \le C \left(\sum_{j=1}^{n+1} \| (L U)_j \|_{C^{\ell-s_j, \alpha}_{\phi, \phi^{t_j+s_j}\varphi_j}(\Omega)} + \sum_{j=1}^{n+1} \| U^j \|_{L^2_{\phi^{-n}\varphi_j^2}(\Omega)}\right),
\]
where $C$ depends only  on $n$, $\ell$, $\alpha$, $\sup\limits_{\stackrel{j, k= 1, \dots, n+1}{ |\beta|\leq s_j+t_k}} \| b^{\beta}_{jk}\|_{C^{\ell-s_j, \alpha}_{\phi, \phi^{s_j + t_k - |\beta|}}(\Omega)}$, and the lower bound of ellipticity of the operator $L$, as well as $r$, $s$ and the constant in \eqref{equation:weight}.
\end{remark}

\begin{proof}[Proof of Theorem~\ref{theorem:var-sol-est}]
Applying Theorem~\ref{theorem:U} (with $r=\frac{n}{2}, s = \frac{1}{2}$) to the operator 
\[
L(f, X)= \rho_g^{-1} D\Phi^W_{(g,\pi)} \rho_g (D\Phi^W_{(g,\pi)})^* (f, X),
\] 
we have 
\begin{align*}
	&\| f \|_{C^{4, \alpha}_{\phi, \phi^{\frac{n}{2}} \rho_g^{\frac{1}{2} }} }+ \| X \|_{C^{3, \alpha}_{\phi, \phi^{1+\frac{n}{2}}\rho_g^{\frac{1}{2}}}}=\sum_{i=1}^{n+1} \| U^i \|_{C^{t_i, \alpha}_{\phi, \varphi_i}}\\
	&\le C \left[\sum_{i=1}^{n+1} \| (L U)_i \|_{C^{-s_i, \alpha}_{\phi, \phi^{t_i+s_i}\varphi_i}} + \sum_{i=1}^{n+1} \| U^i \|_{L^2_{\phi^{-n}\varphi_i^2}}\right]\\
	&=C \left(\| \rho_g^{-1}\psi \|_{C^{0,\alpha}_{\phi, \phi^{4+\frac{n}{2}} \rho_g^{\frac{1}{2}}}} +  \| \rho_g^{-1} V \|_{C^{1, \alpha}_{\phi, \phi^{3+\frac{n}{2}}\rho_g^{\frac{1}{2}}}} + \| f \|_{L^2_{\rho_g} }+ \| X \|_{L^2_{\phi^2 \rho_g}}\right)\\
	&\le C \left(\| \psi \|_{C^{0,\alpha}_{\phi, \phi^{4+\frac{n}{2}} \rho_g^{-\frac{1}{2}}}} + \| V \|_{C^{1, \alpha}_{\phi, \phi^{3+\frac{n}{2}}\rho_g^{-\frac{1}{2}}}} + \| f \|_{L^2_{\rho_g} }+ \| X \|_{L^2_{\phi^2 \rho_g}}\right).
\end{align*}
The above H\"{o}lder estimate, together with the Sobolev estimate (\ref{eq:var-est}), implies \eqref{equation:Schauder-f-X}.

The estimate \eqref{equation:Schauder-h-w}  for $(h,w)$ follows because differentiation is a continuous operator from $C^{k,\alpha}_{\phi, \varphi}$ to $C^{k-1, \alpha}_{\phi, \phi \varphi}$ and from $H^k_{\rho_g^{-1}}$ to $H^{k-1}_{\rho_g^{-1}}$.
\end{proof}

\subsection{Solving the nonlinear problem by iteration}

We complete the proof of Theorem~\ref{thm:nl}, which can be formulated more explicitly as follows (and where we make the harmless change ``$2\psi$" to ``$\psi$" in (\ref{eq:sys-2psi}) for simplicity). 
\begin{theorem} \label{theorem:nl}
Let $(g_0, \pi_0) \in C^{4,\alpha}(\overline{\Omega})\times C^{3,\alpha}(\overline{\Omega})$ be an initial data set, and let $W_0\in C^{2,\alpha}(\overline{\Omega})$ be a vector field.  Suppose that the kernel of $(D\Phi^{W_0}_{(g_0,\pi_0)})^*$ is trivial in $\Omega$.   Let $C$ be the constant  from Theorem \ref{theorem:var-sol-est}.  There is a neighborhood $\mathcal U$ of $(g_0, \pi_0)$ in $C^{4, \alpha}(\overline{\Omega}) \times C^{3, \alpha}(\overline{\Omega})$, and a neighborhood $\mathcal W$ of $W_0$ in $C^{2,\alpha}(\overline{\Omega})$, and   $\epsilon>0$  such that for $(g, \pi)\in \mathcal U$, $W\in \mathcal W$ and for  $(\psi,V)\in \mathcal B_0 \times \mathcal B_1$ with $\|(\psi,V)\|_{ \mathcal B_0 \times \mathcal B_1} \leq \epsilon$, there exists $(f, X) \in \mathcal{B}_4\times \mathcal B_3$ such that
\[
(h, w) = \rho_g (D\Phi^W_{(g, \pi)})^*(f, X)
\] 
satisfies $(h,w)\in \mathcal B_2\times \mathcal B_2$ and 
\[
 \Phi^W_{(g, \pi)} (g+h , \pi + w)  = \Phi^W_{(g, \pi)} (g, \pi )+ (\psi, V)
 \]
with $\|(h,w)\|_{\mathcal B_2\times \mathcal B_2} \leq C \| (\psi, V)\|_{\mathcal B_0\times \mathcal B_1}$.
\end{theorem}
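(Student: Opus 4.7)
The strategy is to reduce the nonlinear equation to a fixed-point problem and solve it by iteration, using Theorem~\ref{theorem:var-sol-est} as the right inverse at the linear level. Let $T : \mathcal{B}_0 \times \mathcal{B}_1 \to \mathcal{B}_2 \times \mathcal{B}_2$ be the bounded linear operator defined by $T(\psi', V') := \rho_g (D\Phi^W_{(g,\pi)})^*(f, X)$, where $(f, X) \in \mathcal{B}_4 \times \mathcal{B}_3$ is the variational solution of $D\Phi^W_{(g,\pi)} \circ \rho_g (D\Phi^W_{(g,\pi)})^*(f,X) = (\psi', V')$ produced by Theorem~\ref{theorem:var-sol-est}. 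Then $D\Phi^W_{(g,\pi)}(T(\psi', V')) = (\psi', V')$, and $\|T(\psi', V')\|_{\mathcal{B}_2 \times \mathcal{B}_2} \le C \|(\psi', V')\|_{\mathcal{B}_0 \times \mathcal{B}_1}$ with $C$ uniform for $(g, \pi) \in \mathcal{U}$ and $W \in \mathcal{W}$.

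Next, I would Taylor-expand the modified constraint map at $(g, \pi)$:
\[
	\Phi^W_{(g,\pi)}(g+h, \pi+w) - \Phi^W_{(g,\pi)}(g,\pi) = D\Phi^W_{(g,\pi)}(h, w) + Q(h, w),
\]
where $Q(h, w)$ is a polynomial expression in $h, w, \nabla h, \nabla^2 h, \nabla w$ with coefficients depending smoothly on $g, \pi, W$ and on $(g+h)^{-1}$. Note that the modification $\tfrac12 \gamma \cdot (\mathrm{div}_g \pi + W)$ is linear in $\gamma$ (since $\mathrm{div}_g \pi$ is evaluated at the fixed background $g$), so it contributes nothing to $Q$ beyond what already appears for the usual constraint map. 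The problem is then equivalent to $D\Phi^W_{(g,\pi)}(h, w) = (\psi, V) - Q(h, w)$, which I solve by Picard iteration: $(h_0, w_0) = (0, 0)$ and $(h_{n+1}, w_{n+1}) = T\bigl((\psi, V) - Q(h_n, w_n)\bigr)$.

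The main analytic step, and the main obstacle, is establishing quadratic and Lipschitz estimates for $Q$:
\begin{align*}
	\|Q(h, w)\|_{\mathcal{B}_0 \times \mathcal{B}_1} &\le C' \|(h, w)\|^2_{\mathcal{B}_2 \times \mathcal{B}_2},\\
	\|Q(h_1, w_1) - Q(h_2, w_2)\|_{\mathcal{B}_0 \times \mathcal{B}_1} &\le C' \bigl(\|(h_1, w_1)\|_{\mathcal{B}_2 \times \mathcal{B}_2} + \|(h_2, w_2)\|_{\mathcal{B}_2 \times \mathcal{B}_2}\bigr) \|(h_1 - h_2, w_1 - w_2)\|_{\mathcal{B}_2 \times \mathcal{B}_2},
\end{align*}
valid on a small $\mathcal{B}_2 \times \mathcal{B}_2$ ball with $C'$ uniform across $\mathcal{U} \times \mathcal{W}$. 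These bounds exploit the scaling of the weights. An element $u \in \mathcal{B}_2$ satisfies $|\nabla^i u| \le C \phi^{-i-2-n/2}\rho^{1/2}$ pointwise for $i \le 2$, so a typical quadratic term $\nabla^i h \cdot \nabla^j h$ with $i + j \le 4$ is bounded by $\phi^{-(i+j)-4-n}\rho$. Weighting by the $\mathcal{B}_0$ factor $\phi^{4+n/2}\rho^{-1/2}$ leaves $\phi^{-(i+j)-n/2}\rho^{1/2}$, which is bounded on $\Omega$: $\phi$ is bounded below in the interior, and near $\partial\Omega$ the exponential decay of $\rho^{1/2}$ beats every polynomial power of $\phi^{-1}$. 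The weighted Hölder seminorm and the $L^2_{\rho^{-1}}$ components of the $\mathcal{B}_k$ norms are treated in the same spirit, placing one factor in the weighted $C^0$ bound coming from the Hölder part and the other in the weighted Sobolev part.

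Granting these nonlinear estimates, a standard contraction mapping argument closes the iteration. For $\|(\psi, V)\|_{\mathcal{B}_0 \times \mathcal{B}_1} \le \epsilon$ with $\epsilon$ sufficiently small in terms of $C$ and $C'$, the closed ball $\{\|(h, w)\|_{\mathcal{B}_2 \times \mathcal{B}_2} \le 2C\epsilon\}$ is invariant under the map $F(h, w) := T\bigl((\psi, V) - Q(h, w)\bigr)$, and $F$ is a contraction on it. The fixed point $(h, w)$ satisfies $\|(h, w)\|_{\mathcal{B}_2 \times \mathcal{B}_2} \le C \|(\psi, V)\|_{\mathcal{B}_0 \times \mathcal{B}_1}$ (after harmlessly adjusting $C$) and exactly solves $\Phi^W_{(g,\pi)}(g+h, \pi+w) = \Phi^W_{(g,\pi)}(g, \pi) + (\psi, V)$; the corresponding $(f, X)$ is recovered from the final application of $T$. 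Smallness of $|h|$ pointwise, which follows from the $\mathcal{B}_2$ bound in the interior and from the exponential suppression of $h$ near $\partial\Omega$, ensures $g + h$ is a Riemannian metric.
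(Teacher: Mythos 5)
Your proposal is correct and follows essentially the same route as the paper: solve the linearized equation with the uniform weighted estimates of Theorem~\ref{theorem:var-sol-est}, control the quadratic remainder in the weighted spaces (the paper's Lemma~\ref{lemma:Taylor-modified}), and iterate. The paper organizes the iteration as a convergent series of increments $(h_p,w_p)$ with residuals decaying like $\|(\psi,V)\|^{1+j\delta}$ (Lemma~\ref{lemma:induction}), but summing those increments reproduces exactly your fixed-point recursion $(h,w)\mapsto T\bigl((\psi,V)-Q(h,w)\bigr)$, so the two arguments coincide in substance.
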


\begin{proof}
By Theorem~\ref{theorem:var-sol},  there exists $(f_0, X_0)\in \mathcal{B}_4\times \mathcal B_3$ such that $(h_0, w_0)= \rho_g (D\Phi^W_{(g,\pi)})^*(f_0, X_0)$ solves $D\Phi^W_{(g, \pi)}(h_0, w_0) = (\psi, V)$ and, by Theorem~\ref{theorem:var-sol-est}, the following estimates hold
\[
	\| ( h_0, w_0) \|_{\mathcal B_2\times \mathcal B_2} \le C\| (\psi, V)\|_{\mathcal B_0\times \mathcal B_1}, \quad\quad \| (f_0, X_0)\|_{\mathcal B_4\times \mathcal B_3} \le C \| (\psi, V)\|_{\mathcal B_0\times \mathcal B_1}.
\]
By Lemma~\ref{lemma:Taylor-modified} we obtain that 
\begin{align*}
	&\| \Phi^W_{(g,\pi)}(g, \pi)+ (\psi, V) - \Phi^W_{(g,\pi)} (g+ h_0, \pi+w_0) \|_{\mathcal B_0\times \mathcal B_1}\\
	&= \| Q^W_{(g,\pi)} (h_0,w_0)\|_{\mathcal B_0\times \mathcal B_1}\le D \| (h_0, w_0)\|_{\mathcal B_2\times \mathcal B_2}^2 \le DC^2 \| (\psi, V)\|_{\mathcal B_0\times \mathcal B_1}^2. 
\end{align*}
Write $(\gamma_1, \tau_1 ) = (g, \pi) + (h_0, w_0)$. Note that $\gamma_1$ is a Riemannian metric provided $\|(h_0, w_0)\|_{\mathcal B_0\times \mathcal B_1}$ is sufficiently small. Fix $\delta \in (0, 1)$. We set $\epsilon$ sufficiently small so that $DC^2 \epsilon^{1-\delta} \le 1$.

We then proceed recursively as in the following lemma, whose proof is included in Appendix~\ref{section:recursion}.
\begin{lemma}\label{lemma:induction}
Fix $(g_0, \pi_0)$ and $\delta \in (0, 1)$. There exists a neighborhood $\mathcal{U}$ of $(g_0, \pi_0)$ in $C^{4,\alpha}(\overline{\Omega})\times C^{3,\alpha}(\overline{\Omega})$, a neighborhood $\mathcal W$ of $W_0$ in $C^{2,\alpha}(\overline{\Omega})$,  and $\epsilon \in (0, \frac{1}{2})$ depending only on $\delta$ and $\Omega$, such that for  $(g, \pi)\in \mathcal{U}$ and  for $W\in \mathcal W$ the following holds. Suppose that $m\ge 1$ and we have constructed $(f_0, X_0), \dots, (f_{m-1}, X_{m-1})\in \mathcal{B}_4(\Omega)\times \mathcal B_3(\Omega)$, $(h_0, w_0), \dots, (h_{m-1}, w_{m-1}) \in \mathcal{B}_2(\Omega)\times \mathcal B_2(\Omega)$ where $(h_p, w_p) = \rho_g (D\Phi^W_{(g, \pi)})^* (f_p, X_p)$  and $(\gamma_1, \tau_1), \dots, (\gamma_m, \tau_m) \in C^{2,\alpha}(\overline{\Omega}) \times C^{1,\alpha}(\overline{\Omega})$ where $\gamma_j = g + \sum_{p=0}^{j-1} h_p$ and $\tau_j = \pi + \sum_{p=0}^{j-1} w_p$. Assume that $\| (\psi, V) \|_{\mathcal B_0\times \mathcal B_1} \le \epsilon$ and that for all $0 \le p \le m-1$,
\begin{align} \label{equation:induction1}
	\| (f_p, X_p ) \|_{\mathcal B_4\times \mathcal B_3} \le C \| (\psi, V) \|_{\mathcal B_0\times \mathcal B_1}^{1+p\delta} \quad \mbox{and} \quad \| (h_p, w_p) \|_{\mathcal B_2\times \mathcal B_2} \le C\| (\psi, V) \|_{\mathcal B_0\times \mathcal B_1}^{1+p\delta}, 
\end{align} 
and that for all $1 \le j \le m$, 
\begin{align} \label{equation:induction2}
	\|\Phi_{(g,\pi)}^W (g, \pi) + (\psi, V) - \Phi^W_{(g,\pi)} (\gamma_j, \tau_j) \|_{\mathcal B_0\times \mathcal B_1} \le \| (\psi, V) \|_{\mathcal B_0\times \mathcal B_1}^{1+ j \delta}.
\end{align}
If we define $(h_m, w_m) = \rho_g D(\Phi^W_{(g,\pi)} )^* (f_m, X_m)$ where $(f_m, X_m)$ is the variational solution to 
\[
	D\Phi^W_{(g, \pi)}\rho_g (D\Phi^W_{(g,\pi)})^* (f_m, X_m)= \Phi^W_{(g,\pi)} (g,\pi) + (\psi, V) - \Phi^W_{(g,\pi)}(\gamma_m, \tau_m),
\]
 and if we set $(\gamma_{m+1}, \tau_{m+1}) = (\gamma_m, \tau_m) + (h_m, w_m)$, then the estimates \eqref{equation:induction1} and \eqref{equation:induction2} hold for $p = m$ and $j = m+1$.
\end{lemma}

We obtain the series $\sum_{p=0}^{\infty}(f_p, X_p)$ converging in $\mathcal{B}_4(\Omega)\times \mathcal B_3(\Omega)$ to some $(f, X)$. Let $(h, w) = \rho_g (D\Phi^W_{(g,\pi)})^* (f, X)$, then $(g+h, \pi+w)$ satisfies the nonlinear system $\Phi_{(g,\pi)}^W(g+h, \pi+w) = \Phi_{(g,\pi)}^W(g, \pi)+(\psi, V)$. 
\end{proof}

\section{Proof of Theorem~\ref{theorem:projected-modified} and higher regularity} \label{section:projected}

The proof of Theorem~\ref{theorem:projected-modified} is along the same lines as the proof of Theorem~\ref{thm:nl} and Theorem \ref{theorem:nl} in Section~\ref{section:modified}.  Recall that $\mathcal{U}_0$ is a bounded neighborhood of a Riemannian metric $g_0$, as in Section~\ref{subsection:weighted}, and the precompact smooth subdomain $\Omega_0$ of $\Omega$ is chosen so that $\rho_g \equiv 1$ on $\Omega_0$ for all $g\in \mathcal{U}_0$.  Fix a smooth bump function $\zeta$ supported in $\overline{\Omega_0}$.  Denote by $S_g$ the $L^2(d\mu_g)$-orthogonal complement of $\zeta K$, where $K$ is the kernel of $(D\Phi^{W_0}_{(g_0, \pi_0)})^*$.  Let $\Pi_g: \mathcal{B}_0\times \mathcal B_1 \to (\mathcal{B}_0\times \mathcal B_1) \cap S_g$ be the $L^2(d\mu_g)$-orthogonal projection. 

We begin with a basic lemma on the projection map. Throughout this section, the function spaces are all taken on $\Omega$, unless otherwise indicated. 

\begin{lemma}\label{lemma:projection}
There is a constant $C>0$ such that for $g\in \mathcal{U}_0$, 
\begin{align*}
	\| \Pi_g (\psi, V)\|_{L^2_{\rho^{-1}_g}} &\le \| (\psi, V)\|_{L^2_{\rho^{-1}_g}} \\ 
	\| (\psi, V)^{\perp}\|_{L^2_{\rho^{-1}_g}} &\le \| (\psi, V)\|_{L^2_{\rho^{-1}_g}} \\	\| \Pi_g(\psi, V)\|_{\mathcal B_0\times \mathcal B_1} &\le C \| (\psi, V) \|_{\mathcal B_0\times \mathcal B_1}, 
\end{align*}
where $(\psi, V)^{\perp}= (\psi, V) - \Pi_g (\psi, V) \in \zeta K$.
\end{lemma}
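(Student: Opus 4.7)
The plan is to exploit the crucial structural feature that the cutoff $\zeta$ is supported inside $\Omega_0$, where $\rho_g \equiv 1$ for every $g\in\mathcal U_0$. Consequently, on the finite-dimensional subspace $\zeta K$ the weighted inner product $\langle \cdot,\cdot\rangle_{L^2_{\rho_g^{-1}}}$ agrees with the unweighted one $\langle\cdot,\cdot\rangle_{L^2(d\mu_g)}$. I will write $(\psi,V) = \Pi_g(\psi,V) + (\psi,V)^\perp$ with $(\psi,V)^\perp\in \zeta K$ and $\Pi_g(\psi,V)\in S_g$. Because $(\psi,V)^\perp$ is supported in $\Omega_0$,
\[
\int \Pi_g(\psi,V)\cdot (\psi,V)^\perp \, \rho_g^{-1}\, d\mu_g \;=\; \int \Pi_g(\psi,V)\cdot (\psi,V)^\perp \, d\mu_g \;=\; 0,
\]
by the defining property of $\Pi_g$. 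Expanding $\|(\psi,V)\|^2_{L^2_{\rho_g^{-1}}}$ then yields the Pythagorean identity
\[
\|(\psi,V)\|^2_{L^2_{\rho_g^{-1}}} = \|\Pi_g(\psi,V)\|^2_{L^2_{\rho_g^{-1}}} + \|(\psi,V)^\perp\|^2_{L^2_{\rho_g^{-1}}},
\]
which delivers the first two inequalities at once.

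For the third inequality, I would fix a basis $\{e_1,\ldots,e_d\}$ of $K$ (the dimension $d$ is finite by Proposition~\ref{proposition:kernel}) and write $(\psi,V)^\perp = \sum_{i=1}^d c_i\,\zeta e_i$. The coefficients $c_i$ are determined by the linear system
\[
\sum_{i=1}^d A_{ij}(g)\, c_i \;=\; \int (\psi,V)\cdot \zeta e_j\, d\mu_g, \qquad A_{ij}(g) := \int \zeta^2\, e_i\cdot e_j\, d\mu_g.
\]
The Gram matrix $A(g)$ varies continuously in $g$ and is invertible at $g_0$, so after shrinking $\mathcal U_0$ its inverse is bounded uniformly in $g\in\mathcal U_0$. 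Cauchy--Schwarz combined with the fact that $\zeta e_j$ is smooth and compactly supported in $\Omega_0$ gives
\[
\Bigl| \int (\psi,V)\cdot \zeta e_j\, d\mu_g \Bigr| \;\le\; \|\zeta e_j\|_{L^2_{\rho_g}}\, \|(\psi,V)\|_{L^2_{\rho_g^{-1}}} \;\le\; C\,\|(\psi,V)\|_{\mathcal B_0\times\mathcal B_1},
\]
so $\max_i |c_i| \le C\,\|(\psi,V)\|_{\mathcal B_0\times\mathcal B_1}$ uniformly in $g$. Since each $\zeta e_i$ is a fixed smooth compactly supported tensor, $\|\zeta e_i\|_{\mathcal B_0\times \mathcal B_1}$ is a uniform constant, so
\[
\|(\psi,V)^\perp\|_{\mathcal B_0\times\mathcal B_1} \;\le\; C\,\|(\psi,V)\|_{\mathcal B_0\times\mathcal B_1},
\]
and the triangle inequality applied to $\Pi_g(\psi,V) = (\psi,V) - (\psi,V)^\perp$ completes the proof.

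The only point requiring any vigilance is the uniform invertibility of the Gram matrix $A(g)$ across $\mathcal U_0$; this is the sole place where we need to shrink the neighborhood of $g_0$, and it follows from continuity of $A(g)$ in $g$ together with the linear independence of $\{e_i\}$ on $\Omega_0$ (which uses that $\zeta$ is not identically zero where the kernel elements are independent). No further analytic input is needed beyond the placement of $\mathrm{supp}(\zeta)$ where the weight $\rho_g$ is trivial.
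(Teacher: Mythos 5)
Your proof is correct. The first two inequalities are handled exactly as in the paper: the support of $\zeta$ lies in $\Omega_0$ where $\rho_g\equiv 1$, so the unweighted orthogonality of $\Pi_g(\psi,V)$ and $(\psi,V)^{\perp}$ upgrades to $L^2_{\rho_g^{-1}}$-orthogonality and the Pythagorean identity gives both bounds. For the third inequality you take a genuinely different (more explicit) route. The paper argues abstractly: since $\zeta K$ is finite-dimensional and its $\mathcal B_0\times\mathcal B_1$- and $L^2$-norms are uniformly equivalent across $g\in\mathcal U_0$, one writes $\|\Pi_g(\psi,V)\|_{\mathcal B_0\times\mathcal B_1}\le \|(\psi,V)\|_{\mathcal B_0\times\mathcal B_1}+\|(\psi,V)^\perp\|_{\mathcal B_0\times\mathcal B_1}\le \|(\psi,V)\|_{\mathcal B_0\times\mathcal B_1}+C\|(\psi,V)^\perp\|_{L^2}$, identifies the latter with the $L^2_{\rho_g^{-1}}$-norm (again because $\rho_g\equiv1$ on $\operatorname{supp}\zeta$), and closes using the already-proved second inequality together with the fact that the $\mathcal B$-norms dominate the $L^2_{\rho^{-1}}$-norm. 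You instead expand $(\psi,V)^\perp$ in a fixed basis $\{\zeta e_i\}$ and bound the coefficients via the Gram matrix $A(g)$. This works, but it needs two inputs the paper's argument does not: (i) invertibility of $A(g)$, i.e.\ linear independence of $\{\zeta e_i\}$, which rests on the unique continuation statement in Proposition~\ref{proposition:kernel} (a nontrivial kernel element cannot vanish on an open set where $\zeta\neq 0$); and (ii) uniform control of $A(g)^{-1}$ over the neighborhood, for which your continuity-plus-shrinking argument suffices, though since $\mathcal U_0$ consists of uniformly equivalent metrics one in fact gets uniform positive definiteness of $A(g)$ without shrinking. The paper's route is slightly leaner because it never needs injectivity of $K\to\zeta K$ nor a Gram-matrix estimate; your route buys an explicit, quantitative description of the projection onto $\zeta K$, which is occasionally convenient (e.g.\ for the degree-theoretic bookkeeping in Section~4), at the cost of these extra uniformity checks.
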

\begin{proof}
Because $\zeta$ is supported in $\overline{\Omega_0}$ where $\rho_g\equiv 1$, the following weighted orthogonality holds:
\[
	\langle \Pi_g(\psi, V), (\psi, V)^{\perp} \rangle_{L_{\rho^{-1}_g}^2(\Omega)}=\langle \Pi_g(\psi, V), (\psi, V)^{\perp} \rangle_{L^2(\Omega_0)}=0.
\]
The first two inequalities follow by
\begin{align*}
	 \| (\psi, V)\|^2_{L^2_{\rho^{-1}_g}}&=  \| \Pi_g (\psi, V)\|^2_{L^2_{\rho^{-1}_g}}+ \| (\psi, V)^{\perp}\|^2_{L^2_{\rho^{-1}_g}}.
\end{align*}
 To establish the last inequality, we recall that all norms of the finite-dimensional space $\zeta K$ are equivalent and the $\mathcal B_0 \times \mathcal B_1$-norms on $\zeta K$ are all uniformly equivalent to each other for $g\in \mathcal U_0$:
\begin{align*}
 	\| \Pi_g (\psi, V) \|_{\mathcal B_0\times \mathcal B_1} &\le \| (\psi, V) \|_{\mathcal B_0\times \mathcal B_1} + \| (\psi, V)^{\perp} \|_{\mathcal B_0\times \mathcal B_1}\\
	&  \le\| (\psi, V) \|_{\mathcal B_0\times \mathcal B_1}+ C \| (\psi, V)^{\perp} \|_{L^2}\\
	& =\| (\psi, V) \|_{\mathcal B_0\times \mathcal B_1}+ C \| (\psi, V)^{\perp} \|_{L_{\rho_g^{-1}}^2} \\
	&\le (1+ C) \| (\psi, V) \|_{\mathcal B_0\times \mathcal B_1}.
\end{align*}
In the third line we use that $\zeta$ is supported where $\rho_g\equiv 1$.

\end{proof}

\subsection{The linearized equation}
In this section, we solve the linearized equation for the operator $\Pi_{g_0}\circ \Phi^W_{(g, \pi)}$. We again denote by $D\Phi^W_{(g,\pi)}= D\Phi^W_{(g,\pi)}|_{(g,\pi)}$ the linearization at $(g, \pi)$.  

\begin{theorem} \label{theorem:linear-projected}
Let $(g_0, \pi_0)\in C^{4,\alpha}(\overline{\Omega})\times C^{3,\alpha}(\overline{\Omega})$ be an initial data set, and let $W_0\in C^{2,\alpha}(\overline{\Omega})$ be a vector field. There is a neighborhood $\mathcal{U}$ of $(g_0, \pi_0)$ in $C^{4,\alpha}(\overline{\Omega})\times C^{3,\alpha}(\overline{\Omega})$, a neighborhood $\mathcal W$ of $W_0$ in $C^{2,\alpha}(\overline{\Omega})$, and  a constant $C>0$,  such that for  $(g, \pi)\in \mathcal{U}, W\in \mathcal W$ and for $(\psi, V)\in L^2_{\rho_g^{-1}}$, there is a unique $(f, X)\in  (H_{\rho_g}^2 \times H^1_{\rho_g})\cap S_{g}$
that weakly solves
\begin{align} \label{equation:projected-linear}
	\Pi_{g_0} \circ D\Phi^W_{(g,\pi)} \circ \rho_g (D\Phi^W_{(g,\pi)})^* (f, X) = \Pi_{g_0}(\psi, V),
\end{align}
or, equivalently, 
\[
	 D\Phi^W_{(g,\pi)}\circ \rho_g (D\Phi^W_{(g,\pi)})^*(f, X) -(\psi, V)\in \zeta K.
\]
Moreover, $(f, X)$ satisfies the estimate
\begin{align}\label{equation:estimate-variational-sol}
	\| (f, X)\|_{H^2_{\rho_{g}}\times H^1_{\rho_{g}}}\le C\| \Pi_{g_0}(\psi, V)\|_{L^2_{\rho_g^{-1}}\times L^2_{\rho_g^{-1}}}\le C\| (\psi, V)\|_{L^2_{\rho_g^{-1}}\times L^2_{\rho_g^{-1}}}.
\end{align}
\end{theorem}

The proof of the theorem is a modification of the variational argument in Theorem~\ref{theorem:var-sol}. While the projection map is taken with respect to a fixed metric $g_0$, the functional $\mathcal{G}$ below naturally involves $g$. To resolve this, we consider the analogous linear equation whose projection is with respect to $g$ and look for solutions $(f, X)\in S_g$. Theorem~\ref{theorem:linear-projected} follows from the proposition below.

\begin{proposition}\label{proposition:linear-projected}
Let $(g_0, \pi_0)\in C^{4,\alpha}(\overline{\Omega})\times C^{3,\alpha}(\overline{\Omega})$ be an initial data set, and let $W_0\in C^{2,\alpha}(\overline{\Omega})$ be a vector field. There is a neighborhood $\mathcal{U}$ of $(g_0, \pi_0)$ in $C^{4,\alpha}(\overline{\Omega})\times C^{3,\alpha}(\overline{\Omega})$, a neighborhood $\mathcal W$ of $W_0$ in $C^{2,\alpha}(\overline{\Omega})$, and a constant $C>0$ such that for  $(g, \pi)\in \mathcal{U}$, $W\in \mathcal W$, and for $(\psi, V)\in  L^2_{\rho_g^{-1}}(\Omega)$, there is a unique $(f, X)\in (H_{\rho_g}^2(\Omega) \times H^1_{\rho_g}(\Omega))\cap S_{g}$ that weakly solves
\begin{align} \label{equation:projected-linear-2}
	\Pi_{g} \circ D\Phi^W_{(g,\pi)} \circ \rho_g (D\Phi^W_{(g,\pi)})^* (f, X) =\Pi_g (\psi, V),
\end{align}
or equivalently, 
\[
	D\Phi^W_{(g,\pi)} \circ \rho_g (D\Phi^W_{(g,\pi)})^* (f, X) - (\psi, V) \in \zeta K.
\]
Moreover, $(f, X)$ satisfies the estimate
\begin{align}\label{equation:estimate-variational}
	\| (f, X)\|_{H^2_{\rho_{g}}\times H^1_{\rho_{g}}}\le C\|\Pi_g (\psi, V)\|_{L^2_{\rho_g^{-1}}\times L^2_{\rho_g^{-1}}}\le C\| (\psi, V)\|_{L^2_{\rho_g^{-1}}\times L^2_{\rho_g^{-1}}}.
\end{align}
\end{proposition}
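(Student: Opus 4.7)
The plan is to run the variational argument of Theorem~\ref{theorem:var-sol} with the sole modification that everything is restricted to the closed subspace $(H^2_{\rho_g}\times H^1_{\rho_g}) \cap S_g$, and with Remark~\ref{remark:coercivity} replacing Theorem~\ref{theorem:coercivity}. The minimized functional remains
\begin{equation*}
\mathcal{G}(f, X) = \int_\Omega \Big(\tfrac{1}{2}\rho_g\, \big|(D\Phi^W_{(g,\pi)})^*(f, X)\big|_g^2 - (\psi, V) \cdot_g (f, X)\Big)\, d\mu_g,
\end{equation*}
and the coercivity estimate transverse to $K := \ker (D\Phi^{W_0}_{(g_0,\pi_0)})^*$ supplied by Remark~\ref{remark:coercivity} plays the role previously taken by Theorem~\ref{theorem:coercivity}.

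The first substantive step is to verify the transversality hypothesis $S_g \cap K = \{0\}$. If $(f, X)$ belonged to both sets, then pairing the defining orthogonality of $S_g$ with the element $\zeta(f, X) \in \zeta K$ would yield $\int_\Omega \zeta\,(|f|^2 + |X|_g^2)\, d\mu_g = 0$; since $\zeta$ is a nonnegative bump function with nonempty open support inside $\Omega_0$, $(f, X)$ would vanish on an open set, hence $(f, X) = 0$ by Proposition~\ref{proposition:kernel}(3). The same argument applies to $K' := \ker(D\Phi^W_{(g,\pi)})^*$ at nearby data, so Remark~\ref{remark:coercivity} delivers a coercivity estimate on $S_g$ that is uniform across small neighborhoods of $(g_0, \pi_0)$ and $W_0$.

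With coercivity in hand, $\mathcal G$ is strictly convex and bounded below on the Hilbert subspace $(H^2_{\rho_g}\times H^1_{\rho_g}) \cap S_g$; indeed, for $(f, X) \in S_g$ one has $\int (\psi,V) \cdot_g (f,X)\, d\mu_g = \int \Pi_g(\psi, V) \cdot_g (f, X)\, d\mu_g$ because $(\psi, V) - \Pi_g(\psi, V) \in \zeta K \subset S_g^\perp$, and weighted Cauchy-Schwarz combined with Lemma~\ref{lemma:projection} bounds the linear term by $\|(\psi, V)\|_{L^2_{\rho_g^{-1}}} \|(f, X)\|_{L^2_{\rho_g}}$. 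A standard direct-method argument then produces a unique minimizer $(f, X)$ satisfying the bound \eqref{equation:estimate-variational}, uniqueness following from strict convexity. Computing the first variation against arbitrary $(\tilde f, \tilde X) \in (H^2_{\rho_g}\times H^1_{\rho_g}) \cap S_g$ gives
\begin{equation*}
\int_\Omega \rho_g\, (D\Phi^W_{(g,\pi)})^*(f, X) \cdot_g (D\Phi^W_{(g,\pi)})^*(\tilde f, \tilde X)\, d\mu_g = \int_\Omega (\psi, V) \cdot_g (\tilde f, \tilde X)\, d\mu_g,
\end{equation*}
which, after integrating by parts in the weak sense (justified by density of $C^\infty_c$ in the weighted spaces, as in \cite[Lemma~2.1]{Corvino-Schoen:2006}), exhibits $D\Phi^W_{(g,\pi)}\rho_g(D\Phi^W_{(g,\pi)})^*(f, X) - (\psi, V)$ as an element annihilating $S_g$, hence lying in $S_g^\perp = \zeta K$; projecting by $\Pi_g$ recovers \eqref{equation:projected-linear-2} and shows the two formulations are equivalent.

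The main obstacle I anticipate is keeping every constant uniform in $(g, \pi, W)$, since the subspace $S_g$ itself moves with $g$ while the reference kernel $K$ is fixed at $(g_0, \pi_0, W_0)$. This is resolved by observing that $\zeta$ is supported in the compact set $\Omega_0$ on which $\rho_g \equiv 1$ for all $g \in \mathcal{U}_0$, so the $L^2(d\mu_g)$ inner product on $\zeta K$ is comparable to a single $g_0$-independent $L^2$ pairing up to constants uniform in $\mathcal{U}_0$; finite-dimensionality of $\zeta K$ then makes both $\Pi_g$ and the transversality quantitative uniformly, and Remark~\ref{remark:coercivity} applies with a single constant $C$ throughout sufficiently small $\mathcal{U}$ and $\mathcal W$. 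Continuous dependence of $(f, X)$ on $(\psi, V)$ is then automatic from linearity and \eqref{equation:estimate-variational}.
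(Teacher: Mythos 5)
Your proposal is correct and follows essentially the same route as the paper: restrict the variational functional to $(H^2_{\rho_g}\times H^1_{\rho_g})\cap S_g$, invoke the coercivity transverse to the kernel from Remark~\ref{remark:coercivity}, apply the direct method, and read off from the first variation that the defect lies in $S_g^{\perp}=\zeta K$. The only cosmetic difference is that you keep the unprojected linear term $(\psi,V)\cdot_g(f,X)$ in $\mathcal G$ while the paper writes $\Pi_g(\psi,V)\cdot_g(f,X)$, which coincide on $S_g$ since $(\psi,V)-\Pi_g(\psi,V)\in\zeta K$; your explicit check that $S_g\cap K=\{0\}$ via pairing with $\zeta(f,X)$ and unique continuation is a welcome detail the paper leaves implicit in Remark~\ref{remark:coercivity}.
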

\begin{proof}
For given $(\psi, V)\in L^2_{\rho_g^{-1}}(\Omega)$, let $\mathcal{G}$ be a similar functional as in Theorem~\ref{theorem:var-sol}, whose domain is restricted to the linear subspace $ (H^2_{\rho_g}(\Omega)\times H^1_{\rho_g}(\Omega))\cap S_g$: for $(f,X)\in (H^2_{\rho_g}(\Omega)\times H^1_{\rho_g}(\Omega))\cap S_g$, let 
\begin{align*}
	\mathcal{G}(f, X) &= \int_{\Omega} \left[ \frac{1}{2} \rho_g \left| (D\Phi^W_{(g,\pi)})^*(f, X)\right|_g^2 - \Pi_g(\psi, V)\cdot_g (f, X) \right]d\mu_g.
\end{align*}
By the coercivity estimate (see Remark~\ref{remark:coercivity}),
\[
	\mathcal{G}(f, X)\ge \frac{1}{2C} \|(f, X)\|^2_{H^2_{\rho_g}\times H^1_{\rho_g}} - \| \Pi_g(\psi, V)\|_{L^2_{\rho_g^{-1}}} \| (f, X) \|_{L^2_{\rho_g}}
\]  It is clear that functional is still convex when restricted on the linear subspace, so there is a unique minimizer $(f, X)\in (H^2_{\rho_g}(\Omega)\times H^1_{\rho_g}(\Omega))\cap S_g$. Furthermore, since $\mathcal{G}(f, X)\le 0$ and by Lemma~\ref{lemma:projection}, the estimate \eqref{equation:estimate-variational} holds.

To see that $(f, X)$ solves~\eqref{equation:projected-linear-2} weakly, we need to show that, for all smooth compactly supported test fields $(u, Y)$, 
\begin{align} \label{equation:weak-solution}
	\int_{\Omega} \left(\Pi_{g} \circ D\Phi^W_{(g,\pi)}\circ \rho_g (D\Phi_{(g,\pi)})^* (f, X) - \Pi_g(\psi, V)\right) \cdot_g (u, Y)\, d\mu_g = 0.
\end{align}
The equality trivially holds for $(u, Y)\in \zeta K$. For  $(u, Y) \in S_g$, because $(f, X)$ is a minimizer in $  (H^2_{\rho_g}(\Omega)\times H^1_{\rho_g}(\Omega))\cap S_g$, we see that 
\begin{align*}
	0 &= \left.\frac{d}{dt}\right|_{t=0} \mathcal{G}\left((f,X)+t(u, Y)\right)\\
	&=\int_{\Omega} \rho_g (D\Phi^W_{(g,\pi)})^*(f, X) \cdot_g (D\Phi^W_{(g,\pi)})^*(u, Y) -\Pi_g (\psi, V)\cdot_g (u, Y) \, d\mu_g\\
	&=\int_{\Omega} \left( D\Phi^W_{(g,\pi)} \circ \rho_g (D\Phi^W_{(g,\pi)})^*(f, X) - \Pi_g(\psi, V) \right)\cdot_g (u, Y) \, d\mu_g\; ,
\end{align*}
which implies \eqref{equation:weak-solution}.
\end{proof}

\subsection{Weighted Schauder estimates}

\begin{proposition} \label{prop:projected-Schauder}
Let $(g_0,\pi_0)\in C^{4,\alpha}(\overline{\Omega})\times C^{3,\alpha}(\overline{\Omega})$ be an initial data set, and let $W_0\in C^{2,\alpha}(\overline{\Omega})$ be a vector field. There is a neighborhood $\mathcal U $ of $(g_0, \pi_0)$ in $C^{4,\alpha}(\overline{\Omega})\times C^{3,\alpha}(\overline{\Omega})$, a neighborhood $\mathcal W$ of $W_0$ in $C^{2,\alpha}(\overline{\Omega})$, and a constant $C>0$ such that for $(g,\pi)\in \mathcal U$, $W\in \mathcal W$, and for $(\psi,V)\in \mathcal B_0\times \mathcal B_1$, if $(f, X) \in (H^2_{\rho_g}(\Omega)\times H^1_{\rho_g}(\Omega))\cap S_g$ weakly solves the linear system 
\[
	\Pi_{g_0}\circ D\Phi^W_{(g,\pi)}\circ \rho_g (D\Phi^W_{(g,\pi)})^* (f, X) =\Pi_{g_0}(\psi, V),
\] 
then $(f, X)\in \mathcal B_4\times \mathcal B_3$ and 
\[
	\|(f, X)\|_{\mathcal B_4\times \mathcal B_3} \le C \| \Pi_{g_0}(\psi, V) \|_{\mathcal B_0\times \mathcal B_1}.
\]
\end{proposition}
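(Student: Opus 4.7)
The plan is to adapt the proof of Theorem~\ref{theorem:var-sol-est} to accommodate the projection, with the only new ingredient being control of a finite-dimensional kernel remainder. Write $T := D\Phi^W_{(g,\pi)}\circ\rho_g(D\Phi^W_{(g,\pi)})^*$ and $L := \rho_g^{-1}T$. Since $(f,X)$ satisfies $\Pi_{g_0}(T(f,X)) = \Pi_{g_0}(\psi,V)$, we have $T(f,X) - \Pi_{g_0}(\psi,V) \in \ker \Pi_{g_0} = \zeta K$, so
\[
T(f,X) = \Pi_{g_0}(\psi,V) + \zeta\kappa \qquad \text{for some } \kappa \in K.
\]
Because $\zeta$ is supported in $\Omega_0$ where $\rho_g \equiv 1$, this gives $L(f,X) = \rho_g^{-1}\Pi_{g_0}(\psi,V) + \zeta\kappa$.

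Applying Theorem~\ref{theorem:U} with $r = n/2$, $s=1/2$ exactly as in the proof of Theorem~\ref{theorem:var-sol-est} produces the weighted H\"older estimate
\[
\|(f,X)\|_{\mathcal B_4\times \mathcal B_3} \le C\Big(\|\Pi_{g_0}(\psi,V)\|_{\mathcal B_0 \times \mathcal B_1} + \|\zeta\kappa\|_{\mathcal B_0\times \mathcal B_1} + \|f\|_{L^2_{\rho_g}} + \|X\|_{L^2_{\phi^2\rho_g}}\Big),
\]
where the last two terms appear via Theorem~\ref{theorem:U} and are bounded by $\|(f,X)\|_{H^2_{\rho_g}\times H^1_{\rho_g}}$. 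By Theorem~\ref{theorem:linear-projected} the latter is controlled by $C\|\Pi_{g_0}(\psi,V)\|_{L^2_{\rho_g^{-1}}} \le C\|\Pi_{g_0}(\psi,V)\|_{\mathcal B_0\times \mathcal B_1}$ (using the embedding $\mathcal B_0 \hookrightarrow L^2_{\rho^{-1}}$). Thus the remaining task is to bound $\|\zeta\kappa\|_{\mathcal B_0\times \mathcal B_1}$ by $\|\Pi_{g_0}(\psi,V)\|_{\mathcal B_0\times \mathcal B_1}$.

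This is the main technical point, and it is carried out by duality on the finite-dimensional space $\zeta K$. Since $\zeta K$ is finite-dimensional and $\zeta$ is compactly supported inside $\{\rho_g\equiv 1\}$, all norms on $\zeta K$ are mutually equivalent, so it suffices to bound $\|\zeta\kappa\|_{L^2(g_0)}$. For any test element $\zeta\xi \in \zeta K$ of unit $L^2(g_0)$ norm, orthogonality of $\Pi_{g_0}(\psi,V)$ to $\zeta K$ in $L^2(g_0)$ gives
\[
\langle \zeta\kappa, \zeta\xi\rangle_{L^2(g_0)} = \langle T(f,X), \zeta\xi\rangle_{L^2(g_0)}.
\]
Using the uniform equivalence of $d\mu_g$ and $d\mu_{g_0}$ for $g\in\mathcal U_0$, then integrating by parts in $L^2(g)$ (the boundary terms vanish by the compact support of $\zeta\xi$),
\[
\big|\langle T(f,X), \zeta\xi\rangle_{L^2(g)}\big| = \big|\langle (D\Phi^W_{(g,\pi)})^*(f,X),\, (D\Phi^W_{(g,\pi)})^*(\zeta\xi)\rangle_{L^2_{\rho_g}(g)}\big| \le C\|(f,X)\|_{H^2_{\rho_g}\times H^1_{\rho_g}},
\]
where I used Cauchy-Schwarz and the fact that $\|(D\Phi^W_{(g,\pi)})^*(\zeta\xi)\|_{L^2_{\rho_g}}$ is uniformly bounded on the unit ball of the finite-dimensional space $\zeta K$ (locally uniformly in $(g,\pi,W)$). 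Taking the supremum over $\xi$ yields $\|\zeta\kappa\|_{L^2(g_0)} \le C\|(f,X)\|_{H^2_{\rho_g}\times H^1_{\rho_g}} \le C\|\Pi_{g_0}(\psi,V)\|_{\mathcal B_0\times \mathcal B_1}$. Substituting this back into the Schauder estimate above completes the proof. The main obstacle, and essentially the only new ingredient beyond Theorem~\ref{theorem:var-sol-est}, is the duality step controlling the finite-dimensional kernel remainder $\zeta\kappa$ through the already-established variational $H^2\times H^1$ bound.
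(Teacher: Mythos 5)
Your argument is correct and follows the paper's skeleton --- decompose $L(f,X)$ into $\rho_g^{-1}\Pi_{g_0}(\psi,V)$ plus a remainder in $\zeta K$, apply the weighted Schauder estimate of Theorem~\ref{theorem:U}, and then control the remainder --- but you treat the kernel remainder by a genuinely different mechanism. The paper bounds $\|(LU)^{\perp}\|$ by $\|LU\|_{L^2(\Omega_0)}\le C\|(f,X)\|_{C^4(\Omega_0)\times C^3(\Omega_0)}$ and then invokes the interpolation inequality $\|\cdot\|_{C^4\times C^3}\le \epsilon\|\cdot\|_{C^{4,\alpha}\times C^{3,\alpha}}+C(\epsilon)\|\cdot\|_{L^2}$, absorbing the $\epsilon$-term into the left-hand side of the Schauder estimate; you instead test $T(f,X)$ against elements of $\zeta K$ and integrate by parts, so the remainder is controlled directly by $\|(D\Phi^W_{(g,\pi)})^*(f,X)\|_{L^2_{\rho_g}}\lesssim\|(f,X)\|_{H^2_{\rho_g}\times H^1_{\rho_g}}$, which the variational estimate of Theorem~\ref{theorem:linear-projected} already bounds. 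This avoids the interpolation/absorption step and is arguably cleaner. One step needs tightening: ``uniform equivalence of $d\mu_g$ and $d\mu_{g_0}$'' does not by itself allow you to replace $\langle T(f,X),\zeta\xi\rangle_{L^2(g_0)}$ by $\langle T(f,X),\zeta\xi\rangle_{L^2(g)}$, since equivalence of measures controls norms but not signed pairings. The correct fix is to write $d\mu_{g_0}=J\,d\mu_g$ and absorb the smooth, uniformly bounded Radon--Nikodym derivative into the test field, i.e.\ integrate by parts against $J\zeta\xi$; its image under $(D\Phi^W_{(g,\pi)})^*$ remains uniformly bounded in $L^2_{\rho_g}$ over the unit sphere of the finite-dimensional space $\zeta K$ and over the chosen neighborhoods of $(g_0,\pi_0)$ and $W_0$. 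With that one-line adjustment your duality step is rigorous, and the rest (reduction to the $L^2$ norm on $\zeta K$ by norm equivalence, and the use of $\mathcal B_0\hookrightarrow L^2_{\rho^{-1}}$ for the lower-order terms) is exactly what is needed.
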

\begin{proof}
Define
\begin{align*}
	L(f, X)&= \rho_g^{-1} D\Phi^W_{(g,\pi)} \circ \rho_g (D\Phi^W_{(g,\pi)})^*(f, X)\\
	U &= (f, X^1,\dots, X^n) \quad \mbox{(with respect to a fixed coordinate chart)}
\end{align*}
as in the proof of Theorem~\ref{theorem:var-sol-est}. Below we denote $\rho_g$ by $\rho$. We write $$LU = \rho^{-1} \Pi_{g_0} (\rho L U) +\rho^{-1} ( \rho LU)^\perp= \rho^{-1} \Pi_{g_0} (\psi, V) + (LU)^{\perp}\; ,$$ where we used the fact that $\rho_{g_0} \equiv 1$ on the support of $\zeta$.  Applying Theorem~\ref{theorem:U} as in the proof of Theorem~\ref{theorem:var-sol-est}, we have
\begin{align*}
	&\| f \|_{C^{4, \alpha}_{\phi, \phi^{\frac{n}{2}} \rho^{\frac{1}{2} }} }+ \| X \|_{C^{3, \alpha}_{\phi, \phi^{1+\frac{n}{2}}\rho^{\frac{1}{2}}}}=\sum_{j=1}^{n+1} \| U^j \|_{C^{t_j, \alpha}_{\phi, \varphi_j}}\\
	&\le C \left[\sum_{j=1}^{n+1} \| (L U)_j \|_{C^{-s_j, \alpha}_{\phi, \phi^{t_j+s_j}\varphi_j}} + \sum_{j=1}^{n+1} \| U^j \|_{L^2_{\phi^{-n}\varphi_j^2}}\right]\\
	&\le C\left( \sum_{j=1}^{n+1} \|  (\rho^{-1} \Pi_{g_0} (\psi, V))_j\|_{C^{-s_j, \alpha}_{\phi, \phi^{t_j+s_j}\varphi_j}} + \sum_{j=1}^{n+1} \|  ( L U)^{\perp}_j\|_{C^{-s_j, \alpha}_{\phi, \phi^{t_j+s_j}\varphi_j}}+ \sum_{j=1}^{n+1} \| U^j \|_{L^2_{\phi^{-n}\varphi_j^2}}\right)\\
	&\le C\left( \sum_{j=1}^{n+1} \|  (\Pi_{g_0} (\psi, V))_j\|_{C^{-s_j, \alpha}_{\phi, \phi^{t_j+s_j}\varphi_j\rho^{-1}}} + \sum_{j=1}^{n+1} \|  ( L U)^{\perp}_j\|_{C^{-s_j, \alpha}_{\phi, \phi^{t_j+s_j}\varphi_j}}+ \sum_{j=1}^{n+1} \| U^j \|_{L^2_{\phi^{-n}\varphi_j^2}}\right). 
\end{align*}
We now estimate the $(L U)^{\perp}$-term. Just as in the proof of Lemma~\ref{lemma:projection}, using the fact that all norms of a finite-dimensional space are equivalent, and that the support of $(L U)_j ^{\perp}$ is contained in $\overline{\Omega_0}$, we obtain the following estimate, uniformly across $\mathcal U$ and $\mathcal W$:
\[
	 \| (L U)_j^{\perp}\|_{C^{-s_j, \alpha}_{\phi, \phi^{t_j+s_j}\varphi_j} (\Omega)} \le C\|(L U)^{\perp}\|_{L^2(\Omega)} =C\|(LU)^{\perp}\|_{L^2(\Omega_0)}\le C\| L U\|_{L^2(\Omega_0)}.
\]
To estimate $\| L U\|_{L^2(\Omega_0)}$, we note that $L$ is a differential operator that contains four derivatives on $f$ and three derivatives on $X$ and that $\overline{\Omega_0}$ is compact. Then by enlarging the constant $C$ if necessary and by interpolation, we have
\begin{align*}
	\| (L U)_j^{\perp}\|_{C^{-s_j, \alpha}_{\phi, \phi^{t_j+s_j}\varphi_j} (\Omega)}&\le C \| (f, X) \|_{C^4(\Omega_0)\times C^3(\Omega_0)}\\
	& \le \epsilon \| (f, X) \|_{C^{4,\alpha}(\Omega_0)\times C^{3,\alpha}(\Omega_0)} + C(\epsilon) \| (f, X) \|_{L^2(\Omega_0) \times L^2(\Omega_0) }. \\
	&\le C\left[ \epsilon \left( \| f \|_{C^{4,\alpha}_{\phi, \phi^{\frac{n}{2}} \rho^{\frac{1}{2}} }}  + \| X \|_{C^{3,\alpha}_{\phi, \phi^{1+\frac{n}{2}} \rho^{\frac{1}{2}} }}\right)  +  C(\epsilon) \left( \| f\|_{L^2_{\rho} }+ \| X\|_{ L^2_{\phi^2 \rho}} \right) \right],
\end{align*}
where in the last inequality, we replace the norms on $\Omega_0$ by the corresponding weighted norms on $\Omega$, up to the multiple of a constant that is uniform in $(g,\pi)\in \mathcal U$ and $W\in \mathcal W$.  The weighted Sobolev estimates on $(f, X)$ follow by Theorem~\ref{theorem:linear-projected}, and the fact that the solution to the linear system is unique.  Using this and  choosing $\epsilon>0$ sufficiently small in the preceding inequality, we can absorb the weighted H\"{o}lder norm on $(f, X)$ to give the desired H\"{o}lder estimates.  \end{proof}

\subsection{Solving the nonlinear projected problem by iteration}

We discuss how to to solve the nonlinear problem for the proof of Theorem~\ref{theorem:projected-modified}:  for $(\psi, V)\in \mathcal{B}_0\times \mathcal B_1$ sufficiently small,  there is  $(f, X)\in (\mathcal{B}_4 \times \mathcal{B}_3) \cap S_g$ such that  $(h,w) =\rho_g (D\Phi^W_{(g,\pi)})^*(f, X)$  solves 
\[
	\Pi_{g_0}\circ \Phi^W_{(g, \pi)} (g+h , \pi + w)  = \Pi_{g_0}\circ \Phi^W_{(g, \pi)} (g, \pi )+ \Pi_{g_0}(\psi, V)
\]
with $\|(h,w)\|_{\mathcal B_2\times \mathcal{B}_2} \le C\| \Pi_{g_0}(\psi, V)\|_{\mathcal B_0 \times \mathcal B_1}$.

The proof follows the same iteration scheme as in the proof of Theorem~\ref{theorem:nl} by replacing $\Phi^W_{(g,\pi)}$ with $\Pi_{g_0} \circ  \Phi^W_{(g,\pi)}$ and $(\psi, V)$ with $\Pi_{g_0} (\psi, V)$.  
The initial step of the iteration is solving the following for $(f_0, X_0)\in S_g$
\[
	\Pi_{g_0} \circ D\Phi^W_{(g,\pi)} \circ \rho_g (D\Phi^W_{(g,\pi)})^* (f_0, X_0) = \Pi_{g_0} (\psi, V)\; , 
\] 
and then setting $(h_0, w_0)= \rho_g (D\Phi^W_{(g,\pi)})^* (f_0, X_0)$ and $(\gamma_1, \tau_1) = (g+h_0, \pi+w_0)$. We then solve inductively for $m\ge 0$
\[
	\Pi_{g_0} \circ D\Phi^W_{(g,\pi)} \circ \rho_g (D\Phi^W_{(g,\pi)})^* (f_m, X_m) =  \Pi_{g_0} \circ \Phi^W_{(g,\pi)} (g,\pi) + \Pi_{g_0}(\psi, V) - \Pi_{g_0} \circ \Phi^W_{(g,\pi)}(\gamma_m, \tau_m)
\]
and set $(h_{m}, w_{m})= \rho_g (D\Phi^W_{(g,\pi)})^* (f_{m}, X_{m})$
and $(\gamma_{m+1}, \tau_{m+1}) = (g+\sum_{p=0}^m h_p, \pi + \sum_{p=0}^m w_p)$. The essential estimates to guarantee the iteration procedure converges are the following:
\begin{align*}
	\| (f_m, X_m)\|_{\mathcal B_4\times \mathcal B_3} &\le C  \|  \Pi_{g_0} \circ \Phi^W_{(g,\pi)} (g,\pi) + \Pi_{g_0}(\psi, V) - \Pi_{g_0} \circ \Phi^W_{(g,\pi)}(\gamma_m, \tau_m)\|_{\mathcal B_0\times \mathcal B_1}\\
	\| (h_m, w_m)\|_{\mathcal B_2 \times \mathcal B_2} &\le C \| (f_m, X_m)\|_{\mathcal B_4\times \mathcal B_3}
\end{align*}
and
\begin{align*}
	\| \Pi_{g_0} \circ D\Phi^W_{(g,\pi)} |_{(\gamma, \tau)} (h,w)- \Pi_{g_0}\circ D\Phi^W_{(g,\pi)} |_{(\gamma', \tau')} (h,w) \|_{\mathcal B_0 \times \mathcal B_1 }&\le D \| (h, w) \|_{\mathcal B_2\times \mathcal B_2} \| (\gamma-\gamma', \tau- \tau')\|_{\mathcal B_2 \times \mathcal B_2}\\
	\| \Pi_{g_0} \circ Q^W_{(g,\pi)}(h_0, w_0)\|_{\mathcal B_0\times \mathcal B_1}&\le D\| (h_0, w_0)\|^2_{\mathcal B_2\times \mathcal B_2}.
\end{align*}
The first two estimates follow by Proposition~\ref{prop:projected-Schauder} and the fact that the differential operator is continuous between the corresponding weighted spaces. The last two estimates follow by Lemma~\ref{lemma:projection} and the estimates for unprojected operators from Lemma~\ref{lemma:Taylor-modified}.

\subsection{Higher order regularity and continuous dependence}\label{subsection:higher-regularity}

The previous analysis, in particular Remark~\ref{remark:higher-order}, implies the following version of the local surjectivity theorem with higher order regularity.  For simplicity we state the theorem for $(\psi, V)$ of compact support, but one can more generally pose that $(\psi, V)$ lies in a suitable weighted space (an infinite intersection of such spaces for the $C^{\infty}$-case, for example).

\begin{theorem} \label{thm:nl-k} 
Let $k\ge 0$. Let $( g_0, \pi_0) \in C^{k+4,\alpha}(\overline{\Omega})\times C^{k+3,\alpha}(\overline{\Omega})$ be an initial data set, and let $W_0\in C^{k+2,\alpha}(\overline{\Omega})$.  Suppose that the kernel of $(D\Phi^{W_0}_{(g_0, \pi_0)})^*$ is $K$ (may be trivial).  Then there is a $C^{k+4,\alpha}(\overline{\Omega}) \times C^{k+3,\alpha}(\overline{\Omega})$ neighborhood $\mathcal U$ of $(g_0, \pi_0)$, a neighborhood $\mathcal W$ of $W_0$ in $C^{k+2,\alpha}(\overline{\Omega})$, and constants $\epsilon>0$, $C>0$ such that for $(g, \pi)\in \mathcal U$, $W\in \mathcal W$, and for $(\psi,V)\in C^{k,\alpha}_c(\Omega)\times C_c^{k+1,\alpha}(\Omega)$ with $\|(\psi,V)\|_{\mathcal B_0\times \mathcal B_1} \leq \epsilon$, there is a pair of symmetric tensors $(h,w)\in C^{k+2,\alpha}_c(\Omega)\times C^{k+2,\alpha}_c(\Omega)$ with $\|(h,w)\|_{\mathcal B_2\times \mathcal B_2} \leq C \| \Pi_{g_0}(\psi, V)\|_{\mathcal B_0\times \mathcal{B}_1}$, such that the initial data set $(g+h, \pi+w)\in C^{k+2,\alpha}(\overline{\Omega})\times C^{k+2,\alpha}(\overline{\Omega})$ satisfies
\[
	\Pi_{g_0} \circ \Phi^W_{(g, \pi)} (g+h , \pi + w)  = \Pi_{g_0} \Phi^W_{(g, \pi)} (g, \pi )+ \Pi_{g_0}(\psi, V).
\]
If, in addition, $(g, \pi)\in C^{\infty}(\overline{\Omega})$ and $(\psi, V)\in C^{\infty}_c(\Omega)$, then $(h, w)\in C^{\infty}_c(\Omega)$.
\end{theorem}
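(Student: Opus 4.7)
The plan is to re-run the variational-iterative scheme of Theorem~\ref{theorem:projected-modified} in weighted H\"{o}lder spaces of higher differentiability order, using the higher-order interior Schauder estimate of Remark~\ref{remark:higher-order}. First I would introduce higher-order analogs $\mathcal B^{(h)}_{k+2}, \mathcal B^{(f)}_{k+4}, \mathcal B^{(X)}_{k+3}$ of the Banach spaces $\mathcal B_2, \mathcal B_4, \mathcal B_3$ (same weight exponents, $k$ extra derivatives in the H\"{o}lder factor). Applying Remark~\ref{remark:higher-order} with $\ell = k$ to the elliptic system $L(f,X) = \rho_g^{-1}\, D\Phi^W \circ \rho_g\, (D\Phi^W)^*(f,X)$, together with the projection-error bookkeeping of Proposition~\ref{prop:projected-Schauder} (the component $(LU)^{\perp}$ is compactly supported in $\Omega_0$ and is absorbed using equivalence of norms on the finite-dimensional space $\zeta K$), yields the analogous Schauder estimate
\[
\|(f,X)\|_{\mathcal B^{(f)}_{k+4} \times \mathcal B^{(X)}_{k+3}} \le C\,\|\Pi_{g_0}(\psi,V)\|_{\widetilde{\mathcal B}_k \times \widetilde{\mathcal B}_{k+1}},
\]
for the appropriate higher-order versions $\widetilde{\mathcal B}_k,\widetilde{\mathcal B}_{k+1}$ of the data spaces $\mathcal B_0,\mathcal B_1$. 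Differentiating the identity $(h,w) = \rho_g (D\Phi^W)^*(f,X)$ then gives the matching estimate in $\mathcal B^{(h)}_{k+2}\times \mathcal B^{(h)}_{k+2}$.

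With these linear estimates, Lemma~\ref{lemma:induction} can be re-run in the higher-order spaces. The key new ingredient is the tame quadratic estimate
\[
\|Q^W(h,w)\|_{\widetilde{\mathcal B}_k \times \widetilde{\mathcal B}_{k+1}} \le D\,\|(h,w)\|_{\mathcal B_2 \times \mathcal B_2}\,\|(h,w)\|_{\mathcal B^{(h)}_{k+2} \times \mathcal B^{(h)}_{k+2}},
\]
which follows from the (at least) quadratic polynomial dependence of $\Phi^W(g+h,\pi+w) - \Phi^W(g,\pi) - D\Phi^W(h,w)$ on $(h,w)$ and their first two covariant derivatives, combined with the product rule for weighted H\"{o}lder spaces. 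Since smallness of $(h,w)$ in $\mathcal B_2 \times \mathcal B_2$ is already supplied by the base-level Theorem~\ref{thm:nl} and its projected analog, the iterates satisfy a geometric estimate and the series $\sum_m(h_m,w_m)$ converges in $\mathcal B^{(h)}_{k+2}\times \mathcal B^{(h)}_{k+2}$ to the same limit produced at the base level (by uniqueness of the iterative fixed point).

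The compact-support and unweighted-norm assertions now follow directly from the weight structure: the factor $\rho_g^{1/2} = e^{-N/(2d_g)}$ near $\partial\Omega$ dominates every polynomial power of $\phi^{-1} = d_g^{-2}$ appearing in the weight of $\mathcal B^{(h)}_{k+2}$, so all derivatives of $(h,w)$ through order $k+2$ decay faster than any polynomial as $d_g \to 0$. Extending by zero gives $(h,w) \in C^{k+2,\alpha}_c(\Omega)$, and the unweighted bound $\|(h,w)\|_{C^{k+2,\alpha}} \le C\,\|(\psi,V)\|_{C^{k,\alpha} \times C^{k+1,\alpha}}$ follows from the weighted estimate combined with the trivial comparison $\|(\psi,V)\|_{\mathcal B_0\times \mathcal B_1} \le C_{\mathrm{supp}}\,\|(\psi,V)\|_{C^{k,\alpha}\times C^{k+1,\alpha}}$ valid for compactly supported data (with $C_{\mathrm{supp}}$ depending on $\mathrm{dist}(\mathrm{supp}(\psi,V),\partial\Omega)$). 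Continuous dependence $(\psi,V) \mapsto (h,w)$ from $\mathcal B_0\times \mathcal B_1$ to $\mathcal B_2\times \mathcal B_2$ is inherited from the iteration: each step is a continuous linear map composed with the smooth polynomial nonlinearity, and the geometric tail bound yields uniform convergence. The $C^\infty$ case is obtained by performing the finite-regularity argument for every $k$ and invoking uniqueness of the fixed point, so the same $(h,w)$ lies in $C^{k+2,\alpha}_c(\Omega)$ for every $k$ and hence in $C^\infty_c(\Omega)$.

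The main technical obstacle is the tame quadratic estimate on $Q^W$ at higher order: one must check that the polynomial weights in $\phi$ arising from the product rule combine correctly with the two factors of $\rho_g^{1/2}$ buried in $(h,w)$ to reproduce exactly the single $\rho_g$ factor in the ansatz $(h,w) = \rho_g(D\Phi^W)^*(f,X)$. This is the same favorable matching of weights that makes the $k=0$ iteration close in Lemma~\ref{lemma:induction}, so once the exponents in $\mathcal B^{(h)}_{k+2}, \mathcal B^{(f)}_{k+4}, \mathcal B^{(X)}_{k+3}, \widetilde{\mathcal B}_k, \widetilde{\mathcal B}_{k+1}$ are set to propagate the $k=0$ relations, the iteration goes through verbatim.
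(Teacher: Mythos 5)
Your route is genuinely different from the paper's. The paper does not re-run the iteration at higher order: it takes the $k=0$ solution of the projected problem, observes that $(f,X)$ satisfies the quasilinear elliptic system $\Phi^W_{(g,\pi)}\bigl((g,\pi)+\rho_g (D\Phi^W_{(g,\pi)})^*(f,X)\bigr)-\Phi^W_{(g,\pi)}(g,\pi)-(\psi,V)\in\zeta K$ with smooth, compactly supported right-hand side in $\zeta K$, and then bootstraps elliptic regularity (following Corvino--Eichmair--Miao) to obtain the higher-order estimates, the boundary decay, the claimed unweighted bound, and the $C^\infty$ statement all at once. What you propose is essentially the paper's briefly mentioned ``alternative'' argument (iteration in higher-order weighted spaces via Remark~\ref{remark:higher-order}), but with two upgrades that the paper deliberately does not claim: you keep the smallness hypothesis only at the level $\mathcal B_0\times\mathcal B_1$, compensating with a tame (low norm)$\times$(high norm) estimate for $Q^W$ and for the linearization differences in Lemma~\ref{lemma:induction}, and you extract smoothness by intersecting over all $k$. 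The tame estimate is the crux and you do not prove it; note that the paper's own remark strengthens the smallness hypothesis to $\|(\psi,V)\|_{C^{k,\alpha}\times C^{k+1,\alpha}}$ precisely because without such a tame structure the higher-order iteration does not close from base-level smallness, and it only claims the finite-regularity statement by that route. To make your scheme rigorous you would have to establish the weighted Moser-type product estimates on the balls $B_{\phi(x)}(x)$, verify that the weights $\phi^{r}\rho^{s}$ propagate correctly through the higher-order analogue of Lemma~\ref{lemma:Taylor-modified}, and track the high norms of all iterates $(\gamma_m,\tau_m)$ in the telescoping step --- none of which is ``verbatim.''

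There is also a concrete gap at the end: the theorem asserts $\|(h,w)\|_{C^{k+2,\alpha}\times C^{k+2,\alpha}}\le C\|(\psi,V)\|_{C^{k,\alpha}\times C^{k+1,\alpha}}$ with a constant $C$ uniform over all compactly supported $(\psi,V)$ satisfying the weighted smallness condition, whereas your conversion goes through the comparison $\|(\psi,V)\|_{\mathcal B_0\times\mathcal B_1}\le C_{\mathrm{supp}}\|(\psi,V)\|_{C^{k,\alpha}\times C^{k+1,\alpha}}$, whose constant blows up as $\mathrm{supp}(\psi,V)$ approaches $\partial\Omega$ (the weight $\phi^{4+\frac n2}\rho^{-1/2}$ is exponentially large there). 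So your argument yields the estimate only with a support-dependent constant, which is strictly weaker than the statement; the interior-Schauder/bootstrap route on the scaled balls is what is designed to produce the support-uniform bound. Finally, the claim of ``compact support by extending by zero'' should be phrased as exponential decay of $(h,w)$ and its derivatives at $\partial\Omega$ (so that the extension by zero is $C^{k+2,\alpha}$); the construction does not literally produce tensors vanishing identically near $\partial\Omega$, though this matches the paper's intended reading.
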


\begin{proof}
In the proof of Theorem~\ref{theorem:projected-modified} for $k=0$, we have obtained $(f, X)\in (\mathcal{B}_4 \times \mathcal{B}_3) \cap S_g$ such that  
\[
(h,w) =\rho_g (D\Phi^W_{(g,\pi)})^*(f, X)
\] 
solves the nonlinear equation. That is, $(f, X)$ satisfies the quasi-linear elliptic system
\[
	 \Phi^W_{(g, \pi)} \left((g , \pi) +\rho_g (D\Phi^W_{(g,\pi)})^*(f, X) \right)-\Phi^W_{(g, \pi)} (g, \pi )- (\psi, V) \in \zeta K
\]
along with the desired estimate.  Because elements in $\zeta K$ are smooth with compact support, using the initial regularity $(f, X)\in \mathcal{B}_4 \times \mathcal{B}_3$, along with bootstrapping, one can get higher-order estimates and boundary decay, by applying the estimates in Remark~\ref{remark:higher-order} to the quasi-linear system, cf. \cite[Sec. 3.7]{Corvino-Eichmair-Miao:2013}.  This yields the $C^{\infty}$-regularity statement as well.  As for the compact support, we could proceed as in \cite[Sec. 3.7]{Corvino-Eichmair-Miao:2013}, by replacing $\Omega$ with a precompact smooth subdomain $\Omega'$, suitably chosen so that (i) $\Omega'\supset\overline{\Omega_0} \cup \mathrm{supp}(\psi, V)$; (ii) $K$ restricts to the kernel of $(D\Phi^{W_0}_{(g_0, \pi_0)})^*$ on $\Omega'$; (iii) there is a diffeomorphism $F: \Omega\rightarrow \Omega'$ which is sufficiently close to the identity and restricts to the identity on $\Omega_0$.   
\end{proof}

\begin{remark}  We note that if we fix a compact subset $L$, and work only with $(\psi, V)$ supported in $L$, we could get the finite regularity result above by posing the smallness condition on the norm $\|(\psi, V)\|_{C^{k,\alpha} \times C^{k+1, \alpha}}$ and proving convergence of the iteration scheme in $C^{k+2,\alpha}\times C^{k+2,\alpha}$ to the limit $(h,w)$ (along with weighted decay).  
\end{remark}

\begin{remark} \label{rmk:cts-dep}  In Theorem~\ref{thm:nl}, Theorem~\ref{theorem:projected-modified} and Theorem~\ref{thm:nl-k}, the solution $(h, w) \in \mathcal B_2 \times \mathcal B_2$ can be chosen to depend continuously on $(g, \pi)\in C^{4, \alpha}(\overline{\Omega}) \times C^{3, \alpha}(\overline{\Omega})$, $W\in  C^{2, \alpha}(\overline{\Omega})$, and $(\psi, V)\in \mathcal B_0 \times \mathcal B_1$, with analogous continuous dependence in higher regularity as well.  The proof of continuous dependence follows readily from the above analysis, cf. \cite[Proposition 3.7]{Corvino-Eichmair-Miao:2013}.   One can mimic the proof of the cited proposition to obtain continuity with respect to $\psi$, $V$, $\pi$, $W$.  Since the weight $\rho_g$ used above involves the metric, a slight bit more care is needed to show continuity with respect to $g$, for which it is convenient to use a weight function at a fixed background metric in place of $\rho_g$ in the analysis, and hence in generating the solution $(h, w)$. 

We briefly indicate the point, in the context of Theorem~\ref{thm:nl}.  For fixed $W$ and $(\psi, V)$, we consider for $i=1, 2$, $(g_i, \pi_i)\in \mathcal U$, from which we generate $(h_i,w_i)= \rho_i  (D\Phi^{W}_{(g_i, \pi_i)})^*(f_i, X_i)$ solving $\Phi^W_{(g_i, \pi_i)}(g_i+h_i, \pi_i+w_i)-\Phi^W_{(g_i, \pi_i)}(g_i,\pi_i)= (2\psi, V)$. Using the Taylor expansion (\ref{eq:Phi-mod-Tay-exp}) in Appendix A, we have $D\Phi^W_{(g_1, \pi_1)} (h_1, w_1)+ Q^W_{(g_1,\pi_1)}(h_1, w_1)=D\Phi^W_{(g_2, \pi_2)} (h_2, w_2)+ Q^W_{(g_2,\pi_2)}(h_2, w_2)$, where we let $Q^W_{(g,\pi)}= Q^W_{(g,\pi),(g,\pi)}$.  This can be re-written as an elliptic system $$\rho_1^{-1} D\Phi^W_{(g_1, \pi_1)} [ \rho_1 (D\Phi^W_{(g_1, \pi_1)})^*(f_1-f_2, X_1-X_2)]= (\phi, Z),$$ where 
\begin{align*}
(\phi, Z)& = - \Big[ \rho_1^{-1} (D\Phi^W_{(g_1, \pi_1)}-D\Phi^W_{(g_2, \pi_2)})(h_2, w_2) + \rho_1^{-1} \Big( Q^W_{(g_1,\pi_1)}(h_1, w_1)-Q^W_{(g_2,\pi_2)}(h_2, w_2)\Big)\\
& \qquad + \rho_1^{-1} D\Phi^W_{(g_1, \pi_1)} \Big( \big( \rho_1 (D\Phi^W_{(g_1, \pi_1)} )^*-\rho_2 (D\Phi^W_{(g_2, \pi_2)})^*\big)(f_2, X_2)\Big)\Big]. 
\end{align*}
We can estimate the difference $(h_1-h_2, w_1-w_2)$ using weighted estimates for the elliptic system, but to estimate the very last term, we want to take $\rho_1=\rho_2$ when using an exponential weight (in case we use a power weight instead, it would not matter since we would have uniform estimates on $\rho_1^{-1} \rho_2$ and derivatives), cf. Remark \ref{rmk:weight-cts}. 
\end{remark}

This completes the proof of Theorem~\ref{theorem:main2}, and this also gives us what is required in the proof of Theorem~\ref{theorem:interpolating-main} and Theorem~\ref{theorem:gluing}.  In particular for Theorem~\ref{theorem:gluing}, we can choose $\psi_0>0$ in the proof of Theorem~\ref{theorem:gluing2} to decay fast enough at the near the boundary (as a suitable power of the weight function, say).

\appendix
\section{Estimates on the Taylor expansions}
Consider the Taylor expansion of the constraint map $\Phi$ at an initial data set $(g, \pi)$
 \begin{align} \label{eq:Phi-Tay-exp}
	\Phi(g+h, \pi + w) &= \Phi(g, \pi) + D\Phi|_{(g,\pi)}(h,w) + Q_{(g,\pi)} (h,w).
\end{align}
In local coordinates, the first component of $D\Phi|_{(g, \pi)}(h,w)$ is a homogeneous linear polynomial in $\partial^2_{ij} h_{kl}, \partial_i h_{kl}, h_{kl}$ and $\partial_i w^{kl}, w^{kl}$ whose coefficients are smooth functions of $\partial^2_{ij} g_{kl}, \partial_i g_{kl}, g_{kl}$, $\partial_i \pi^{kl}, \pi^{kl}$, and the second component of $D\Phi|_{(g, \pi)}(h,w)$ is of the same type but contains no second derivatives of $g_{kl}, h_{kl}$. The remainder term $Q_{(g, \pi)}(h,w)$ is a homogeneous quadratic polynomial in $\partial^2_{ij} h_{kl}, \partial_i h_{kl}, h_{kl}, \partial_i w^{kl}, w^{kl}$ whose coefficients are smooth functions of $\partial^2_{ij} g_{kl}, \partial_i g_{kl}, g_{kl}$, $\partial_i \pi^{kl}, \pi^{kl}$ and $\partial^2_{ij} h_{kl}, \partial_i h_{kl}, h_{kl}, \partial_i w^{kl}, w^{kl}$, and note that the second component of  $Q_{(g, \pi)}(h,w)$  contains no second derivatives in $g_{kl}$ and $h_{kl}$. 

It is clear that if $(g,\pi)\in C^{k+1,\alpha}(\overline{\Omega})\times C^{k+1,\alpha}(\overline{\Omega})$ for $k\ge 1$, we have
\begin{align}\label{equation:simple-estimate}
\begin{split}
	\| D\Phi|_{(g, \pi)} (h, w)\|_{C^{k-1,\alpha}(\overline{\Omega})\times C^{k,\alpha}(\overline{\Omega}) } &\le  C \| (h, w) \|_{C^{k+1,\alpha}(\overline{\Omega})\times C^{k+1,\alpha}(\overline{\Omega})}\\
	\|Q_{(g,\pi)} (h,w) \|_{C^{k-1,\alpha}(\overline{\Omega})\times C^{k,\alpha}(\overline{\Omega}) } & \le  C \| (h, w) \|_{C^{k+1,\alpha}(\overline{\Omega})\times C^{k+1,\alpha}(\overline{\Omega})}^2
\end{split}
\end{align}
where $C$ depends locally uniformly on $(g, \pi), (h,w) \in C^{k+1,\alpha}(\overline{\Omega})\times C^{k+1,\alpha}(\overline{\Omega})$. By direct analysis (with a bit more care), we have the following estimates involving the weights. 

\begin{lemma} \label{lemma:Taylor}
Suppose that $f$ is a $C^{2,\alpha}(\overline{\Omega})$ function such that  $\nabla f$ is supported on a compact subset of $\Omega$. Then 
\begin{align*}
	\|D\Phi|_{(g,\pi)} (fh,fw) - f D\Phi|_{(g,\pi)}(h,w)\|_{\mathcal B_0 \times \mathcal B_1} &\le C  \big(\|\nabla^2 f\|_{\mathcal B_0}+ \| \nabla f\|_{\mathcal B_1}\big) \| (h, w) \|_{C^{2,\alpha}(\overline{\Omega})\times C^{2,\alpha}(\overline{\Omega})}\\
	 \| D\Phi|_{(g+h, \pi+w)} (h,w) - D\Phi|_{(g, \pi)}(h, w)\|_{C^{0,\alpha}(\overline{\Omega}) \times C^{1,\alpha}(\overline{\Omega})}&\le C \| (h, w)\|^2_{C^{2,\alpha}(\overline{\Omega})\times C^{2,\alpha}(\overline{\Omega})} \\
	\|Q_{(g, \pi)}(fh,fw) - f^2\widetilde{Q}_{(g,\pi)}(h,w) \|_{\mathcal B_0 \times \mathcal B_1} &\le C\big(\|\nabla^2 f\|_{\mathcal B_0}+  \|\nabla f\|_{\mathcal B_1}\big) \| (h, w)\|^2_{C^{2,\alpha}(\overline{\Omega})\times C^{2,\alpha}(\overline{\Omega})}
\end{align*}
for some 
\[
\| \widetilde{Q}_{(g,\pi)}(h,w) \|_{C^{0,\alpha}(\overline{\Omega})\times C^{1,\alpha}(\overline{\Omega})} \le C  \| (h, w)\|^2_{C^{2,\alpha}(\overline{\Omega})\times C^{2,\alpha}(\overline{\Omega})},
\] 
where $C$ depends locally uniformly on $(g, \pi), (h,w) \in C^{2,\alpha}(\overline{\Omega})\times C^{2,\alpha}(\overline{\Omega})$ and $ f \in C^{2,\alpha}(\overline{\Omega})$. 
\end{lemma}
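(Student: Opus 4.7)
The plan is to treat the three estimates by direct expansion of $D\Phi$ and $Q$ via the Leibniz rule (for (i) and (iii)) and a first-order Taylor expansion in the base point $(g,\pi)$ (for (ii)). For (i) and (iii), the guiding observation is that if $\nabla f \equiv 0$ the corresponding differences vanish identically, since $D\Phi|_{(g,\pi)}$ is linear and $Q_{(g,\pi)}$ is a quadratic polynomial in the tensor argument at a fixed base point; hence every surviving term carries at least one factor of $\nabla f$ or $\nabla^2 f$. The weighted bounds then reduce to the differentiation continuity $C^{k,\alpha}_{\phi,\varphi}\to C^{k-1,\alpha}_{\phi,\phi\varphi}$ and the multiplication continuity by $C^{k,\alpha}(\overline\Omega)$-factors recorded in Section~\ref{subsection:weighted-Holder}.

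For (i), I apply Leibniz to each term in \eqref{equation:lin}. The nontrivial contributions come from the principal parts: in the first slot, $L_g(fh) - f L_g h = -2\nabla f\cdot\nabla \mathrm{tr}_g h - (\Delta_g f)\mathrm{tr}_g h + 2\nabla f\cdot\mathrm{div}_g h + h\cdot \mathrm{Hess}_g f$; in the second slot, $(\mathrm{div}_g(fw))^i - f(\mathrm{div}_g w)^i = w^{ij}\nabla_j f$ and $\pi^{jk}(fh)_{jk;\ell} - f\pi^{jk}h_{jk;\ell} = \pi^{jk}h_{jk}\nabla_\ell f$ (with the remaining $\pi\cdot h_{;\cdot}$-type terms analogous), while the zeroth-order algebraic pieces of $D\Phi$ scale cleanly by $f$. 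Each resulting term has schematic form $(\nabla^i f)\cdot (\nabla^j(h,w))$ with $i\in\{1,2\}$ and $i+j\le 2$ in the first slot, $i+j\le 1$ in the second. Since $\nabla f\in \mathcal{B}_1$ implies $\nabla^2 f\in C^{0,\alpha}_{\phi,\phi^{4+n/2}\rho^{-1/2}}$ by the differentiation continuity, the products land in $\mathcal{B}_0$ and $\mathcal{B}_1$ respectively with the claimed norm bound. The $L^2_{\rho^{-1}}$ parts of the $\mathcal B$-norms are immediate because $\nabla f$ is compactly supported in $\Omega$.

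For (ii), $D\Phi|_{(\tilde g,\tilde \pi)}(h,w)$ is a linear differential operator in $(h,w)$ whose coefficients are smooth in $\tilde g,\partial\tilde g,\partial^2\tilde g,\tilde \pi,\partial\tilde \pi$ (first slot) and in $\tilde g,\partial\tilde g,\tilde \pi$ (second slot). A first-order Taylor expansion around $(g,\pi)$ produces coefficient differences of $O(\|(h,w)\|_{C^{2,\alpha}\times C^{2,\alpha}})$ in $C^{0,\alpha}$ (resp.\ $C^{1,\alpha}$); applying the perturbed operator to $(h,w)\in C^{2,\alpha}\times C^{2,\alpha}$ yields the stated quadratic bound. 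For (iii), I expand $Q_{(g,\pi)}(fh,fw)$ using its form as a sum of quadratic monomials in $\partial^{\beta}(fh),\partial^{\beta}(fw)$ with coefficients smooth in $(g,\pi,fh,fw)$ and their derivatives. By Leibniz, $\partial^{\beta_1}(fh)\cdot\partial^{\beta_2}(fh) = f^2\partial^{\beta_1}h\cdot\partial^{\beta_2}h$ plus cross-terms each carrying a factor $\partial^{\gamma}f$ with $|\gamma|\ge 1$. I define $\widetilde Q_{(g,\pi)}(h,w)$ as the sum of the pure $f^2$ contributions, with coefficients evaluated at $(g,\pi,fh,fw)$ as in the original $Q$; the stated bound on $\widetilde Q$ follows from the Taylor-remainder estimate \eqref{equation:simple-estimate} with $(fh,fw)$ in place of $(h,w)$ in the coefficient arguments. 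The cross-term remainder $Q(fh,fw) - f^2\widetilde Q(h,w)$ is handled exactly as in (i), but with an extra factor of $(h,w)$ producing the quadratic $\|(h,w)\|^2_{C^{2,\alpha}\times C^{2,\alpha}}$.

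The main technical care is the combinatorial bookkeeping in (iii). A cross-term may contain more than one factor of $\nabla^{|\gamma|} f$; any excess such factors are bounded by the local $C^{2,\alpha}$-norm of $f$ and absorbed into the locally uniform constant $C$, leaving exactly one $\nabla f$-factor to produce $\|\nabla f\|_{\mathcal{B}_1}$. One must also verify that the weight exponents $\phi^r\rho^{\pm 1/2}$ line up correctly between $\mathcal{B}_0$, $\mathcal{B}_1$, and the intermediate weighted classes through each application of the product and differentiation continuity lemmas, but each individual step is routine.
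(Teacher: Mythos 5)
Your proposal is correct and follows exactly the ``direct analysis'' the paper invokes without writing out: the paper offers no explicit proof of Lemma~\ref{lemma:Taylor} beyond noting that every term of the differences carries a factor of $\nabla f$ or $\nabla^2 f$ (so that $\nabla f\in\mathcal B_1$ and the differentiation/multiplication continuity of the weighted H\"{o}lder classes give the weighted bounds), which is precisely your argument. Your explicit Leibniz computations for $L_g(fh)-fL_gh$ and the divergence terms check out, and your handling of the compactly supported $L^2_{\rho^{-1}}$ component and of the definition of $\widetilde Q$ (which is only required to exist with the stated bound) are consistent with how the lemma is used in the paper.
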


 We apply those estimates to interpolation between initial data sets $(g_1,\pi_1)$ and $(g_2, \pi_2)$.

\begin{lemma}\label{lemma:interpolation-Taylor}
Let  $ 0\le \chi \le 1$ be a $C^{k+2,\alpha}(\overline{\Omega})$ bump function such that $\chi (1-\chi)$ is supported on a compact subset of  $\Omega$. Denote by $(g,\pi) = \chi(g_1,\pi_1) + (1-\chi) (g_2, \pi_2)$. Then 
\begin{enumerate}
\item \label{item:estimate-1} The following H\"{o}lder estimate holds:
\begin{align*}
	&\| \Phi(g,\pi ) - \chi \Phi(g_1, \pi_1) - (1-\chi) \Phi(g_2, \pi_2) \|_{C^{k-1, \alpha}(\overline{\Omega})\times C^{ k, \alpha}(\overline{\Omega}) }\\
	&\qquad \le C \| (g_1 - g_2, \pi_1 - \pi_2) \|_{C^{k+1,\alpha}(\overline{\Omega})\times C^{k+1,\alpha}(\overline{\Omega})},
\end{align*}
where $C$ depends locally uniformly on $(g_1, \pi_1), (g_2,\pi_2) \in C^{k+1,\alpha}(\overline{\Omega})\times C^{k+1,\alpha}(\overline{\Omega})$ and $ \chi \in C^{k+1,\alpha}(\overline{\Omega})$. 
\item \label{item:estimate-2} The following weighted estimate holds:
\begin{align*}
	&\| \Phi(g,\pi ) - \chi \Phi(g_1, \pi_1) - (1-\chi) \Phi(g_2, \pi_2) \|_{\mathcal{B}_0 \times \mathcal B_1}\\
	&\qquad  \le C \big(\| \chi (1-\chi)\|_{\mathcal{B}_1} +\|\nabla^2 \chi\|_{\mathcal B_0}+ \| \nabla \chi \|_{\mathcal B_1} \big) \| (g_1-g_2, \pi_1 - \pi_2 ) \|_{C^{2,\alpha}(\overline{\Omega})\times C^{2,\alpha}(\overline{\Omega})},
\end{align*}
where $C$ depends locally uniformly on $(g_1, \pi_1), (g_2,\pi_2) \in C^{2,\alpha}(\overline{\Omega})\times C^{2,\alpha}(\overline{\Omega})$ and $ \chi \in C^{2,\alpha}(\overline{\Omega})$. 
\end{enumerate}
\end{lemma}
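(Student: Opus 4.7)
The plan is to Taylor expand the constraint map around the common base point $(g_2,\pi_2)$ and extract the commutator structure induced by $\chi$. Setting $h := g_1-g_2$ and $w := \pi_1-\pi_2$, we have $(g,\pi) - (g_2,\pi_2) = (\chi h,\chi w)$ and $(g_1,\pi_1) - (g_2,\pi_2) = (h,w)$. Applying the expansion \eqref{eq:Phi-Tay-exp} to both $\Phi(g,\pi)$ and $\Phi(g_1,\pi_1)$, then taking the combination $\Phi(g,\pi) - \chi\Phi(g_1,\pi_1) - (1-\chi)\Phi(g_2,\pi_2)$, the base point values cancel and one is left with the clean identity
\begin{align*}
\Phi(g,\pi) - \chi \Phi(g_1,\pi_1) - (1-\chi)\Phi(g_2,\pi_2)
&= \bigl[D\Phi|_{(g_2,\pi_2)}(\chi h,\chi w) - \chi D\Phi|_{(g_2,\pi_2)}(h,w)\bigr] \\
&\quad + \bigl[Q_{(g_2,\pi_2)}(\chi h,\chi w) - \chi Q_{(g_2,\pi_2)}(h,w)\bigr].
\end{align*}
The proof then consists in estimating each bracket in the appropriate norms.

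For part \eqref{item:estimate-1}, the key observation is that the top-order derivatives of $(h,w)$ in the two terms of the first bracket cancel. For instance, in the Leibniz expansion $\Delta_g \operatorname{tr}(\chi h) = \chi \Delta_g \operatorname{tr} h + 2\nabla\chi\cdot\nabla \operatorname{tr} h + (\Delta_g\chi)\operatorname{tr} h$, only the first piece survives the commutator with $\chi$, so the bracket is effectively first-order in $(h,w)$ with coefficients carrying $\nabla\chi$ and $\nabla^2\chi$. An identical cancellation in the $Q$-bracket reduces its top-order quadratic piece to $(\chi^2-\chi)Q_{(g_2,\pi_2)}(h,w)$ plus strictly lower-derivative cross-terms involving $\nabla\chi$. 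Each bracket is then controlled in $C^{k-1,\alpha}\times C^{k,\alpha}$ by $\|(h,w)\|_{C^{k+1,\alpha}\times C^{k+1,\alpha}}$ via \eqref{equation:simple-estimate}, with the constant absorbing $\|\chi\|_{C^{k+2,\alpha}}$ and local-uniform dependence on $(g_2,\pi_2)$.

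For part \eqref{item:estimate-2}, I would invoke Lemma~\ref{lemma:Taylor} with $f=\chi$, which immediately bounds the first bracket in $\mathcal{B}_0\times\mathcal{B}_1$ by $C\|\nabla\chi\|_{\mathcal{B}_1}\|(h,w)\|_{C^{2,\alpha}\times C^{2,\alpha}}$. For the $Q$-bracket, I decompose
\[
Q_{(g_2,\pi_2)}(\chi h,\chi w) - \chi Q_{(g_2,\pi_2)}(h,w) = \bigl[Q_{(g_2,\pi_2)}(\chi h,\chi w) - \chi^2 \widetilde Q_{(g_2,\pi_2)}(h,w)\bigr] - \chi(1-\chi)\widetilde Q_{(g_2,\pi_2)}(h,w) + \chi\bigl[\widetilde Q_{(g_2,\pi_2)}(h,w) - Q_{(g_2,\pi_2)}(h,w)\bigr].
\]
Lemma~\ref{lemma:Taylor} with $f=\chi$ bounds the first piece by $C\|\nabla\chi\|_{\mathcal{B}_1}\|(h,w)\|^2_{C^{2,\alpha}\times C^{2,\alpha}}$; the same lemma applied with $f\equiv 1$ forces $\widetilde Q = Q$ (since $\nabla f\equiv 0$ makes the right-hand side vanish), so the third piece is zero; and the middle piece is bounded in $\mathcal{B}_0\times \mathcal{B}_1$ by $C\|\chi(1-\chi)\|_{\mathcal{B}_0}\|\widetilde Q_{(g_2,\pi_2)}(h,w)\|_{C^{0,\alpha}\times C^{1,\alpha}} \le C\|\chi(1-\chi)\|_{\mathcal{B}_0}\|(h,w)\|^2_{C^{2,\alpha}\times C^{2,\alpha}}$ using the second conclusion of Lemma~\ref{lemma:Taylor}. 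Since $\|(h,w)\|_{C^{2,\alpha}}$ is uniformly bounded in the local neighborhood considered, an extra factor of it on the right can be absorbed into the constant, yielding the stated linear-in-$\|(h,w)\|$ estimate.

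The main delicate point is the $\widetilde Q$-identification in the last step: one must verify that the auxiliary field $\widetilde Q_{(g_2,\pi_2)}(h,w)$ produced by Lemma~\ref{lemma:Taylor} can be chosen independently of $f$ (or at least to coincide with $Q_{(g_2,\pi_2)}(h,w)$ when $\nabla f\equiv 0$), so that the third piece of the decomposition vanishes. This is consistent with the statement of that lemma — the inequality with $f\equiv 1$ degenerates to $\widetilde Q = Q$ — but formally requires a brief inspection of the construction of $\widetilde Q$ in the proof of Lemma~\ref{lemma:Taylor}. Everything else is routine bookkeeping given the identity above and the two ingredient estimates.
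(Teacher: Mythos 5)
Your starting identity is correct, and your single–base-point decomposition is a genuine (and cleaner-looking) alternative to what the paper does. However, the crucial step — identifying $\widetilde Q_{(g_2,\pi_2)} = Q_{(g_2,\pi_2)}$ so that your third piece vanishes — does not hold, and you flag this worry yourself, rightly. The auxiliary form $\widetilde Q$ supplied by Lemma~\ref{lemma:Taylor} for a given $f$ has coefficients (recall $Q$ is a quadratic form whose coefficients are smooth functions of $g,\pi$ \emph{and} of $h,w$ and their derivatives) evaluated at $(fh,\, f\nabla h,\dots)$ rather than at $(h,\nabla h,\dots)$. So for $f=\chi$ the discrepancy $\widetilde Q_{(g_2,\pi_2)}(h,w) - Q_{(g_2,\pi_2)}(h,w)$ is of order $(\chi-1)\cdot O(|h|)\cdot O(\|(h,w)\|^2)$ by the mean value theorem, which is \emph{not} controlled by $\|\nabla\chi\|$ or by taking $f\equiv 1$; applying the lemma with $f\equiv 1$ identifies a \emph{different} auxiliary $\widetilde Q_1 = Q$, not the $\widetilde Q_\chi$ you used in piece one. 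This is a real gap, not just a formality. Your approach can be rescued: the prefactor $\chi$ on your third piece combines with the $(\chi-1)$ above to give $-\chi(1-\chi)\cdot(\text{bounded})$, which is then controllable by $\|\chi(1-\chi)\|_{\mathcal B_0}$ — but this requires opening up the construction of $\widetilde Q$ and arguing the coefficient discrepancy directly. The paper sidesteps this entirely by writing $\Phi(g,\pi) = \chi\Phi(g,\pi) + (1-\chi)\Phi(g,\pi)$ and Taylor-expanding the first summand at $(g_1,\pi_1)$ and the second at $(g_2,\pi_2)$; since $(g,\pi)-(g_1,\pi_1) = (1-\chi)(g_2-g_1,\pi_2-\pi_1)$ and $(g,\pi)-(g_2,\pi_2) = \chi(g_1-g_2,\pi_1-\pi_2)$, every linear and quadratic remainder automatically carries a $\chi(1-\chi)$ factor after one application of Lemma~\ref{lemma:Taylor} (and the two linear remainders pair up into $\chi(1-\chi)\bigl(D\Phi|_{(g_2,\pi_2)} - D\Phi|_{(g_1,\pi_1)}\bigr)(g_1-g_2,\pi_1-\pi_2)$, handled by the second estimate of Lemma~\ref{lemma:Taylor}), with no need to compare $\widetilde Q$ across different $f$. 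I would recommend switching to the two-base-point expansion; it is structurally designed so that $\chi(1-\chi)$ appears without any coefficient-identification argument.
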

\begin{proof}
Writing $\Phi(g, \pi) = \chi \Phi (g, \pi) + (1-\chi) \Phi(g,\pi)$, we apply Taylor expansion to the first term at $(g_1, \pi_1)$ and the second term at $(g_2, \pi_2)$ and derive
\begin{align*}
	& \Phi(g, \pi) - \left( \chi \Phi(g_1,\pi_1) + (1-\chi) \Phi(g_2,\pi_2) \right)\\
	&=\chi D\Phi|_{(g_1, \pi_1)} ((1-\chi)(g_2 - g_1, \pi_2-\pi_1)) + \chi Q_{(g_1, \pi_1)}((1-\chi)(g_2 - g_1, \pi_2-\pi_1)) \\
	&\quad+(1- \chi) D\Phi|_{(g_2, \pi_2)} (\chi(g_1 - g_2, \pi_1-\pi_2)) + (1-\chi) Q_{(g_2, \pi_2)}(\chi(g_1 - g_2, \pi_1-\pi_2)).
\end{align*}
The estimate \eqref{item:estimate-1} follows by \eqref{equation:simple-estimate}.

The weighted estimate \eqref{item:estimate-2} follows by Lemma~\ref{lemma:Taylor}. For example, in analyzing the preceding equation, the following term appears
\[
	\chi(1-\chi) \left( D\Phi|_{(g_2, \pi_2)} ( g_1 - g_2, \pi_1-\pi_2) - D\Phi|_{(g_1,\pi_1)}(g_1 - g_2, \pi_1 - \pi_2)\right)
\]
and satisfies the desired estimate. 
\end{proof}

We also need to estimate the Taylor expansion of the modified constraint map for the iteration scheme. For a fixed vector field $W\in C^{k,\alpha}(\overline{\Omega})$, the Taylor expansion of the modified map $\Phi^W_{(g,\pi)}$ at $(\gamma, \tau)$ is  
\begin{align}\label{eq:Phi-mod-Tay-exp}
	\Phi^W_{(g,\pi)}(\gamma+h, \tau+w)  = \Phi^W_{(g,\pi)}(\gamma, \tau) + D\Phi^W_{(g,\pi)}|_{(\gamma,\tau)} (h, w) + Q^W_{(g, \pi), (\gamma, \tau)} (h, w).
\end{align}
In local coordinates, the linearized equation and the quadratic error term have a similar type of expressions as those for the usual constraint map. By direct analysis, we have the following estimates.

\begin{lemma}\label{lemma:Taylor-modified}
There is a constant $D$ depending locally uniformly on $(g, \pi), (\gamma, \tau), (\gamma', \tau'), (h,w) \in C^{2,\alpha}(\overline{\Omega})\times C^{2,\alpha}(\overline{\Omega})$ and $W\in C^{1,\alpha}(\overline{\Omega})$ such that 
\begin{align*}
	\| D \Phi^W_{(g,\pi)} |_{(\gamma, \tau)} (h, w) - D\Phi^W_{(g,\pi)}|_{(\gamma', \tau')} (h, w) \|_{\mathcal{B}_0\times \mathcal B_1}  &\le D \| (h, w) \|_{\mathcal{B}_2 \times \mathcal B_2} \| (\gamma-\gamma', \tau-\tau')\|_{\mathcal{B}_2 \times \mathcal B_2}\\		
	\|Q^W_{(g,\pi), (\gamma,\tau)} (h, w) \|_{\mathcal B_0\times \mathcal B_1} &\leq D\|( h, w)\|_{\mathcal{B}_2 \times \mathcal B_2}^2.
\end{align*}
\end{lemma}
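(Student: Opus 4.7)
The key structural observation is that, by the definition \eqref{eq:def-mco}, the modified operator differs from the usual constraint map only by the term $(0,\tfrac{1}{2}\gamma\cdot(\textup{div}_g\pi+W))$, which is \emph{linear} in $\gamma$ with coefficients depending only on the fixed background $(g,\pi)$ and $W$ (note that the contraction $\gamma \cdot Y$ uses $g^{ij}$, and $\textup{div}_g \pi$ is computed with respect to $g$, not the base point $(\gamma,\tau)$). Consequently, its linearization at any base point $(\gamma,\tau)$ in the direction $(h,w)$ is simply $(0,\tfrac{1}{2}h\cdot(\textup{div}_g\pi+W))$, which is independent of $(\gamma,\tau)$. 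Therefore
\[
D\Phi^W_{(g,\pi)}|_{(\gamma,\tau)}(h,w)-D\Phi^W_{(g,\pi)}|_{(\gamma',\tau')}(h,w)=D\Phi|_{(\gamma,\tau)}(h,w)-D\Phi|_{(\gamma',\tau')}(h,w),
\]
and an analogous cancellation shows
\[
Q^W_{(g,\pi),(\gamma,\tau)}(h,w)=\Phi(\gamma+h,\tau+w)-\Phi(\gamma,\tau)-D\Phi|_{(\gamma,\tau)}(h,w)=Q_{(\gamma,\tau)}(h,w),
\]
since the linear modification pieces telescope exactly.

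With this reduction, both estimates become statements purely about the usual constraint map $\Phi$ and its Taylor expansion at the variable base point $(\gamma,\tau)$. The plan is then to adapt the proof of Lemma~\ref{lemma:Taylor}: write $Q_{(\gamma,\tau)}(h,w)$ and the difference $D\Phi|_{(\gamma,\tau)}(h,w)-D\Phi|_{(\gamma',\tau')}(h,w)$ in local coordinates as polynomial expressions (quadratic, respectively bilinear in $(\gamma-\gamma',\tau-\tau')$ and $(h,w)$) in the components $\partial^{\beta}g_{ij}$, $\partial^\beta\pi^{ij}$, $\partial^\beta\gamma_{ij}$, $\partial^\beta\tau^{ij}$, $\partial^\beta h_{ij}$, $\partial^\beta w^{ij}$ for $|\beta|\le 2$, with coefficients that are smooth functions of the zeroth-order data. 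Here one uses that the second slot of $\Phi$ carries at most one derivative in each argument, matching the asymmetry between $\mathcal{B}_0$ and $\mathcal{B}_1$ on the target side.

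To turn this into the weighted estimates, I apply the pointwise ball-wise structure of the $C^{k,\alpha}_{\phi,\varphi}$ norms: since the $\mathcal{B}_2$ weight $\phi^{2+n/2}\rho^{-1/2}$ squared is $\phi^{4+n}\rho^{-1}$, while the $\mathcal{B}_0$ and $\mathcal{B}_1$ weights are $\phi^{4+n/2}\rho^{-1/2}$ and $\phi^{3+n/2}\rho^{-1/2}$, the product of any two $\mathcal{B}_2$ factors (possibly with one or two derivatives as dictated by $\Phi$) lands in $\mathcal{B}_0$ or $\mathcal{B}_1$ with the correct weight after accounting for the shift $\phi$ per derivative (using that differentiation is continuous $C^{k,\alpha}_{\phi,\varphi}\to C^{k-1,\alpha}_{\phi,\phi\varphi}$) and for the fact that multiplication by $\rho$ costs a factor $\rho^{-1}$ in the weight by Proposition~\ref{proposition:phi}(\ref{item:2}). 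The $L^2_{\rho^{-1}}$ component of the $\mathcal{B}_0$/$\mathcal{B}_1$ norm is handled using the pointwise bound coming from the weighted H\"older piece of the $\mathcal{B}_2$ factors (which gives an $L^\infty$ control) paired with an $L^2_{\rho^{-1}}$ bound on the other factor.

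The main technical obstacle, as in Lemma~\ref{lemma:Taylor}, is the bookkeeping of the weights $\phi^r\rho^s$ to verify that no negative powers of $\rho$ appear uncontrolled and that the boundary decay of the $\mathcal{B}_2$ factors is strong enough to absorb the $O(\phi^{-j})$ blow-up of the $j$-th derivatives. Once this accounting is carried out for each of the finitely many polynomial terms comprising $Q_{(\gamma,\tau)}(h,w)$ and the difference $D\Phi|_{(\gamma,\tau)}-D\Phi|_{(\gamma',\tau')}$, the constant $D$ can be chosen uniformly as $(g,\pi)$, $(\gamma,\tau)$, $(\gamma',\tau')$, $(h,w)$ vary in a bounded set of $C^{2,\alpha}(\overline\Omega)\times C^{2,\alpha}(\overline\Omega)$ and $W$ varies in a bounded set of $C^{1,\alpha}(\overline\Omega)$, completing the proof.
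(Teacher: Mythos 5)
Your proposal is correct and follows essentially the route the paper intends: the paper disposes of this lemma by noting that, in local coordinates, the linearization and quadratic remainder of $\Phi^W_{(g,\pi)}$ have the same structure as those of the usual constraint map and then invoking the same direct weighted analysis as in Lemma~\ref{lemma:Taylor}, which is exactly what you do, with the added (correct and slightly sharper) observation that since the modification $(0,\tfrac{1}{2}\gamma\cdot(\textup{div}_g\pi+W))$ is linear in $\gamma$ with coefficients built from the fixed $(g,\pi)$ and $W$, its linearization is base-point independent and its quadratic remainder vanishes, so $Q^W_{(g,\pi),(\gamma,\tau)}=Q_{(\gamma,\tau)}$ exactly. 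Your weight bookkeeping (products of $\mathcal{B}_2$ factors landing in $\mathcal{B}_0\times\mathcal{B}_1$ because $\rho^{1/2}$ dominates any power of $\phi$ near $\partial\Omega$, plus the $L^\infty$/$L^2_{\rho^{-1}}$ pairing) is the right justification of the paper's ``direct analysis.''
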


\section{Asymptotically flat initial data sets}\label{section:asymptotically-flat}

 Let $B$ be a closed ball in $\mathbb{R}^3$. For every $k\in\{0, 1, \ldots\}$, $\alpha\in(0,1)$, and $q\in\mathbb{R}$ we define the norm $C^{k,\alpha}_{-q}(\mathbb{R}^3\setminus B)$ for $f \in C^{k, \alpha}_{\mathrm{loc}} (\mathbb{R}^3\setminus B)$ as 
\[
		\| f\|_{C^{k,\alpha}_{-q}(\mathbb{R}^3\setminus B)} = \sum_{|I| \le k} \sup_{x\in \mathbb{R}^3\setminus B} \left| |x|^{ | I| + q }(\partial^{I} f)(x) \right| +  \sum_{|I| = k} \left[ |x|^{k + q+\alpha }(\partial^{I} f)(x) \right]_{\alpha, \mathbb{R}^3\setminus B}. 
\]
Let  $M$ be a smooth manifold such that there is a compact set $K \subset M$ and a diffeomorphism $M\setminus K \cong \mathbb{R}^3\setminus B$. The $C^{k,\alpha}_{-q}$ norm on $M$ is defined by taking the maximum of the $C^{k,\alpha}_{-q}(\mathbb{R}^3\setminus B)$ norm and the $C^{k,\alpha}$ norm on the compact set $K$. The weighted H\"{o}lder space $C^{k,\alpha}_{-q}(M)$ is the collection of those $f\in C^{k,\alpha}_{\mathrm{loc}}(M)$ with finite $C^{k,\alpha}_{-q}(M)$ norm.

Let $q>\frac{1}{2}$, $q_0 >0$. We say that an initial data set $(M,g,\pi)$ is \emph{asymptotically flat at the rate $(q, q_0)$} if there is a compact set $K \subset M$ and a diffeomorphism $M\setminus K \cong \mathbb{R}^3\setminus B$ for a closed ball $B\subset \mathbb{R}^3$ such that
\[
	(g-\delta_{\mathbb{E}}, \pi)\in C^{2,\alpha}_{-q}(M)\times C^{1,\alpha}_{-q-1}(M),
\]
where $\delta_{\mathbb{E}}$ is a smooth symmetric $(0,2)$ tensor that coincides with the Euclidean metric $g_\mathbb{E}$ on $M\setminus K\cong \mathbb{R}^3\setminus B$, and such that 
\[
	\mu, J \in C^{0,\alpha}_{-3-q_0}(M).
\]
Our definition focuses on the analysis of one asymptotically flat end, but can obviously accommodate $M$ with multiple asymptotically flat ends. 

For an asymptotically flat initial data set $(M, g, \pi)$, one can define the following boundary integral, for a function $N$ and a vector field $X$, 
\[
	B^r_{(g,\pi)}( N, X) = \int_{|x|=r} \sum\limits_{i,j=1}^3 \left[ N \left(g_{ij,i}-  g_{ii,j} \right) -\left (N_{,i} g_{ij} - N_{,j} g_{ii}\right)+ X^i \pi_{ij} \right] \nu_0^j \, d\sigma_0.
\]
Here, the integrals are computed in the coordinate chart $M \setminus K \mathcal \cong_x \mathbb{R}^3\setminus B$, $\nu_0^j= x^j / |x|$, and $d\sigma_0$ is $2$-dimensional Euclidean Hausdorff measure. The ADM energy $E$, linear momentum $P$,  center of mass\footnote{We remark that (for $E\neq 0$) $\mathcal C$ is sometimes written $\mathcal C=Ec$, where $c$ is the center of mass in other references, e.g. \cite{Chrusciel-Corvino-Isenberg:2011}} $\mathcal C$, and angular momentum $\mathcal{J}$ are defined by
\begin{align*}
E &= \tfrac{1}{16\pi}  \lim_{r\to\infty} B^r_{(g,\pi)}(1,0)\\
P_i&=\tfrac{1}{8\pi} \lim_{r\to\infty}  B^r_{(g,\pi)}(0, \frac{\partial}{\partial x^i})\\
\mathcal C_i & = \tfrac{1}{16\pi}  \lim_{r\rightarrow \infty} B^r_{(g,\pi)}(x^i, 0) \\
\mathcal{J}_{k} &= \tfrac{1}{8\pi} \lim_{r\rightarrow \infty} B^r_{(g,\pi)}(0, x\times \frac{\partial}{\partial x^k}),
\end{align*}
where $i, k=1, 2, 3$, and $ x\times \frac{\partial}{\partial x^k}$ is the cross product.  The well-definedness of those quantities can be found in, e.g., \cite{Bartnik:1986, Huang:2009, Huang:2010}.

For an asymptotically flat initial data set $(g,\pi)$, by Taylor expansion~\eqref{eq:Phi-Tay-exp} at the flat data $(g_{\mathbb{E}}, 0)$, 
\[
	 \Phi(g, \pi) = D\Phi|_{(g_\mathbb{E}, 0)} (g - g_\mathbb{E}, \pi) + Q_{(g_\mathbb{E}, 0)}(g-g_\mathbb{E}, \pi).
\]
For $(N,X)$ so that $ D\Phi|_{(g_{\mathbb E},0)}^*(N, X) $ vanishes, we have (omitting the Euclidean metric subscript on the dot product)
\begin{align}	\label{equation:boundary}
 \int_{\{R_1 \le |x| \le R_2\}} D\Phi|_{(g_{\mathbb{E}}, 0)}(g-g_{\mathbb{E}},\pi) \cdot (N, X) \,dx= B^{R_2}_{(g,\pi)}( N, X)  - B^{R_1}_{(g,\pi)} (N, X).
\end{align}

\begin{lemma}\label{lemma:L^2-projection}
Let $(g, \pi)$ be an asymptotically flat initial data set on $\mathbb{R}^3 \setminus B$ at the rate $q=q_0=1$. Let $(g^\theta, \pi^\theta)$ be an admissible family for $(g, \pi)$. Consider the initial data set $(\bar{g}^R, \bar{\pi}^R) $ on $A_1$ obtained in Proposition~\ref{proposition:nl2}  (with $(g_1,\pi_1) = (g,\pi), (g_2, \pi_2) = (g^\theta, \pi^\theta)$):
\[	
	(\bar{g}^R, \bar{\pi}^R) = \chi (g^R, \pi^R) + (1-\chi)((g^\theta)^R, (\pi^\theta)^R) + (h^R, w^R).
\]
Then there is a constant $C$ such that for $\theta \in \Theta_1\times \Theta_2^R$ and for $R$ sufficiently large, 
\begin{align*}
	 \left| R\int_{A_1} \Phi(\bar{g}^R, \bar{\pi}^R)\cdot (1,0) \, dx - 16\pi (E^\theta - E)\right| &\le  CR^{-1}\\
	\left| R\int_{A_1} \Phi(\bar{g}^R,\bar{\pi}^R) \cdot (0, \frac{\partial}{\partial x^i}) \, dx - 8\pi (P^\theta - P) \right| &\le CR^{-1}\\
	\left| R\int_{A_1} \Phi(\bar{g}^R,\bar{\pi}^R) \cdot (x^k, 0)\, dx - 16\pi R^{-1}(\mathcal C^\theta_k - \mathcal C^R_k)\right| &\le C(R^{-1}+|\theta|^{2} R^{-2}) \\
	\left| R\int_{A_1} \Phi(\bar{g}^R,\bar{\pi}^R) \cdot (0, x\times \frac{\partial}{\partial x^\ell})\, dx-8\pi R^{-1}(\mathcal J^\theta_\ell - \mathcal J^R_\ell)\right| &\le C(R^{-1}+|\theta|^{2} R^{-2}). 
\end{align*}
\end{lemma}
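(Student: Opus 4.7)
\medskip

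The plan is to Taylor expand $\Phi(\bar g^R,\bar\pi^R)$ about the flat data $(g_{\mathbb E},0)$, convert the linear part into boundary integrals via \eqref{equation:boundary} (applicable because each of the four test fields $(N,X)$ lies in $\ker D\Phi|^*_{(g_{\mathbb E},0)}$ by Example~\ref{example:linearization}), and then identify those boundary integrals with the ADM quantities via rescaling. First I write
\[
\Phi(\bar g^R,\bar\pi^R) = D\Phi|_{(g_{\mathbb E},0)}(\bar g^R - g_{\mathbb E},\bar\pi^R) + Q_{(g_{\mathbb E},0)}(\bar g^R - g_{\mathbb E},\bar\pi^R).
\]
The asymptotic flatness hypothesis at the rate $(1,1)$, combined with the uniformity condition \eqref{equation:fall-off} across $\theta\in\Theta_1\times\Theta_2^R$ and the scaling relations from Section~\ref{N-body}, gives $\|(g^R-g_{\mathbb E},\pi^R)\|_{C^2\times C^1(A_1)}\le CR^{-1}$ and the same bound for $((g^\theta)^R,(\pi^\theta)^R)$. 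The estimate $\|(h^R,w^R)\|_{\mathcal B_2\times\mathcal B_2}\le CR^{-1}$ from Proposition~\ref{proposition:nl2} yields $O(R^{-1})$ pointwise bounds on compact subsets of $A_1$ together with super-exponential vanishing of $(h^R,w^R)$ and their first derivatives at $\partial A_1$ (via the weight $\rho^{1/2}=e^{-N/(2d)}$ in $\mathcal B_2$). Thus $Q_{(g_{\mathbb E},0)}(\bar g^R-g_{\mathbb E},\bar\pi^R)=O(R^{-2})$ on $A_1$; integrated against each bounded test field and multiplied by the prefactor $R$ this contributes $O(R^{-1})$ to each of the four expressions.

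Next I apply \eqref{equation:boundary} to the linear part on $A_1$ to obtain
\[
\int_{A_1} D\Phi|_{(g_{\mathbb E},0)}(\bar g^R-g_{\mathbb E},\bar\pi^R)\cdot(N,X)\,dx = B^{2}_{(\bar g^R,\bar\pi^R)}(N,X) - B^{1}_{(\bar g^R,\bar\pi^R)}(N,X).
\]
Because $(h^R,w^R)$ and $\nabla(h^R,w^R)$ vanish at $\partial A_1$, and because $\chi\equiv1$ on $\partial B_1$ and $\chi\equiv0$ on $\partial B_2$, the boundary integrals reduce to
\[
B^{1}_{(g^R,\pi^R)}(N,X)\quad\text{and}\quad B^{2}_{((g^\theta)^R,(\pi^\theta)^R)}(N,X).
\]
A direct change of variables in $B^r$ together with $(g^R)_{ij}(x)=g_{ij}(Rx)$ and $(\pi^R)_{ij}(x)=R\pi_{ij}(Rx)$ gives, for $n=3$, the scaling identities
\[
B^r_{(g^R,\pi^R)}(1,0)=R^{-1}B^{rR}_{(g,\pi)}(1,0),\qquad B^r_{(g^R,\pi^R)}(0,\tfrac{\partial}{\partial x^i})=R^{-1}B^{rR}_{(g,\pi)}(0,\tfrac{\partial}{\partial y^i}),
\]
\[
B^r_{(g^R,\pi^R)}(x^k,0)=R^{-2}B^{rR}_{(g,\pi)}(y^k,0),\qquad B^r_{(g^R,\pi^R)}(0,x\!\times\!\tfrac{\partial}{\partial x^\ell})=R^{-2}B^{rR}_{(g,\pi)}(0,y\!\times\!\tfrac{\partial}{\partial y^\ell}),
\]
so that after multiplying by the front factor $R$ one obtains, respectively, $B^{2R}_{(g^\theta,\pi^\theta)}(N,X)-B^{R}_{(g,\pi)}(N,X)$ for the first two cases and $R^{-1}\bigl[B^{2R}_{(g^\theta,\pi^\theta)}(N,X)-B^{R}_{(g,\pi)}(N,X)\bigr]$ for the last two.

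It remains to identify each difference with the corresponding quantity in the lemma. For energy and momentum, the standard AF convergence at the rate $(1,1)$ yields $|B^R_{(g,\pi)}(1,0)-16\pi E|+|B^{2R}_{(g^\theta,\pi^\theta)}(1,0)-16\pi E^\theta|\le CR^{-1}$, uniformly in $\theta\in\Theta_1\times\Theta_2^R$ by \eqref{equation:fall-off}, and analogously for $(0,\partial_i)$; this produces the first two estimates after combining with the quadratic error. For the center of mass and angular momentum, the definitions of $\mathcal C^R,\mathcal J^R$ give $B^R_{(g,\pi)}(y^k,0)=16\pi\mathcal C^R_k$ and $B^R_{(g,\pi)}(0,y\times\partial_\ell)=8\pi\mathcal J^R_\ell$ exactly, while the uniformity condition \eqref{equation:fall-off-RT} in the definition of admissible family gives
\[
|B^{2R}_{(g^\theta,\pi^\theta)}(y^k,0)-16\pi\mathcal C^\theta_k|+|B^{2R}_{(g^\theta,\pi^\theta)}(0,y\!\times\!\tfrac{\partial}{\partial y^\ell})-8\pi\mathcal J^\theta_\ell|\le \kappa|\theta|^2R^{-1};
\]
combined with the $R^{-1}$ factor from the scaling, this yields the $|\theta|^2R^{-2}$ part of the error, and adding the $O(R^{-1})$ quadratic error completes the proof.

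\medskip

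The main technical step is tracking the super-exponential vanishing of $(h^R,w^R)$ at $\partial A_1$ so that the boundary terms reduce exactly to the stated interpolants; this is the reason to work in the $\mathcal B_2$-spaces (with their exponential weight $\rho$) rather than with polynomially weighted spaces. Everything else is bookkeeping of scalings, the Taylor quadratic error, and the uniform admissibility estimates.
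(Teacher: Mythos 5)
Your proof is correct and follows the same approach as the paper: Taylor-expand $\Phi$ about flat data, bound the quadratic remainder by $R^{-1}$, convert the linearized integral to boundary terms via \eqref{equation:boundary}, observe that the boundary integrals reduce to those of $(g^R,\pi^R)$ and $((g^\theta)^R,(\pi^\theta)^R)$, and rescale to identify boundary integrals at radii $R$ and $2R$ with the ADM quantities and the finite-radius $\mathcal C^R,\mathcal J^R$, using \eqref{equation:fall-off} and \eqref{equation:fall-off-RT} for the uniform error bounds. You actually spell out two steps the paper treats implicitly—the vanishing of $(h^R,w^R)$ and their first derivatives at $\partial A_1$ (so the boundary terms see only the unmodified data) and the explicit coordinate-rescaling identities for the flux integrals—which is harmless and makes the argument more self-contained.
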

\begin{proof}
By \eqref{equation:fall-off} and the estimate for $(h^R, \pi^R)$ in Proposition~\ref{proposition:nl2}, the quadratic error terms are estimated as follows:
\[
		\left| R\int_{A_1} Q_{(g_\mathbb{E}, 0)} (\bar{g}^R - g_\mathbb{E}, \bar{\pi}^R)\cdot (N, X) \, dx\right| \le C R\|  (\bar{g}^R - g_\mathbb{E}, \bar{\pi}^R) \|_{C^2(A_1)\times C^1(A_1)}^2 \le C R^{-1}.
\]

We now estimate  the integrals of the linearized operator. The energy and momentum integrals can be estimated similarly, so we only show the one for the energy. By \eqref{equation:boundary}, 
\begin{align*}
	R\int_{A_1} D\Phi(\bar{g}^R, \bar{\pi}^R)\cdot (1,0) \, dx&= R \left(B^2_{((g^\theta)^R,(\pi^\theta)^R)}(1,0) - B^1_{(g^R,\pi^R)} (1,0)\right) \\
	&= B^{2R}_{(g^\theta, \pi^\theta)}(1,0) - B^R_{(g,\pi)}(1,0),
\end{align*}
where we note that the rescaling in the last line accounts for the factor $R$. It is standard to relate the boundary integral to the ADM energy at infinity by the divergence theorem. In particular, the uniformity assumption \eqref{equation:fall-off} implies that there is a constant $C$ such that for $\theta \in \Theta_1\times \Theta_2^R$,
\begin{align*}
	\left| B^{2R}_{(g^\theta, \pi^\theta)}(1,0) - 16 \pi E^\theta \right| \le CR^{-1}.
\end{align*}

The proofs for the center of mass and angular momentum integrals are similar, so we only show prove  the  one for the center of mass. By \eqref{equation:boundary} we have
\begin{align*}
	R\int_{A_1} D\Phi(\bar{g}^R, \bar{\pi}^R)\cdot (x^k,0) \, dx &= R \left(B^2_{((g^\theta)^R,(\pi^\theta)^R)}(x^k,0) - B^1_{(g^R,\pi^R)} (x^k,0)\right) \\
	&= R^{-1}\left(B^{2R}_{(g^\theta, \pi^\theta)}(x^k,0) - B^R_{(g,\pi)}(x^k,0)\right),
\end{align*}
where the rescaling in the last line gives an extra factor $R^{-1}$ from the rescaling of the coordinate function $x^k$. To obtain the desired estimate, we see that the term $B^{2R}_{(g^\theta, \pi^\theta)}(x^k,0) $ is estimated by the uniformity \eqref{equation:fall-off-RT} and $B^R_{(g,\pi)}(x^k,0) = 16\pi \mathcal C_k^R$ by definition.
\end{proof}

\section{Interior Schauder estimates} \label{section:Schauder}
Let $x\in \Omega$ be fixed. Consider the ball $B_{\phi(x)} (x)$ centered at $x$ of radius $\phi(x)$, where $\phi(x)$ is the weight function defined in Section~\ref{subsection:weighted-Holder}. We blur the distinction between $B_{\phi(x)}(x)$ and its coordinate image, and we consider the diffeomorphism $F_x: B_1(0) \to B_{\phi(x)}(x)$ by $z \mapsto x+ \phi(x) z=y$, where $B_1(0)$ is the unit ball in $\mathbb{R}^n$ centered at the origin. For any function $f$ defined on $B_{\phi(x)}(x)$, let 
\[
	\widetilde{f}(z) = F_x^*(f)(z) = f\circ F_x (z)
\] denote the pull-back of $f$ on $B_1(0)$.

With a minor abuse of notation, we denote for $a\in (0,1]$,
\[
	\| f \|_{C^{k,\alpha}_{\phi, \varphi}(B_{a\phi(x)}(x))} = \sum_{j=0}^k \varphi(x) \phi^j(x)\|  \nabla^j f \|_{C^0(B_{a\phi(x)}(x))} + \varphi(x) \phi^{k+\alpha}(x) [ \nabla^{k} f]_{0, \alpha; B_{a\phi(x)}(x)}.
\]
One can easily obtain the following lemma. 
\begin{lemma} \label{lemma:pull-back}
Let $f$ and $g$ be functions defined on $B_{\phi(x)} (x)$. The following properties hold.
\begin{enumerate}
\item $\widetilde{f + g} = \widetilde{f} + \widetilde{g}$ and $\widetilde{fg} = \widetilde{f} \widetilde{g}$.
\item $\widetilde{ \partial^{\beta}_{y} f}= (\phi(x))^{-|\beta|} \partial_z^{\beta} \widetilde{f}$, where  $\beta = (\beta_1, \dots, \beta_k)$ is a multi-index, $\partial^{\beta}_{y} = \partial^{\beta_1}_{y^{i_1} } \cdots \partial^{\beta_k}_{y^{i_k}}$, $i_1, \dots, i_k  \in\{ 1, 2, \ldots, n\}$, and $\partial^{\beta}_z$ is defined analogously. 
\item For any $a\in (0, 1]$, 
\begin{align*}
	\| \varphi(x) \widetilde{f} \|_{C^{k, \alpha} (B_a(0))} &= \| f \|_{C^{k,\alpha}_{\phi, \varphi} (B_{a\phi(x)} (x))}\\
	\|\varphi (x)\widetilde{f} \|_{L^2 (B_a(0))} &= \| f \|_{L^2_{\phi^{-n} \varphi^2}(B_{a\phi(x)} (x))}.
\end{align*}
\end{enumerate}
\end{lemma}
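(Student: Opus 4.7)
The lemma is a bookkeeping result about the behavior of the affine pullback $F_x(z)=x+\phi(x)z$ under the weighted norms defined just above. Since $F_x$ is just a translation composed with an isotropic dilation by the constant factor $\phi(x)$, all three statements reduce to the chain rule and a single change of variables; there is no real analytic obstacle, but one has to keep track of the powers of $\phi(x)$ carefully to see that they match the weights in the definitions.

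Part (1) is immediate: by definition $\widetilde{u}(z)=u(F_x(z))$, so composition with $F_x$ commutes with pointwise sum and product. For part (2), since $F_x$ has constant Jacobian $\phi(x)\,I_n$, the ordinary chain rule gives $\partial_{z^i}(f\circ F_x)(z)=\phi(x)(\partial_{y^i}f)(F_x(z))$, and iterating $|\beta|$ times yields $\partial^{\beta}_z\widetilde{f}=\phi(x)^{|\beta|}\widetilde{\partial^{\beta}_y f}$, which is exactly the formula claimed.

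Part (3) is where the weights enter. I would start with the $C^0$ norm: since $F_x$ is a bijection from $B_a(0)$ onto $B_{a\phi(x)}(x)$, we have $\|\widetilde{f}\|_{C^0(B_a(0))}=\|f\|_{C^0(B_{a\phi(x)}(x))}$, so multiplying by $\varphi(x)$ matches the $j=0$ term of the weighted H\"older norm (recall that the abuse-of-notation norm defined just above uses the \emph{constant} weights $\varphi(x),\phi(x)$ evaluated at the base point $x$). For $1\le j\le k$, part~(2) gives $\|\nabla^j_z\widetilde{f}\|_{C^0(B_a(0))}=\phi(x)^j\|\nabla^j_y f\|_{C^0(B_{a\phi(x)}(x))}$, which when multiplied by $\varphi(x)$ reproduces the $j$-th summand $\varphi(x)\phi(x)^j\|\nabla^j f\|_{C^0}$. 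The H\"older seminorm contributes one more factor of $\phi(x)^{\alpha}$: writing $z_1,z_2\in B_a(0)$ with $y_i=F_x(z_i)$, one has $|z_1-z_2|=\phi(x)^{-1}|y_1-y_2|$, hence
\[
[\nabla^k_z\widetilde{f}]_{0,\alpha;B_a(0)} = \phi(x)^{k}\,[\widetilde{\nabla^k_y f}]_{0,\alpha;B_a(0)} = \phi(x)^{k+\alpha}\,[\nabla^k_y f]_{0,\alpha;B_{a\phi(x)}(x)},
\]
which again matches after multiplication by $\varphi(x)$. Summing the three contributions gives the $C^{k,\alpha}$ equality.

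Finally, for the $L^2$ identity I would apply the change of variable $y=F_x(z)$, whose Jacobian is $\phi(x)^n$: this gives $\int_{B_a(0)}|\widetilde{f}|^2\,dz=\phi(x)^{-n}\int_{B_{a\phi(x)}(x)}|f|^2\,dy$. Multiplying by $\varphi(x)^2$ yields $\int_{B_{a\phi(x)}(x)}|f|^2\,\phi(x)^{-n}\varphi(x)^2\,dy$, which under the same abuse of notation (constant weights at the base point $x$) is exactly $\|f\|^2_{L^2_{\phi^{-n}\varphi^2}(B_{a\phi(x)}(x))}$. The only place one might expect trouble is the potential discrepancy between the weight evaluated at the center versus pointwise on the ball, but Proposition~\ref{proposition:phi}\eqref{item:3} shows $\phi$ and $\rho$ (hence $\varphi$) are comparable across $B_{\phi(x)}(x)$ with constants uniform in $\mathcal{U}_0$, so the stated equalities are consistent with the localized-norm convention introduced right before the lemma.
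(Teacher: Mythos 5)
Your proof is correct, and it is precisely the routine chain-rule and change-of-variables computation that the paper omits (the lemma is stated there with ``one can easily obtain'' and no proof). Your closing remark is also the right reading of the convention: the stated equalities use the constant weights $\varphi(x),\phi(x)$ evaluated at the base point, exactly as in the abuse-of-notation norm defined just before the lemma, with Proposition~\ref{proposition:phi}\eqref{item:3} guaranteeing this is uniformly comparable to the pointwise-weighted norms actually used in Theorem~\ref{theorem:U}.
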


Let $U= (U^1, U^2, U^3, \ldots,  U^{n+1})$ where each $U^i$ is a function defined on $B_{\phi(x)}(x)$. Consider the partial differential system $LU$ whose $j$-th component is 
\[
	(L U)_j = \sum_{k=1}^{n+1} \sum_{|\beta|=0}^{s_j+t_k} b_{jk}^{\beta} \partial_y^{\beta} U^k, 
\]
where 
\[
	s_1 = 0,  \quad t_1 = 4, \qquad s_j=-1,\quad t_k= 3\qquad (j, k = 2,  \dots, n+1).
\]
\begin{theorem}
Suppose that the operator $L$ is strictly elliptic in the sense of \cite{Douglis-Nirenberg:1955}. For any $r, s\in \mathbb{R}$, let $\varphi_j = \phi^{r+4-t_j} \rho^s$. Then for each $k\in \{1, 2, \ldots ,n+1\}$, 
\begin{align} \label{equation:U-local}
\begin{split}
	&\| U^k\|_{C^{t_k, \alpha}_{\phi, \varphi_k}(B_{\phi(x)/2}(x))} \\
&\qquad  \le C\left( \sum_{j=1}^{n+1} \|(L U)_j\|_{C^{-s_j, \alpha}_{\phi, \phi^{t_j+ s_j}\varphi_j }(B_{\phi(x)}(x))} + \sum_{j=1}^{n+1} \| U^j \|_{L_{\phi^{-n} \varphi_j^2}^2(B_{\phi(x)}(x))}\right),
\end{split}
\end{align}
where $C$ depends only on $n$, $\alpha$, $\sup\limits_{\stackrel{j, k = 1, \dots, n+1}{ |\beta|\leq s_j+t_k}}  \| b_{jk}^{\beta} \|_{C^{-s_j, \alpha}_{\phi, \phi^{s_j+t_k-|\beta|}}(B_{\phi(x)}(x))}$, and the lower bound of ellipticity of the operator $L$. 
\end{theorem}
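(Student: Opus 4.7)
The plan is to reduce the weighted local estimate to the classical Douglis--Nirenberg interior Schauder estimate on the unit ball, by pulling everything back through the dilation $F_x:B_1(0)\to B_{\phi(x)}(x)$, $z\mapsto x+\phi(x)z$. Introduce the normalized unknown
$$V^k(z) := \varphi_k(x)\,\widetilde U^k(z) = \varphi_k(x)\,U^k(F_x(z)).$$
By Lemma~\ref{lemma:pull-back}, the weighted norms appearing on both sides of \eqref{equation:U-local} translate exactly into the ordinary H\"older and $L^2$ norms of $V^k$ on $B_{1/2}(0)$ and $B_1(0)$, respectively, with no additional $x$-dependent prefactor.

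Next, derive the system satisfied by $V=(V^1,\dots,V^{n+1})$. Using $\widetilde{\partial_y^\beta U^k}=\phi(x)^{-|\beta|}\partial_z^\beta\widetilde U^k$ together with the key identity $\varphi_j(x)/\varphi_k(x)=\phi(x)^{t_k-t_j}$ (built into the specific form $\varphi_j=\phi^{r+4-t_j}\rho^s$, independently of $r$ and $s$, since $\rho$ cancels), a direct computation shows
$$\phi(x)^{s_j+t_j}\varphi_j(x)\,(LU)_j\circ F_x \;=\; (\widetilde L V)_j,\qquad (\widetilde LV)_j := \sum_{k=1}^{n+1}\sum_{|\beta|\le s_j+t_k} \bigl(\widetilde b_{jk}^\beta(z)\,\phi(x)^{s_j+t_k-|\beta|}\bigr)\,\partial_z^\beta V^k.$$
Thus $\widetilde L$ is a Douglis--Nirenberg system of the same orders $(s_j,t_k)$ on $B_1(0)$. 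Applying Lemma~\ref{lemma:pull-back} with $\varphi=\phi^{s_j+t_k-|\beta|}$ shows
$$\bigl\|\widetilde b_{jk}^\beta\,\phi(x)^{s_j+t_k-|\beta|}\bigr\|_{C^{-s_j,\alpha}(B_1(0))} = \bigl\|b_{jk}^\beta\bigr\|_{C^{-s_j,\alpha}_{\phi,\phi^{s_j+t_k-|\beta|}}(B_{\phi(x)}(x))},$$
which is bounded by the hypothesized constant uniformly in $x$. Since the principal part ($|\beta|=s_j+t_k$) of $\widetilde L$ is merely the pull-back of the principal part of $L$, strict ellipticity is preserved with the same constant.

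I would then invoke the classical Douglis--Nirenberg interior Schauder estimate on the fixed ball $B_1(0)$ for the operator $\widetilde L$, obtaining
$$\sum_{k=1}^{n+1}\|V^k\|_{C^{t_k,\alpha}(B_{1/2}(0))} \;\le\; C\left(\sum_{j=1}^{n+1}\|(\widetilde LV)_j\|_{C^{-s_j,\alpha}(B_1(0))} + \sum_{k=1}^{n+1}\|V^k\|_{C^0(B_1(0))}\right),$$
with $C$ depending only on $n$, $\alpha$, the ellipticity constant, and the uniform $C^{-s_j,\alpha}$ bounds on the coefficients of $\widetilde L$. A standard Nirenberg-type interpolation then upgrades $\|V^k\|_{C^0(B_1(0))}$ to $\|V^k\|_{L^2(B_1(0))}$ by absorbing a small multiple of the left-hand side. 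Finally, translating back via Lemma~\ref{lemma:pull-back} recovers exactly \eqref{equation:U-local}.

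The main obstacle is essentially bookkeeping: one must check that the specific weight family $\varphi_j=\phi^{r+4-t_j}\rho^s$ has been engineered so that (i) the ratio $\varphi_j/\varphi_k$ is the pure power $\phi^{t_k-t_j}$ matching the Douglis--Nirenberg scaling and independent of $(r,s)$, and (ii) the pull-back $\widetilde L$ retains the same system type with uniform coefficient bounds and ellipticity. Once these compatibilities are verified, the remainder is a direct transcription of the classical local Schauder theorem through the scaling diffeomorphism $F_x$.
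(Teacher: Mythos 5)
Your proposal is correct and follows essentially the same route as the paper: rescale to the unit ball via $F_x$, use Lemma~\ref{lemma:pull-back} to identify the weighted norms with ordinary norms of the rescaled quantities, check that the coefficients $\phi(x)^{s_j+t_k-|\beta|}\widetilde{b_{jk}^{\beta}}$ are uniformly controlled, apply the Douglis--Nirenberg interior estimate on $B_1(0)$, and translate back. The only cosmetic differences are that you absorb the full weight $\varphi_k(x)$ (including the harmless constant factor $\phi^r(x)\rho^s(x)$) into the unknown from the start, whereas the paper multiplies by $(\phi(x))^{4+s_j}$ first and by $\phi^r(x)\rho^s(x)$ only at the end, and that you pass from the $C^0$ lower-order term to $L^2$ by interpolation while the paper invokes the Douglis--Nirenberg estimate directly in its $L^2$ form.
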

\begin{proof}
By Lemma~\ref{lemma:pull-back}, for $j = 1,\dots, n+1$, 
\begin{align*}
	\widetilde{(LU)_j} &= \sum_{k=1}^{n+1} \sum_{|\beta|=0}^{s_j+t_k} \widetilde{b_{jk}^{\beta}} (\phi(x))^{-|\beta|} \partial_z^{\beta} \widetilde{U^k}\\
	&=\sum_{k=1}^{n+1} \sum_{|\beta|=0}^{s_j+t_k} (\phi(x))^{-4+t_k-|\beta|} \widetilde{b_{jk}^{\beta}}  \partial_z^{\beta} ((\phi(x))^{4-t_k} \widetilde{U^k}).
\end{align*}
Multiplying $(\phi(x))^{4+s_j}$ to the above identity, we have 
\[
	(\phi(x))^{4+s_j}	\widetilde{(LU)_j} =\sum_{k=1}^{n+1} \sum_{|\beta|=0}^{s_j+t_k}  (\phi(x))^{s_j+t_k-|\beta|} \widetilde{b_{jk}^{\beta}}\partial_z^{\beta} ((\phi(x))^{4-t_k} \widetilde{U^k}).
\]
We remark the power of $\phi$ is specifically chosen such that, for our application to Theorem~\ref{theorem:var-sol-est}, each of the coefficients $ (\phi(x))^{s_j+t_k-|\beta|} \widetilde{b_{jk}^{\beta}}$ is bounded in the $C^{ - s_j, \alpha}(B_1(0))$ norm.  Now we apply the Schauder interior estimate and obtain
\begin{align*}
	\sum_{j=1}^{n+1}\| (\phi(x))^{4-t_j} & \widetilde{U^j}\|_{C^{t_j, \alpha}(B_{\frac{1}{2}}(0))} \\
	&\le C\left( \sum_{j=1}^{n+1} \| (\phi(x))^{4+s_j}\widetilde{L_j U}\|_{C^{-s_j, \alpha}(B_1(0))} + \sum_{j=1}^{n+1} \| (\phi(x))^{4-t_j} \widetilde{U^j} \|_{L^2(B_1(0))}\right), 
\end{align*} 
where $C$ depends only on $n$, $\alpha$, $\sup\limits_{\stackrel{j, k = 1, \dots, n+1}{ |\beta|\leq s_j+t_k}}  \| (\phi(x))^{s_j+t_k-|\beta|} \widetilde{b_{jk}^{\beta}}\|_{C^{ - s_j, \alpha}(B_1(0))}$, and the lower bound of ellipticity of the operator $L$. By Lemma~\ref{lemma:pull-back} (3),
\[
	\| (\phi(x))^{s_j+t_k-|\beta|} \widetilde{b_{jk}^{\beta}}\|_{C^{ - s_j, \alpha}(B_1(0))} = \| b_{jk}^{\beta} \|_{C^{-s_j, \alpha}_{\phi, \phi^{s_j+t_k-|\beta|}}(B_{\phi(x)}(x))}. 
\]
For $r, s \in \mathbb{R}$, let $\varphi_j = \phi^{r+4-t_j} \rho^s$. Multiplying $\phi^r(x) \rho^s(x)$ to the above inequality, we have 
\begin{align*}
	&\sum_{j=1}^{n+1}\| \varphi_j(x) \widetilde{U^j}\|_{C^{t_j, \alpha}(B_{\frac{1}{2}}(0))} \\
	&\le C\left( \sum_{j=1}^{n+1} \| (\phi(x))^{t_j+s_j} \varphi_j(x) \widetilde{(LU)_j}\|_{C^{-s_j, \alpha}(B_1(0))} + \sum_{j=1}^{n+1} \| \varphi_j(x) \widetilde{U^j} \|_{L^2(B_1(0))}\right).
\end{align*}
Then Lemma~\ref{lemma:pull-back} (3) implies the desired estimate.
\end{proof}

\begin{proof}[Proof of Theorem~\ref{theorem:U}]
By the definition of the weighted H\"{o}lder norm and by taking the supremum of \eqref{equation:U-local} among $x\in \Omega$, it suffices to prove that for any $u$ and $\varphi = \phi^r \rho^s$
\[
	  \sup_{x\in \Omega} \| f\|_{C^{k, \alpha}_{\phi, \varphi}(B_{\phi(x)}(x))} \le C \sup_{x\in \Omega} \| f\|_{C^{k, \alpha}_{\phi, \varphi}(B_{\phi(x)/2}(x))}
\]
where $C$ denotes a positive constant that depends only on $n$, $ k, \alpha, r, s,$ and the constant in \eqref{equation:weight}. The proof is a straightforward computation using \eqref{equation:weight}.
\end{proof}

\section{Iteration scheme} \label{section:recursion}

We now prove Lemma~\ref{lemma:induction}. The proof follows essentially \cite[Lemma 3.5]{Corvino-Eichmair-Miao:2013}. 

\begin{proof} By the induction hypothesis~\eqref{equation:induction2},  $(\gamma_m , \tau_m)$ satisfies that $\Phi^W_{(g,\pi)}(g,\pi) + (\psi, V) - \Phi^W_{(g,\pi)} (\gamma_m, \tau_m) \in L^2_{\rho^{-1}} \times L^2_{\rho^{-1}}$. Use the variational  result Theorem~\ref{theorem:var-sol}, we find $(f_m, X_m)$ such that $(h_m, w_m) = \rho_g(D \Phi^W_{(g,\pi)}|_{(g,\pi)})^* ( f_m, X_m)$ satisfies
\[	
	D\Phi^W_{(g,\pi)}|_{(g,\pi)} (h_m, w_m)= \Phi^W_{(g,\pi)}(g,\pi) + (\psi, V) - \Phi^W_{(g,\pi)} (\gamma_m, \tau_m).
\]
By Theorem~\ref{theorem:var-sol-est} and the induction hypothesis~\eqref{equation:induction2}, 
\begin{align*}
	\| (f_m, X_m) \|_{\mathcal B_4\times \mathcal B_3} &\le C \| \Phi^W_{(g,\pi)}(g,\pi) + (\psi, V) - \Phi^W_{(g,\pi)} (\gamma_m, \tau_m) \|_{\mathcal B_0\times \mathcal B_1} \\
	&\le C\| (\psi, V) \|_{\mathcal B_0\times \mathcal B_1}^{1+ m\delta}\\
	\| (h_m, w_m) \|_{\mathcal B_2\times \mathcal B_2} &\le C \| \Phi^W_{(g,\pi)}(g,\pi) + (\psi, V) - \Phi^W_{(g,\pi)} (\gamma_m, \tau_m) \|_{\mathcal B_0 \times \mathcal B_1} \\
	&\le C\| (\psi, V) \|_{\mathcal B_0 \times \mathcal B_1}^{1+ m \delta}.
\end{align*}
This gives the desired estimates~(\ref{equation:induction1}) for $p=m$. 

To prove \eqref{equation:induction2} for $j=m+1$, we note that by Taylor expansion,
\begin{align*}
	&\Phi^W_{(g,\pi)}(\gamma_{m+1}, \tau_{m+1}) \\
	&= \Phi^W_{(g,\pi)}(\gamma_m, \tau_m) + D\Phi^W_{(g,\pi)}|_{(\gamma_m, \tau_m)} (h_m, w_m) + Q^W_{(\gamma_m, \tau_m)}(h_m, w_m)\\
	&= \Phi^W_{(g,\pi)}(g,\pi) + (\psi, V) - D\Phi^W_{(g,\pi)}|_{(g,\pi)} (h_m, w_m)\\
	&\quad +D\Phi^W_{(g,\pi)}|_{(\gamma_m, \tau_m)} (h_m, w_m)+ Q^W_{(\gamma_m, \tau_m)}(h_m, w_m)\\
	&= \Phi^W_{(g,\pi)}(g,\pi) + (\psi, V) \\
	&\quad + \sum_{p=0}^{m-1} \left[ D\Phi^W_{(g,\pi)} |_{(\gamma_{p+1}, \tau_{p+1})}(h_m, w_m) - D\Phi^W_{(g,\pi)} |_{(\gamma_{p}, \tau_{p})}(h_m, w_m) \right]\\
	&\quad + Q^W_{(\gamma_m, \tau_m)}(h_m, w_m).
\end{align*}
By Lemma~\ref{lemma:Taylor-modified}, 
\begin{align*}
	 \| \Phi^W_{(g,\pi)} (g, \pi) + & (\psi, V)  - \Phi^W_{(g,\pi)}(\gamma_{m+1}, \tau_{m+1}) \|_{\mathcal B_0\times \mathcal B_1}\\
	&\le D\left( \|(h_m, w_m)\|_{\mathcal B_2\times \mathcal B_2}^2 + \| (h_m, w_m)\|_{\mathcal B_2\times \mathcal B_2} \sum_{p=0}^{m-1} \| (\gamma_{p+1}, \tau_{p+1}) - (\gamma_p, \tau_p)\|_{\mathcal B_2\times \mathcal B_2}\right) \\
	&\le DC^2 \left(\|(\psi, V)\|_{\mathcal B_0\times \mathcal B_1}^{2+2m\delta} + \| (\psi, V) \|_{\mathcal B_0\times \mathcal B_1}^{2+m\delta} \sum_{p=0}^{m-1} \| (\psi, V) \|_{\mathcal B_0\times \mathcal B_1}^{p\delta}\right)\\
	&\le 2DC^2 \epsilon^{1-\delta} (1-\epsilon^{\delta})^{-1} \| (\psi, V) \|_{\mathcal B_0\times \mathcal B_1}^{1+(m+1)\delta}.
\end{align*}
Choose $\epsilon>0$ small enough so that $ 2DC^2 \epsilon^{1-\delta} (1-\epsilon^{\delta})^{-1} \le 1$.  \end{proof}

\bibliographystyle{amsplain}
\bibliography{2017-JCrev}

\end{document}